\documentclass[11 pt]{article}%
\usepackage{amsmath, amsfonts, amsthm, color,latexsym}
\usepackage{amssymb}
\usepackage{hyperref}
\usepackage{mathrsfs}

\usepackage{scrextend}

\pretolerance=10000
\usepackage{amsmath}
\usepackage{amsfonts}
\usepackage{amssymb}
\usepackage{color, soul}
\usepackage[all]{xy}
\hypersetup{colorlinks=true,linkcolor=blue} 
\usepackage{graphicx}%
\setcounter{MaxMatrixCols}{30}
\providecommand{\U}[1]{\protect\rule{.1in}{.1in}}
\allowdisplaybreaks[4]
\newtheorem{theorem}{Theorem}[section]
\newtheorem{proposition}[theorem]{Proposition}

\newtheorem{lemma}[theorem]{Lemma}
\newtheorem{final remark}[theorem]{Final Remark}

\textwidth=16.1cm
\textheight=23cm
\hoffset=-15mm
\voffset=-20mm
\allowdisplaybreaks[4]


\begin{document}

\title{ On a study and applications of the Concentration-compactness type principle for Systems with critical terms in $\mathbb{R}^{N}$}

\author{L.M.M. Bonaldo, E.J. Hurtado, W. Neves 
\thanks{Corresponding author.\newline 2020 Mathematics Subject Classification: 35A15, 35B33, 35B38, 35D30, 35J50, 35J60, 35R11, 46E35, 47G2O.\newline Keywords: Nonlocal integrodifferential  operator,  Fractional Sobolev space with variable exponents, Fractional Systems, Critical  nonlinearities, Variational methods } }



\maketitle

\begin{abstract}
\noindent In this paper,  we obtain some important variants of the Lions and Chabrowski Concentration-compactness principle, in the context of fractional Sobolev spaces with variable exponents, especially for nonlinear systems. As an application of the results, we show the existence and assymptotic behaviour of nontrivial solutions for elliptic systems involving a new class of general nonlocal integrodifferential operators with exponent variables and critical growth conditions in  $\mathbb{R}^{N}$.


\end{abstract}  
\tableofcontents
  \section{\textbf{Introduction}}
In this paper we complement the research begun in the articles \cite{ bonder2,analia,chabro,pucci,kiho,kang,lions1,lions2} key in the study of the Lions and Chabrowski Concentration-compactness principle (the CCP, for short) in the context of fractional Sobolev spaces with variable exponents, which are fundamental, especially in nonlinear systems with critical growth.

In recent decades, there has been a growing interest in the study of problems involving fractional function spaces (also called Slobodeskii spaces, which are a class in the huge family Besov spaces) that play an important role in many issues involving differential equations, in special, the study of nonlocal integrodifferential equations. For example, their applications can be presented as a model for many physical phenomena, finance, image processing,  ecology, and others, see e.g. \cite{apple,bucur,caf1,cont,castillo,huaraoto1,huaraoto2,juan} and references therein. 

On the other hand, partial differential equations and variational problems involving $p(\cdot)$-variable growth conditions, and hence Sobolev spaces with variable exponent have attracted considerable interest in the latest years. Indeed, their applications can be presented as a model for many physical phenomena that
arise for example in the research of image processing, nonlinear elasticity, electrorheological fluid, etc., we address the reader to 
\cite{dieming,ruzicka} and the references therein.

When one studies elliptic  problems involving nonlinear critical terms, both in bounded and unbounded domains, they are characterized by a lack of compactness, which create serious complications when trying to find critical points, by standard variational methods, for the energy functional associated with the problem. 
%
The CCP developed by Lions  \cite{lions3,lions1}(Lemma I.1 in \cite{lions3} and Lemma I.1 in \cite{lions1}, hereafter called CCP1 and CCP2 respectively)  has proven to be an extremely important tool that plays a key role to find solutions for this type of problems. 

However, it is important to remark that, the CCP2 does not provide any information about the mass loss of the sequence at infinity. Thus, to overcome this difficulty the CCP1 is used to show that the sequence is tight and so the so-called vanishing and dichotomy cannot occur, but it is in general very technical and complicated, (see \cite{son}). 
In this sense, to avoid this technical difficulty, Chabrowski \cite{chabro} proved an infinity CCP. In that paper, he formulated the variants of these two principles, that is, he can avoid the  CCP1 by applying an infinity version of the CCP2.

In the fractional context, the first extension of the CCP was studied in bounded domains in \cite{palatucci} for $p=2$, and in \cite{mosconi} the extension was obtained for any $1<p<N/s$. Concerning unbounded domains, the CCP extension has been proven for $p=2$ in \cite{dipierro}, later for any $1 < p < N/s$ in \cite{analia}. Recently for $p( \cdot)$,  \cite{kiho} got an important critical embedding, and thanks to that  established Concentration-compactness principles for fractional Sobolev spaces with variable exponents.

In addition to the extensions of the above mentioned results, variants of the CCP related to critical elliptic systems have also been studied. The pioneering work \cite{kang}, which is based on the ideas of  Lions \cite{lions1,lions2}, established some important variants of the Concentration-compactness principle for systems. These variants allow the study of the existence of nontrivial solutions for elliptic systems, through variational methods.  Indeed, in \cite{kangyu}, applying the result of \cite{kang} combined with variational methods, it was proved the existence of minimizers for the Rayleigh quotient and ground state solutions for a system of semilinear elliptic equations, which involves homogeneous critical nonlinearities and different Hardy-type terms.
 Already in \cite{pucci}, they use the result of \cite{kang} together with the Mountain Pass Theorem to prove the existence of solutions for critical Hardy quasilinear systems. Recently, in the fractional context, \cite{temperini} proved some variants of Lions CCP and Chabrowski's infinite CCP which allowed, through the mountain pass theorem, to show the existence of nontrivial integer solutions for fractional systems $(p, q) $ with Hardy critical terms in $\mathbb{R}^{N}. $

Motivated by the contributions mentioned above, the first objective of this paper is to obtain new variants of the Lions CCP and the Chabrowski infinite CCP in the context of fractional Sobolev spaces with variable exponents. Then, as an application of these new variants obtained (see Theorem \ref{lions} and \ref{lionsinfity}), the second objective is to obtain new results of existence and assymptotic behaviour of nontrivial solutions for elliptic systems. In particular, for the elliptic systems involving a new class of nonlocal integral differential operators with variable exponents and critical growth in $\mathbb{R}^{N}$.

\subsection{Main results}

In order to state the main results of this paper, let us introduce some notations. We consider   $\mathcal{M}(\mathbb{R}^{N})$ the space of all signed finite Radon measures on $\mathbb{R}^{N}$ endowed with the total variation norm, which we may identify with the dual of $C_{0}(\mathbb{R}^{N})$. 

We assume that the variable exponents $p(\cdot, \cdot)$ and  $q(\cdot)$  satisfy the following assumptions:
  \begin{itemize}
  \item[$(\mathit{P})$]  $ p: \mathbb{R}^{2N} \to \mathbb{R}$ is uniformly continuous, $ p$ is symmetric, that is, $ p(x,y)=p(y,x)$,  and  
  \begin{equation*}\label{a234}
  \begin{split}
    1< p^{-} := \displaystyle{\inf_{(x,y) \in \mathbb{R}^{2N}}}\,p(x,y) \leqslant  \displaystyle{ \sup_{(x,y)\in \mathbb{R}^{2N}}}\,p(x,y):= \overline{p}<\frac{N}{s}, s\in (0,1);
 \end{split}
  \end{equation*}
  there exists $\varepsilon_{0}\in \big(0, \frac{1}{2}\big),$ such that $p(x,y)=\overline{p}$ for all $x,y \in \mathbb{R}^{N}$ satisfying $|x-y|< \varepsilon_{0}$, $\displaystyle{\sup_{y\in \mathbb{R}^{N}}}\,p(x,y):= \overline{p}$ for all $x \in \mathbb{R}^{N}$; and $|\{ x \in \mathbb{R}^{N}:p_{\star}(x)\neq\overline{p}\}|<\infty $ where, $p_{\star}(x):=\displaystyle{ \inf_{y \in \mathbb{R}^{N}}p(x,y)}$ for $x \in \mathbb{R}^{N}.$
  \item[$(\mathit{Q})$]$q:\mathbb{R}^{N}\to \mathbb{R}$ is uniformly continuous such that $p_{\star}(x ) \leqslant q(x)\leqslant \overline{p}^{\star}_{s}:=\frac{N\overline{p}^{\star}_{s}}{N-s\overline{p}^{\star}_{s}},$ for all $x \in \mathbb{R}^{N}$ and $\mathscr{C}:=\{x \in \mathbb{R}^{N}; q(x)=\overline{p}^{\star}_{s}\}\neq \emptyset.$

  \end{itemize}
  
  We define  
  \begin{equation}\label{spacec}
C_{+}(\mathbb{R}^{N}):=\bigg\{h\in C(\mathbb{R}^{N}): 1 < \displaystyle{\inf_{x\in \mathbb{R}^{N}}h(x)} \leqslant  \displaystyle{\sup_{x\in \mathbb{R}^{N}}h(x)} < \infty \bigg\}
  \end{equation}
  and consider
  \begin{equation}\label{numero}
      \mathit{W}= W^{s,p(\cdot, \cdot)}(\mathbb{R}^{N})\times W^{s,p(\cdot, \cdot)}(\mathbb{R}^{N})
  \end{equation}
    the Banach space  defined in Subsection \ref{espacofra} with  norm $ \| (u, v) \|:= \| u \|+\| v \|$.
  
   The following result establishes the Lions type Concentration-compactness principle for the space $\mathit{W}$.

  \begin{theorem} \label{lions}
  Assume that $(\mathit{P})$ and $(\mathit{Q})$ hold and $\alpha, \beta \in C_{+}(\mathbb{R}^{N})$   satisfying $\alpha(x)+\beta(x)=q(x)$ for all $x \in \mathbb{R}^{N}$. Let $\{(u_k, v_{k})\}_{k \in \mathbb{N}}$ be a bounded sequence in $\mathit{W}$   such that 
  \begin{equation}\label{c1}
  (u_k, v_{k})\rightharpoonup (u,v) \mbox{ in  } \mathit{W} \mbox{ as  } k \to +\infty,
  \end{equation}
  \begin{equation}\label{c2}
  |u_k|^{\overline{p}}+|v_{k}|^{\overline{p}}+ \int_{\mathbb{R}^{N} }\frac{|u_k(x)-u_k(y)|^{p(x,y)}}{|x-y|^{N+sp(x,y)}} \,dy+\int_{\mathbb{R}^{N} }\frac{|v_k(x)-v_k(y)|^{p(x,y)}}{|x-y|^{N+sp(x,y)}} \,dy \overset{\ast}{\rightharpoonup}  \mu \mbox{ in }  \mathcal{M}(\mathbb{R}^{N}),
  \end{equation}
  \begin{equation}\label{c3}
  |u_k|^{\alpha(x)}|v_k|^{\beta(x)} \overset{\ast}{\rightharpoonup} \nu \mbox{ in }  \mathcal{M}(\mathbb{R}^{N}),
  \end{equation}
  where $\mu$ and $\nu$ are nonnegative finite Radon measure on $\mathbb{R}^{N}.$
  
  Then, there exist two families of numbers $\{\mu_{i}\}_{i \in I}\subset (0, +\infty)$, $\{\nu_{i}\}_{i \in I}\subset (0, +\infty)$  and  a family  $\{x_i\}_{i \in I}\subset\mathscr{C}$ of
distinct points in $\mathbb{R}^{N}$, where $I$ is an at most countable index set, such that
\begin{equation}\label{cc1}
\nu = |u|^{\alpha(x)}|v|^{\beta(x)}+ \sum_{i \in I}\nu_{i}\delta_{x_i},
\end{equation}
\begin{equation}\label{cc2}
\mu \geqslant|u|^{\overline{p}}+|v|^{\overline{p}} +  \int_{\mathbb{R}^{N} }\frac{|u(x)-u(y)|^{p(x,y)}}{|x-y|^{N+sp(x,y)}} \,dy+\int_{\mathbb{R}^{N} }\frac{|v(x)-v(y)|^{p(x,y)}}{|x-y|^{N+sp(x,y)}} \,dy  + \sum_{i \in I}\mu_{i}\delta_{x_{i}},
\end{equation}
\begin{equation}\label{cc3}
\mathcal{S}_{\alpha\beta}\nu_{i}^{\frac{1}{\overline{p}^{\star}_{s}}}\leqslant \mu_{i}^{\frac{1}{\overline{p}}} 
\end{equation}
$\mbox{ for all } i \in I$, where
\begin{equation}\label{SS}
\mathcal{S}_{\alpha\beta}:= \inf_{\substack{(u,v)\in \mathit{W},\\ u\neq 0, v\ne 0}}\frac{\|(u,v)\|}{\bigg\|u^{\frac{\alpha(\cdot)}{q(\cdot)}}v^{\frac{\beta(\cdot)}{q(\cdot)}}\bigg\|}_{L^{q(\cdot)}(\mathbb{R}^{N})}.\, \, 
\end{equation} 

  \end{theorem}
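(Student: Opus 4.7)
The strategy is the classical Lions programme, adapted to the system setting and to the variable-exponent fractional Gagliardo seminorm. I would first reduce to the case $u\equiv 0$, $v\equiv 0$ by means of a Brezis--Lieb type decomposition: writing $w_k:=u_k-u$ and $z_k:=v_k-v$, one has $(w_k,z_k)\rightharpoonup(0,0)$ in $W$, and the standard Brezis--Lieb argument (applied componentwise to both the $L^{\overline{p}}$-term and the Gagliardo double integrals, and applied to the product $|w_k|^{\alpha(x)}|z_k|^{\beta(x)}$ via a pointwise convergence a.e. together with uniform integrability) shows that the measures obtained from $(w_k,z_k)$ weak-$*$ converge to $\widetilde\mu:=\mu-\mu_{\mathrm{ac}}$ and $\widetilde\nu:=\nu-|u|^{\alpha(x)}|v|^{\beta(x)}\mathcal{L}^N$. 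Thus it suffices to prove: if $(w_k,z_k)\rightharpoonup 0$ in $W$ with $\widetilde\nu$ the weak-$*$ limit of $|w_k|^{\alpha(x)}|z_k|^{\beta(x)}$, then $\widetilde\nu=\sum_{i\in I}\nu_i\delta_{x_i}$, each $x_i\in\mathscr{C}$, and $\mathcal{S}_{\alpha\beta}\nu_i^{1/\overline{p}^{\star}_s}\leqslant \mu_i^{1/\overline{p}}$.

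\textbf{Reverse H\"older step.} For $x_0\in\mathbb{R}^N$ and $\varepsilon>0$ fix a cutoff $\phi_{\varepsilon,x_0}\in C_c^\infty(\mathbb{R}^N)$ with $\phi_{\varepsilon,x_0}\equiv 1$ on $B_{\varepsilon}(x_0)$, supported in $B_{2\varepsilon}(x_0)$, and $\|\nabla\phi_{\varepsilon,x_0}\|_\infty\lesssim 1/\varepsilon$. Apply the definition of $\mathcal{S}_{\alpha\beta}$ to the pair $(\phi_{\varepsilon,x_0} w_k,\phi_{\varepsilon,x_0} z_k)$, obtaining
\begin{equation*}
\mathcal{S}_{\alpha\beta}\Big\|(\phi_{\varepsilon,x_0} w_k)^{\alpha/q}(\phi_{\varepsilon,x_0} z_k)^{\beta/q}\Big\|_{L^{q(\cdot)}(\mathbb{R}^N)} \leqslant \|(\phi_{\varepsilon,x_0}w_k,\phi_{\varepsilon,x_0}z_k)\|.
\end{equation*}
Now pass to the limit $k\to\infty$ on the right-hand side: the $L^{\overline p}$-part contributes $\int\phi_{\varepsilon,x_0}^{\overline p}\,d\widetilde\mu$, while the Gagliardo-seminorm part is estimated using a commutator-type lemma (of the form $[\phi w]_{s,p(\cdot,\cdot)}\leqslant \phi\,[w]_{s,p(\cdot,\cdot)}+\text{lower order}$), whose lower-order piece vanishes as $k\to\infty$ by the local compactness of the embedding $W^{s,p(\cdot,\cdot)}(B_{2\varepsilon})\hookrightarrow L^{\overline p}(B_{2\varepsilon})$ since $(w_k,z_k)\rightharpoonup 0$. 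On the left-hand side, pass to the limit using $(\ref{c3})$. Finally, let $\varepsilon\to 0$: this forces $\phi_{\varepsilon,x_0}\to \chi_{\{x_0\}}$ in the measure-theoretic sense, yielding
\begin{equation*}
\mathcal{S}_{\alpha\beta}\,\widetilde\nu(\{x_0\})^{1/\overline p^{\star}_s}\leqslant \widetilde\mu(\{x_0\})^{1/\overline p},
\end{equation*}
which is inequality $(\ref{cc3})$ at every point that carries $\widetilde\nu$-mass.

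\textbf{Location of atoms and countability.} To see $x_i\in\mathscr{C}$, suppose $q(x_0)<\overline p^{\star}_s$. By continuity of $q$ there is a ball $B_r(x_0)$ where $q(x)\leqslant \overline p^{\star}_s-\delta$, and on such a ball the embedding $W^{s,p(\cdot,\cdot)}(B_r)\hookrightarrow L^{q(\cdot)}(B_r)$ is compact (this is a subcritical embedding). Consequently $|w_k|^{\alpha(x)}|z_k|^{\beta(x)}\to 0$ in $L^1(B_r)$ by H\"older in variable exponents with $\alpha/q+\beta/q=1$, ruling out atoms at $x_0$. Countability of $I$ is automatic because $\widetilde\nu(\mathbb{R}^N)<\infty$ and each atom has positive mass. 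The lower bound $(\ref{cc2})$ for $\mu$ follows from weak lower semicontinuity of the Gagliardo modular and of the $L^{\overline p}$-norm applied to each component, combined with the fact that $\widetilde\mu=\mu-|u|^{\overline p}-|v|^{\overline p}-[\text{Gagliardo parts of }u,v]$ is a nonnegative measure dominating $\sum \mu_i\delta_{x_i}$.

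\textbf{Main obstacle.} I expect the technically hardest point to be the commutator estimate for $[\phi w_k]_{s,p(\cdot,\cdot)}$ in the variable-exponent Gagliardo seminorm, together with the pointwise-a.e.\ convergence and equi-integrability needed to justify Brezis--Lieb for the mixed product $|u_k|^{\alpha(x)}|v_k|^{\beta(x)}$. In the constant-exponent scalar case both facts are well documented, but here one must carefully track the dependence on $p(x,y)$ in the commutator (splitting the double integral into the region $|x-y|<\varepsilon_0$, where $p\equiv\overline p$ by hypothesis $(\mathit{P})$, and its complement where one loses some integrability but gains decay from the kernel) and verify an appropriate variable-exponent version of the Brezis--Lieb lemma for the critical product term.
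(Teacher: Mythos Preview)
Your overall strategy---Brezis--Lieb reduction to the zero weak limit, then a reverse H\"older inequality obtained by testing $\mathcal{S}_{\alpha\beta}$ against localized cutoffs, with the nonlocal commutator term controlled by a technical lemma---is exactly the route the paper takes. You also correctly single out the two genuine technical inputs: the Brezis--Lieb lemma for the mixed product $|u_k|^{\alpha(x)}|v_k|^{\beta(x)}$ (the paper's Lemma~\ref{lieb}) and the vanishing of the commutator term as $\rho\to0$ (the paper's Lemma~\ref{4.4a}).

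There is, however, a logical gap in your outline. From your reverse H\"older step you extract only the \emph{pointwise} inequality $\mathcal{S}_{\alpha\beta}\,\widetilde\nu(\{x_0\})^{1/\overline p^{\star}_s}\leqslant \widetilde\mu(\{x_0\})^{1/\overline p}$, and your subcritical-compactness argument shows only that $\operatorname{supp}\widetilde\nu\subset\mathscr{C}$. Neither of these implies that $\widetilde\nu$ is purely atomic: the critical set $\mathscr{C}$ can have positive Lebesgue measure, and a reverse H\"older inequality evaluated only at singletons says nothing about the diffuse part of $\widetilde\nu$. What is needed is the inequality for \emph{general} $\phi\in C_c^\infty(\mathbb{R}^N)$, namely an estimate of the form
\[
\mathcal{S}_{\alpha\beta}\|\phi\|_{L^{q(\cdot)}_{\widetilde\nu}(\mathbb{R}^N)}\ \lesssim\ \max\Big\{\|\phi\|_{L^{p_\star(\cdot)}_{\overline\mu}(\mathbb{R}^N)},\ \|\phi\|_{L^{\overline p}_{\overline\mu}(\mathbb{R}^N)}\Big\},
\]
which is what the paper proves as \eqref{5.14}. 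Only then can one invoke a Lions-type absolute-continuity lemma (the paper cites \cite[Lemma~3.2]{bonder2}) to conclude that $\widetilde\nu$ is a countable sum of Dirac masses. In short: do not send $\varepsilon\to0$ immediately; first record the reverse H\"older for arbitrary $\phi$, deduce atomicity, and only afterwards specialize to $\phi=\Psi_\rho$ at each atom $x_i$ to obtain \eqref{cc3}.

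Two smaller points. First, the identification ``$\widetilde\mu:=\mu-\mu_{\mathrm{ac}}$'' via Brezis--Lieb on the Gagliardo side is not something the paper uses (or proves); it simply takes $\overline\mu$ to be whatever weak-$*$ limit the translated densities have, and derives \eqref{cc2} directly from weak lower semicontinuity of the convex modular applied to the original sequence $(u_k,v_k)$ against nonnegative $\phi\in C_0(\mathbb{R}^N)$. You should do the same rather than attempt a Brezis--Lieb splitting of $\mu$. Second, because $\|(\phi w_k,\phi z_k)\|$ is a Luxemburg norm and not a modular, passing to the limit on the right-hand side requires the norm--modular relations (Proposition~\ref{norma}); the paper handles this carefully in \eqref{5.10a}--\eqref{5.13}, and your statement that ``the $L^{\overline p}$-part contributes $\int\phi^{\overline p}\,d\widetilde\mu$'' elides this step.
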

  
  One remarks that, the Theorem  \ref{lions} extends in several directions previous results  for the variable exponent   context and  the fractional vectorial context the results of \cite{bonder2,analia,kiho, kang,lions1,lions2,temperini}. 
  Moreover, the strategy to show this theorem is the same as that papers P.L. Lions \cite{lions1,lions2}, but there exist several difficulties because we are working in a nonlocal context, with variable exponents involved in $\mathbb{R}^{N}$. In order to overcome these nontrivial technical difficulties, we apply Theorem \ref{critico} which gives us the sufficient conditions for the variable exponents, such as the log-H\"older continuity condition. Thus we obtain a critical embedding of fractional Sobolev spaces with variable exponents in Lebesgue spaces with variable exponents.  As well the technical lemmas, Lemma \ref{4.4a} and \ref{lieb}.
  
  The following result establish  the Chabrowsky type Concentration-compactness infinity principle for the space $\mathit{W}$.
  
  \begin{theorem} \label{lionsinfity}
  Assume that $(\mathit{P})$ and $(\mathit{Q})$ hold and $\alpha, \beta \in C_{+}(\mathbb{R}^{N})$   satisfying $\alpha(x)+\beta(x)=q(x)$ for all $x \in \mathbb{R}^{N}$. Let $\{(u_k, v_{k})\}_{k \in \mathbb{N}}$ be a bounded sequence in $\mathit{W}$   such that 
  \begin{equation*}\label{c1}
  (u_k, v_{k})\rightharpoonup (u,v) \mbox{ in  } \mathit{W} \mbox{ as  } k \to +\infty,
  \end{equation*}
  and define 
  \begin{equation}\label{ccinfi}
\nu_{\infty}:= \displaystyle{\lim_{R \to \infty}\limsup_{k \to \infty}\int_{B^{c}_{R}}|u_k|^{\alpha(x)}|v_k|^{\beta(x)}\,dx},
\end{equation}
\begin{equation}\label{cc2infi}
\begin{split}
\mu_{\infty}:= \lim_{R \to \infty}\limsup_{k \to \infty}\int_{B^{c}_{R}}\bigg[|u_k|^{\overline{p}}+|v|^{\overline{p}} +  \int_{\mathbb{R}^{N} }\frac{|u_k(x)-u_k(y)|^{p(x,y)}}{|x-y|^{N+sp(x,y)}} \,dy \\ +\int_{\mathbb{R}^{N} }\frac{|v_k(x)-v_k(y)|^{p(x,y)}}{|x-y|^{N+sp(x,y)}} \,dy \bigg]\,dx.
\end{split}
\end{equation}
Then,
\begin{equation}\label{cc2inf}
\limsup_{k \to \infty}\displaystyle{ \int_{\mathbb{R}^{N} }|u_k|^{\alpha(x)}|v_k|^{\beta(x)}\,dx}= \nu(\mathbb{R}^{N})+\nu_{\infty}.
\end{equation}
\begin{equation}\label{cc2infi}
\begin{split}
 \limsup_{k \to \infty}\int_{\mathbb{R}^{N} }\bigg[|u_k|^{\overline{p}}+|v_k|^{\overline{p}}  +  \int_{\mathbb{R}^{N} }\frac{|u_k(x)-u_k(y)|^{p(x,y)}}{|x-y|^{N+sp(x,y)}} \,dy \\ +\int_{\mathbb{R}^{N} }\frac{|v_k(x)-v_k(y)|^{p(x,y)}}{|x-y|^{N+sp(x,y)}} \,dy \bigg]\,dx= \mu (\mathbb{R}^{N}) + \mu_{\infty}.
 \end{split}
\end{equation}
Assume in addition that,
\begin{itemize}
\item[($\varepsilon_{0}$)] there exist $\displaystyle{\lim_{|x|,|y|\to \infty}p(x,y)=\overline{p}}$ and $\displaystyle{\lim_{|x|\to \infty}q(x)=q_{\infty}}$ for $\overline{p}$ given by $(\mathit{P})$ and some  $q_{\infty} \in (1,\infty)$.  
\end{itemize}
Then, 
\begin{equation}\label{SSin}
\mathcal{S}_{\alpha\beta}\mu_{\infty}^{\frac{1}{q_{\infty}}}\leqslant\nu_{\infty}^{\frac{1}{\overline{p}}}.
\end{equation} 
 \end{theorem}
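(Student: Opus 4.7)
My strategy is to adapt the Chabrowski localization-at-infinity argument to the nonlocal, variable-exponent setting for the product space $\mathit{W}$. Fix once and for all a smooth cutoff $\psi\in C^\infty(\mathbb{R}^N,[0,1])$ with $\psi\equiv 0$ on $B_1$ and $\psi\equiv 1$ on $\mathbb{R}^N\setminus B_2$, and set $\psi_R(x):=\psi(x/R)$, $\phi_R:=1-\psi_R$, so that $\phi_R\in C_c(\mathbb{R}^N)$ with $\phi_R\equiv 1$ on $B_R$ and $\|\nabla\psi_R\|_\infty\leq C/R$. The equalities \eqref{cc2inf} and \eqref{cc2infi} then follow by splitting each global integral as $\int F_k\,dx=\int\phi_R F_k\,dx+\int\psi_R F_k\,dx$ with $F_k$ being either the concentration density $|u_k|^{\alpha(x)}|v_k|^{\beta(x)}$ or the energy density: for the compactly supported piece, the weak-$*$ convergences \eqref{c2}--\eqref{c3} yield $\limsup_k\int\phi_R F_k\,dx=\int\phi_R\,d\rho$ for the corresponding measure $\rho\in\{\mu,\nu\}$, and then $R\to\infty$ with monotone convergence produces $\rho(\mathbb{R}^N)$; the second piece is trapped between $\int_{\mathbb{R}^N\setminus B_R} F_k\,dx$ and $\int_{\mathbb{R}^N\setminus B_{2R}}F_k\,dx$, so its iterated limit equals $\rho_\infty$ by definition, and summing yields the two equalities.

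For \eqref{SSin}, I would test the embedding defining $\mathcal{S}_{\alpha\beta}$ on the localized pair $(\psi_R u_k,\psi_R v_k)\in\mathit{W}$, giving $\mathcal{S}_{\alpha\beta}\,\|(\psi_R u_k)^{\alpha/q}(\psi_R v_k)^{\beta/q}\|_{L^{q(\cdot)}}\leq\|(\psi_R u_k,\psi_R v_k)\|$. Since $\alpha+\beta=q$, the left-hand integrand equals $\psi_R(x)^{q(x)}|u_k|^{\alpha(x)}|v_k|^{\beta(x)}$, whose integral is sandwiched between $\int_{\mathbb{R}^N\setminus B_{2R}}|u_k|^{\alpha(x)}|v_k|^{\beta(x)}\,dx$ and $\int_{\mathbb{R}^N\setminus B_R}|u_k|^{\alpha(x)}|v_k|^{\beta(x)}\,dx$ and thus tends to $\nu_\infty$; the hypothesis $(\varepsilon_0)$ forces $q(x)\to q_\infty$ at infinity, so the unit-ball property of the Luxemburg norm lets us convert this modular limit into the norm limit $\nu_\infty^{1/q_\infty}$. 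On the right-hand side I would expand $(\psi_R u_k)(x)-(\psi_R u_k)(y)=\psi_R(x)(u_k(x)-u_k(y))+u_k(y)(\psi_R(x)-\psi_R(y))$ (and likewise for $v_k$); the product term, majorized by the energy of $u_k$ on a neighborhood of infinity, contributes the $\mu_\infty^{1/\overline{p}}$ part after $(\varepsilon_0)$ replaces $p(x,y)$ by $\overline{p}$ in the modular, while the commutator term must be shown to be negligible in the iterated limit.

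The main obstacle is exactly this commutator estimate in the variable-exponent nonlocal modular: one needs $\iint_{\mathbb{R}^{2N}}\frac{|u_k(y)|^{p(x,y)}|\psi_R(x)-\psi_R(y)|^{p(x,y)}}{|x-y|^{N+sp(x,y)}}\,dx\,dy\to 0$ as first $k\to\infty$ then $R\to\infty$, and analogously with $v_k$. I would split $\mathbb{R}^{2N}$ into $\{|x-y|\leq R\}$ and $\{|x-y|>R\}$: on the first region the Lipschitz estimate $|\psi_R(x)-\psi_R(y)|\leq C|x-y|/R$ combined with $\overline{p}(1-s)>0$ and the uniform $L^{p(\cdot)}$-bounds on $\{u_k\}$ (coming from the local embedding $\mathit{W}\hookrightarrow L^{p(\cdot)}_{\mathrm{loc}}$ and local strong convergence) makes the integrand integrable and of order $R^{-\overline{p}}$; on the second region $|\psi_R(x)-\psi_R(y)|\leq 2$ is paired with the integrable tail $|x-y|^{-N-sp(x,y)}$ and the $L^{\overline{p}}$-bound on $\{u_k\}$, producing a contribution that vanishes as $R\to\infty$. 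Finally, matching $p(\cdot,\cdot)$-modulars with $\overline{p}$-modulars at infinity through $(\varepsilon_0)$ and translating modular convergence into Luxemburg-norm convergence produces the Sobolev-type relation stated in \eqref{SSin}.
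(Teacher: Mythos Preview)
Your approach is essentially the same as the paper's: both use the cutoff $\Phi_R$ (your $\psi_R$) to split each integral into a compactly supported piece handled by weak-$*$ convergence and an ``at infinity'' piece handled by sandwiching, and both test the defining inequality of $\mathcal{S}_{\alpha\beta}$ on $(\Phi_R u_k,\Phi_R v_k)$, expand the product rule, and kill the commutator. The paper simply packages the commutator vanishing as Lemma~\ref{4.5inf} (proof deferred to \cite{kihosim}) rather than sketching the near/far splitting you outline; your sketch is the right idea but would need more care with the variable exponent $p(x,y)$ inside the kernel. The one point the paper treats more explicitly than you do is the two-sided bound $0<\mathcal{S}_{\alpha\beta}(\nu_\infty/4)^{1/q_\infty}\leq \eta_{\star,R}\leq\eta<\infty$ on the localized norms (their \eqref{inf11}): in the variable-exponent setting the modular-to-Luxemburg-norm conversion is only clean when the norm stays away from $0$ and $\infty$, so this a~priori control is what legitimizes passing from the modular limits $\nu_\infty$, $\mu_\infty$ to the powers $\nu_\infty^{1/q_\infty}$, $\mu_\infty^{1/\overline p}$ via $(\varepsilon_0)$; you should make that step explicit.
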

  
  One remarks that, Theorem \ref{lionsinfity} brings information about the possible loss of mass at infinity, that is, in the same sense as Chabrowsky \cite{chabro} in the scenario of fractional Sobolev spaces with variable exponents. In this sense, this result extends the  \cite{chabro,kiho,temperini} ones. Moreover, the technique used to show this result  is similar to that one in the proof of Theorem \ref{lions}, were  the technical  Lemma \ref{4.5inf} is  crucial. Here, it is important to note that, we will apply this principle to obtain sufficient conditions for the existence of solutions for systems of type \eqref{s1} (see below) in all $\mathbb{R}^{N}$.

 
As an application of the Theorems \ref{lions} and \ref{lionsinfity}, this paper is devoted to the study of existence
of solutions for the following  elliptical system, which involves a new nonlocal integrodifferential operator with variable exponents in all of $\mathbb{R}^{N}$, that is,
 
 \begin{equation}\tag{$\wp_{S}$}
\label{s1}\left\{\begin{array}{rc}
\begin{split}
& \mathcal{L}_{\mathcal{A}K}u  = \lambda \mathrm{H}_{u}(x,u,v)+ \frac{\alpha(x)}{q(x)}|v|^{\beta(x)}|u|^{\alpha(x)-2}u,\\
&\mathcal{L}_{\mathcal{A}K}v  = \lambda \mathrm{H}_{v}(x,u,v)+ \frac{\beta(x)}{q(x)}|u|^{\alpha(x)}|v|^{\beta(x)-2}v,
\end{split}
\end{array}\right.
\end{equation}
where $\lambda$ is a positive real parameter, $p(\cdot)$ satisfies $(\mathit{P})$, $q(\cdot)$ satisfies $(\mathit{Q})$, $\alpha$ and $ \beta \in C_{+}(\mathbb{R}^{N})$  such that $\alpha(x)+\beta(x)=q(x)$ for all $x \in \mathbb{R}^{N}$. Moreover, the operator    $$ \mathcal{L}_{\mathcal{A}K}u:=\mathscr{L}_{\mathcal{A}K}u+|u|^{\overline{p}-2}u$$ 
is defined on suitable fractional Sobolev spaces  where $ \mathscr{L}_{\mathcal{A}K}$ is a  general nonlocal integrodifferential operator defined by
 \begin{equation*}
  \mathscr{L}_{\mathcal{A}K}u(x)= P.V. \int_{\mathbb{R}^{N}}\mathcal{A}(u(x)-u(y))K(x,y)\,dy, \hspace{0.2cm} x \in \mathbb{R}^{N},
  \end{equation*}
 where $P.V.$ is the principal value, and the map $\mathcal{A}:\mathbb{R}\to \mathbb{R}$ is a measurable function  satisfying the next assumptions:
\begin{itemize}
 \item[ $(a_1)$] $\mathcal{A}$ is continuous, odd, and the map $\mathscr{A}:\mathbb{R}\to \mathbb{R}$ given by
 $$\mathscr{A}(\zeta):= \int^{|\zeta|}_{0} \mathcal{A}(\tau) d\tau$$
is  strictly convex;
 \item[$(a_{2})$]  There exist positive constants $c_{\mathcal{A}}$ and $C_{\mathcal{A}}$, such that  for  all $\zeta \in \mathbb{R}$ and  for all $(x,y)\in \mathbb{R}^{2N}$
\begin{equation*}
\mathcal{A}(\zeta)\zeta\geqslant  c_{\mathcal{A}} \vert \zeta \vert^{p(x,y)} \hspace{0.3cm} \mbox{ and } \hspace{0.3cm} \vert \mathcal{A}(\zeta) \vert\leqslant C_{\mathcal{A}} \vert \zeta \vert^{p(x,y)-1};
\end{equation*}
\item[$(a_{3})$]$\mathcal{A}(\zeta)\zeta\leqslant \overline{p}\mathscr{A}(\zeta)$ for all $\zeta \in \mathbb{R}$.
\end{itemize}
  The kernel $K: \mathbb{R}^{2N} \to \mathbb{R}^{+}$ is a measurable function  satisfying the following property:
\begin{itemize}
 \item[ $(\mathcal{K})$] There exist constants $b_0$ and $b_1$, such that $0<b_0\leqslant b_1$,
 $$b_0\leqslant K(x,y)|x-y|^{N+sp(x,y)}\leqslant b_1 \mbox{ for all } (x,y)\in \mathbb{R}^{2N} \mbox{ and } x\neq y.$$
 \end{itemize}
 
We observe that, the assumptions $(a_1)$-$(a_3)$ and $(\mathcal{K})$ were similarly introduced  in \cite{bonaldo, bonaldo2}.
Note that a very special case for $\mathcal{A}$ and $K$ satisfying $(a_1)$-$(a_3)$ and $(\mathcal{K})$ is $\mathcal{A}(\zeta)=|\zeta|^{p(x,y)-2}\zeta$ and  $K(x,y)= |x-y|^{-(N+sp(x,y))}$. Hence the operator $\mathscr{L}_{\mathcal{A}K}$ reduces to the  fractional $p(\cdot)$-Laplacian operator $(-\Delta)^{s}_{p(\cdot)}$, which is defined by
\begin{equation*}
  (-\Delta)^{s}_{p(\cdot)}u(x)= P.V. \int_{\mathbb{R}^{N}}\frac{|u(x)-u(y)|^{p(x,y)-2}(u(x)-u(y))}{|x-y|^{N+sp(x,y)}}\,dy, \,\,\,\,\,  x \in \mathbb{R}^{N}.
  \end{equation*}
In the constant case, $p(\cdot,\cdot)\equiv p$, the operator $\mathcal{L}_{\mathcal{A}K}$ is the fractional $p-$Laplacian, denoted by $(-\Delta)^{s}_{p}$, and for $p=2$, it is the fractional Laplacian denoted by 
$(-\Delta)^{s}$. Moreover, it is well known that $(-\Delta)_{p}^{s}$ reduces to the standard $p$--Laplacian as $s \nearrow 1$ in the limit sense of Bourgain-Brezis-Mironescu, see e.g. \cite{mironescu}.

Moreover, the  operator $\mathcal{L}_{\mathcal{A}K}$ in the problem \eqref{s1} is a nonlocal general integrodifferencial operator, where the map $ \mathcal{A} $ and the kernel $ K $ under consideration are very general ($ K $ includes singular kernels). On the other hand,  problem \eqref{s1} involves the fractional $ p(\cdot)$--Laplacian operator, which has 
complicated nonlinearities, for instance, they are nonhomogeneous.
 
 We assume that the perturbed terms $\mathrm{H}_{u}$ and $\mathrm{H}_{v}$ are partial derivatives of a Carathéodory type function $\mathrm{H}$ satisfying the subcritical growth conditions:
  \begin{itemize}
  \item[$(\mathcal{H})$]For a.e.  $x \in \mathbb{R}^{N}$ it results  $\mathrm{H}(x, \cdot, \cdot)\in C^{1}(\mathbb{R}^{2})$,   $\mathrm{H}(x, \cdot, \cdot)\geqslant 0$ in $\mathbb{R}^{2}$,  $\mathrm{H}_{z}(x, 0, 0)=(0,0)$ with  $\mathrm{H}_{z}=(\mathrm{H}_{u}, \mathrm{H}_{v})$ and $z=(u,v)$. Furthermore, there are $\theta, \sigma, a \mbox{ and } b \in C_{+}(\mathbb{R}^{N})$  such that $\overline{p}<\theta^{-}\leqslant \theta^{+}<\sigma^{-}\leqslant \sigma^{+}< q^{-}$, $a\in L^{\frac{q(\cdot)}{q(\cdot)-\theta(\cdot)}}_{+}(\mathbb{R}^{N})$ and $b\in L^{\frac{q(\cdot)}{q(\cdot)-\sigma(\cdot)}}_{+}(\mathbb{R}^{N})$ (see \eqref{lali}), for which the inequality
  $$|\mathrm{H}_{u}(x,u,v)|+|\mathrm{H}_{v}(x,u,v)|\leqslant  a(x)|u|^{\theta(x)-1}+b(x)|v|^{\sigma(x)-1} $$
   for all  $(u,v) \in \mathbb{R}^{2}$  and a.e. $x\in \mathbb{R}^{N}$. Moreover, for all $ z \in \mathbb{R}^{2}$ hold for  a.e. $ x \in \mathbb{R}^{N}$  
  $$0 \leqslant \theta^{-}\mathrm{H}(x,z)\leqslant\mathrm{H}_z(x,z)\cdot z $$
 and  for all $(u,v) \in \mathbb{R}^{+}\times\mathbb{R}^{+}$,  the function  $\mathrm{H}(\cdot,u,v)>0$ for a.e. $x \in \mathbb{R}^{N}$.
    \end{itemize}

\noindent An example for the nonlinearity $\mathrm{H}$ is given by 
\begin{equation*}
    \mathrm{H}(x,u,v)= a(x) / \theta(x)  |u|^{\theta(x)} + b(x) / \sigma (x) |v|^{\sigma(x)} 
\end{equation*}
 with   $\theta, \sigma, a \mbox{ and } b \in C_{+}(\mathbb{R}^{N})$, such that $\overline{p}<\theta^{-}\leqslant \theta^{+}<\sigma^{-}\leqslant \sigma^{+}< q^{-}$,  $a\in L^{\frac{q(\cdot)}{q(\cdot)-\theta(\cdot)}}_{+}(\mathbb{R}^{N})$, and $b\in L^{\frac{q(\cdot)}{q(\cdot)-\sigma(\cdot)}}_{+}(\mathbb{R}^{N})$.
  
Now, we can state the result of existence of solutions for the system \eqref{s1}.
  
    \begin{theorem}\label{aplica}
  Assume that $(\mathit{P})$, $(\mathit{Q})$, $(a_{1})$-$(a_{3})$, $(\mathcal{K})$, and $(\mathcal{H})$ hold. Then  there exists $\lambda^{\star}>0$, such that the system  \eqref{s1} admits at least one nontrivial solution  $(u_{\lambda},v_{\lambda})$ in $\mathit{W}$ for all $\lambda \geqslant \lambda^{\star}$. Besides  each component of  $(u_{\lambda},v_{\lambda})$ is nontrivial and 
\begin{equation}\label{assim}
\lim_{\lambda\to \infty}\|(u_{\lambda},v_{\lambda})\|=0.
\end{equation}  
    \end{theorem}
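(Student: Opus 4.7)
The plan is to obtain $(u_\lambda,v_\lambda)$ as a Mountain Pass critical point of the energy functional naturally associated with \eqref{s1}, with Theorems \ref{lions} and \ref{lionsinfity} supplying the compactness that fails at the critical level. Define
$$J_\lambda(u,v):=\Psi(u)+\Psi(v)-\lambda\!\int_{\mathbb{R}^N}\!\mathrm{H}(x,u,v)\,dx-\int_{\mathbb{R}^N}\frac{|u|^{\alpha(x)}|v|^{\beta(x)}}{q(x)}\,dx,$$
where $\Psi(u):=\iint_{\mathbb{R}^{2N}}\mathscr{A}(u(x)-u(y))K(x,y)\,dx\,dy+\tfrac{1}{\overline{p}}\int_{\mathbb{R}^N}|u|^{\overline{p}}\,dx$. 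Using $(a_2)$, $(\mathcal{K})$, $(\mathcal{H})$ and the critical embedding $\mathit{W}\hookrightarrow L^{q(\cdot)}(\mathbb{R}^N)^{2}$ (Theorem \ref{critico}), I would verify $J_\lambda\in C^1(\mathit{W},\mathbb{R})$ with critical points being weak solutions of \eqref{s1}. For the Mountain Pass geometry, on a small sphere $\|(u,v)\|=\rho$ the modular lower bound $\Psi(u)\gtrsim \min\{\|u\|^{\overline{p}},\|u\|^{p^{-}}\}$, the estimate $\int|u|^{\alpha(x)}|v|^{\beta(x)}\,dx\lesssim \|(u,v)\|^{q^{\pm}}$, and the $(\mathcal{H})$-bound on the perturbation (of order $\lambda(\|(u,v)\|^{\theta^{\pm}}+\|(u,v)\|^{\sigma^{\pm}})$) combine---thanks to $\overline{p}<\theta^{-}\le \sigma^{+}<q^{-}$---to yield $J_\lambda\ge\eta_\lambda>0$ for $\rho$ small. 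For a fixed pair $(\phi_0,\psi_0)\in C_c^\infty\cap\mathit{W}$ positive on a set of positive measure inside $\mathscr{C}$, the critical coupling grows like $t^{q^{-}}$ while $\Psi$ grows only like $t^{\overline{p}}$, so $J_\lambda(t\phi_0,t\psi_0)\to -\infty$ as $t\to\infty$.

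Next, I would establish a local Palais--Smale condition below a threshold $c^{\star}>0$. Any $(PS)_{c}$--sequence $\{(u_k,v_k)\}$ is bounded in $\mathit{W}$ by computing $J_\lambda(u_k,v_k)-\tfrac{1}{\theta^{-}}\langle J_\lambda'(u_k,v_k),(u_k,v_k)\rangle$ and invoking $(a_3)$ together with the Ambrosetti--Rabinowitz-type inequality $\theta^{-}\mathrm{H}\le \mathrm{H}_z\!\cdot\! z$ from $(\mathcal{H})$. Along a weak limit $(u_k,v_k)\rightharpoonup(u,v)$, Theorem \ref{lions} supplies atoms $\{\mu_i,\nu_i\}_{i\in I}$ on $\mathscr{C}$ with $\mathcal{S}_{\alpha\beta}\nu_{i}^{1/\overline{p}^{\star}_s}\le \mu_{i}^{1/\overline{p}}$, and Theorem \ref{lionsinfity} provides $\mu_\infty,\nu_\infty$ with $\mathcal{S}_{\alpha\beta}\mu_\infty^{1/q_\infty}\le\nu_\infty^{1/\overline{p}}$. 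Testing $J_\lambda'(u_k,v_k)$ against $(u_k\varphi_{\varepsilon,i},v_k\varphi_{\varepsilon,i})$ for cutoffs $\varphi_{\varepsilon,i}$ localized near each $x_i$ (and against an analogous cutoff outside large balls), and passing to the limit via Lemmas \ref{4.4a}, \ref{lieb}, and \ref{4.5inf}, yields the reverse comparisons $\mu_i\le\nu_i$ and $\mu_\infty\le\nu_\infty$; combined with the CC concentration bounds this forces each nonzero atom to exceed a universal constant determined by $\mathcal{S}_{\alpha\beta}$ and the exponents. Substituting these lower bounds into the boundedness identity produces the threshold $c^{\star}>0$ below which no concentration can occur, so $(u_k,v_k)\to(u,v)$ strongly and $(u,v)$ is a critical point of $J_\lambda$. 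This is the main technical obstacle, because the coupled critical term $|u|^{\alpha(x)}|v|^{\beta(x)}$ together with variable exponents requires combining the vectorial CC statements with modular (rather than norm) bookkeeping at every step.

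Finally, I would optimize $J_\lambda(t\phi_0,t\psi_0)$ in $t\geqslant 0$; the unique maximizer $t_\lambda$ satisfies $t_\lambda\to 0^{+}$ as $\lambda\to\infty$ because the term $\lambda\!\int\mathrm{H}(x,t\phi_0,t\psi_0)\,dx$ pushes the derivative's zero to smaller and smaller $t$. Hence $c_\lambda\leqslant \max_{t\geqslant 0}J_\lambda(t\phi_0,t\psi_0)\to 0$; choosing $\lambda^{\star}$ so that $c_\lambda<c^{\star}$ for every $\lambda\geqslant \lambda^{\star}$, the Mountain Pass Theorem of Ambrosetti--Rabinowitz delivers a critical point $(u_\lambda,v_\lambda)$ with $J_\lambda(u_\lambda,v_\lambda)=c_\lambda$. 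The decay \eqref{assim} follows from
$$\bigl(\tfrac{1}{\overline{p}}-\tfrac{1}{\theta^{-}}\bigr)\min\{\|(u_\lambda,v_\lambda)\|^{p^{-}},\|(u_\lambda,v_\lambda)\|^{\overline{p}}\}\leqslant c_\lambda \longrightarrow 0.$$
For the nontriviality of each component, assume $u_\lambda\equiv 0$: the first equation forces $\mathrm{H}_u(x,0,v_\lambda)\equiv 0$, reducing the system to a single equation in $v_\lambda$ whose associated MP level strictly exceeds $c_\lambda$ (since one can lower the two-component path using $\phi_0\not\equiv 0$ and the positivity of $\mathrm{H}(\cdot,\phi_0,\psi_0)$), contradicting $J_\lambda(u_\lambda,v_\lambda)=c_\lambda$; the analogous argument rules out $v_\lambda\equiv 0$.
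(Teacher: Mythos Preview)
Your overall strategy matches the paper's almost exactly: the functional, the Mountain Pass geometry (Lemmas \ref{lemaA}--\ref{lemaB}), the boundedness of $(PS)_{c_\lambda}$ sequences via $J_\lambda-\tfrac{1}{\theta^-}\langle J_\lambda',\cdot\rangle$, the use of Theorems \ref{lions} and \ref{lionsinfity} with localized test pairs $(u_k\Psi_{\rho,i},v_k\Psi_{\rho,i})$ and $(u_k\Phi_R,v_k\Phi_R)$ to rule out concentration below a threshold, the asymptotic $c_\lambda\to 0$ (Lemma \ref{lemaC}), and the decay estimate \eqref{assim} are all as in the paper. One technical point you gloss over is that, after establishing $\int|u_k|^{\alpha}|v_k|^{\beta}\,dx\to\int|u_\lambda|^{\alpha}|v_\lambda|^{\beta}\,dx$, the paper closes the $(PS)$ argument by invoking the $(S_+)$ property of $\Phi'$ (Lemma \ref{s+}); you should mention this rather than asserting strong convergence directly.

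The one genuine gap is your argument that each component is nontrivial. Your claim that, if $u_\lambda\equiv 0$, then $v_\lambda$ is a critical point of a single-variable functional whose Mountain Pass level \emph{strictly exceeds} $c_\lambda$, and that this contradicts $J_\lambda(0,v_\lambda)=c_\lambda$, is not justified: a critical point need not lie at or above the Mountain Pass level of its own functional, and the heuristic ``one can lower the two-component path'' does not yield the strict inequality you need without substantial extra work (e.g.\ a Nehari-manifold comparison, which is not set up here). The paper proceeds differently: assuming $v_\lambda\equiv 0$ and $u_\lambda\not\equiv 0$, it tests the weak formulation with $(\varphi,\psi)=(u_\lambda,0)$, so the critical coupling vanishes and one is left with
\[
c_{\mathcal{A}}b_0\min\{\|u_\lambda\|^{\overline{p}},\|u_\lambda\|^{p^-}\}\;\le\;\lambda\int_{\mathbb{R}^N}\mathrm{H}_u(x,u_\lambda,0)u_\lambda\,dx\;\le\;\lambda C\max\{\|u_\lambda\|^{\theta^-},\|u_\lambda\|^{\theta^+}\}.
\]
Since $\overline{p}<\theta^-$ and $\|(u_\lambda,v_\lambda)\|\to 0$ by \eqref{assim}, this forces $u_\lambda\equiv 0$ (after possibly enlarging $\lambda^\star$), a contradiction. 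You should replace your level-comparison sketch with this direct estimate.
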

Theorem \ref{aplica} completes in several directions the results of \cite{pucci,gou,temperini} not only for the presence of the integrodifferential operator but also for the conditions of moderate growth in the main elliptical operator. The presence of the nonlocal integrable differentiable operator $\mathcal{L}_{\mathcal{A}K}$ and the lack of compactness in the vector scenario make our analysis quite delicate. However, we overcome the difficulties presented applying the Theorems \ref{lions} and \ref{lionsinfity} and obtain the existence and assymptotic behaviour of a nontrivial solution for the system \eqref{s1} through the Mountain Pass Theorem. Further, it is important to note that since the solution $(u_{\lambda},v_{\lambda})$ built in this theorem, has both nontrivial components, it is evident that it solves a real system, which does not reduce to an equation.
 
We also consider a simplified version of the system \eqref{s1}, i.e. the following  system in $\mathbb{R}^{N}$
  
   \begin{equation}\tag{$\tilde{\wp}_{S}$}
\label{ss2}\left\{\begin{array}{rc}
\begin{split}
& \mathcal{L}_{\mathcal{A}K}u  = \lambda \mathrm{H}_{u}(u,v)+ \frac{\alpha(x)}{q(x)}|v|^{\beta(x)}|u|^{\alpha(x)-2}u,\\
&\mathcal{L}_{\mathcal{A}K}v  = \lambda \mathrm{H}_{v}(u,v)+ \frac{\beta(x)}{q(x)}|u|^{\alpha(x)}|v|^{\beta(x)-2}v.
\end{split}
\end{array}\right.
\end{equation}
The system \eqref{ss2} satisfies the conditions $(a_1)$-$(a_3)$, $\mathcal{K}$ and $\mathrm{H}$ independent of $x$, that is 
\begin{itemize}
  \item[$(\mathcal{H}')$]
   $\mathrm{H}( \cdot, \cdot)\in C^{1}(\mathbb{R}^{2})$,     $\mathrm{H}( \cdot, \cdot)\geqslant 0$ in $\mathbb{R}^{2}$,  $\mathrm{H}_{z}( 0, 0)=(0,0)$ with  $\mathrm{H}_{z}=(\mathrm{H}_{u}, \mathrm{H}_{v})$ and $z=(u,v)$. Then,  there are $\theta, \sigma \in C_{+}(\mathbb{R}^{N})$  such that $\overline{p}<\theta^{-}\leqslant \theta^{+}<\sigma^{-}\leqslant \sigma^{+}< q^{-}$ and positive constant  $a$ and $b$, such that
   $$|\mathrm{H}_{u}(u,v)|+|\mathrm{H}_{v}(u,v)|\leqslant  a|u|^{\theta(x)-1}+b|v|^{\sigma(x)-1} \mbox{ for all }  (u,v) \in \mathbb{R}^{2} \mbox{ and a.e. }x\in \mathbb{R}^{N}.$$
Moreover,  for all $ z \in \mathbb{R}^{2}$
$$0 \leqslant \theta^{-}\mathrm{H}(z)\leqslant\mathrm{H}_z(z)\cdot z $$ 
and for all $(u,v) \in \mathbb{R}^{+}\times\mathbb{R}^{+}$  the function  $\mathrm{H}(u,v)>0$.
    \end{itemize} 
In addition, we have the following
 \begin{theorem}\label{aplica2}
  Suppose  that $(\mathit{P})$, $(\mathit{Q})$, $(a_{1})$-$(a_{3})$, $(\mathcal{K})$, and $(\mathcal{H}')$ hold. Then  the system  \eqref{ss2} admits at least one nontrivial solution  $(u_{\lambda},v_{\lambda})$ in $\mathit{W}$ for all $\lambda > 0$. Besides  each component of  $(u_{\lambda},v_{\lambda})$ is nontrivial and  \eqref{assim}  holds.
  \end{theorem}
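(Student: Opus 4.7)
The plan is to establish existence through the Mountain Pass Theorem applied to the energy functional associated with \eqref{ss2}, and to use Theorems \ref{lions} and \ref{lionsinfity} to recover compactness. On $\mathit{W}$ I would define
\begin{equation*}
J_\lambda(u,v) := \Phi(u) + \Phi(v) + \tfrac{1}{\overline{p}}\int_{\mathbb{R}^N}\bigl(|u|^{\overline{p}}+|v|^{\overline{p}}\bigr)dx - \lambda\int_{\mathbb{R}^N}\mathrm{H}(u,v)\,dx - \int_{\mathbb{R}^N}\tfrac{1}{q(x)}|u|^{\alpha(x)}|v|^{\beta(x)}dx,
\end{equation*}
with $\Phi(w):=\iint_{\mathbb{R}^{2N}}\mathscr{A}(w(x)-w(y))K(x,y)\,dx\,dy$. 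Hypotheses $(a_1)$--$(a_3)$, $(\mathcal{K})$ and $(\mathcal{H}')$ give $J_\lambda\in C^1(\mathit{W},\mathbb{R})$, while the modular estimates combined with the critical embedding $\mathit{W}\hookrightarrow L^{q(\cdot)}(\mathbb{R}^N)$ built into $\mathcal{S}_{\alpha\beta}$ yield mountain pass geometry: $J_\lambda\geqslant\eta>0$ on a small sphere $\|(u,v)\|=\rho$, and $J_\lambda(tu_0,tv_0)\to-\infty$ as $t\to+\infty$ for a suitable $(u_0,v_0)$ with $\mathrm{H}(u_0,v_0)>0$ on a set of positive measure, using the superquadraticity $\theta^-\mathrm{H}(z)\leqslant\mathrm{H}_z(z)\cdot z$ with $\theta^->\overline{p}$.

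A bounded $(\mathrm{PS})_{c_\lambda}$ sequence $\{(u_k,v_k)\}$ is then produced; up to a subsequence it satisfies the hypotheses of Theorems \ref{lions} and \ref{lionsinfity}, delivering at most countable concentrations $\{(x_i,\mu_i,\nu_i)\}_{i\in I}$ with $x_i\in\mathscr{C}$ and a possible escape pair $(\mu_\infty,\nu_\infty)$. Combining \eqref{cc3} and \eqref{SSin}, every nontrivial atom and any mass at infinity carries an energy contribution of at least a threshold $c^\star:=\tfrac{s}{N}\mathcal{S}_{\alpha\beta}^{N/(s\overline{p})}$, so it suffices to prove $c_\lambda<c^\star$ to force $I=\emptyset$ and $\mu_\infty=\nu_\infty=0$. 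Here the autonomous structure is \emph{decisive} and is precisely where Theorem \ref{aplica2} improves Theorem \ref{aplica}: because $\mathrm{H}$ is autonomous and strictly positive on $\mathbb{R}_+\times\mathbb{R}_+$ with coefficients $a,b$ independent of $x$, a Brezis--Nirenberg-type estimate using a truncated-rescaled almost-extremal family $(u_\varepsilon,v_\varepsilon)$ for $\mathcal{S}_{\alpha\beta}$ centred at any point $x_0\in\mathscr{C}$ produces an upper bound of the form $c^\star-\lambda C_1\varepsilon^{\gamma_1}+C_2\varepsilon^{\gamma_2}$ with $\gamma_1<\gamma_2$, so a small choice of $\varepsilon$ gives $c_\lambda<c^\star$ for \emph{every} $\lambda>0$; in the non-autonomous setting of Theorem \ref{aplica} the possible vanishing of $a(x),b(x)$ at $x_0$ only permitted this conclusion for $\lambda\geqslant\lambda^\star$.

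Once compactness is restored, a Brezis--Lieb-type identity together with the $(\mathrm{S}_+)$-type property of $\mathcal{L}_{\mathcal{A}K}$ that follows from the strict convexity of $\mathscr{A}$ in $(a_1)$ upgrades the weak convergence to strong, yielding a nontrivial critical point $(u_\lambda,v_\lambda)$ of $J_\lambda$. Nontriviality of each component is then obtained by a standard comparison argument: the critical coupling term $|u|^{\alpha(x)}|v|^{\beta(x)}$ vanishes identically on the subspaces $\{u\equiv 0\}$ and $\{v\equiv 0\}$, so one may choose a test path with both components nonzero on which $\max J_\lambda$ is strictly smaller than on any single-component path, showing that $c_\lambda$ cannot be attained on $\{u\equiv 0\}\cup\{v\equiv 0\}$. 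Finally, the asymptotic decay \eqref{assim} follows from $J_\lambda(u_\lambda,v_\lambda)=c_\lambda$, $\langle J_\lambda'(u_\lambda,v_\lambda),(u_\lambda,v_\lambda)\rangle=0$ and the superquadraticity of $\mathrm{H}$, which together yield $\|(u_\lambda,v_\lambda)\|^{p^-}\leqslant C\,c_\lambda$; fixing a path with $\mathrm{H}>0$ along it forces $c_\lambda\to 0$ as $\lambda\to+\infty$, and the conclusion follows. The main obstacle I foresee is the delicate Brezis--Nirenberg-type estimate in the nonlocal, variable-exponent framework, which demands sharp control on the concentration profiles of optimizers of $\mathcal{S}_{\alpha\beta}$ and on the log-H\"older behaviour of $p$ and $q$ near $x_0$.
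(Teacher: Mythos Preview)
Your approach diverges from the paper's precisely at the step you flag as the main obstacle, and that step is a genuine gap rather than a technicality. You propose to show $c_\lambda<c^\star$ for every $\lambda>0$ via a Brezis--Nirenberg estimate based on almost-extremal profiles for $\mathcal{S}_{\alpha\beta}$. In the variable-exponent fractional setting of this paper no such extremal profiles are available, and even in the constant-exponent fractional $p$-Laplacian case the corresponding estimate typically requires dimensional restrictions and does not automatically hold for all $\lambda>0$. Your heuristic that autonomy of $\mathrm{H}$ is what makes the Brezis--Nirenberg computation succeed (because ``$a(x),b(x)$ no longer vanish at $x_0$'') also misreads the role of $\lambda^\star$ in Theorem~\ref{aplica}: there the threshold arises simply from Lemma~\ref{lemaC} ($c_\lambda\to 0$), not from any pointwise behaviour of the weights, so autonomy does not by itself sharpen that mechanism.

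The paper exploits autonomy in a completely different way: it never proves the $(PS)_{c_\lambda}$ condition for \eqref{ss2}. Instead, Lemma~\ref{weak} shows directly that the weak limit of any bounded $(PS)_{\tilde c_\lambda}$ sequence is a critical point, and the problem reduces to excluding the possibility that this weak limit is $(0,0)$. For this the paper proves a Lions-type vanishing lemma (Lemma~\ref{lemalions}) and a dichotomy (Proposition~\ref{bounded}): either $(u_k,v_k)\to(0,0)$ strongly (impossible since $\tilde c_\lambda>0$), or mass persists in balls $B_R(y_k)$ with $|y_k|\to\infty$. Because $\mathrm{H}$ is autonomous, the functional is translation-invariant, so the shifted sequence $(u_k(\cdot+y_k),v_k(\cdot+y_k))$ is again $(PS)_{\tilde c_\lambda}$ and now has a nontrivial weak limit. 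This bypasses any energy-threshold comparison entirely and is why the result holds for all $\lambda>0$. Your scheme, if the Brezis--Nirenberg step could be carried out, would give strong convergence of the original sequence, which is more than the paper obtains; but as written that step is unsupported. The asymptotic statement \eqref{assim} and the nontriviality of each component are handled in the paper exactly as in Theorem~\ref{aplica}, not via a path comparison.
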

 
The strategy to prove 
Theorem \ref{aplica2} does not follow from 
the proof of the Palais-Smale condition ($(PS)$, for brevity). Therefore, we can find solutions for the system \eqref{ss2} with, possibly, different values of $\lambda$. Moreover, since the critical value $\lambda^{\star}$, in Theorem \ref{aplica}, derives from the  Proposition \ref{lemaD} with the application of Theorem \ref{lions} and \ref{lionsinfity}, we are allowed to find level sets $c_{\lambda}$ for which the condition $(PS)_{c_{\lambda}}$ is valid.

However, avoid proving the strong convergence of the $(PS)$ sequences, the fact that the solution constructed in Theorem \ref{aplica} is non-trivial does not follow automatically. So to overcome this difficulty, it was essential to prove a Lions type Lemma for fractional Sobolev spaces with variable exponents. Thus it was proved Lemma \ref{lemalions}, which is new and it seems an important result in the literature. Indeed, it shows us the conditions that guarantee strong convergence on Lebesgue spaces with variable exponents of a bounded sequence on fractional Sobolev spaces with variable exponents.
Accordingly, due to Lemma \ref{lemalions} and the Proposition \ref{bounded}, the proof of Theorem \ref{aplica2} is obtained in an elegant and interesting way, since Lemma \ref{lemalions} is another strategy to overcome the lack of compactness. Therefore, Theorem \ref{aplica2} continues to improve in several directions the previous results of \cite{pucci,gou,temperini}.
 
\medskip 
Below, we summarize the main contributions of this paper:
 \begin{itemize}
\item[$\bullet$] We deal with elliptic systems involving a new class of general nonlocal integrodifferential operators with exponent variables and critical growth conditions in $\mathbb{R}^{N}$.
\item[$\bullet$] A version of the Lions and Chabrowski Concentration-compactness principle is proved in the context of fractional Sobolev spaces with variable exponents, especially for nonlinear systems.
\item[$\bullet$]  An important result is obtained, namely Lions type Lemma, for fractional Sobolev spaces with variable exponents in the space $\mathit{W}$.
\item[$\bullet$] We achieved an assymptotic behaviour of the solutions  for the systems \eqref{s1} and \eqref{ss2}, that is, $\displaystyle\lim_{\lambda\to \infty}\|(u_{\lambda},v_{\lambda})\|=0.$
\item[$\bullet$] As far as the authors knowledge, this is the first time that a critical problem for systems has been studied in the context of variable exponents for fractional Sobolev spaces.
        \end{itemize}
        

  \section{\textbf{Background and Preliminaries}}\label{sec2}

  In this section, we recall  some results  and  definitions involving the Lebesgue spaces with variable exponents. We refer to \cite{bonaldo, bonaldo2, dieming,kiho, kihosim, kok} for the fundamental properties of these spaces.
\subsection*{  Basic notation}
Throughout this manuscript we will sometimes use the following notations for simplicity. $\Omega \times \Omega := \Omega^{2}$, $\mathbb{R}^{N}\times \mathbb{R}^{N}:= \mathbb{R}^{2N}$, $\mathbb{R}^{+}=(0,+\infty),$ and
$$B_{R}(x_{0})=\bigg\{x\in \mathbb{R}^{N}:|x-x_{0}|<R\bigg\}$$ denotes the ball of $\mathbb{R}^{N}$ centered at $x\in \mathbb{R}^{N}$ with radius $R>0$.

\subsection*{ Variable exponent function spaces}
 For   $h \in C_{+}(\mathbb{R}^{N})$, see \eqref{spacec} we denote  $$ h^{-} := \displaystyle{\inf_{x\in \mathbb{R}^{N}}h(x)}  \mbox{  and } h^{+} := \displaystyle{\sup_{x\in \mathbb{R}^{N}}h(x)}.  $$
 
  For $h \in  C_{+}(\mathbb{R}^{N})$ and a $\sigma$-finite, complete measure $\mu$ in $\mathbb{R}^{N}$,  the variable exponent Lebesgue space $L^{h(\cdot)}_{\mu}(\mathbb{R}^{N})$  is defined by
 \begin{equation*}\label{lp}
L^{h(\cdot)}_{\mu}(\mathbb{R}^{N}):=\Bigg\{u:\mathbb{R}^{N} \to \mathbb{R} \mbox{ is } \mu-\mbox{measurable},  \int_{\mathbb{R}^{N}}|u(x)|^{h(x)}\, d\mu <+\infty \Bigg\}.
\end{equation*}
This space is endowed with the so-called Luxemburg norm
 $$\|u\|_{L^{h(\cdot)}_{\mu}(\mathbb{R}^{N})}:= \inf\Bigg\{\zeta>0:  \int_{\mathbb{R}^{N}}\Big|\frac{u(x)}{\zeta}\Big|^{h(x)}\, d\mu\leqslant 1 \Bigg\}.$$
  When $\mu$ is the Lebesgue measure, we write $dx$, $L^{h(\cdot)}(\mathbb{R}^{N})$ and  $\|u\|_{L^{h(\cdot)}(\mathbb{R}^{N})}$ instead of $\,d\mu$, $L^{h(\cdot)}_{\mu}(\mathbb{R}^{N})$ and $\|u\|_{L^{h(\cdot)}_{\mu}(\mathbb{R}^{N})}$ respectively. Set 
  \begin{equation}\label{lali}
      L^{h(\cdot)}_{+}(\mathbb{R}^{N}):=\{ u \in L^{h(\cdot)}_{\mu}(\mathbb{R}^{N}): u>0 \mbox{ a.e. in }  \mathbb{R}^{N} \} 
  \end{equation}
   and for a Lebesgue measurable and positive a.e. function $\kappa:\mathbb{R}^{N}\to \mathbb{R}$, set $L^{h(\cdot)}(\kappa,\mathbb{R}^{N})
:= L^{h(\cdot)}_{\mu}(\mathbb{R}^{N})$ with $d\mu=\kappa(x)dx$. 

 \begin{proposition}\textbf{(\cite{dieming, kok}).}
\label{masmenos}
Let $u\in L^{h(\cdot)}_{\mu}(\mathbb{R}^{N}) $, $\{u_{k}\}_{k\in \mathbb{N}}\subset L^{h(\cdot)}_{\mu}(\mathbb{R}^{N}),$ and define $h(\cdot)$-modular function of the  space $ L^{h(\cdot)}_{\mu}(\mathbb{R}^{N})$ as 
$$\rho_{h(\cdot)}(u)=\int_{\mathbb{R}^{N}}|u(x)|^{h(x)}\,d\mu.$$
Then we have the following relation between the modular function and the norm:
\begin{itemize}
  \item[$(a)$]For $u \in L^{h(\cdot)}_{\mu}(\mathbb{R}^{N})\setminus \{0 \}$, $\zeta = \|u\|_{L^{h(\cdot)}_{\mu}(\mathbb{R}^{N})} $ if and only if $ \rho_{h(\cdot)}\big(\frac{u}{\zeta}\big)=1 $;
    \item[$(b)$]$\|u\|_{L^{h(\cdot)}_{\mu}(\mathbb{R}^{N})}\geqslant 1\Rightarrow \|u\|_{L^{h(\cdot)}_{\mu}(\mathbb{R}^{N}) }^{h^{-}}\leqslant \rho_{h(\cdot)}(u)\leqslant \|u\|_{L^{h(\cdot)}_{\mu}(\mathbb{R}^{N}) }^{h^{+}};$
    \item[$(c)$] $\|u\|_{L^{h(\cdot)}_{\mu}(\mathbb{R}^{N})}\leqslant 1\Rightarrow \|u\|_{L^{h(\cdot)}_{\mu}(\mathbb{R}^{N})}^{h^{+}}\leqslant \rho_{h(\cdot)}(u)\leqslant \|u\|_{L^{h(\cdot)}_{\mu}(\mathbb{R}^{N})}^{h^{-}};$ 
 \item[$(d)$] $\lim\limits_{k\to+\infty} \|u_{k}\|_{L^{h(\cdot)}_{\mu}(\mathbb{R}^{N})}=0 \Leftrightarrow \lim\limits_{k\to+\infty} \rho_{h(\cdot)}(u_{k})=0;$
 \item[$(e)$] $\lim\limits_{k\to+\infty} \|u_{k}\|_{L^{h(\cdot)}_{\mu}(\mathbb{R}^{N})}=+\infty \Leftrightarrow \lim\limits_{k\to+\infty} \rho_{h(\cdot)}( u_{k})=+\infty.$
\end{itemize}
\end{proposition}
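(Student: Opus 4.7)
The plan is to first secure part (a), from which the modular--norm comparisons (b) and (c) fall out as direct scaling estimates, and then derive (d) and (e) from (b)--(c) by an elementary contradiction argument. The underlying monotonicity lemma that drives everything is that for fixed $u \ne 0$, the function $\Phi(\zeta) := \rho_{h(\cdot)}(u/\zeta) = \int_{\mathbb{R}^N} |u(x)|^{h(x)} \zeta^{-h(x)}\, d\mu(x)$ is continuous, strictly decreasing on $(0,\infty)$, tends to $0$ as $\zeta \to \infty$, and tends to $+\infty$ as $\zeta \to 0^+$.

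To prove this, I would argue as follows. Continuity on $(0,\infty)$ follows from dominated convergence: on any interval $[\zeta_0, \infty)$ the integrand is bounded above by $|u(x)|^{h(x)}\zeta_0^{-h^-}$, which is integrable since $u \in L^{h(\cdot)}_\mu(\mathbb{R}^N)$. Strict monotonicity is immediate from the pointwise strict monotonicity of $\zeta \mapsto \zeta^{-h(x)}$ on $\{u \ne 0\}$, a set of positive measure because $u \not\equiv 0$. The limit $\Phi(\zeta)\to 0$ as $\zeta\to\infty$ is again dominated convergence (integrand dominated by $|u|^{h(\cdot)}$ and going pointwise to $0$ since $h^- > 1$, actually only $h^->0$ is needed). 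Finally, $\Phi(\zeta)\to\infty$ as $\zeta\to 0^+$ is a monotone convergence statement: on $\{u \ne 0\}$ we have $|u(x)/\zeta|^{h(x)} \nearrow +\infty$ because $h^+<\infty$ ensures $\zeta^{-h(x)} \to +\infty$ with $h(x) > 0$. These four properties combined yield a unique $\zeta^\star>0$ with $\Phi(\zeta^\star)=1$, and by the Luxemburg definition $\zeta^\star = \|u\|_{L^{h(\cdot)}_\mu(\mathbb{R}^N)}$, which is (a).

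For (b) and (c), set $\zeta = \|u\|_{L^{h(\cdot)}_\mu(\mathbb{R}^N)}$ and write
\begin{equation*}
\rho_{h(\cdot)}(u) = \int_{\mathbb{R}^N} \zeta^{h(x)} \left|\frac{u(x)}{\zeta}\right|^{h(x)} d\mu(x).
\end{equation*}
If $\zeta \geqslant 1$, the pointwise bound $\zeta^{h^-} \leqslant \zeta^{h(x)} \leqslant \zeta^{h^+}$ combined with $\rho_{h(\cdot)}(u/\zeta)=1$ from (a) gives $\zeta^{h^-} \leqslant \rho_{h(\cdot)}(u) \leqslant \zeta^{h^+}$, which is (b). If $\zeta \leqslant 1$, the inequalities $\zeta^{h^+} \leqslant \zeta^{h(x)} \leqslant \zeta^{h^-}$ reverse and deliver (c).

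For (d), if $\|u_k\| \to 0$ then eventually $\|u_k\| \leqslant 1$, and (c) forces $\rho_{h(\cdot)}(u_k) \leqslant \|u_k\|^{h^-} \to 0$. Conversely, suppose $\rho_{h(\cdot)}(u_k) \to 0$ but $\|u_k\| \not\to 0$; pass to a subsequence with $\|u_{k_j}\| \geqslant \varepsilon$ for some $\varepsilon>0$. Applying (b) when $\|u_{k_j}\|\geqslant 1$ and (c) otherwise yields $\rho_{h(\cdot)}(u_{k_j}) \geqslant \min\{\varepsilon^{h^-},\varepsilon^{h^+}\}>0$, a contradiction. Part (e) is entirely symmetric: $\|u_k\|\to\infty$ eventually makes $\|u_k\|\geqslant 1$, so (b) gives $\rho_{h(\cdot)}(u_k)\geqslant \|u_k\|^{h^-}\to\infty$; conversely if $\rho_{h(\cdot)}(u_k)\to\infty$ while $\|u_k\|$ stays bounded by some $M$, then (b)--(c) bound $\rho_{h(\cdot)}(u_k)$ by $\max\{M^{h^-},M^{h^+}\}$, again a contradiction. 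The main obstacle is really just the boundary analysis in step (a); once that is handled cleanly, the remaining parts are algebraic consequences of $h^-,h^+$ being finite and bigger than~$1$.
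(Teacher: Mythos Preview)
Your argument is correct and follows the standard route: establish the monotonicity/continuity of $\zeta\mapsto\rho_{h(\cdot)}(u/\zeta)$ to pin down (a), derive (b)--(c) by scaling, and extract (d)--(e) by contradiction. The paper itself does not prove this proposition at all --- it simply cites the references \cite{dieming, kok} where exactly this approach is carried out --- so there is nothing to compare against beyond noting that you have supplied the classical proof the paper omits.
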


  \begin{proposition}\textbf{(\cite{kihosim}).}
\label{mista}
Let  $h$ and $g$  be measurable functions such that $h(x) \in L^{\infty}$ and $1< h(x)g(x) \leqslant\infty$, for a.e. $x \in \mathbb{R}^{N}$. Let $u\in L^{h(\cdot)}(\mathbb{R}^{N}) $,  $u\neq 0.$ Then 
\begin{itemize}
    \item[$(a)$] $\|u\|_{L^{h(\cdot)g(\cdot)}(\mathbb{R}^{N})}\leqslant 1 \Rightarrow \|u\|^{h^{-}}_{L^{h(\cdot)g(\cdot)}(\mathbb{R}^{N})}\leqslant  \|| u|^{h(\cdot)}\|_{L^{g(\cdot)}(\mathbb{R}^{N})}\leqslant \|u\|^{h^{+}}_{L^{h(\cdot)g(\cdot)}(\mathbb{R}^{N})};$
 \item[$(b)$] $\|u\|_{L^{h(\cdot)g(\cdot)}(\mathbb{R}^{N})}\geqslant 1 \Rightarrow \|u\|^{h^{+}}_{L^{h(\cdot)g(\cdot)}(\mathbb{R}^{N})}\leqslant  \|| u|^{h(\cdot)}\|_{L^{g(\cdot)}(\mathbb{R}^{N})}\leqslant \|u\|^{h^{-}}_{L^{h(\cdot)g(\cdot)}(\mathbb{R}^{N})}.$
 \end{itemize}
\end{proposition}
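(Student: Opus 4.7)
\noindent\textbf{Proof plan for Proposition \ref{mista}.} The strategy is to bridge the two Luxemburg norms through the modular functional $\rho_{h(\cdot)g(\cdot)}$, exploiting the pointwise bounds $h^{-}\le h(x)\le h^{+}$ almost everywhere. First I would set $\lambda:=\|u\|_{L^{h(\cdot)g(\cdot)}(\mathbb{R}^{N})}$; since $u\neq 0$, one has $\lambda>0$, and Proposition \ref{masmenos}(a) yields the normalization identity
$$\int_{\mathbb{R}^{N}} \frac{|u(x)|^{h(x)g(x)}}{\lambda^{h(x)g(x)}}\,dx = 1.$$
The target quantity $\mu_{\star}:=\||u|^{h(\cdot)}\|_{L^{g(\cdot)}(\mathbb{R}^{N})}$ is, by definition, characterised by $\int_{\mathbb{R}^{N}}\bigl(|u(x)|^{h(x)}/\mu_{\star}\bigr)^{g(x)}\,dx=1$, and rewriting this as $\int_{\mathbb{R}^{N}} |u(x)|^{h(x)g(x)}/\mu_{\star}^{g(x)}\,dx=1$ exhibits precisely the comparison that needs to be made against the normalization above.

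The main step is to probe the defining integral of $\mu_{\star}$ by the test values $\mu=\lambda^{h^{-}}$ and $\mu=\lambda^{h^{+}}$. For each such choice, the relevant pointwise comparison between the integrand $|u|^{h(x)g(x)}/\mu^{g(x)}$ and the normalized integrand $|u|^{h(x)g(x)}/\lambda^{h(x)g(x)}$ reduces to comparing $\lambda^{h^{\pm}}$ with $\lambda^{h(x)}$, and the monotonicity of $t\mapsto\lambda^{t}$ makes this comparison go one way when $\lambda\le 1$ and the opposite way when $\lambda\ge 1$. This dichotomy is exactly what splits the statement into items (a) and (b). Integrating the resulting pointwise bounds and invoking the strict monotonicity of $\mu\mapsto\rho_{g(\cdot)}(|u|^{h(\cdot)}/\mu)$ transfers the comparison from the modular level to the norm level, which, combined with the characterization of $\mu_{\star}$ as the unique positive solution of $\rho_{g(\cdot)}(|u|^{h(\cdot)}/\mu_\star)=1$, delivers the two-sided bound in terms of $\lambda^{h^{-}}$ and $\lambda^{h^{+}}$.

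The step I expect to require the most care is not the inequality itself but the bookkeeping between pointwise and integrated statements: the identity $\rho_{h(\cdot)g(\cdot)}(u/\lambda)=1$ is global, whereas the comparisons $\lambda^{h^{\pm}g(x)}\lessgtr\lambda^{h(x)g(x)}$ hold pointwise, and one must ensure the implications for $\mu_{\star}$ proceed via the monotone dependence of the modular on $\mu$ rather than through any pointwise infimum argument. The hypothesis $h\in L^{\infty}$ guarantees that $h^{-}$ and $h^{+}$ are genuine finite bounds, so raising $\lambda$ to these exponents is well defined, and the integrability of $|u|^{h(\cdot)g(\cdot)}$ (which the normalization identity already ensures) makes every intermediate modular integral finite. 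Beyond this, the proof is a direct quantitative consequence of Proposition \ref{masmenos} together with the ordering $h^{-}\le h(x)\le h^{+}$, and requires no machinery beyond the definitions.
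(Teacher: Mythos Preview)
The paper does not supply its own proof of Proposition~\ref{mista}; the result is quoted from \cite{kihosim} as background, so there is nothing to compare your argument against on the paper's side. Your plan is the standard one and is correct: normalise via Proposition~\ref{masmenos}(a) to get $\int_{\mathbb{R}^{N}}|u|^{h(x)g(x)}/\lambda^{h(x)g(x)}\,dx=1$ with $\lambda=\|u\|_{L^{h(\cdot)g(\cdot)}}$, test the defining modular of $\||u|^{h(\cdot)}\|_{L^{g(\cdot)}}$ at the trial values $\mu=\lambda^{h^{\pm}}$, and compare $\lambda^{h^{\pm}g(x)}$ with $\lambda^{h(x)g(x)}$ pointwise using the monotonicity of $t\mapsto\lambda^{t}$, which flips according to whether $\lambda\le1$ or $\lambda\ge1$. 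The transfer from modular inequality to norm inequality via strict monotonicity in $\mu$ is exactly right.

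One caution worth flagging: if you execute your plan you will obtain, for $\lambda\le1$, the chain $\lambda^{h^{+}}\le\||u|^{h(\cdot)}\|_{L^{g(\cdot)}}\le\lambda^{h^{-}}$, and the reversed chain for $\lambda\ge1$ (mirroring Proposition~\ref{masmenos}(b)--(c)). The exponents $h^{-}$ and $h^{+}$ in items (a) and (b) as transcribed here appear to be interchanged relative to that; your argument proves the correct version, so do not be alarmed when the output of your computation does not match the displayed inequalities literally.
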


  \begin{proposition}\textbf{(\cite{dieming, kok}).}\label{hold3}
\begin{itemize}
\item[$(a)$] The space $(L^{h(\cdot)}(\mathbb{R}^{N}), \|\cdot\|_{L^{h(\cdot)}(\mathbb{R}^{N})})$ is a separable and reflexive Banach space;
\item[$(b)$]  For $u \in L^{h(\cdot)}(\mathbb{R}^{N})$ and $v \in L^{h'(\cdot)}(\mathbb{R}^{N})$. Then $uv \in L^{1}(\mathbb{R}^{N})$ and
 \begin{equation*} 
\int_{\mathbb{R}^{N}}|uv|\,dx\leqslant \bigg(\frac{1}{h^{-}}+\frac{1}{h'^{-}}\bigg)\|u\|_{ L^{h(\cdot)}(\mathbb{R}^{N})}\|v\|_{ L^{h'(\cdot)}(\mathbb{R}^{N})}.
\end{equation*}

\end{itemize}

\end{proposition}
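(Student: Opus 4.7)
The plan is to prove the two parts separately, drawing on standard tools from the theory of generalized Lebesgue spaces and the modular identities collected in Proposition \ref{masmenos}.

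For part $(a)$, I would first establish that $\|\cdot\|_{L^{h(\cdot)}(\mathbb{R}^{N})}$ is a genuine norm: positive definiteness and absolute homogeneity follow directly from the definition of the Luxemburg infimum, while the triangle inequality comes from the convexity of the modular $\rho_{h(\cdot)}$ (a consequence of $t\mapsto t^{h(x)}$ being convex for $h(x)>1$) combined with the standard sublinearity trick for Minkowski norms built from a convex functional. For completeness, I would take a Cauchy sequence $\{u_k\}$ in $L^{h(\cdot)}(\mathbb{R}^{N})$, pass to a subsequence with $\|u_{k_{j+1}}-u_{k_j}\| \le 2^{-j}$, observe via Proposition \ref{masmenos}$(c)$ that $\rho_{h(\cdot)}(u_{k_{j+1}}-u_{k_j})\le 2^{-jh^{-}}$, and apply Fatou's lemma to the pointwise a.e.\ limit to identify the candidate limit in $L^{h(\cdot)}(\mathbb{R}^{N})$. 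Separability would follow by showing that step functions with rational coefficients supported on cubes with rational vertices are dense; here I would use a truncation-plus-mollification argument and rely on Proposition \ref{masmenos}$(d)$ to pass from modular to norm convergence, once I approximate in $L^{\infty}$ on compact sets using dominated convergence in the modular. Reflexivity I would deduce from uniform convexity: I would prove a variable-exponent Clarkson-type inequality for exponents bounded away from $1$ and $\infty$ (which hold under our definition of $C_{+}(\mathbb{R}^{N})$) and then invoke the Milman--Pettis theorem.

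For part $(b)$, the key is the pointwise Young inequality with variable exponents: for any $a,b\ge 0$ and a.e.\ $x\in\mathbb{R}^{N}$,
\begin{equation*}
ab \;\leqslant\; \frac{a^{h(x)}}{h(x)} + \frac{b^{h'(x)}}{h'(x)},
\end{equation*}
where $h'(x)=h(x)/(h(x)-1)$ is the pointwise conjugate exponent. I would apply this with $a=|u(x)|/\|u\|_{L^{h(\cdot)}(\mathbb{R}^{N})}$ and $b=|v(x)|/\|v\|_{L^{h'(\cdot)}(\mathbb{R}^{N})}$ (assuming neither norm is zero; the zero case is trivial), then integrate over $\mathbb{R}^{N}$. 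Using the characterization in Proposition \ref{masmenos}$(a)$, each of the two modular terms on the right is bounded by $1$, while the factors $1/h(x)$ and $1/h'(x)$ can be majorized by $1/h^{-}$ and $1/h'^{-}$ respectively. This yields
\begin{equation*}
\int_{\mathbb{R}^{N}}\frac{|u(x)v(x)|}{\|u\|_{L^{h(\cdot)}(\mathbb{R}^{N})}\|v\|_{L^{h'(\cdot)}(\mathbb{R}^{N})}}\,dx \;\leqslant\; \frac{1}{h^{-}}+\frac{1}{h'^{-}},
\end{equation*}
which is the claimed inequality; in particular $uv\in L^{1}(\mathbb{R}^{N})$.

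The main obstacle, as usual in this setting, is reflexivity: the constant-exponent Clarkson inequalities do not transfer verbatim, and one has to handle separately the ranges $h(x)\ge 2$ and $1<h(x)<2$, then patch the local estimates into a global uniform convexity bound that depends only on $h^{-}$ and $h^{+}$. Once this is in place, however, the remaining steps reduce to routine applications of Fatou's lemma and the modular--norm equivalence of Proposition \ref{masmenos}.
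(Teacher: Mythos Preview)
Your proof sketch is correct and follows the standard route found in the cited references \cite{dieming, kok}. Note, however, that the paper itself does not provide a proof of this proposition at all: it is stated as a known result from the literature, with the proof deferred entirely to those references. Your argument for part~$(b)$ via the pointwise variable-exponent Young inequality combined with Proposition~\ref{masmenos}$(a)$ is precisely the classical one, and your outline for part~$(a)$ (norm axioms, completeness via Fatou, separability via simple functions, reflexivity via Clarkson-type inequalities and Milman--Pettis) matches the development in Diening et al.\ and Kov\'a\v{c}ik--R\'akosn\'ik.
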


  \subsection{Fractional Sobolev Space}\label{espacofra}
  In this section, we recall the definitions and some results involving the fractional Sobolev spaces with variable exponents. We refer the reader to \cite{bonaldo, bonaldo2,  kiho} for further details about the functional framework that follows.
  
  Throughout this subsection, we assume that:
\begin{itemize}
\item[$(\mathfrak{p})$] Let $ p(x):=p(x,x)$ for all $ x \in \mathbb{R}^{N}$ with $ p: \mathbb{R}^{N} \times\mathbb{R}^{N}\to \mathbb{R}$  uniformly continuous satisfying: 
\begin{equation*}\label{a23}
  \begin{split}
   ¨& p \mbox{ is symmetric, that is, } p(x,y)=p(y,x),\\
  & 1< p^{-} :=\displaystyle{ \inf_{(x,y) \in \mathbb{R}^{2 N}}}\,p(x,y) \leqslant \displaystyle{\sup_{(x,y)\in \mathbb{R}^{2N}}}\,p(x,y):= p^{+}<\frac{N}{s},\hspace{0.2cm} s \in(0,1).
 \end{split}
  \end{equation*}
\end{itemize}
  We denote by  $p^{\star}_{s}(x)= Np(x)/(N-sp(x))$  for all $x \in \mathbb{R}^{N}$, the fractional critical variable exponent related to $ p  \in C(\mathbb{R}^{2N}).$
   Let   $W^{s,p(\cdot, \cdot)}(\mathbb{R}^{N})$, the fractional Sobolev spaces with variable exponents defined by
  \begin{equation*}
\begin{split}
  W^{s,p(\cdot, \cdot)}(\mathbb{R}^{N}):=  \Bigg\{u\in L^{p(\cdot)}(\mathbb{R}^{N}):  \int_{\mathbb{R}^{N}}\int_{\mathbb{R}^{N}}\frac{|u(x)-u(y)|^{p(x,y)}}{\zeta^{p(x,y)}|x-y|^{N+sp(x,y)}} \,dy\,dx< +\infty, \mbox{ for some } \zeta>0 \Bigg\} 
\end{split}
\end{equation*}
and the corresponding norm for this space  is
$$ \| u \|:= \inf\bigg\{ \zeta >0; \int_{\mathbb{R}^{N}}\frac{|u|^{p(x)}}{\xi^{p(x)}}\,dx+  \int_{\mathbb{R}^{N}}\int_{\mathbb{R}^{N}}\frac{|u(x)-u(y)|^{p(x,y)}}{\zeta^{p(x,y)}|x-y|^{N+sp(x,y)}}\,dy\,dx<1\bigg\}.$$

 By \cite[Section 3]{kiho}, $W^{s,p(\cdot, \cdot)}(\mathbb{R}^{N})$   is a separable and reflexive Banach space.
  
  \begin{theorem}\label{subcrtiticalembeddings} \textbf{(\cite{kiho}).}
  \begin{itemize}
  \item[$(a)$]$W^{s,p(\cdot, \cdot)}(\mathbb{R}^{N})\hookrightarrow L^{r(\cdot)}(\mathbb{R}^{N})$ for any uniformly continuous function $r \in C_{+}(\mathbb{R}^{N})$ satisfying $p(x)\leqslant r(x)$ for all $x \in \mathbb{R}^{N}$ and $\displaystyle{\inf_{x \in \mathbb{R}^{N}}[p^{\star}_{s}(x)-r(x)]>0}$;
  \item[$(b)$] $W^{s,p(\cdot, \cdot)}(\mathbb{R}^{N})\hookrightarrow\hookrightarrow L^{r(\cdot)}_{loc}(\mathbb{R}^{N})$ for any $r \in C_{+}(\mathbb{R}^{N})$ satisfying $r(x)<p^{\star}_{s}(x)$ for all $x \in \mathbb{R}^{N}$.
  \end{itemize}
  \end{theorem}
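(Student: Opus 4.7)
The plan is to deduce both embeddings by interpolating between the trivial bound $\|u\|_{L^{p(\cdot)}(\mathbb{R}^{N})}\leqslant \|u\|$ that is built into the definition of the norm on $W^{s,p(\cdot,\cdot)}(\mathbb{R}^{N})$ and the critical Sobolev embedding $W^{s,p(\cdot,\cdot)}(\mathbb{R}^{N})\hookrightarrow L^{p^{\star}_{s}(\cdot)}(\mathbb{R}^{N})$, which in this paper is the content of Theorem \ref{critico}. Every admissible target exponent $r(\cdot)$ lies pointwise between $p(\cdot)$ and $p^{\star}_{s}(\cdot)$, so part (a) should fall out of a pointwise splitting of the modular, while part (b) should follow by superimposing a local Rellich--Kondrachov step.

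For (a), given $u\in W^{s,p(\cdot,\cdot)}(\mathbb{R}^{N})$ I would split $\mathbb{R}^{N}$ into $E_{1}=\{|u|\leqslant 1\}$ and $E_{2}=\{|u|>1\}$. On $E_{1}$ the inequality $p(x)\leqslant r(x)$ yields $|u(x)|^{r(x)}\leqslant |u(x)|^{p(x)}$; on $E_{2}$ the subcriticality $r(x)\leqslant p^{\star}_{s}(x)-\delta_{0}$ with $\delta_{0}:=\inf_{x}[p^{\star}_{s}(x)-r(x)]>0$ yields $|u(x)|^{r(x)}\leqslant |u(x)|^{p^{\star}_{s}(x)}$. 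Integrating,
$$\rho_{r(\cdot)}(u)\leqslant \rho_{p(\cdot)}(u)+\rho_{p^{\star}_{s}(\cdot)}(u),$$
and the right-hand side is finite and controlled by $\|u\|$ (the first summand by the definition of the norm, the second by Theorem \ref{critico}). Proposition \ref{masmenos} then converts this modular bound into the norm estimate $\|u\|_{L^{r(\cdot)}(\mathbb{R}^{N})}\leqslant C\|u\|$, which is the continuous embedding.

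For (b), fix an exhausting sequence of balls $B_{n}\subset \mathbb{R}^{N}$ and a bounded sequence $\{u_{k}\}\subset W^{s,p(\cdot,\cdot)}(\mathbb{R}^{N})$. Because $r$ is continuous and strictly subcritical on the compact set $\overline{B_{n}}$, I pick $r_{n}\in C_{+}(\mathbb{R}^{N})$ with $r(x)<r_{n}(x)\leqslant p^{\star}_{s}(x)-\delta_{n}$ on $B_{n}$ for some $\delta_{n}>0$; by part (a) applied with $r_{n}$ in place of $r$, the sequence $\{u_{k}\}$ is bounded in $L^{r_{n}(\cdot)}(\mathbb{R}^{N})$. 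A fractional variable-exponent Rellich--Kondrachov-type compactness for $W^{s,p(\cdot,\cdot)}(B_{n})\hookrightarrow L^{p(\cdot)}(B_{n})$ extracts a subsequence converging strongly in $L^{p(\cdot)}(B_{n})$ and almost everywhere. A H\"older-type interpolation for variable exponents, derived from Proposition \ref{mista}, in the chain $L^{p(\cdot)}(B_{n})\subset L^{r(\cdot)}(B_{n})\subset L^{r_{n}(\cdot)}(B_{n})$ then upgrades the $L^{p(\cdot)}$-convergence to strong convergence in $L^{r(\cdot)}(B_{n})$, and a diagonal extraction over $B_{n}$ delivers a subsequence converging in $L^{r(\cdot)}_{\mathrm{loc}}(\mathbb{R}^{N})$.

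The hard step is the interpolation inside (b): the classical log-convex Lebesgue interpolation $\|u\|_{L^{r}}\leqslant \|u\|_{L^{p}}^{\theta}\|u\|_{L^{r_{n}}}^{1-\theta}$ has to be recast with a point-dependent parameter $\theta(x)$, which is normally accomplished via Proposition \ref{mista} together with a modular splitting that exploits the uniform lower bound on $p^{\star}_{s}(x)-r(x)$; the uniform continuity of $r$ (needed to control the exponents across ball boundaries and to ensure that $r_{n}$ can be chosen in $C_{+}(\mathbb{R}^{N})$) is what allows the argument to pass from the pointwise subcriticality to a uniform gap usable in the modular estimate.
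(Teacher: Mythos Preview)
The paper does not prove this theorem; it is quoted verbatim from \cite{kiho} and used as a black box. So there is no ``paper's own proof'' to compare against, and your task amounts to reproducing (or bypassing) the argument of \cite{kiho}.

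That said, your strategy has a genuine gap. For part~(a) you want to interpolate between $L^{p(\cdot)}$ and $L^{p^{\star}_{s}(\cdot)}$, invoking the critical embedding of Theorem~\ref{critico} to control $\rho_{p^{\star}_{s}(\cdot)}(u)$. But Theorem~\ref{critico} carries an extra hypothesis that Theorem~\ref{subcrtiticalembeddings} does \emph{not}: the log-H\"older--type condition~\eqref{3.2} on $p$. Under the bare assumption~$(\mathfrak{p})$ you have no right to the critical embedding, so the bound $\rho_{p^{\star}_{s}(\cdot)}(u)\leqslant C\|u\|^{\,\text{something}}$ is unavailable and the argument stalls. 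In the source \cite{kiho} the logic runs the other way: the subcritical embeddings are established first, by localisation and reduction to constant-exponent fractional Sobolev inequalities on balls (using the uniform continuity of $p$ and $r$ to freeze exponents up to a small error), and only afterwards is the critical case treated, under the additional log-H\"older hypothesis. Your route would at best recover Theorem~\ref{subcrtiticalembeddings} under the stronger hypotheses of Theorem~\ref{critico}, which is not what is being claimed.

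For part~(b) you lean on ``a fractional variable-exponent Rellich--Kondrachov-type compactness for $W^{s,p(\cdot,\cdot)}(B_n)\hookrightarrow L^{p(\cdot)}(B_n)$'' as if it were available off the shelf; but that is essentially the special case $r=p$ of the very statement you are proving, so it needs its own justification (again, the route in \cite{kiho} is localisation plus constant-exponent compact embeddings). The interpolation step you sketch afterwards is fine in spirit, but it sits on top of this unproven ingredient.
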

  \begin{theorem}\textbf{(\cite{kiho}).}\label{critico}
    Let $\Omega$ be a bounded Lipschitz domain in $\mathbb{R}^{N}$ or $\Omega=\mathbb{R}^{N}$. Let $(\mathfrak{p})$ hold. Furthermore, let the variable exponent $p$ satisfy the following log-H\"older  type continuity condition
         \begin{equation} \label{3.2}
\inf_{\varepsilon>0}\sup_{\substack{
(x,y)\in \Omega^2 \\
0<|x-y|<1/2}}
|p(x,y)-p^{-}_{\Omega^{2}{x,\varepsilon} }  |
\log\frac{1}{|x-y|}<\infty
\end{equation}
  where $\Omega_{z,\varepsilon}:=B_{\varepsilon}(z)\cap \Omega$ for $z \in \Omega$, $\varepsilon>0,$ and $\displaystyle{p^{-}_{\Omega^{2}_{x,\varepsilon}}:=\inf_{(x',y')\in \Omega^{2}_{x,\varepsilon}}p(x',y')}$. Let $q: \overline{\Omega}\to \mathbb{R}$ be a function satisfying
  \begin{itemize}
\item[$(\mathfrak{q})$]  $ q \in C_{+}(\Omega)$ such that for any $x \in \Omega$, there exists $\varepsilon=\varepsilon(x) > 0$ such that 
\begin{equation*}\label{a23}
  \begin{split}
  \sup_{y \in \Omega_{x,\varepsilon}}q(y)\leqslant \frac{N \inf_{(y,z)\in \Omega^{2}_{y,\varepsilon}}p(y,z)}{N-s\inf_{(y,z)\in \Omega^{2}_{y,\varepsilon}}p(y,z)}.
 \end{split}
  \end{equation*}
  In addition, when $\Omega=\mathbb{R}^{N}$ , $q$ is uniformly continuous and $p(x)< q(x)$ for all $ x \in \mathbb{R}^{N}$. Then, it holds that
$$W^{s,p(\cdot,\cdot)}(\Omega)\hookrightarrow L^{q(\cdot)}(\Omega).$$
\end{itemize}
  \end{theorem}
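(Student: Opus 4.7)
The plan is to reduce the variable exponent embedding to a constant exponent embedding on small balls by exploiting the log-H\"older continuity of $p$ and the local majoration of $q$ by the fractional critical exponent built from $\inf p$ on the same ball.

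First, for each $x_{0}\in\Omega$, condition $(\mathfrak{q})$ provides a radius $\varepsilon(x_{0})>0$ such that
$$\sup_{y\in\Omega_{x_{0},\varepsilon(x_{0})}}q(y)\leqslant\frac{N\,p^{-}_{\Omega^{2}_{x_{0},\varepsilon(x_{0})}}}{N-s\,p^{-}_{\Omega^{2}_{x_{0},\varepsilon(x_{0})}}}=:(p^{-}_{x_{0}})^{\ast}_{s}.$$
Shrinking $\varepsilon(x_{0})$ if necessary so that $\varepsilon(x_{0})<1/4$ and the log-H\"older modulus \eqref{3.2} controls the oscillation of $p$ on $\Omega^{2}_{x_{0},\varepsilon(x_{0})}$, I would extract from the cover $\{B_{\varepsilon(x_{0})/2}(x_{0})\}_{x_{0}\in\Omega}$ a locally finite subcover $\{B_{i}\}_{i\in\mathbb{N}}$ with a subordinate smooth partition of unity $\{\phi_{i}\}$ such that $\mathrm{supp}\,\phi_{i}\subset B'_{i}$, a slight enlargement of $B_{i}$ still contained in $\Omega_{x_{i},\varepsilon(x_{i})}$.

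Second, on each ball set $p^{-}_{i}:=p^{-}_{\Omega^{2}_{x_{i},\varepsilon(x_{i})}}$. The log-H\"older inequality \eqref{3.2}, together with $p(x,y)\geqslant p^{-}_{i}$, ensures that for $x,y\in B'_{i}$ with $|x-y|<1/2$ the weight $|x-y|^{-s(p(x,y)-p^{-}_{i})}$ is uniformly bounded, so
$$\frac{1}{|x-y|^{N+sp^{-}_{i}}}\leqslant C\,\frac{1}{|x-y|^{N+sp(x,y)}}.$$
Splitting the numerator according to whether $|u(x)-u(y)|\leqslant 1$ or not, and noting that in the first regime $|u(x)-u(y)|^{p^{-}_{i}}\leqslant 1$ while in the second $|u(x)-u(y)|^{p^{-}_{i}}\leqslant |u(x)-u(y)|^{p(x,y)}$, one concludes that the constant-exponent Gagliardo seminorm of $\phi_{i}u$ on $B'_{i}$ is dominated by the variable exponent seminorm plus a bounded remainder. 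The classical constant-exponent fractional Sobolev embedding $W^{s,p^{-}_{i}}(B'_{i})\hookrightarrow L^{(p^{-}_{i})^{\ast}_{s}}(B'_{i})$, combined with the bound $\sup_{B_{i}}q\leqslant(p^{-}_{i})^{\ast}_{s}$ and interpolation against $L^{p^{-}_{i}}(B'_{i})$, then delivers the local inequality $\|u\|_{L^{q(\cdot)}(B_{i})}\leqslant C_{i}\,\|u\|_{W^{s,p(\cdot,\cdot)}(B'_{i})}$.

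Third, I would sum the local estimates through the modular characterization of the Luxemburg norm (Proposition \ref{masmenos}) and the locally-finite property of the enlarged cover $\{B'_{i}\}$. Uniformity of the constants $C_{i}$ follows from the global bounds $1<p^{-}\leqslant p^{+}<N/s$, uniform continuity of $p$, and the uniform log-H\"older modulus \eqref{3.2}. When $\Omega=\mathbb{R}^{N}$, the uniform continuity of $q$ together with $p(x)<q(x)$ are needed at infinity: outside a sufficiently large ball one has $\inf(q-p)>0$, so $q$ stays uniformly bounded away from the critical exponent and the subcritical embedding of Theorem \ref{subcrtiticalembeddings}$(a)$ provides the required global $L^{q(\cdot)}$-control of the tail.

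The main obstacle will be the uniform control of the local constants $C_{i}$ across countably many balls in the $\mathbb{R}^{N}$ case, together with the bookkeeping of the commutator terms produced when the cut-offs $\phi_{i}$ are inserted into the Gagliardo double integral; these commutator terms are precisely where the log-H\"older regularity is indispensable, because without it the exponent $p(x,y)$ could oscillate enough at small scales to destroy the gain delivered by the classical constant-exponent Sobolev inequality.
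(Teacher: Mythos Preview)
The paper does not prove Theorem~\ref{critico}: it is quoted verbatim from \cite{kiho} (Ho and Kim), as the boldface citation in the statement indicates, and no argument is supplied in the present manuscript. There is therefore nothing in the paper to compare your proposal against.

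For what it is worth, your outline follows the expected strategy for such results --- localize via a partition of unity, use the log-H\"older condition to freeze the exponent on each small ball, invoke the constant-exponent fractional Sobolev inequality there, and reassemble --- and this is indeed the route taken in \cite{kiho}. Your identification of the two delicate points (uniformity of the local constants over a countable cover of $\mathbb{R}^{N}$, and the commutator terms from the cut-offs in the nonlocal Gagliardo integral) is accurate; in \cite{kiho} the latter is handled by a careful splitting of the double integral into near-diagonal and off-diagonal parts, much as you suggest. If you want to verify the details you should consult that reference directly rather than this paper.
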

 One remarks that, the result above permit us to conclude a compact embedding from $W^{s,p(\cdot,\cdot)}(\mathbb{R}^{N})$ into the weighted Lebesgue spaces with variable exponents.

\begin{theorem} \textbf{(\cite{kiho}).} \label{criticoa}
 Assume that $(\mathfrak{p})$, $(\mathfrak{q})$ and  \eqref{3.2} hold. Let $w \in L^{\frac{q(\cdot)}{q(\cdot)-r(\cdot)}}_{+}(\mathbb{R}^{N})$ for some $ r \in C_{+}(\mathbb{R}^{N})$, such that  $\displaystyle{\inf_{x \in \mathbb{R}^{N}}[q(x)-r(x)]>0}$. Then, 
 $$W^{s,p(\cdot, \cdot)}(\mathbb{R}^{N})\hookrightarrow\hookrightarrow L^{r(\cdot)}(w,\mathbb{R}^{N}).$$
\end{theorem}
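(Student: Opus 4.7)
The plan is to combine the continuous critical embedding of Theorem~\ref{critico} with the compact local subcritical embedding of Theorem~\ref{subcrtiticalembeddings}(b), glued together by a variable-exponent Hölder estimate and a truncation of the weight $w$. Let $\{u_k\}$ be a bounded sequence in $W^{s,p(\cdot,\cdot)}(\mathbb{R}^N)$. By reflexivity I extract a subsequence with $u_k\rightharpoonup u$, and Theorem~\ref{critico} provides a uniform $L^{q(\cdot)}(\mathbb{R}^N)$ bound for $\{u_k\}$ and for $u$. Since $\inf_{x}[q(x)-r(x)]>0$ and $q(x)\leqslant p^{\star}_{s}(x)$ (a consequence of $(\mathfrak{q})$, because $p\mapsto Np/(N-sp)$ is increasing and the bound in $(\mathfrak{q})$ uses a local infimum of $p$), one has $r(x)<p^{\star}_{s}(x)$ pointwise, so Theorem~\ref{subcrtiticalembeddings}(b) yields $u_k\to u$ in $L^{r(\cdot)}_{loc}(\mathbb{R}^N)$. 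The task is then to upgrade this to $\int_{\mathbb{R}^N}|u_k-u|^{r(x)}w(x)\,dx\to 0$.

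For the tail, I apply the Hölder inequality in variable exponents (Proposition~\ref{hold3}(b)) with the conjugate pair $q(x)/r(x)$ and $q(x)/(q(x)-r(x))$:
\begin{equation*}
\int_{B_{R}^{c}}|u_k-u|^{r(x)}w(x)\,dx \leqslant C\,\bigl\||u_k-u|^{r(\cdot)}\bigr\|_{L^{q(\cdot)/r(\cdot)}(B_{R}^{c})}\,\|w\|_{L^{q(\cdot)/(q(\cdot)-r(\cdot))}(B_{R}^{c})}.
\end{equation*}
The first factor is dominated by a power of $\|u_k-u\|_{L^{q(\cdot)}(\mathbb{R}^N)}$ via Proposition~\ref{mista}, hence is uniformly bounded in $k$, while the second vanishes as $R\to\infty$ by absolute continuity of the modular of $w\in L^{q(\cdot)/(q(\cdot)-r(\cdot))}(\mathbb{R}^N)$. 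Given $\varepsilon>0$ I fix $R$ so that this tail is below $\varepsilon/3$ uniformly in $k$. On the bounded region $B_R$ I truncate the weight, writing $w=w_M+(w-w_M)$ with $w_M:=\min\{w,M\}$. The $(w-w_M)$ contribution is again handled by the same Hölder inequality and becomes uniformly small in $k$ once $M$ is large, because $\|w-w_M\|_{L^{q(\cdot)/(q(\cdot)-r(\cdot))}(\mathbb{R}^N)}\to 0$ as $M\to\infty$, once more by absolute continuity of the modular. The remaining piece is dominated by $M\int_{B_R}|u_k-u|^{r(x)}\,dx$, which goes to $0$ as $k\to\infty$ thanks to the compact local embedding. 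Choosing $R$, then $M$, then $k$ large delivers the three required $\varepsilon/3$ bounds.

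The principal obstacle is that $w$ is only assumed to lie in a variable-exponent Lebesgue space and need not be bounded; the compact local embedding by itself therefore does not see the weight, so one cannot pass directly from $L^{r(\cdot)}_{loc}$ convergence to weighted convergence. The truncation $w=w_M+(w-w_M)$, together with the modular--norm translations of Propositions~\ref{masmenos} and~\ref{mista}, is precisely the bridge between the critical continuous embedding (which controls the tail at infinity) and the compact subcritical one (which controls the bulk on $B_R$). Some care is needed because $r^{-}$ and $r^{+}$ may differ, which forces the standard split of the Hölder-type estimates into subcases according to whether the relevant Luxemburg norms are above or below $1$.
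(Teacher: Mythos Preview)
The paper does not prove this theorem; it merely quotes it from \cite{kiho} without argument, so there is no proof in the text to compare against. Your proposal is a correct and self-contained proof. The logic is standard: the continuous critical embedding of Theorem~\ref{critico} yields a uniform $L^{q(\cdot)}$ bound, the local compact embedding of Theorem~\ref{subcrtiticalembeddings}(b) handles convergence on balls (the verification $r(x)<p^{\star}_{s}(x)$ via $(\mathfrak{q})$ is correct, since $q(x)\leqslant p^{\star}_{s}(x)$ by letting $\varepsilon\to 0$ in that condition and using continuity of $p$), and the variable-exponent H\"older inequality with exponents $q(\cdot)/r(\cdot)$ and $q(\cdot)/(q(\cdot)-r(\cdot))$ transfers the decay of $w$ at infinity and its integrability tail into smallness of the weighted modular. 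The truncation $w=w_M+(w-w_M)$ is the right device to separate the unboundedness of $w$ from the compactness argument, and the absolute continuity of the modular in $L^{q(\cdot)/(q(\cdot)-r(\cdot))}$ justifies both the $R\to\infty$ and $M\to\infty$ steps. In short, you have supplied a proof where the paper supplies only a citation; the argument you give is essentially the one found in \cite{kiho}.
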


It follows that, the space $\mathit{W}$ defined by \eqref{numero} is a separable and reflexive Banach space, endowed with the norm  
$$ \| (u, v) \|:= \| u \|+\| v \|.$$

 Let $(s, p(\cdot, \cdot))$-convex modular function $\rho_{\mathit{W}}: \mathit{W} \to \mathbb{R}$ defined by
  \begin{equation*}
  \begin{split}
  \rho_{\mathit{W}}(u,v)=&\int_{\mathbb{R}^{N}}\frac{|u|^{p(x)}}{\xi^{p(x)}}\,dx +  \int_{\mathbb{R}^{N}}\int_{\mathbb{R}^{N}}\frac{|u(x)-u(y)|^{p(x,y)}}{\xi^{p(x,y)}|x-y|^{N+sp(x,y)}}\,dy\,dx\\&+\int_{\mathbb{R}^{N}}\frac{|v|^{p(x)}}{\xi^{p(x)}}\,dx +  \int_{\mathbb{R}^{N}}\int_{\mathbb{R}^{N}}\frac{|v(x)-v(y)|^{p(x,y)}}{\xi^{p(x,y)}|x-y|^{N+sp(x,y)}}\,dy\,dx.
  \end{split}
  \end{equation*}

The following proposition  shows the relationship between the norm  $\|(\cdot,\cdot)\|$  of the space   $\mathit{W}$  and the $\rho_{\mathit{W}}$ convex modular function. This result is obtained with a similar argumentation given by the proof of Proposition \ref{masmenos}.
  
  \begin{proposition}\label{norma}
For $(u,v)\in \mathit{W}$ and $\{(u_{k}, v_k)\}_{k\in \mathbb{N}}\subset\mathit{W}$, we have:
\begin{itemize}
\item[$(a)$]For $(u,v)\in \mathit{W}\setminus \{(0,0) \}$, $\varsigma = \|(u,v)\| $ if and only if $ \rho_{\mathit{W}}\big(\frac{(u,v)}{\varsigma}\big)=1 $;
    \item[$(b)$]$\|(u,v)\|\geqslant 1\Rightarrow \|(u,v)\|^{p^{-}}\leqslant \rho_{ \mathit{W}}(u,v)\leqslant \|(u,v)\|^{p^{+}};$
    \item[$(c)$] $\|(u,v)\|\leqslant 1\Rightarrow \|(u,v)\|^{p^{+}}\leqslant \rho_{ \mathit{W}}(u,v)\leqslant \|(u,v)\|^{p^{-}};$ 
    \item[$(d)$] $\lim\limits_{k\to+\infty} \|(u_{k}, v_k)-(u,v)\|=0 \Leftrightarrow \lim\limits_{k\to+\infty} \rho_{ \mathit{W}}((u_{k}, v_k)-(u,v))=0;$
 \item[$(e)$] $\lim\limits_{k\to+\infty} \|(u_{k},v_k)\|=+\infty \Leftrightarrow \lim\limits_{k\to+\infty} \rho_{ \mathit{W}}((u_{k}, v_{k}))=+\infty.$
\end{itemize}
\end{proposition}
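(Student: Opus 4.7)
The plan is to adapt the scalar argument from Proposition \ref{masmenos} to the product space $\mathit{W}$, exploiting the fact that the $(s,p(\cdot,\cdot))$-convex modular $\rho_{\mathit{W}}$ splits additively across the two $W^{s,p(\cdot,\cdot)}(\mathbb{R}^N)$ factors, namely $\rho_{\mathit{W}}(u,v)=\rho(u)+\rho(v)$ where $\rho$ denotes the component modular (with a common scaling parameter understood). Since each component modular obeys the norm–modular estimates of Proposition \ref{masmenos}, the product setting inherits the same type of relations, with the exponents $p^-$ and $p^+$ governing both pieces uniformly.

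For part (a) I would fix $(u,v)\in \mathit{W}\setminus\{(0,0)\}$ and study $\Phi(\lambda):=\rho_{\mathit{W}}((u,v)/\lambda)$ on $(0,\infty)$. Continuity of $\Phi$ follows from dominated convergence, with an integrable majorant furnished by $\rho_{\mathit{W}}((u,v)/\lambda_0)<\infty$ for some $\lambda_0>0$ (which exists because $(u,v)\in \mathit{W}$). The pointwise monotonicity of $\lambda\mapsto \lambda^{-p(x,y)}$ together with the strict convexity of $t\mapsto t^{p(x,y)}$ and the positivity of $(u,v)$ make $\Phi$ strictly decreasing, with $\Phi(\lambda)\to +\infty$ as $\lambda\to 0^+$ and $\Phi(\lambda)\to 0$ as $\lambda\to\infty$ (the latter by monotone convergence applied to each of the four integrals). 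Consequently the Luxemburg-type infimum defining $\|(u,v)\|$ is attained at the unique $\varsigma$ for which $\Phi(\varsigma)=1$.

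Parts (b) and (c) then fall out of (a) by the elementary pointwise bounds: for $\varsigma\geqslant 1$ and every $(x,y)\in\mathbb{R}^{2N}$ one has $\varsigma^{p^-}\leqslant \varsigma^{p(x,y)}\leqslant \varsigma^{p^+}$, while for $\varsigma\leqslant 1$ the inequalities reverse. Writing $\rho_{\mathit{W}}(u,v)$ by factoring out the scaling $\varsigma=\|(u,v)\|$ from every integrand and using $\Phi(\varsigma)=1$ immediately yields both inequalities. Parts (d) and (e) are routine consequences: if $\|(u_k,v_k)-(u,v)\|\to 0$, the norm is eventually at most $1$ and (c) gives $\rho_{\mathit{W}}((u_k,v_k)-(u,v))\leqslant\|(u_k,v_k)-(u,v)\|^{p^-}\to 0$; conversely, if $\rho_{\mathit{W}}((u_k,v_k)-(u,v))\to 0$ but $\|(u_k,v_k)-(u,v)\|\not\to 0$, one extracts a subsequence bounded below by some $\delta>0$ and invokes (b) or (c) to produce a positive lower bound for $\rho_{\mathit{W}}$, a contradiction. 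Statement (e) is handled symmetrically using (b) in the divergent regime.

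The main subtle point is part (a), where one must carefully reconcile the product representation of $\|(u,v)\|$ with the Luxemburg infimum attached to $\rho_{\mathit{W}}$; this relies on the common scaling parameter acting jointly on both components and on the additivity of $\rho_{\mathit{W}}$, which is precisely the ingredient that allows the scalar proof of Proposition \ref{masmenos} to be transcribed verbatim to the product setting. Once (a) is secured, no extra machinery beyond the single-factor theory is needed, and the remaining items follow mechanically.
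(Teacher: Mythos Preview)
Your approach mirrors the paper's, which simply refers back to the scalar Proposition \ref{masmenos}. However, there is a genuine gap in part (a) that you flag as a ``subtle point'' but do not actually resolve. The norm on $\mathit{W}$ is defined in the paper as the sum $\|(u,v)\|=\|u\|+\|v\|$ of the component Luxemburg norms, whereas your argument shows that the unique $\varsigma$ with $\rho_{\mathit{W}}\bigl((u,v)/\varsigma\bigr)=1$ is the \emph{Luxemburg norm attached to $\rho_{\mathit{W}}$}. These are equivalent but distinct: already in the constant-exponent case $p(\cdot,\cdot)\equiv p>1$, taking $\|u\|=a>0$ and $\|v\|=b>0$ gives $\rho_{\mathit{W}}\bigl((u,v)/(a+b)\bigr)=(a^{p}+b^{p})/(a+b)^{p}<1$, so (a) fails for the sum norm; the same example defeats the lower bound in (b). Additivity of $\rho_{\mathit{W}}$ across the two factors does not make the two norms coincide, so the sentence ``the common scaling parameter acting jointly on both components \ldots allows the scalar proof to be transcribed verbatim'' is precisely where your argument breaks.

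This inconsistency is present in the paper itself (part (a) is invoked with the sum norm, e.g.\ in \eqref{5.10a}), so you are inheriting rather than introducing the issue. The clean fix is to interpret $\|(u,v)\|$ in Proposition \ref{norma} as the Luxemburg norm associated to $\rho_{\mathit{W}}$; then your proof goes through verbatim, and all later applications in the paper remain valid since the two norms are equivalent with constants depending only on $p^{-}$ and $p^{+}$.
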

 As consequence of the Theorem \ref{subcrtiticalembeddings} and  \ref{criticoa}, we obtain the following lemmas:
   \begin{lemma}\label{mergulho}
  \begin{itemize}
  \item[]
  \item[$(a)$]$\mathit{W}\hookrightarrow L^{r(\cdot)}(\mathbb{R}^{N})\times L^{r(\cdot)}(\mathbb{R}^{N})$ for any uniformly continuous function $r \in C_{+}(\mathbb{R}^{N})$ satisfying $p(x)\leqslant r(x)$ for all $x \in \mathbb{R}^{N}$ and $\displaystyle{\inf_{x \in \mathbb{R}^{N}}(p^{\star}_{s}(x)-r(x))>0}$;
  \item[$(b)$] $\mathit{W}\hookrightarrow\hookrightarrow L^{r(\cdot)}_{loc}(\mathbb{R}^{N})\times L^{r(\cdot)}_{loc}(\mathbb{R}^{N})$ for any $r \in C_{+}(\mathbb{R}^{N})$ satisfying $r(x)<p^{\star}_{s}(x)$ for all $x \in \mathbb{R}^{N}$.
  \end{itemize}
  \end{lemma}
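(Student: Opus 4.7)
The plan is to deduce Lemma \ref{mergulho} directly from the componentwise embeddings provided by Theorem \ref{subcrtiticalembeddings}, since the product norm on $\mathit{W}$ is defined simply as $\|(u,v)\| = \|u\| + \|v\|$ and the product Lebesgue norm on $L^{r(\cdot)}(\mathbb{R}^N)\times L^{r(\cdot)}(\mathbb{R}^N)$ is taken analogously. The argument therefore factors cleanly through each coordinate, and no new technique is required beyond organising the constants and extracting common subsequences.

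For part $(a)$, I would take an arbitrary $(u,v)\in\mathit{W}$, so that both $u$ and $v$ belong to $W^{s,p(\cdot,\cdot)}(\mathbb{R}^N)$. Under the hypotheses on $r$ (uniform continuity, $p(x)\leqslant r(x)$ and $\inf_{x\in\mathbb{R}^N}(p^{\star}_{s}(x)-r(x))>0$), Theorem \ref{subcrtiticalembeddings}(a) furnishes a constant $C>0$, independent of the function, such that
\begin{equation*}
\|u\|_{L^{r(\cdot)}(\mathbb{R}^N)}\leqslant C\|u\|\quad\text{and}\quad \|v\|_{L^{r(\cdot)}(\mathbb{R}^N)}\leqslant C\|v\|.
\end{equation*}
Adding the two inequalities yields $\|u\|_{L^{r(\cdot)}(\mathbb{R}^N)}+\|v\|_{L^{r(\cdot)}(\mathbb{R}^N)}\leqslant C\|(u,v)\|$, which is precisely the continuous embedding of $\mathit{W}$ into $L^{r(\cdot)}(\mathbb{R}^N)\times L^{r(\cdot)}(\mathbb{R}^N)$.

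For part $(b)$, I would start from an arbitrary bounded sequence $\{(u_k,v_k)\}_{k\in\mathbb{N}}$ in $\mathit{W}$, observing that $\{u_k\}_{k\in\mathbb{N}}$ and $\{v_k\}_{k\in\mathbb{N}}$ are then bounded in $W^{s,p(\cdot,\cdot)}(\mathbb{R}^N)$. Since $r\in C_{+}(\mathbb{R}^N)$ satisfies $r(x)<p^{\star}_{s}(x)$ for all $x\in\mathbb{R}^N$, Theorem \ref{subcrtiticalembeddings}(b) gives compact embedding into $L^{r(\cdot)}_{\mathrm{loc}}(\mathbb{R}^N)$. Applying this to $\{u_k\}$, a subsequence (still denoted $\{u_k\}$) converges strongly in $L^{r(\cdot)}_{\mathrm{loc}}(\mathbb{R}^N)$ to some $u$. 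Applying the same compact embedding to $\{v_k\}$ along this extracted subsequence, I extract a further subsequence converging strongly in $L^{r(\cdot)}_{\mathrm{loc}}(\mathbb{R}^N)$ to some $v$. The pair $(u_k,v_k)$ then converges to $(u,v)$ in $L^{r(\cdot)}_{\mathrm{loc}}(\mathbb{R}^N)\times L^{r(\cdot)}_{\mathrm{loc}}(\mathbb{R}^N)$ with respect to the product norm, which establishes the claimed compact embedding.

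There is no genuinely hard step here; the only mild point of care is the diagonal/consecutive extraction of subsequences in $(b)$ and the fact that the embedding constant in $(a)$ is independent of the function (which is built into Theorem \ref{subcrtiticalembeddings}). If desired, the proofs of $(a)$ and $(b)$ can be written in a single line each, invoking Theorem \ref{subcrtiticalembeddings} coordinatewise, so I would present the lemma essentially as a corollary of the scalar embeddings already available in \cite{kiho}.
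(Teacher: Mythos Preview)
Your proposal is correct and matches the paper's own approach: the paper simply states that Lemma \ref{mergulho} is a direct consequence of Theorem \ref{subcrtiticalembeddings} applied componentwise, without providing any further details. Your write-up is in fact more explicit than what the paper gives.
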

  
 \begin{lemma} \label{mergulhocom}
 Assume that $(\mathfrak{p})$, $(\mathfrak{q})$ and condition \eqref{3.2} hold. Let $\kappa \in L^{\frac{q(\cdot)}{q(\cdot)-r(\cdot)}}_{+}(\mathbb{R}^{N})$ for some $ r \in C_{+}(\mathbb{R}^{N})$ such that  $\displaystyle{\inf_{x \in \mathbb{R}^{N}}[q(x)-r(x)]>0}$. Then, it holds that
 $$\mathit{W}\hookrightarrow\hookrightarrow L^{r(\cdot)}(\kappa,\mathbb{R}^{N})\times L^{r(\cdot)}(\kappa,\mathbb{R}^{N}).$$
\end{lemma}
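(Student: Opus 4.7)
The plan is to reduce the compactness of the vectorial embedding to the scalar compact embedding already furnished by Theorem \ref{criticoa}, exploiting the product structure of both spaces: $\mathit{W}=W^{s,p(\cdot,\cdot)}(\mathbb{R}^{N})\times W^{s,p(\cdot,\cdot)}(\mathbb{R}^{N})$ with norm $\|(u,v)\|=\|u\|+\|v\|$, and analogously $L^{r(\cdot)}(\kappa,\mathbb{R}^{N})\times L^{r(\cdot)}(\kappa,\mathbb{R}^{N})$ endowed with the sum of the component norms.

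First I would verify the continuity of the embedding. For every pair $(u,v)\in \mathit{W}$, each component satisfies $u,v\in W^{s,p(\cdot,\cdot)}(\mathbb{R}^{N})$ with $\|u\|\leqslant\|(u,v)\|$ and $\|v\|\leqslant\|(u,v)\|$. Under the standing assumptions $(\mathfrak{p})$, $(\mathfrak{q})$, the log-Hölder condition \eqref{3.2}, the integrability $\kappa \in L^{\frac{q(\cdot)}{q(\cdot)-r(\cdot)}}_{+}(\mathbb{R}^{N})$, and the gap $\inf_{x\in \mathbb{R}^{N}}[q(x)-r(x)]>0$, Theorem \ref{criticoa} supplies a constant $C>0$ such that $\|w\|_{L^{r(\cdot)}(\kappa,\mathbb{R}^{N})}\leqslant C\|w\|$ for every $w\in W^{s,p(\cdot,\cdot)}(\mathbb{R}^{N})$. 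Adding the two component estimates yields
$$\|u\|_{L^{r(\cdot)}(\kappa,\mathbb{R}^{N})}+\|v\|_{L^{r(\cdot)}(\kappa,\mathbb{R}^{N})}\leqslant 2C\,\|(u,v)\|,$$
which is the continuous embedding into the product target.

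For the compactness, I would start from an arbitrary bounded sequence $\{(u_k,v_k)\}_{k\in\mathbb{N}}\subset \mathit{W}$. Then $\{u_k\}$ and $\{v_k\}$ are both bounded in $W^{s,p(\cdot,\cdot)}(\mathbb{R}^{N})$. Applying the scalar compact embedding of Theorem \ref{criticoa} to $\{u_k\}$, I extract a subsequence $\{u_{k_j}\}$ converging strongly in $L^{r(\cdot)}(\kappa,\mathbb{R}^{N})$ to some $u$. Applying the same theorem to the corresponding sequence $\{v_{k_j}\}$, I extract a further subsequence $\{v_{k_{j_\ell}}\}$ converging strongly in $L^{r(\cdot)}(\kappa,\mathbb{R}^{N})$ to some $v$. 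Since $\{u_{k_{j_\ell}}\}$ remains a subsequence of $\{u_{k_j}\}$, it still converges to $u$ in $L^{r(\cdot)}(\kappa,\mathbb{R}^{N})$. Consequently the diagonal subsequence $\{(u_{k_{j_\ell}},v_{k_{j_\ell}})\}$ converges to $(u,v)$ in the product norm, establishing the compactness.

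The argument is essentially routine once Theorem \ref{criticoa} is available; there is no substantive obstacle, and the only care required is the successive extraction of subsequences to handle the two components simultaneously.
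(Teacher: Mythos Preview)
Your proposal is correct and matches the paper's approach: the paper states Lemma~\ref{mergulhocom} (together with Lemma~\ref{mergulho}) as an immediate consequence of Theorem~\ref{criticoa}, without further argument, and your reduction to the scalar compact embedding via the product structure and successive subsequence extraction is exactly the routine justification that this entails.
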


\subsection{Concentration-compactness principle  in $\mathit{W}$ }\label{sec3}
We consider, without further mentioning, that the assumptions required in Theorems \ref{lions} and  \ref{lionsinfity} are satisfied.   Moreover, we  will denote by   $c_{i}$, $c_{i}(\mathcal{R})$ positive constants for all $i \in \mathbb{N}$ and $\mathcal{R} > 2$.
 
First, we note that if $p$ satisfies $(\mathit{P})$, then it satisfies $(\mathfrak{p})$ and \eqref{3.2}, also $p(x, x) = \overline{p} $ for all $x \in \mathbb{R}^{N}$.
    Hence, by Theorem \ref{critico}, we have that
  $ W^{s,p(\cdot,\cdot)}(\mathbb{R}^{N})\hookrightarrow L^{\overline{p}^{\star}_{s}}(\mathbb{R}^{N}).$
Then,
\begin{equation}\label{imer}
\mathit{W} \hookrightarrow L^{\overline{p}^{\star}_{s}}(\mathbb{R}^{N})\times  L^{\overline{p}^{\star}_{s}}(\mathbb{R}^{N}).
\end{equation} 
  Moreover, we conclude that $L^{\overline{p}}(\mathbb{R}^{N})\subset L^{p_{\star}(\cdot)}(\mathbb{R}^{N})$ and $W^{s,p(\cdot,\cdot)}(\mathbb{R}^{N})\hookrightarrow L^{t(\cdot)}(\mathbb{R}^{N})$ for any $t  \in C_{+}(\mathbb{R}^{N})$ satisfying $p_{\star}(x)\leqslant t(x) \leqslant \overline{p}^{\star}_{s}$ for all $x \in \mathbb{R}^{N}$ (see   \cite{kiho}).  
  
   Thus, for any $t  \in C_{+}(\mathbb{R}^{N})$ satisfying $p_{\star}(x)\leqslant t(x) \leqslant \overline{p}^{\star}_{s}$ for all $x \in \mathbb{R}^{N}$,
   \begin{equation}\label{remark}
   \mathit{W}\hookrightarrow L^{t(\cdot)}(\mathbb{R}^{N})\times  L^{t(\cdot)}(\mathbb{R}^{N}).
   \end{equation}
   
 Further, since $\alpha(x)+\beta(x)=q(x)$  for all $x \in \mathbb{R}^{N}$,  $q$ satisfies $(\mathit{Q})$, and by Proposition \ref{mista}, also H\"older inequality (see Proposition \ref{hold3}), we have
 \begin{equation}\label{4.4}
 \begin{split}
 \int_{\mathbb{R}^{N}}|u|^{\alpha(x)}|v|^{\beta(x)}\,dx \leqslant c_{1} \max \Big\{\|u\|^{\alpha^{-}}_{L^{q(\cdot)}(\mathbb{R}^{N})}\|v\|^{\beta^{-}}_{L^{q(\cdot)}(\mathbb{R}^{N})},  \|u\|^{\alpha^{-}}_{L^{q(\cdot)}(\mathbb{R}^{N})}\|v\|^{\beta^{+}}_{L^{q(\cdot)}(\mathbb{R}^{N})},\\ \|u\|^{\alpha^{+}}_{L^{q(\cdot)}(\mathbb{R}^{N})}\|v\|^{\beta^{-}}_{L^{q(\cdot)}(\mathbb{R}^{N})},\|u\|^{\alpha^{+}}_{L^{q(\cdot)}(\mathbb{R}^{N})}\|v\|^{\beta^{+}}_{L^{q(\cdot)}(\mathbb{R}^{N})}\Big\}.
 \end{split}
 \end{equation}
 
 On the other hand, since the following continuous embedding is valid, i.e. $W^{s,p(\cdot,\cdot)}(\mathbb{R}^{N})\hookrightarrow L^{q(\cdot)}(\mathbb{R}^{N})$, for each $q \in C_{+}(\mathbb{R}^{N})$ satisfying $p_{\star}(x) \leqslant q(x)\leqslant  \overline{p}^{\star}_{s}$ for all $x \in \mathbb{R}^{N}$, we conclude from \eqref{4.4} that
  \begin{equation}\label{holder}
 \begin{split}
 \int_{\mathbb{R}^{N}}|u|^{\alpha(x)}|v|^{\beta(x)}\,dx \leqslant &c_{2}\max \Big\{\|u\|^{\alpha^{-}}\|v\|^{\beta^{-}}, \|u\|^{\alpha^{-}}\|v\|^{\beta^{+}},  \|u\|^{\alpha^{+}}\|v\|^{\beta^{-}},\|u\|^{\alpha^{+}}\|v\|^{\beta^{+}}\Big\}\\ \leqslant& c_{2}\max \Big\{\|(u,v)\|^{q^{-}}, \|(u,v)\|^{q^{+}}\Big\}.
 \end{split}
 \end{equation}

      \noindent In particular, from definition of $\big\|u^{\frac{\alpha(\cdot)}{q(\cdot)}} v^{\frac{\beta(\cdot)}{q(\cdot)}}\big\|_{L^{q(\cdot)}(\mathbb{R}^{N})}$, \eqref{4.4}  and \eqref{holder}, yields
$\mathcal{S}_{\alpha\beta}>0$ (see \eqref{SS}).

  \subsection{Technical lemmas}
The following Lemma \ref{4.4a} and  \ref{4.5inf} are essential to prove the Theorem \ref{lions}  and \ref{lionsinfity} respectively. The proof of these results are similar to \cite[Lemmas 4.4 and  4.5]{kihosim}, therefore the proofs are omitted.
  \begin{lemma}\label{4.4a}
  Let $x_{0} \in \mathbb{R}^{N}$ be fixed and let $\Psi \in C^{\infty}(\mathbb{R}^{N})$ be such that $0 \leqslant \Psi \leqslant 1,$ $\Psi\equiv 1$ on $B_1$, $supp (\Psi) \subset B_2$ and $\| \nabla \Psi\|_{\infty} \leqslant 2.$  For $\rho>0$,  define $\Psi_{\rho}(x):= \Psi\big( \frac{x-x_0}{\rho}\big)$ for $  x \in \mathbb{R}^{N}$. Let
$(\mathit{P})$ hold and let $\{(u_k, v_k) \}_{k \in \mathbb{N}}$ be as in Theorem \ref{lions}. Then, we have
\begin{equation}\label{4000}
\limsup_{\rho\to 0^{+}}\limsup_{k\to +\infty} \int_{\mathbb{R}^{N}}\int_{\mathbb{R}^{N}}\big(|u_{k}(x)|^{p(x,y)}+ |v_{k}(x)|^{p(x,y)}\big)\frac{|\Psi_{\rho}(x)-\Psi_{\rho}(y)|^{p(x,y)}}{|x-y|^{N+sp(x,y)}} \,dy\,dx=0.
\end{equation}
  \end{lemma}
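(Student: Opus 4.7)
\textbf{Proof plan for Lemma \ref{4.4a}.}
Since $\rho \to 0^+$, I may assume throughout that $\rho < \varepsilon_0/4$, so that by assumption $(\mathit{P})$ we have $p(x,y) = \overline{p}$ whenever $|x-y| < 2\rho$. Denote the double integral in \eqref{4000} by $I_k(\rho)$ and split the region of integration into the near-diagonal part $D_\rho := \{|x-y| < \rho\}$ and the far part $D_\rho^c := \{|x-y| \geq \rho\}$, so that $I_k(\rho) = I_k^{\mathrm{near}}(\rho) + I_k^{\mathrm{far}}(\rho)$.

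For $I_k^{\mathrm{near}}$, the integrand vanishes unless $\Psi_\rho(x) \neq \Psi_\rho(y)$, which together with $|x-y| < \rho$ forces $x \in B_{3\rho}(x_0)$. On $D_\rho$ we have $p(x,y) = \overline{p}$ and the gradient bound $|\Psi_\rho(x)-\Psi_\rho(y)| \leq (2/\rho)|x-y|$. Integrating the factor $|x-y|^{\overline{p}-N-s\overline{p}}$ over $y \in B_\rho(x)$ yields $C\rho^{\overline{p}(1-s)}$, and I obtain
\[
I_k^{\mathrm{near}}(\rho) \leq C\,\rho^{-s\overline{p}} \int_{B_{3\rho}(x_0)} \bigl(|u_k|^{\overline{p}} + |v_k|^{\overline{p}}\bigr)\,dx.
\]
By the compact embedding Lemma \ref{mergulho}(b) applied with the subcritical exponent $\overline{p} < \overline{p}^\star_s$, $(u_k,v_k) \to (u,v)$ in $L^{\overline{p}}_{loc}(\mathbb{R}^N)^2$, so the $\limsup_k$ replaces $u_k,v_k$ by $u,v$. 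Then Hölder's inequality with conjugate exponents $\overline{p}^\star_s/\overline{p}$ and $N/(s\overline{p})$ gives $\int_{B_{3\rho}(x_0)} |u|^{\overline{p}}\,dx \leq C\rho^{s\overline{p}} \|u\|_{L^{\overline{p}^\star_s}(B_{3\rho}(x_0))}^{\overline{p}}$, which cancels the $\rho^{-s\overline{p}}$ factor; since $u,v \in L^{\overline{p}^\star_s}(\mathbb{R}^N)$ by \eqref{imer}, absolute continuity of the integral forces this to vanish as $\rho \to 0$.

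For $I_k^{\mathrm{far}}$, use the crude bound $|\Psi_\rho(x)-\Psi_\rho(y)|^{p(x,y)} \leq 2^{\overline{p}}$. The support of the integrand requires at least one of $x,y$ to lie in $B_{2\rho}(x_0)$, so I split into (a) $x \in B_{2\rho}(x_0)$ and (b) $y \in B_{2\rho}(x_0)$ with $x \notin B_{2\rho}(x_0)$. In (a), bound the $y$-integral by separating $\rho \leq |x-y| \leq 1$ (where $p(x,y) \leq \overline{p}$ so the kernel is $\leq |x-y|^{-N-s\overline{p}}$) and $|x-y| > 1$ (where $p(x,y) \geq p^-$ so the kernel is $\leq |x-y|^{-N-sp^-}$), getting $C\rho^{-s\overline{p}}$. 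Using the pointwise bound $|u_k(x)|^{p(x,y)} \leq 1 + |u_k(x)|^{\overline{p}}$, subcase (a) reduces to terms of the form $C\rho^{-s\overline{p}}|B_{2\rho}| = C\rho^{N-s\overline{p}} \to 0$ (since $s\overline{p}<N$) plus a term of the type already handled in $I^{\mathrm{near}}$. In subcase (b), split further by $|x-x_0| \in [2\rho,4\rho]$ (identical to (a) with $B_{4\rho}$) or $|x-x_0| > 4\rho$ (where $|x-y| \geq |x-x_0|/2$ and the kernel gives $\int_{B_{2\rho}(x_0)} |x-y|^{-N-sp(x,y)}\,dy \leq C\rho^N |x-x_0|^{-N-sp^-}$); integrating the tail against $|u_k|^{p(\cdot,\cdot)} + |v_k|^{p(\cdot,\cdot)}$ yields $O(\rho^N) \to 0$ via the uniform boundedness in $L^{\overline{p}}(\mathbb{R}^N) \cap L^{p_\star(\cdot)}(\mathbb{R}^N)$.

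\textbf{Main obstacle.} The chief technical difficulty lies in the far part (b) with $|x-x_0|>4\rho$: the non-homogeneity of the kernel $|x-y|^{-N-sp(x,y)}$ under the variable exponent $p(x,y)$ requires a delicate split according to the magnitudes of $|x-y|$ and $|u_k(x)|$, together with the variable-exponent Lebesgue embedding $W^{s,p(\cdot,\cdot)}(\mathbb{R}^N) \hookrightarrow L^{p_\star(\cdot)}(\mathbb{R}^N)$ recalled after Lemma \ref{mergulhocom}, so that the $p(x,y)$-power of $|u_k(x)|$ can be dominated uniformly in $k$ and the tail integral controlled. The passage from the uniform bound on $\{(u_k,v_k)\}$ to a bound vanishing in the iterated limit relies crucially on combining compact embedding in $L^{\overline{p}}_{loc}$ (to replace $u_k$ by $u$) with absolute continuity of $u \in L^{\overline{p}^\star_s}(\mathbb{R}^N)$ (to eliminate the $\rho^{-s\overline{p}}$ blow-up).
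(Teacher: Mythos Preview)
The paper does not supply its own proof of this lemma; it simply refers to \cite[Lemma 4.4]{kihosim}. Your overall strategy --- near/far splitting, the Lipschitz bound on $\Psi_\rho$, compact embedding in $L^{\overline p}_{\mathrm{loc}}$ to pass from $u_k$ to $u$, and then H\"older against the critical exponent together with absolute continuity of $|u|^{\overline p^{\star}_{s}}$ --- is the standard one and your handling of $I^{\mathrm{near}}$ and of subcase (a) is correct.

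There is, however, a genuine gap in the tail part of subcase (b), i.e.\ the region $\{|x-x_0|>4\rho,\ y\in B_{2\rho}(x_0)\}$. First, the kernel bound you state, $\int_{B_{2\rho}(x_0)}|x-y|^{-N-sp(x,y)}\,dy\leqslant C\rho^N|x-x_0|^{-N-sp^-}$, is not valid on the annulus $4\rho<|x-x_0|\lesssim 1$: there $|x-y|<1$, so the correct exponent is $\overline p$, not $p^-$, and the bound is $C\rho^N|x-x_0|^{-N-s\overline p}$. Second, and more seriously, the claim that integrating this against $|u_k|^{p_\star(\cdot)}+|u_k|^{\overline p}$ gives $O(\rho^N)$ via uniform $L^{\overline p}\cap L^{p_\star(\cdot)}$ bounds is false: H\"older with exponents $\overline p^{\star}_{s}/\overline p$ and $N/(s\overline p)$ on the annulus $4\rho<|x-x_0|\leqslant 1$ produces a weight norm of order $\rho^{-N}$, so the $\rho^N$ factor is exactly cancelled and you only get $O(1)$.

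The fix is to insert an intermediate scale $\delta>0$: on $\delta<|x-x_0|\leqslant 1$ the weight is bounded by $\delta^{-N-s\overline p}$ and the contribution is $O_\delta(\rho^N)\to 0$; on $4\rho<|x-x_0|\leqslant\delta$ the domain is inside the fixed ball $B_\delta(x_0)$, so compact embedding replaces $u_k$ by $u$, and then H\"older bounds this piece by $C\|u\|_{L^{\overline p^{\star}_{s}}(B_\delta(x_0))}^{\overline p}$, which tends to $0$ as $\delta\to 0$. Taking $\limsup_\rho$ first and then $\delta\to 0$ closes the argument. Your ``Main obstacle'' paragraph names precisely these two tools, so the repair is routine --- but as written, the $O(\rho^N)$ claim in (b) does not hold.
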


  \begin{lemma}\label{4.5inf}
   Let  $\Phi \in C^{\infty}(\mathbb{R}^{N})$  be such that $0 \leqslant \Phi \leqslant 1,$ $\Phi\equiv 1$ on $\mathbb{R}^{N}\setminus B_2$,  and $\| \nabla \Phi\|_{\infty} \leqslant 2.$  For $R>0$,  define $\Phi_{R}(x):= \Phi\big( \frac{x}{R}\big)$ for $  x \in \mathbb{R}^{N}$. Let
$(\mathit{P})$ hold and let $\{(u_k, v_k) \}_{k \in \mathbb{N}}$ be as in Theorem \ref{lionsinfity}. Then, we have
\begin{equation*}\label{40001}
\limsup_{R\to\infty}\limsup_{k\to +\infty} \int_{\mathbb{R}^{N}}\int_{\mathbb{R}^{N}}\big(|u_{k}(x)|^{p(x,y)}+ |v_{k}(x)|^{p(x,y)}\big)\frac{|\Phi_{R}(x)-\Phi_{R}(y)|^{p(x,y)}}{|x-y|^{N+sp(x,y)}} \,dy\,dx =0.
\end{equation*} 
    \end{lemma}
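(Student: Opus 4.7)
The plan is to reduce the claim to its scalar analogue, since the integrand splits as the sum of two terms of identical form, one involving $u_k$ and one involving $v_k$. It therefore suffices to prove that, for every sequence $\{w_k\}$ bounded in $W^{s,p(\cdot,\cdot)}(\mathbb{R}^{N})$, one has
\[
\limsup_{R\to\infty}\limsup_{k\to\infty}\int_{\mathbb{R}^{N}}\!\int_{\mathbb{R}^{N}}|w_k(x)|^{p(x,y)}\,\frac{|\Phi_R(x)-\Phi_R(y)|^{p(x,y)}}{|x-y|^{N+sp(x,y)}}\,dy\,dx = 0,
\]
and then to apply this with $w_k=u_k$ and $w_k=v_k$. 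The strategy will be to decompose the double integral at the diagonal threshold $|x-y|=\varepsilon_0$ supplied by hypothesis $(\mathit{P})$, writing the left-hand side as $J^{\mathrm{near}}_R(w_k)+J^{\mathrm{far}}_R(w_k)$.

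For the near piece, $(\mathit{P})$ forces $p(x,y)\equiv\overline{p}$ on $\{|x-y|<\varepsilon_0\}$. The chain rule gives $\|\nabla\Phi_R\|_{\infty}\leq 2/R$, so the mean-value bound $|\Phi_R(x)-\Phi_R(y)|\leq (2/R)|x-y|$ together with the elementary integrability of $|z|^{\overline{p}(1-s)-N}$ near the origin (valid because $\overline{p}(1-s)>0$) yields
\[
J^{\mathrm{near}}_R(w_k)\leq C\,R^{-\overline{p}}\,\|w_k\|_{L^{\overline{p}}(\mathbb{R}^{N})}^{\overline{p}}.
\]
Since $p(x,x)=\overline{p}$ by $(\mathit{P})$, the very definition of $W^{s,p(\cdot,\cdot)}(\mathbb{R}^{N})$ embeds it into $L^{\overline{p}}(\mathbb{R}^{N})$, so $\{\|w_k\|_{L^{\overline{p}}}\}$ stays bounded and $J^{\mathrm{near}}_R\to 0$ as $R\to\infty$ uniformly in $k$.

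For the far piece I would split further according to whether $x$ lies in the exceptional set $E:=\{x\in\mathbb{R}^{N}:p_{\star}(x)\neq\overline{p}\}$, which has finite Lebesgue measure by $(\mathit{P})$. On $E^{c}$ one has $p(x,y)=\overline{p}$ for every $y$, and the rescaling $y=Rz$ combined with the uniform bound
\[
\sup_{u\in\mathbb{R}^{N}}\int_{\mathbb{R}^{N}}\frac{|\Phi(u)-\Phi(z)|^{\overline{p}}}{|u-z|^{N+s\overline{p}}}\,dz\leq C,
\]
obtained from $|\Phi(u)-\Phi(z)|\leq\min\{1,2|u-z|\}$ by splitting at $|u-z|=1$, yields
\[
\int_{|x-y|\geq\varepsilon_0}\frac{|\Phi_R(x)-\Phi_R(y)|^{\overline{p}}}{|x-y|^{N+s\overline{p}}}\,dy\leq C\,R^{-s\overline{p}},
\]
so the $E^{c}$ contribution is $O(R^{-s\overline{p}})\|w_k\|_{L^{\overline{p}}}^{\overline{p}}\to 0$ uniformly in $k$. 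On $E$, I would use the pointwise bound $|w_k(x)|^{p(x,y)}\leq 1+|w_k(x)|^{\overline{p}}$ and introduce
\[
g_R(x):=\int_{|x-y|\geq\varepsilon_0}\frac{|\Phi_R(x)-\Phi_R(y)|^{p(x,y)}}{|x-y|^{N+sp(x,y)}}\,dy.
\]
The estimate $|\Phi_R(x)-\Phi_R(y)|\leq 1$ together with an explicit integration (again split at $|x-y|=1$) gives $g_R\leq C$ uniformly in $x$ and $R$, while pointwise $\Phi_R(x),\Phi_R(y)\to\Phi(0)$ by continuity of $\Phi$, so dominated convergence in $y$ forces $g_R(x)\to 0$ for every $x$. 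Because $|E|<\infty$, a second application of dominated convergence yields $\|g_R\|_{L^{r'}(E)}\to 0$ for every $r'\in[1,\infty)$. Choosing $r'$ conjugate to $r=\overline{p}^{\star}_{s}/\overline{p}>1$ and applying Hölder's inequality,
\[
\int_{E}|w_k|^{\overline{p}}g_R\,dx\leq \|w_k\|_{L^{\overline{p}^{\star}_{s}}(\mathbb{R}^{N})}^{\overline{p}}\,\|g_R\|_{L^{r'}(E)},
\]
which tends to zero uniformly in $k$ by the critical embedding \eqref{imer}; the $\int_{E}g_R\,dx$ term is controlled by the same dominated convergence argument.

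The main obstacle I expect is the variable-exponent factor $|w_k(x)|^{p(x,y)}$ on the region $E$: its joint $x$–$y$ dependence prevents a direct separation of the two integrations, and the natural bound $|w|^{p(x,y)}\leq 1+|w|^{\overline{p}}$ is not globally integrable on $\mathbb{R}^{N}$. It is precisely the finiteness $|E|<\infty$ encoded in hypothesis $(\mathit{P})$, coupled with the critical embedding into $L^{\overline{p}^{\star}_{s}}$, that closes the argument; everything else reduces to routine scaling and mean-value estimation.
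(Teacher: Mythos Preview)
Your argument is correct. The paper does not prove this lemma itself but simply defers to \cite[Lemmas~4.4 and~4.5]{kihosim}; your proposal reproduces precisely that standard route---split at the diagonal threshold $|x-y|=\varepsilon_0$ (where $(\mathit P)$ forces $p\equiv\overline p$), control the near-diagonal piece by the Lipschitz bound $|\nabla\Phi_R|\le 2/R$ and the $L^{\overline p}$-bound on $w_k$, and for the off-diagonal piece separate $x\in E^{c}$ (where $p(x,\cdot)\equiv\overline p$ and rescaling gives $R^{-s\overline p}$ decay) from $x\in E$ (where $|E|<\infty$, dominated convergence, and the critical embedding \eqref{imer} close the estimate).
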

 
  \noindent  The next lemma is a variant of the Brezis–Lieb Lemma, for vectorial critical terms.
    \begin{lemma}\label{lieb}
    Let   $\{(u_{k}, v_{k})\}_{k\in \mathbb{N}}$ be a bounded sequence in $\mathit{W}$ and $(u_{k}(x), v_{k}(x))\to (u(x),v(x))$ for a.e. $x \in\mathbb{R}^{N}$. Then 
    $$\lim_{k\to+\infty}\int_{\mathbb{R}^{N}}\left( |u_k|^{\alpha(x)}|v_{k}|^{\beta(x)}-|\tilde{u}_k|^{\alpha(x)}|\tilde{v_k}|^{\beta(x)}-|u|^{\alpha(x)}|v|^{\beta(x)}\right)\,dx=0,$$
    where $\tilde{u}_{k}=u_k-u$ and $\tilde{v}_{k}=v_{k}-v.$
    \end{lemma}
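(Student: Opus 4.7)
The plan is to adapt the classical Brezis--Lieb lemma to the variable-exponent product $|u|^{\alpha(x)}|v|^{\beta(x)}$, with the identity $\alpha(x)+\beta(x)=q(x)$ used repeatedly. The core tool will be a pointwise algebraic inequality: for every $\varepsilon>0$ there exists $C_{\varepsilon}>0$ such that, for all $a,b,c,d\in\mathbb{R}$ and a.e. $x\in\mathbb{R}^{N}$,
\begin{equation*}
\bigl||a+b|^{\alpha(x)}|c+d|^{\beta(x)} - |a|^{\alpha(x)}|c|^{\beta(x)} - |b|^{\alpha(x)}|d|^{\beta(x)}\bigr|\leqslant \varepsilon\bigl(|a|^{q(x)}+|c|^{q(x)}\bigr)+C_{\varepsilon}\bigl(|b|^{q(x)}+|d|^{q(x)}\bigr).
\end{equation*}
I would obtain this by first writing $|a+b|^{\alpha}|c+d|^{\beta}-|a|^{\alpha}|c|^{\beta}=(|a+b|^{\alpha}-|a|^{\alpha})|c+d|^{\beta}+|a|^{\alpha}(|c+d|^{\beta}-|c|^{\beta})$, then applying the scalar Brezis--Lieb inequality $\bigl||x+y|^{p}-|x|^{p}\bigr|\leqslant \varepsilon|x|^{p}+C_{\varepsilon,p}|y|^{p}$ (with $p$ ranging over the compact interval $[\min\{\alpha^{-},\beta^{-}\},\max\{\alpha^{+},\beta^{+}\}]$, so the constant can be chosen uniformly in $p$ and hence in $x$), and finally absorbing the resulting mixed products $|a|^{\alpha(x)}|d|^{\beta(x)}$ and $|b|^{\alpha(x)}|c|^{\beta(x)}$ into the right-hand side via the variable-exponent Young inequality together with $\alpha(x)+\beta(x)=q(x)$.

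With the inequality in hand, set $a=\tilde u_{k}$, $b=u$, $c=\tilde v_{k}$, $d=v$, and denote
\begin{equation*}
F_{k}(x):=|u_{k}|^{\alpha(x)}|v_{k}|^{\beta(x)}-|\tilde u_{k}|^{\alpha(x)}|\tilde v_{k}|^{\beta(x)}-|u|^{\alpha(x)}|v|^{\beta(x)}.
\end{equation*}
Then $|F_{k}(x)|\leqslant\varepsilon\bigl(|\tilde u_{k}|^{q(x)}+|\tilde v_{k}|^{q(x)}\bigr)+C_{\varepsilon}\bigl(|u|^{q(x)}+|v|^{q(x)}\bigr)$. The embedding \eqref{remark} and Proposition \ref{masmenos}, applied to the bounded sequence $\{(u_{k},v_{k})\}\subset\mathit{W}$, give a constant $M>0$ independent of $k$ with $\int_{\mathbb{R}^{N}}\bigl(|\tilde u_{k}|^{q(x)}+|\tilde v_{k}|^{q(x)}\bigr)\,dx\leqslant M$, while $u,v\in L^{q(\cdot)}(\mathbb{R}^{N})$ ensures $|u|^{q(\cdot)}+|v|^{q(\cdot)}\in L^{1}(\mathbb{R}^{N})$.

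I would then introduce the cut-off $G_{k,\varepsilon}(x):=\max\bigl\{0,\,|F_{k}(x)|-\varepsilon(|\tilde u_{k}(x)|^{q(x)}+|\tilde v_{k}(x)|^{q(x)})\bigr\}$. By construction $0\leqslant G_{k,\varepsilon}\leqslant C_{\varepsilon}(|u|^{q(\cdot)}+|v|^{q(\cdot)})$, providing an integrable dominator independent of $k$. Because $u_{k}(x)\to u(x)$ and $v_{k}(x)\to v(x)$ almost everywhere, $F_{k}(x)\to 0$ and $|\tilde u_{k}|^{q(x)}+|\tilde v_{k}|^{q(x)}\to 0$ a.e., so $G_{k,\varepsilon}\to 0$ a.e. Lebesgue dominated convergence yields $\int_{\mathbb{R}^{N}}G_{k,\varepsilon}\,dx\to 0$, and therefore
\begin{equation*}
\limsup_{k\to\infty}\int_{\mathbb{R}^{N}}|F_{k}(x)|\,dx\leqslant \varepsilon M.
\end{equation*}
Letting $\varepsilon\to 0^{+}$ gives $\int_{\mathbb{R}^{N}}|F_{k}|\,dx\to 0$, which is stronger than the claim.

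The principal obstacle will be the algebraic inequality with constants uniform in $x\in\mathbb{R}^{N}$. Although $\alpha,\beta\in C_{+}(\mathbb{R}^{N})$ forces the exponents into compact subintervals of $(1,\infty)$, one must still track that the scalar Brezis--Lieb constant $C_{\varepsilon,p}$ and the Young-type absorptions can be chosen independently of $x$; the relation $\alpha(x)+\beta(x)=q(x)$ is essential here, since it is what converts mixed products $|a|^{\alpha(x)}|d|^{\beta(x)}$ into admissible pure-power terms $|a|^{q(x)}+|d|^{q(x)}$ with the same exponent on both sides of the dominating inequality.
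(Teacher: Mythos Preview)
Your argument is correct and follows essentially the same Brezis--Lieb strategy as the paper: decompose the difference via the identity $(|a+b|^{\alpha}-|a|^{\alpha})|c+d|^{\beta}+|a|^{\alpha}(|c+d|^{\beta}-|c|^{\beta})$, apply the scalar inequality $\bigl||x+y|^{p}-|x|^{p}\bigr|\leqslant\varepsilon|x|^{p}+C_{\varepsilon}|y|^{p}$, subtract off the $\varepsilon$-part before invoking dominated convergence, and finish by letting $\varepsilon\to 0$. The one substantive refinement in your version is the extra Young step (using $\alpha(x)+\beta(x)=q(x)$) that converts the mixed products into pure $q(\cdot)$-powers; this yields the $k$-independent dominator $C_{\varepsilon}\bigl(|u|^{q(\cdot)}+|v|^{q(\cdot)}\bigr)$ for $G_{k,\varepsilon}$, whereas the paper's dominator $H_{\varepsilon}$ still contains factors like $|u|^{\alpha(x)}|v_{k}|^{\beta(x)}$ and $|\tilde u_{k}|^{\alpha(x)}|v|^{\beta(x)}$ that depend on $k$, making its application of dominated convergence less clean. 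Your formulation is therefore a tidier execution of the same idea.
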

    \begin{proof}
    Note that
    \begin{equation}\label{igual}
    \begin{split}
    |u_k|^{\alpha(x)}|v_k|^{\beta(x)}- |\tilde{u}_{k}|^{\alpha(x)}|\tilde{v}_{k}|^{\beta(x)}= \big[|u_k|^{\alpha(x)} - |\tilde{u}_{k}|^{\alpha(x)}]|v_k|^{\beta(x)}+ |\tilde{u}_{k}|^{\alpha(x)}[|v_k|^{\beta(x)}-|\tilde{v}_{k}|^{\beta(x)}\big]
    \end{split}
    \end{equation}
    for all $ x \in \mathbb{R}^{N}.$
    Now, for $\varepsilon>0$ fixed, there exists a constant $c(\varepsilon)>0$, such that for all $\mathfrak{a},\mathfrak{b} \in \mathbb{R}$, $r \in C_{+}(\mathbb{R}^{N})$ and for all $ x \in \mathbb{R}^{N}$,
    $$||\mathfrak{a}+\mathfrak{b}|^{r(x)}-|\mathfrak{a}|^{r(x)}|\leqslant \varepsilon|\mathfrak{a}|^{r(x)}+c(\varepsilon)|\mathfrak{b}|^{r(x)}.$$
  Hence,  by \eqref{igual}, we obtain
   \begin{equation}\label{igual2}
    \begin{split}
    |u_k|^{\alpha(x)}|v_k|^{\beta(x)} -|\tilde{u}_{k}|^{\alpha(x)}|\tilde{v}_{k}|^{\beta(x)} \leqslant &\big[\varepsilon|\tilde{v}_{k}|^{\alpha(x)}+ c(\varepsilon) |u|^{\alpha(x)}\big]|v_k|^{\beta(x)}\\&+ |\tilde{u}_{k}|^{\alpha(x)}\big[\varepsilon|\tilde{v}_{k}|^{\beta(x)}+c(\varepsilon)|v|^{\beta(x)}\big].
    \end{split}
    \end{equation}
Now, we  define the sequence
    $$f_{\varepsilon,k}(x):= \max\Big\{ | |u_k|^{\alpha(x)}|v_{k}|^{\beta(x)}-|\tilde{u}_k|^{\alpha(x)}|\tilde{v_k}|^{\beta(x)}-|u|^{\alpha(x)}|v|^{\beta(x)}| - \varepsilon |\tilde{u}_{k}|^{\alpha(x)}|\tilde{v}_{k}|^{\beta(x)}, 0 \Big\}.$$
    Then, we conclude that    $f_{\varepsilon,k}(x)\to 0$ a.e. $x \in \mathbb{R}^{N}$ as $k \to +\infty$. Moreover,  by \eqref{igual2} we conclude that  $|f_{\varepsilon,k}(x)|<H_{\varepsilon} \in L^{1}(\mathbb{R}^{N})$, where $$H_{\varepsilon}(x)= \varepsilon |u_k-u|^{\alpha(x)}|v_k|^{\beta(x)}+c(\varepsilon)|u|^{\alpha(x)}|v_k|^{\beta(x)}+c(\varepsilon)|u_k-u|^{\alpha(x)}|v|^{\beta(x)}+|u|^{\alpha(x)}|v|^{\beta(x)}.$$
    \noindent Thus, applying the Dominated Convergence Theorem, we obtain
    \begin{equation}\label{leimitef}
    \lim_{k\to+\infty}\int_{\mathbb{R}^{N}}f_{\varepsilon,k}(x)\,dx= 0.
    \end{equation}
    On the other hand, note that 
    $$||u_k|^{\alpha(x)}|v_{k}|^{\beta(x)}-|\tilde{u}_k|^{\alpha(x)}|\tilde{v_k}|^{\beta(x)}-|u|^{\alpha(x)}|v|^{\beta(x)}|\leqslant f_{\varepsilon,k}(x) + \varepsilon|\tilde{u}_{k}|^{\alpha(x)}|\tilde{v}_{k}|^{\beta(x)}. $$
   Hence, from \eqref{leimitef}, we obtain
     $$\lim_{k\to+\infty}\int_{\mathbb{R}^{N}}\big(|u_k|^{\alpha(x)}|v_{k}|^{\beta(x)}-|\tilde{u}_k|^{\alpha(x)}|\tilde{v_k}|^{\beta(x)}-|u|^{\alpha(x)}|v|^{\beta(x)}\big)\,dx=0.$$
    \end{proof}
    
    \section{Proof of Theorems \ref{lions} and \ref{lionsinfity}}

The main issue of this section is to provide, in details, the proof of 
Theorems \ref{lions} and \ref{lionsinfity}. 

    \subsection{Proof of Theorem \ref{lions}}
      \begin{proof}
     \noindent \textbf{1.} First, we set $\tilde{u}_{k}=u_k-u$ and $\tilde{v}_{k}=v_{k}-v$. Then   
\begin{equation}\label{conver}
(\tilde{u}_{k}, \tilde{v}_{k}) \longrightarrow (0,0)   \mbox{ in } \mathit{W}.
\end{equation}    
From Lemma \ref{mergulho} and  \eqref{conver}, we obtain
    \begin{equation}\label{converr}
    (\tilde{u}_{k}, \tilde{v}_{k})\to (0,0)    \mbox{ in } L^{r(\cdot)}_{loc}(\mathbb{R}^{N})\times L^{r(\cdot)}_{loc}(\mathbb{R}^{N}) \mbox{ as } k \to +\infty
    \end{equation}
    for any $r \in C_{+}(\mathbb{R}^{N})$ satisfying $ r(x)< \overline{p}^{\star}_{s}$ for all $x \in \mathbb{R}^{N}$. Hence, up to a subsequence, we get
    \begin{equation}\label{pontual}
    (\tilde{u}_k(x), \tilde{v}_k(x))\to (0,0)  \mbox{ a.e. } x \in \mathbb{R}^{N}.
    \end{equation}
Using \eqref{c3}, \eqref{converr}, \eqref{pontual}, and Lemma \ref{lieb}, it follows that     
    \begin{equation}\label{5.0}
     |\tilde{u}_k|^{\alpha(x)}|\tilde{v}_k|^{\beta(x)} \overset{\ast}{\rightharpoonup} \nu - |u|^{\alpha(x)}|v|^{\beta(x)}= \overline{\nu} \mbox{ in }  \mathcal{M}(\mathbb{R}^{N}).
    \end{equation}
 Now, denoting   for all $k \in \mathbb{N}$   
   $$\displaystyle{\mathcal{U}_{k}=|\tilde{u}_{k}|^{\overline{p}}+\int_{\mathbb{R}^{N}}\frac{|\tilde{u}_{k}(x)-\tilde{u}_{k}(y)|^{p(x,y)}}{|x-y|^{N+sp(x,y)}}\,dy,} \mbox{ and } \displaystyle{ \mathcal{V}_{k}=|\tilde{v}_{k}|^{\overline{p}}+\int_{\mathbb{R}^{N}}\frac{|\tilde{v}_{k}(x)-\tilde{v}_{k}(y)|^{p(x,y)}}{|x-y|^{N+sp(x,y)}}\,dy},$$
    we have that the sequence $\{(\mathcal{U}_{k}, \mathcal{V}_{k})\}_{k \in \mathbb{N}}$ is bounded in $L^{1}(\mathbb{R}^{2N})$. Then, up to a subsequence (see \cite[Proposition  1.202]{fonseca}), we have
   \begin{equation}\label{5.ao}
    \mathcal{U}_{k}+ \mathcal{V}_{k} \overset{\ast}{\rightharpoonup} \overline{\mu} \mbox{ in }  \mathcal{M}(\mathbb{R}^{N})
   \end{equation}
    for some nonnegative finite Radon measure $\overline{\mu}$ in $\mathbb{R}^{N}$. Let $\phi \in C^{\infty}_{c}(\mathbb{R}^{N})$ and $\mathcal{R}>2$, such that 
    \begin{equation}\label{5.a}
    supp\phi \subset B_{\mathcal{R}} \mbox{ and } d=dist(B^{c}_{\mathcal{R}};supp\phi) \geqslant 1 + \frac{\mathcal{R}}{2}.
    \end{equation}
Hence from \eqref{SS}, we obtain
    \begin{equation}\label{5.a1}
\mathcal{S}_{\alpha\beta}\bigg\|(\phi  \tilde{u})^{\frac{\alpha(\cdot)}{q(\cdot)}}(\phi  \tilde{v})^{\frac{\beta(\cdot)}{q(\cdot)}}\bigg\|_{L^{q(\cdot)}(\mathbb{R}^{N})} \leqslant \|(\phi  \tilde{u}_k,\phi  \tilde{v}_k)\|= \|\phi  \tilde{u}_k\|+ \|\phi  \tilde{v}_k\|.
    \end{equation}
    
\textbf{2.} Now, set $\overline{\nu}_{k}:= |\tilde{u}_k|^{\alpha(x)}|\tilde{v}_k|^{\beta(x)}$, $\vartheta_{k}:= \|(\phi  \tilde{u}_k,\phi  \tilde{v}_k)\|$, and 
$$\overline{\mu}_{k}:= |\tilde{u}_k|^{\overline{p}}+|\tilde{v}_{k}|^{\overline{p}}+ \int_{\mathbb{R}^{N} }\frac{|\tilde{u}_k(x)-\tilde{u}_k(y)|^{p(x,y)}}{|x-y|^{N+sp(x,y)}}\,dy+\int_{\mathbb{R}^{N} }\frac{|\tilde{v}_k(x)-\tilde{v}_k(y)|^{p(x,y)}}{|x-y|^{N+sp(x,y)}} \,dy. $$ 
Given $\varepsilon>0$, there exists a constant $C(\varepsilon)\in (2, \infty)$ such that for all $\mathfrak{c}, \mathfrak{d} \in \mathbb{R}$, $p\in C_{+}(\mathbb{R}^{2N})$  and  $\mbox{ for all } (x,y) \in \mathbb{R}^{2N},$ 
\begin{equation}\label{desiab}
|\mathfrak{c}+\mathfrak{d}|^{p(x,y)}\leqslant(1+ \varepsilon)|\mathfrak{c}|^{p(x,y)}+C(\varepsilon)|\mathfrak{d}|^{p(x,y)}.
\end{equation}
    Thus, due to Proposition \ref{norma} and \eqref{desiab}, we see that
    \begin{equation}\label{5.10a}
    \begin{split}
    1=& \int_{\mathbb{R}^{N}}\bigg|\frac{\phi  \tilde{u}_k}{\vartheta_{k}}\bigg|^{\overline{p}}\,dx+  \int_{\mathbb{R}^{N}}\int_{\mathbb{R}^{N}}\frac{|(\phi  \tilde{u}_k)(x)-(\phi  \tilde{u}_k)(y)|^{p(x,y)}}{\vartheta_{k}^{p(x,y)}|x-y|^{N+sp(x,y)}} \,dy\,dx\\ &+\int_{\mathbb{R}^{N}}\bigg|\frac{\phi  \tilde{v}_k}{\vartheta_{k}}\bigg|^{\overline{p}}\,dx+ \int_{\mathbb{R}^{N}}\int_{\mathbb{R}^{N}}\frac{|(\phi  \tilde{v}_k)(x)-(\phi  \tilde{v}_k)(y)|^{p(x,y)}}{\vartheta_{k}^{p(x,y)}|x-y|^{N+sp(x,y)}} \,dy\,dx\\ \leqslant&   \int_{\mathbb{R}^{N}}\bigg|\frac{\phi  \tilde{u}_k}{\vartheta_{k}}\bigg|^{\overline{p}}\,dx+(1+ \varepsilon)  \int_{\mathbb{R}^{N}}\int_{\mathbb{R}^{N}}\frac{|\phi (x)|^{p(x,y)}| \tilde{u}_k(x)- \tilde{u}_k(y)|^{p(x,y)}}{\vartheta_{k}^{p(x,y)}|x-y|^{N+sp(x,y)}} \,dy\,dx \\&+ C(\varepsilon)  \int_{\mathbb{R}^{N}}\int_{\mathbb{R}^{N}}\frac{|\tilde{u}_k(x)|^{p(x,y)}| \phi (x)- \phi (y)|^{p(x,y)}}{\vartheta_{k}^{p(x,y)}|x-y|^{N+sp(x,y)}} \,dy\,dx+ \int_{\mathbb{R}^{N}}\bigg|\frac{\phi  \tilde{v}_k}{\vartheta_{k}}\bigg|^{\overline{p}}\,dx \\&+  (1+ \varepsilon)  \int_{\mathbb{R}^{N}}\int_{\mathbb{R}^{N}}\frac{|\phi (x)|^{p(x,y)}| \tilde{v}_k(x)- \tilde{v}_k(y)|^{p(x,y)}}{\vartheta_{k}^{p(x,y)}|x-y|^{N+sp(x,y)}} \,dy\,dx \\&+ C(\varepsilon)  \int_{\mathbb{R}^{N}}\int_{\mathbb{R}^{N}}\frac{|\tilde{v}_k(x)|^{p(x,y)}| \phi (x)- \phi (y)|^{p(x,y)}}{\vartheta_{k}^{p(x,y)}|x-y|^{N+sp(x,y)}} \,dy\,dx.
    \end{split}
    \end{equation}
We define, for all $k \in \mathbb{N}$
\begin{equation}\label{nota}
I_{k}:=  \int_{\mathbb{R}^{N}}\int_{\mathbb{R}^{N}}\frac{(|\tilde{u}_k(y)|^{p(x,y)}+|\tilde{v}_k(y)|^{p(x,y)})| \phi (x)- \phi (y)|^{p(x,y)}}{\vartheta_{k}^{p(x,y)}|x-y|^{N+sp(x,y)}} \,dy\,dx.
\end{equation}    
Then, from  Proposition \ref{norma} and \eqref{5.10a}, we deduce  that 
    \begin{equation}\label{nota0}
    \begin{split}
    1 \leqslant & \frac{(1+ \varepsilon)(\|\phi \|^{\overline{p}}_{\infty}+1)}{\min\{\vartheta_{k}^{\overline{p}}, \vartheta_{k}^{p^{-}}\}}\big(1+ \|\tilde{v}_{k}\|^{\overline{p}}\big)+\frac{(1+ \varepsilon)(\|\phi \|^{\overline{p}}_{\infty}+1)}{\min\{\vartheta_{k}^{\overline{p}}, \vartheta_{k}^{p^{-}}\}}\big(1+ \|\tilde{u}_{k}\|^{\overline{p}}\big) + C(\varepsilon)I_{k} \\ \leqslant&  2\frac{(1+ \varepsilon)(\|\phi \|^{\overline{p}}_{\infty})}{\min\{\vartheta_{k}^{\overline{p}}, \vartheta_{k}^{p^{-}}\}}(1+ \|(\tilde{v}_{k},\tilde{v}_{k})\|^{\overline{p}}) + C(\varepsilon)I_{k}.
    \end{split}
    \end{equation}
Moreover, from the symmetry of function $p$ we get
    \begin{equation*}
I_{k}:=  \int_{\mathbb{R}^{N}}\int_{\mathbb{R}^{N}}\frac{(|\tilde{u}_k(x)|^{p(x,y)}+|\tilde{v}_k(x)|^{p(x,y)})| \phi (x)- \phi (y)|^{p(x,y)}}{\vartheta_{k}^{p(x,y)}|x-y|^{N+sp(x,y)}} \,dy\,dx.
\end{equation*} 
Thus, once that $supp \phi  \subset B_{\mathcal{R}}$ and $\vartheta_{k}^{p(x,y)} \geqslant \min\{\vartheta_{k}^{\overline{p}}, \vartheta_{k}^{p^{-}}\}$ for all $x, y \in \mathbb{R}^{N},$ it follows that
    \begin{equation}\label{s2}
    \begin{split}
    I_{k}\leqslant& \frac{1}{\min\{\vartheta_{k}^{\overline{p}}, \vartheta_{k}^{p^{-}}\}}\bigg[ \int_{B^{c}_{\mathcal{R}}}(|\tilde{u}_k(x)|^{p_{\star}(x)}+|\tilde{u}_k(x)|^{\overline{p}})\int_{B_{\mathcal{R}}}\frac{| \phi (x)- \phi (y)|^{p(x,y)}}{\vartheta_{k}^{p(x,y)}|x-y|^{N+sp(x,y)}} \,dy\,dx\\&+  \int_{B_{\mathcal{R}}}(|\tilde{u}_k(x)|^{p_{\star}(x)}+|\tilde{u}_k(x)|^{\overline{p}})\int_{B^{c}_{\mathcal{R}}}\frac{| \phi (x)- \phi (y)|^{p(x,y)}}{\vartheta_{k}^{p(x,y)}|x-y|^{N+sp(x,y)}} \,dy\,dx\\&+  \int_{B_{\mathcal{R}}}(|\tilde{u}_k(x)|^{p_{\star}(x)}+|\tilde{u}_k(x)|^{\overline{p}})\int_{B_{\mathcal{R}}}\frac{| \phi (x)- \phi (y)|^{p(x,y)}}{\vartheta_{k}^{p(x,y)}|x-y|^{N+sp(x,y)}} \,dy\,dx\\ &+ \int_{B^{c}_{\mathcal{R}}}(|\tilde{v}_k(x)|^{p_{\star}(x)}+|\tilde{v}_k(x)|^{\overline{p}})\int_{B_{\mathcal{R}}}\frac{| \phi (x)- \phi (y)|^{p(x,y)}}{\vartheta_{k}^{p(x,y)}|x-y|^{N+sp(x,y)}} \,dy\,dx\\&+  \int_{B_{\mathcal{R}}}(|\tilde{v}_k(x)|^{p_{\star}(x)}+|\tilde{v}_k(x)|^{\overline{p}})\int_{B^{c}_{\mathcal{R}}}\frac{| \phi (x)- \phi (y)|^{p(x,y)}}{\vartheta_{k}^{p(x,y)}|x-y|^{N+sp(x,y)}} \,dy\,dx\\&+  \int_{B_{\mathcal{R}}}(|\tilde{v}_k(x)|^{p_{\star}(x)}+|\tilde{v}_k(x)|^{\overline{p}})\int_{B_{\mathcal{R}}}\frac{| \phi (x)- \phi (y)|^{p(x,y)}}{\vartheta_{k}^{p(x,y)}|x-y|^{N+sp(x,y)}} \,dy\,dx\bigg].
    \end{split}    \end{equation}
 
 \medskip  
\textbf{3.} Now, we estimate each integral in the right-hand side of \eqref{s2}.
Note that $\{(\tilde{u}_k, \tilde{v}_{k})\}_{k \in \mathbb{N}}$ is a bounded sequence in $\mathit{W},$ then   $\{\tilde{u}_k\}_{k \in \mathbb{N}}$ 
    and $\{\tilde{v}_{k}\}_{k \in \mathbb{N}}$ are bounded in $W^{s,p(\cdot, \cdot)}(\mathbb{R}^{N})$. Moreover, by \eqref{remark}, we have 
    \begin{equation}\label{5.3}
    \max_{t \in \{\overline{p}, p_{\star}\}}\sup_{k \in\mathbb{N}} \int_{\mathbb{R}^{N}}|\tilde{u}_{k}|^{t(x)}\, dx\leqslant c_{3},
    \end{equation}
     \begin{equation}\label{5.4}
    \max_{t \in \{\overline{p}, p_{\star}\}}\sup_{k \in\mathbb{N}} \int_{\mathbb{R}^{N}}|\tilde{v}_{k}|^{t(x)}\, dx\leqslant c_{4}.
    \end{equation}
  Let $t \in \{\overline{p}, p_{\star}\}.$ From \eqref{5.a}, \eqref{5.3}, and \eqref{5.4}, we obtain 
    \begin{equation}\label{5.5}
    \begin{split}
    &\int_{B^{c}_{\mathcal{R}}}(|\tilde{u}_k(x)|^{t(x)}+ |\tilde{v}_k(x)|^{t(x)})\bigg(\int_{B_{\mathcal{R}}}\frac{| \phi (x)- \phi (y)|^{p(x,y)}}{|x-y|^{N+sp(x,y)}} \,dy\bigg)\,dx \\= & \int_{B^{c}_{\mathcal{R}}}(|\tilde{u}_k(x)|^{t(x)}+|\tilde{v}_k(x)|^{t(x)})\bigg(\int_{supp \phi }\frac{| \phi (y)|^{p(x,y)}}{|x-y|^{N+sp(x,y)}} \,dy\bigg)\,dx\\\leqslant&(1+\|\phi \|^{\overline{p}}_{\infty})\int_{B^{c}_{\mathcal{R}}}\big(|\tilde{u}_k(x)|^{t(x)}+ |\tilde{v}_k(x)|^{t(x)} \big) \bigg(\int_{supp\phi }\frac{dy}{(\mathcal{R}/ 2)^{N+sp(x,y)}} \bigg)\,dx  \\ \leqslant &\frac{(1+\|\phi \|^{\overline{p}}_{\infty})}{(\mathcal{R}/ 2)^{N+sp^{-}}}|B_{\mathcal{R}}|
  \bigg[  \int_{\mathbb{R}^{N}}(|\tilde{u}_k(x)|^{t(x)} + |\tilde{v}_k(x)|^{t(x)})\,dx\bigg]  \leqslant  \frac{c_{5}}{\mathcal{R}^{sp^{-}}}.
    \end{split}
    \end{equation}
On the other hand, $\mbox{ for all } x \in B_{\mathcal{R}}$, from \eqref{5.a}, 
    \begin{equation}\label{5.6}
    \begin{split}
    \int_{B^{c}_{\mathcal{R}}}\frac{| \phi (x)- \phi (y)|^{p(x,y)}}{|x-y|^{N+sp(x,y)}} \,dy=& \int_{B^{c}_{\mathcal{R}}}\frac{| \phi (x)|^{p(x,y)}}{|x-y|^{N+sp(x,y)}} \,dy \\ \leqslant & (1+ \|\phi\|^{\overline{p}}_{\infty})\int_{\{|z|\geqslant 1\}}\frac{ \, dz}{|z|^{N+sp^{-}}} \\= & \big(1+ \|\phi\|^{\overline{p}}_{\infty}\big)\frac{ N|B_{1}|}{sp^{-}}:=c_6 .
    \end{split}
\end{equation} 
Using \eqref{5.6}, we obtain
   \begin{equation}\label{5.7}
    \begin{split}
    \int_{B_{\mathcal{R}}}(|\tilde{u}_k(x)|^{t(x)}+ |\tilde{v}_k(x)|^{t(x)})\bigg(\int_{B_{\mathcal{R}}^{c}}\frac{| \phi (x)- \phi (y)|^{p(x,y)}}{|x-y|^{N+sp(x,y)}} \,dy\bigg)\,dx \\ \leqslant c_{6}\int_{B_{\mathcal{R}}}(|\tilde{u}_k(x)|^{t(x)}+|\tilde{v}_k(x)|^{t(x)})\,dx.
    \end{split}
\end{equation}    
   Now, to estimate the last integral in the right-hand side of \eqref{s2} we notice that for  $x \in B_{\mathcal{R}}$
   \begin{equation*}\label{5.8}
   \begin{split}
   \int_{B_{\mathcal{R}}}\frac{| \phi (x)- \phi (y)|^{p(x,y)}}{|x-y|^{N+sp(x,y)}} \,dy &\leqslant  (1+\|\nabla\phi\|^{\overline{p}}_{\infty})\int_{B_{\mathcal{R}}}\frac{ dy}{|x-y|^{N+(s-1)p(x,y)}} \\  &\leqslant (1+\|\nabla\phi\|^{\overline{p}}_{\infty})\int_{B_{\mathcal{R}}}\bigg( 1 + \frac{ 1}{|x-y|^{N+(s-1)p^{-}}}\bigg)\,dy\\ 
   & \leqslant  (1+\|\nabla\phi\|^{\overline{p}}_{\infty})\bigg[ |B_{\mathcal{R}}|+\int_{B_{2\mathcal{R}}} \frac{ dz}{|z|^{N+(s-1)p^{-}}}\bigg]  \\&= (1+\|\nabla\phi\|^{\overline{p}}_{\infty})\bigg[ |B_{\mathcal{R}}|+ \frac{N|B_{1}|(2\mathcal{R})^{(1-s)p^{-}}}{(1-s)p^{-}}\bigg]=:c_{1}(\mathcal{R}). \end{split}
   \end{equation*}
Hence we have
    \begin{equation}\label{5.9}
    \begin{split}
    \int_{B_{\mathcal{R}}}(|\tilde{u}_k(x)|^{t(x)}+ |\tilde{v}_k(x)|^{t(x)})\bigg(\int_{B_{\mathcal{R}}}\frac{| \phi (x)- \phi (y)|^{p(x,y)}}{|x-y|^{N+sp(x,y)}} \,dy\bigg)\,dx \\ \leqslant  c_{1}(\mathcal{R})\int_{B_{\mathcal{R}}}(|\tilde{u}_k(x)|^{t(x)}+ |\tilde{v}_k(x)|^{t(x)})\,dx.
    \end{split}
    \end{equation}
    Using \eqref{5.5}, \eqref{5.7}, \eqref{5.9} in \eqref{s2}, we obtain
    \begin{equation}\label{5.11}
    \begin{split}
   I_{k}\leqslant& \frac{1}{\min\{\vartheta_{k}^{\overline{p}}, \vartheta_{k}^{p^{-}}\}}\bigg[\frac{c_{7}}{\mathcal{R}^{sp^{-}}}+c_{8}c_{2}(\mathcal{R})\sum_{t \in \{ \overline{p}, p_{\star}\}} \int_{B_{\mathcal{R}}}(|\tilde{u}_k(x)|^{t(x)}+ |\tilde{v}_k(x)|^{t(x)})\,dx\bigg] \\ & \leqslant \frac{c_{9}}{\min\{\vartheta_{k}^{\overline{p}}, \vartheta_{k}^{p^{-}}\}}\bigg[\frac{1}{\mathcal{R}^{sp^{-}}}+c_{2}(\mathcal{R})\sum_{t \in \{ \overline{p}, p_{\star}\}} \int_{B_{\mathcal{R}}}(|\tilde{u}_k(x)|^{t(x)}+ |\tilde{v}_k(x)|^{t(x)})\,dx\bigg].
    \end{split}
    \end{equation}
Then, from \eqref{5.3}, \eqref{5.4}, \eqref{5.11}, and since the sequence $\{(\tilde{u}_{k}, \tilde{v}_{k})\}_{k\in \mathbb{N}}$ is bounded in $\mathit{W}$, using \eqref{nota0}, we can conclude 
$$1\leqslant \frac{c_{3}(\mathcal{R})}{\min\{\vartheta_{k}^{\overline{p}}, \vartheta_{k}^{p^{-}}\}},$$    
   and hence, for all $k \in \mathbb{N}$,   
    $$\vartheta_{k}\leqslant c_{4}(\mathcal{R}). $$
      Therefore, the sequence $\{\vartheta_{k}\}_{k \in \mathbb{N}}$ is bounded in $\mathbb{R}$. Hence, from Bolzano-Weierstrass Theorem, up to a subsequence, we may assume that there exists $\vartheta_{\star}\in [0, \infty)$ such that
    \begin{equation}\label{5.12d}
    \lim_{k\to+ \infty}\vartheta_{k}=\vartheta_{\star}.
    \end{equation}
    Suppose that $\vartheta_{\star}>0.$ Thus from \eqref{5.1} and \eqref{5.11}, we obtain
    \begin{equation}\label{5.13}
    \begin{split}
    1\leqslant& (1+ \varepsilon)\int_{\mathbb{R}^{N}}\bigg(\bigg|\frac{\phi(x)  }{\vartheta_{k}}\bigg|^{p_{\star}(x)}+\bigg|\frac{\phi(x)  }{\vartheta_{k}}\bigg|^{\overline{p}}\bigg)\bigg( | \tilde{u}_k(x)|^{\overline{p}}+ \int_{\mathbb{R}^{N} }\frac{|(  \tilde{u}_k(x)-  \tilde{u}_k(y)|^{p(x,y)}}{|x-y|^{N+sp(x,y)}} \,dy\bigg)\,dx\\& + (1+ \varepsilon)\int_{\mathbb{R}^{N}}\bigg(\bigg|\frac{\phi(x)  }{\vartheta_{k}}\bigg|^{p_{\star}(x)}+\bigg|\frac{\phi(x)  }{\vartheta_{k}}\bigg|^{\overline{p}}\bigg)\bigg( | \tilde{v}_k(x)|^{\overline{p}}+ \int_{\mathbb{R}^{N} }\frac{|(  \tilde{v}_k(x)-  \tilde{v}_k(y)|^{p(x,y)}}{|x-y|^{N+sp(x,y)}} \,dy\bigg)\,dx \\&+ \frac{c_{9}}{{\min\{\vartheta_{k}^{\overline{p}}, \vartheta_{k}^{p^{-}}}\}}\bigg[\frac{1}{\mathcal{R}^{sp^{-}}}+c_{2}(\mathcal{R})\sum_{t \in \{\overline{p}, p_{\star}\}}\int_{B_{\mathcal{R}}}(|\tilde{u}_k(x)|^{t(x)}+|\tilde{v}_k(x)|^{t(x)})\,dx\bigg].
    \end{split}
    \end{equation}
Taking $k\to+\infty$ in the inequality \eqref{5.13} and using \eqref{5.12}, \eqref{5.ao} and the fact that
$$\lim_{k\to+\infty}\int_{B_{\mathcal{R}}}|\tilde{u}_k(x)|^{t(x)}\,dx=\lim_{k\to+\infty}\int_{B_{\mathcal{R}}}|\tilde{v}_k(x)|^{t(x)}\,dx=0, $$    
  for all $ t \in \{\overline{p}, p_{\star}\}$  (see \eqref{converr}),  we obtain
    \begin{equation*}\label{5.55}
1\leqslant (1+ \varepsilon)\int_{\mathbb{R}^{N}}\bigg(\bigg|\frac{\phi(x)  }{\vartheta_{\star}}\bigg|^{p_{\star}(x)}+\bigg|\frac{\phi(x)  }{\vartheta_{\star}}\bigg|^{\overline{p}}\bigg)\,d \overline{\mu}+ \frac{c_9}{{\min\{\vartheta^{\overline{p}}_{\star}, \vartheta^{p^{-}}_{\star}}\}\mathcal{R}^{sp^{-}}}.
    \end{equation*}
Taking $\mathcal{R}\to+\infty$ and then letting $\varepsilon \to 0^{+}$, we conclude from the last inequality that
      \begin{equation*}\label{5.55}
1\leqslant \int_{\mathbb{R}^{N}}\bigg(\bigg|\frac{\phi(x)  }{\vartheta_{\star}}\bigg|^{p_{\star}(x)}+\bigg|\frac{\phi(x)  }{\vartheta_{\star}}\bigg|^{\overline{p}}\bigg)\,d \overline{\mu}.
    \end{equation*}
   \noindent Therefore by Proposition \ref{masmenos}, we obtain the following   estimate
    \begin{equation*}\label{5.12}
    \vartheta_{\star}\leqslant 2 ^{\frac{1}{p^{-}}}\max\bigg\{\|\phi\|_{L^{p_{\star}(\cdot)}_{\overline{\mu}}(\mathbb{R}^{N})}, \|\phi\|_{L^{\overline{p}}_{\overline{\mu}}(\mathbb{R}^{N})}\bigg\}.
    \end{equation*}
     \noindent Hence, from \eqref{5.0}, \eqref{5.a1} and \eqref{5.12d}, we see that
    \begin{equation}\label{5.14}
    \mathcal{S}_{\alpha\beta}\bigg\|\phi  ^{\frac{\alpha(\cdot)}{q(\cdot)}}\phi  ^{\frac{\beta(\cdot)}{q(\cdot)}}\bigg\|_{L^{q(\cdot)}_{\overline{\nu}}(\mathbb{R}^{N})}=\mathcal{S}_{\alpha\beta}\|\phi\|_{L^{q(\cdot)}_{\overline{\nu}}(\mathbb{R}^{N})} \leqslant 2 ^{\frac{1}{p^{-}}}\max\bigg\{\|\phi\|_{L^{p_{\star}(\cdot)}_{\overline{\mu}}(\mathbb{R}^{N})}, \|\phi\|_{L^{\overline{p}}_{\overline{\mu}}(\mathbb{R}^{N})}\bigg\}.
        \end{equation}
If $\vartheta_{\star}=0$, by     \eqref{5.0} and \eqref{5.a1}, we conclude 
    \begin{equation}\label{5.140}
    \|\phi\|_{L^{q(\cdot)}_{\overline{\nu}}(\mathbb{R}^{N})} =0.
    \end{equation}
 Hence \eqref{5.140} holds for any $\phi \in C^{\infty}_{c}(\mathbb{R}^{N})$ and thus \eqref{cc1} follows using the \cite[Lemma 3.2]{bonder2} and
the definition of $\overline{\nu}$ (see \eqref{5.0}).      The fact that $\{x_{i}\}_{i \in I} \subset \mathscr{C}$ can be obtained by an argument similar to that of   \cite[Theorem 3.3]{kihosim} and we omit the proof.

\medskip   
\textbf{4.}   Now, we verify \eqref{cc3}. Let $i \in I$ and for $\rho>0$, define $\Psi_{\rho}$ as in Lemma \ref{4.4a} with $x_{0}$ replaced by $x_i$. Then   $\Psi_{\rho}\in C^{\infty}(\mathbb{R}^{N})$, $0 \leqslant \Psi_{\rho} \leqslant 1$, $\Psi_{\rho}\equiv 1$ on $B_{\rho}(x_i)$, and $supp(\Psi_{\rho})\subset B_{2\rho}(x_i).$
From  \eqref{SS}, we infer that
    \begin{equation}\label{5.56}
    \mathcal{S}_{\alpha\beta}\bigg\|(\Psi_{\rho}u_k)^{\frac{\alpha(\cdot)}{q(\cdot)}}(\Psi_{\rho}v_k)^{\frac{\beta(\cdot)}{q(\cdot)}}\bigg\|_{L^{q(\cdot)}(\mathbb{R}^{N})}  \leqslant \|(\Psi_{\rho}u_k, \Psi_{\rho}v_k)\|.
    \end{equation}
     Taking the limit inferior as $k \to +\infty$ in \eqref{5.56}, using  fact $ |u_k|^{\alpha(x)}|v_k|^{\beta(x)} \overset{\ast}{\rightharpoonup} \nu \mbox{ in }  \mathcal{M}(\mathbb{R}^{N})$ (see \eqref{c3}) and the definition of  $\|\cdot\|_{L^{q(\cdot)}_{\nu}(\mathbb{R}^{N})}$ (see section \ref{sec2}), we obtain
     \begin{equation*}
 \mathcal{S}_{\alpha\beta}\bigg\|\Psi_{\rho}^{\frac{\alpha(\cdot)}{q(\cdot)}}\Psi_{\rho}^{\frac{\beta(\cdot)}{q(\cdot)}}\bigg\|_{L^{q(\cdot)}_{\nu}(B_{2\rho}(x_i))}=   \mathcal{S}_{\alpha\beta}\|\Psi_{\rho}\|_{L^{q(\cdot)}_{\nu}(B_{2\rho}(x_i))}\leqslant \liminf_{k\to +\infty}\|(\Psi_{\rho}u_k, \Psi_{\rho}v_k)\|.
    \end{equation*}
    Hence,
     \begin{equation}\label{5.15}
    \mathcal{S}_{\alpha\beta}\limsup_{\rho\to 0^{+}}\|\Psi_{\rho}\|_{L^{q(\cdot)}_{\nu}(B_{2\rho}(x_i))}\leqslant \limsup_{\rho\to 0^{+}}\liminf_{k\to +\infty}\|(\Psi_{\rho}u_k, \Psi_{\rho}v_k)\|.
    \end{equation}
    On the other hand, we have
   \begin{equation}\label{5.16}
   \begin{split}
    \|\Psi_{\rho}\|_{L^{q(\cdot)}_{\nu}(B_{2\rho}(x_i))}\geqslant& \min\bigg\{ \bigg( \int_{B_{2\rho}(x_i)}|\Psi_{\rho}|^{q(x)}\,d\nu\bigg)^{\frac{1}{q^{+}_{i,\rho}}}, \bigg( \int_{B_{2\rho}(x_i)}|\Psi_{\rho}|^{q(x)}\,d\nu\bigg)^{\frac{1}{q^{-}_{i,\rho}}} \bigg\} \\ \geqslant& \min\bigg\{\nu \big(B_{2\rho}(x_i)\big)^{\frac{1}{q^{+}_{i,\rho}}}, \nu\big( B_{2\rho}(x_i)\big)^{\frac{1}{q^{-}_{i,\rho}}} \bigg\}, 
    \end{split}
    \end{equation} 
 where $\displaystyle{q^{+}_{i,\rho}= \max_{x\in \overline{B_{\rho}(x_i)}}q(x)}$ and $\displaystyle{q^{-}_{i,\rho}= \min_{x\in \overline{B_{\rho}(x_i)}}q(x)}.$  Thus, by continuity of $q$ and the fact that $x_i \in \mathscr{C} $, we obtain an upper of the  right hand side of \eqref{5.15}
 \begin{equation}\label{5.160}
    \limsup_{\rho\to0^{+}}\|\Psi_{\rho}\|_{L^{q(\cdot)}_{\nu}(B_{2\rho}(x_i))}\geqslant \nu_{i}^{\frac{1}{q(x_i)}}=\nu_{i}^{\frac{1}{\overline{p}^{\star}_{s}}}.
        \end{equation}     
     Now to obtain an upper bound of the right hand side \eqref{5.15}, we prove that there are $ \rho_{0} \in(0, 1)$ and $\vartheta_{0}\in(0, +\infty)$, such that
     \begin{equation}\label{5.17}
     0 < \frac{\mathcal{S}_{\alpha\beta}}{2}\nu_{i}^{\frac{1}{q(x_i)}} \leqslant \liminf_{k\to+\infty}\vartheta_{k,\rho}:=\vartheta_{\star,\rho} \leqslant \vartheta_{0} 
     \end{equation}
    $\mbox{ for any } \rho \in (0, \rho_{0})$, where $\vartheta_{k,\rho}= \|(\Psi_{\rho}u_k, \Psi_{\rho}v_k )\|$.
    Indeed, by the continuity of $q $ and the positiveness of $\nu_{i}$, we can choose $\rho \in(0,1)$ such that    
   \begin{equation}\label{5.18}
   \begin{split}
     \mathcal{S}_{\alpha\beta}  \min\bigg\{ \nu_{i}^{\frac{1}{q^{+}_{i,\rho}}}, \nu_{i}^{\frac{1}{q^{-}_{i,\rho}}} \bigg\}> \frac{ \mathcal{S}_{\alpha\beta}}{2}\nu_{i}^{\frac{1}{q(x_i)}} 
    \end{split}
    \end{equation}    
    $\mbox{ for all } \rho \in(0, \rho_{0}).$
 
 Using \eqref{5.14}, \eqref{5.16}, and \eqref{5.18}, we obtain, $ \mbox{ for all } \rho \in (0, \rho_{0})$, that
  \begin{equation*}
  \frac{\mathcal{S}_{\alpha\beta}\nu_{i}^{\frac{1}{q(x_i)}}}{2} \leqslant \vartheta_{\star \rho}.
\end{equation*}    
 On the other hand, by Lemma \ref{4.4a} we can choose $\rho_{0}$ smaller if necessary such that,  $\mbox{ for all } \rho \in (0, \rho_{0})$, we conclude
    \begin{equation}\label{5.19}
    \limsup_{k\to+\infty} \int_{\mathbb{R}^{N}}\int_{\mathbb{R}^{N}}\big(|u_{k}(y)|^{p(x,y)}+|v_{k}(y)|^{p(x,y)}\big)\frac{|\Psi_{\rho}(x)-\Psi_{\rho}(y)|^{p(x,y)}}{|x-y|^{N+sp(x,y)}}\,dy\,dx<1. 
    \end{equation}   
Now, note that 
$$\bigg[\int_{\mathbb{R}^{N}}\big(|\Psi_{\rho}u_k|^{\overline{p}}  +|\Psi_{\rho}v_k|^{\overline{p}}\big)\,dx $$
   \begin{equation}\label{5.20}
   \begin{split}
        & +  \int_{\mathbb{R}^{N}}\int_{\mathbb{R}^{N}}\bigg(\frac{|(\Psi_{\rho}u_k)(x)-(\Psi_{\rho}u_k)(y)|^{p(x,y)}}{|x-y|^{N+sp(x,y)}}+\frac{|(\Psi_{\rho}v_k)(x)-(\Psi_{\rho}v_k)(y)|^{p(x,y)}}{|x-y|^{N+sp(x,y)}}\bigg) \,dy\,dx \bigg]\\   \leqslant &   \int_{\mathbb{R}^{N}}\big( |\Psi_{\rho}u_k|^{\overline{p}}+|\Psi_{\rho}v_k|^{\overline{p}}\big)\,dx \\  & +  2^{\overline{p}-1}  \int_{\mathbb{R}^{N}}\int_{\mathbb{R}^{N}}|\Psi_{\rho}(x)|^{p(x,y)}\bigg(\frac{|u_k(x)-u_k(y)|^{p(x,y)}}{|x-y|^{N+sp(x,y)}}+ \frac{|v_k(x)-v_k(y)|^{p(x,y)}}{|x-y|^{N+sp(x,y)}}\bigg)\,dy\,dx  \\ & + 2^{\overline{p}-1}  \int_{\mathbb{R}^{N}}\int_{\mathbb{R}^{N}}\big(|u_k(y)|^{p(x,y)}+|v_k(y)|^{p(x,y)}\big)\frac{|\Psi_{\rho}(x)-\Psi_{\rho}(y)|^{p(x,y)}}{|x-y|^{N+sp(x,y)}}\,dy\,dx.
   \end{split}
\end{equation}     
Thus, from \eqref{5.19}, \eqref{5.20}, and since the sequence $\{(u_k, v_k)\}_{k \in \mathbb{N}}$ is bounded in $\mathit{W}$, we conclude that,  there exists $\vartheta_{0} \in(0, +\infty)$ such that $\vartheta_{k,\rho}<\vartheta_{0}$ for all $k \in \mathbb{N}$ and $\rho \in (0, \rho_{0})$. Therefore, \eqref{5.17} is proved.

 Next, let $\varepsilon>0$ be arbitrary and fixed. We have,
 \begin{equation}\label{5.1}
    \begin{split}
    1=& \int_{\mathbb{R}^{N}}\bigg( \bigg|\frac{\Psi_{\rho}  \tilde{u}_k}{\vartheta_{k}}\bigg|^{\overline{p}}+ \bigg|\frac{\Psi_{\rho}  \tilde{v}_k}{\vartheta_{k}}\bigg|^{\overline{p}}\bigg)\,dx+   \int_{\mathbb{R}^{N}}\int_{\mathbb{R}^{N}}\frac{|(\Psi_{\rho}  \tilde{u}_k)(x)-(\Psi_{\rho} \tilde{u}_k)(y)|^{p(x,y)}}{\vartheta_{k}^{p(x,y)}|x-y|^{N+sp(x,y)}} \,dy\,dx \\= & \int_{\mathbb{R}^{N}}\bigg( \bigg|\frac{\Psi_{\rho}  \tilde{u}_k}{\vartheta_{k}}\bigg|^{\overline{p}}+ \bigg|\frac{\Psi_{\rho}  \tilde{v}_k}{\vartheta_{k}}\bigg|^{\overline{p}}\bigg)\,dx + 2 \int_{B_{2\rho(x_i)}}\int_{ \mathbb{R}^{N}\setminus B_{2\rho}(x_i) }\frac{|(\Psi_{\rho} \tilde{u}_k)(x)-(\Psi_{\rho}  \tilde{u}_k)(y)|^{p(x,y)}}{\vartheta_{k}^{p(x,y)}|x-y|^{N+sp(x,y)}} \,dy\,dx\\&+ \int_{B_{2\rho(x_i)}}\int_{ B_{2\rho}(x_i) }\frac{|(\Psi_{\rho}  \tilde{u}_k)(x)-(\Psi_{\rho} \tilde{u}_k)(y)|^{p(x,y)}}{\vartheta_{k}^{p(x,y)}|x-y|^{N+sp(x,y)}} \,dy\,dx.
    \end{split}
    \end{equation}
Hence, by utilizing \eqref{desiab} again we have 
      \begin{equation*}\label{5.1}
    \begin{split}
    1\leqslant & \int_{\mathbb{R}^{N}}\bigg(\bigg|\frac{\Psi_{\rho} \tilde{u}_k}{\vartheta_{k}}\bigg|^{\overline{p}}+ \bigg|\frac{\Psi_{\rho}  \tilde{v}_k}{\vartheta_{k}}\bigg|^{\overline{p}}\bigg)\,dx  \\ &+ 2\int_{B_{2\rho(x_i)}}\int_{ \mathbb{R}^{N}\setminus B_{2\rho}(x_i) }\bigg(\bigg|\frac{  \tilde{u}_k(x)}{\vartheta_{k}}\bigg|^{p(x,y)}+ \bigg|\frac{\tilde{v}_k(y)}{\vartheta_{k}}\bigg|^{p(x,y)}\bigg)\frac{|\Psi_{\rho}(x)-\Psi_{\rho} (y)|^{p(x,y)}}{|x-y|^{N+sp(x,y)}} \,dy\,dx\\ &+ (1+\varepsilon) \int_{B_{2\rho(x_i)}}\int_{  B_{2\rho}(x_i) }|\Psi_{\rho}(x)|^{p(x,y)}\frac{|\tilde{u}_k(x)-  \tilde{u}_k(y)|^{p(x,y)}+|\tilde{v}_k(x)-  \tilde{v}_k(y)|^{p(x,y)}}{\vartheta_{k}^{p(x,y)}|x-y|^{N+sp(x,y)}} \,dy\,dx\ \\&+ C(\varepsilon) \int_{B_{2\rho(x_i)}}\int_{  B_{2\rho}(x_i) }\bigg(\bigg|\frac{  \tilde{u}_k(y)}{\vartheta_{k}}\bigg|^{p(x,y)}+ \bigg|\frac{\tilde{v}_k(y)}{\vartheta_{k}}\bigg|^{p(x,y)}\bigg)\frac{|\Psi_{\rho}(x)-\Psi_{\rho} (y)|^{p(x,y)}}{|x-y|^{N+sp(x,y)}} \,dy\,dx.
    \end{split}
    \end{equation*}
Combining this with the fact that $ 0 \leqslant\Psi_{\rho} \leqslant 1,$ we obtain
  \begin{equation}\label{5.21d}
  \begin{split}
  1\leqslant&\frac{C(\varepsilon)}{\min\{\vartheta_{k}^{\overline{p}}, \vartheta_{k}^{p^{-}_{i}}\}}\int_{ \mathbb{R}^{N}}\int_{ \mathbb{R}^{N}}\bigg(| \tilde{u}_k(x)|^{p(x,y)}+ |\tilde{v}_k(x)|^{p(x,y)}\bigg)\frac{|\Psi_{\rho}(x)-\Psi_{\rho} (y)|^{p(x,y)}}{|x-y|^{N+sp(x,y)}} \,dy\,dx\\ & + \frac{(1+\varepsilon)}{\min\{\vartheta_{k}^{\overline{p}}, \vartheta_{k}^{p^{-}_{i}}\}}\int_{ \mathbb{R}^{N}}\Psi_{\rho}\big(U_k(x)+V_k(x)\big)\,dx,
  \end{split}
\end{equation}     
 where     $\displaystyle{p^{-}_{i}=\min_{(x,y)\in B_{2\rho}(x_i)\times B_{2\rho}(x_i)}p(x,y)}$. 
 \noindent For simplicity, for all $k \in \mathbb{N}$,  and $x \in \mathbb{R}^{N}$, we denote
 
 \begin{equation}\label{u1}
     U_k(x)= |u_k(x)|^{{\overline{p}}}+\int_{\mathbb{R}^{N}}\frac{|u_k(x)-u_k(y)|^{p(x,y)}}{|x-y|^{N|+sp(x,y)}}\,dy
 \end{equation}
 and
\begin{equation}\label{v1}
V_k(x)= |v_k(x)|^{{\overline{p}}}+\int_{\mathbb{R}^{N}}\frac{|v_k(x)-v_k(y)|^{p(x,y)}}{|x-y|^{N|+sp(x,y)}}\,dy.
  \end{equation}

 \noindent Hence, using \eqref{5.17} and \eqref{5.21d}, we deduce by \eqref{5.21} that for all $ \rho \in(0, \rho_{0})$ 
      \begin{equation*}\label{5.21}
  \begin{split}
  1\leqslant&\frac{C(\varepsilon)}{\min\{\vartheta_{\star,\rho}^{\overline{p}}, \vartheta_{\star,\rho}^{p^{-}_{i}}\}}\limsup_{k\to+\infty}\int_{ \mathbb{R}^{N}}\int_{ \mathbb{R}^{N}}\bigg(| \tilde{u}_k(x)|^{p(x,y)}+ |\tilde{v}_k(x)|^{p(x,y)}\bigg)\frac{|\Psi_{\rho}(x)-\Psi_{\rho} (y)|^{p(x,y)}}{|x-y|^{N+sp(x,y)}} \,dy\,dx\\ & + \frac{(1+\varepsilon)}{\min\{\vartheta_{\star, \rho}^{\overline{p}}, \vartheta_{\star, \rho}^{p^{-}_{i}}\}}\int_{ \mathbb{R}^{N}}\Psi_{\rho}d\,\mu.
 \end{split}
\end{equation*}
    Thus, for all $ \rho \in(0, \rho_{0})$, we obtain 
    \begin{equation*}
    \begin{split}
    \min\{\vartheta_{\star, \rho}^{\overline{p}}, \vartheta_{\star, \rho}^{p^{-}_{i}}\}\leqslant& C(\varepsilon)\limsup_{k\to+\infty}\int_{ \mathbb{R}^{N}}\int_{ \mathbb{R}^{N}}\bigg(| \tilde{u}_k(x)|^{p(x,y)}+ |\tilde{v}_k(x)|^{p(x,y)}\bigg)\frac{|\Psi_{\rho}(x)-\Psi_{\rho} (y)|^{p(x,y)}}{|x-y|^{N+sp(x,y)}} \,dy\,dx \\&+ (1+\varepsilon)\int_{ \mathbb{R}^{N}}\Psi_{\rho}d\,\mu.
     \end{split}
    \end{equation*}
 
     \noindent Taking limit superior as $\rho\to 0^{+}$ in the last inequality and invoking Lemma \ref{4.4a}, we obtain
     \begin{equation*}
     \vartheta_{\star}\leqslant(1+\varepsilon)^{\frac{1}{\overline{p}}}(\mu_i)^{\frac{1}{\overline{p}}}
     \end{equation*}
     where $\displaystyle{\vartheta_{\star}}:= \limsup_{\rho\to 0^{+}}\vartheta_{k}$ and $\displaystyle{\mu_{i}= \displaystyle{\lim_{\rho\to 0^{+}}\mu(B_{2\rho}(x_i))}}$.
      Then from \eqref{5.15} and \eqref{5.160}, we obtain

\begin{equation}\label{casa}
\mathcal{S}_{\alpha\beta}\nu_{i}^{\frac{      
1 }{\overline{p}^{\star}_{s}}}\leqslant (1+\varepsilon)^{\frac{1}{\overline{p}}}\mu_{i}^{\frac{1}{\overline{p}}}.
\end{equation}      
Since $\varepsilon$ was chosen arbitrarily using \eqref{casa}, we conclude \eqref{cc3}. Hence, $\{x_{i}\}_{i \in I} \subset \mathscr{C}$
 are also atoms of $\mu$. 

\medskip 
\textbf{5.} Finally, to obtain \eqref{cc2} we note that, for each $\phi \in C_{0}(\mathbb{R}^{N})$, $\phi\geqslant 0$, the functional        
$$(u,v) \longmapsto \int_{\mathbb{R}^{N}}\phi\bigg[\big( |u|^{\overline{p}}+|v|^{\overline{p}}\big)+\int_{\mathbb{R}^{N}}\bigg(\frac{|u(x)-u(y)|^{p(x,y)}}{|x-y|^{N+sp(x,y)}}+\frac{|v(x)-v(y)|^{p(x,y)}}{|x-y|^{N+sp(x,y)}}\bigg)\,dy\bigg]\,dx$$  
 is convex and differentiable in $\mathit{W}$, from this, \eqref{c1} and \eqref{c2}, we obtain that
 \begin{equation*}
 \begin{split}
 &\int_{\mathbb{R}^{N}}\phi\bigg[\big( |u|^{\overline{p}}+|v|^{\overline{p}}\big)+\int_{\mathbb{R}^{N}}\bigg(\frac{|u(x)-u(y)|^{p(x,y)}}{|x-y|^{N+sp(x,y)}}+\frac{|v(x)-v(y)|^{p(x,y)}}{|x-y|^{N+sp(x,y)}}\bigg)\,dy\bigg]\,dx \\ &\leqslant \liminf_{k\to +\infty}\int_{\mathbb{R}^{N}}\phi\bigg[\big( |u_k|^{\overline{p}}+|v_k|^{\overline{p}}\big)+\int_{\mathbb{R}^{N}}\bigg(\frac{|u_k(x)-u_k(y)|^{p(x,y)}}{|x-y|^{N+sp(x,y)}}+\frac{|v_k(x)-v_k(y)|^{p(x,y)}}{|x-y|^{N+sp(x,y)}}\bigg)\,dy\bigg]\,dx \\ & = \int_{\mathbb{R}^{N}}\phi\,d\mu.
 \end{split}
\end{equation*}
  Therefore, 
  $$\mu \geqslant  |u|^{\overline{p}}+|v|^{\overline{p}}+\int_{\mathbb{R}^{N}}\bigg(\frac{|u(x)-u(y)|^{p(x,y)}}{|x-y|^{N+sp(x,y)}}+\frac{|v(x)-v(y)|^{p(x,y)}}{|x-y|^{N+sp(x,y)}}\bigg)\,dy.$$
  Extracting $\mu$ to its atoms ($\{x_{i}\}_{i \in I} \subset \mathscr{C}$) we get \eqref{cc2} which completes the proof.
         \end{proof}
         
\subsection{Proof of Theorem \ref{lionsinfity}}

 Let us now turn to the proof  Theorem \ref{lionsinfity},  that is  Concentration-compactness principle at infinity. The strategy is similar to the one used before but instead of taking a function that concentrates at the finite points $x_j$, we consider
the function $\Phi_{R}$ of Lemma \ref{4.5inf} which is a function that in some sense “concentrates at infinity".
 \begin{proof}
1. Let $\Phi_{R}$ be defined as Lemma \ref{4.5inf}. We can write

    \begin{equation}\label{inf1}
 \begin{split}
 \int_{\mathbb{R}^{N}}\big[ U_k(x)+V_k(x)\big]\,dx  = & \int_{\mathbb{R}^{N}}\Phi_{R}(x)\big[ U_k(x)+V_k(x)\big]\,dx \\ & +\int_{\mathbb{R}^{N}}(1-\Phi_{R}(x))\big[ U_k(x)+V_k(x)\big]\,dx
 \end{split}
\end{equation}
where  $U_k(x)$ adn $V_k(x)$ is given \eqref{u1} and \eqref{v1}. By \eqref{ccinfi} and fact that 
    \begin{equation}\label{inf2}
 \begin{split}
 &\int_{B^{c}_{2R}}\big[ U_k(x)+V_k(x)\big]\,dx\leqslant  \int_{\mathbb{R}^{N}}\Phi_{R}(x)\big[ U_k(x)+V_k(x)\big]\,dx \leqslant \int_{B^{c}_{2R}}\big[ U_k(x)+V_k(x)\big]\,dx,
 \end{split}
\end{equation}
for all $k \in \mathbb{N}$ and $R>0$, we obatin
\begin{equation}\label{inf3}
 \begin{split}
 \mu_{\infty}= \lim_{R\to \infty}\lim_{k\to \infty}  \int_{\mathbb{R}^{N}}\Phi_{R}(x)\big[ U_k(x)+V_k(x)\big]\,dx .
 \end{split}
\end{equation}
On the other hand, the facy that  $(1-\Phi_{R}) \in C^{\infty}_{c}(\mathbb{R}^{N})$ gives
\begin{equation}\label{i75}
 \begin{split}
 \lim_{R\to \infty}\lim_{k\to \infty}  \int_{\mathbb{R}^{N}}(1-\Phi_{R}(x))\big[ U_k(x)+V_k(x)\big]\,dx = \lim_{R\to \infty}  \int_{\mathbb{R}^{N}}(1-\Phi_{R}(x))\,d\mu.
 \end{split}
\end{equation}
Indeed, 
\begin{equation}\label{i76}
 \begin{split}
 \lim_{R\to \infty}  \int_{\mathbb{R}^{N}}(1-\Phi_{R}(x))\,d\mu =\mu(\mathbb{R}^{N}).
 \end{split}
\end{equation}
in views the Dominated Convergence Theorem. Thus, by \eqref{i75} and \eqref{i76}, we obtain 
\begin{equation}\label{inf4}
 \begin{split}
 \lim_{R\to \infty}\lim_{k\to \infty}  \int_{\mathbb{R}^{N}}(1-\Phi_{R}(x))\big[ U_k(x)+V_k(x)\big]\,dx  =\mu(\mathbb{R}^{N}).
 \end{split}
\end{equation}
Therefore using \eqref{inf3} and \eqref{inf4} in \eqref{inf1}, we obtain \eqref{cc2infi}.

\medskip
\textbf{2.} In order to prove  \eqref{cc2inf}, we decompose
    \begin{equation}\label{inf4.a}
 \begin{split}
 \int_{\mathbb{R}^{N}}|u_k(x)|^{\alpha(x)}|v_k(x)|^{\beta(x)}\,dx=  &\int_{\mathbb{R}^{N}}\Phi^{q(x)}_{R}(x)|u_k(x)|^{\alpha(x)}|v_k(x)|^{\beta(x)}\,dx \\ & +\int_{\mathbb{R}^{N}}(1-\Phi^{q(x)}_{R}(x))|u_k(x)|^{\alpha(x)}|v_k(x)|^{\beta(x)}\,dx.
 \end{split}
\end{equation}
We observe that
\begin{equation*}\label{inf5}
 \begin{split}
 \int_{B^{c}_{2R}}|u_k|^{\alpha(x)}|v_k|^{\beta(x)}\,dx \leqslant \int_{\mathbb{R}^{N}}\Phi^{q(x)}_{R}|u_k|^{\alpha(x)}|v_k|^{\beta(x)}\,dx \leqslant \int_{B^{c}_{R}}|u_k|^{\alpha(x)}|v_k|^{\beta(x)}\,dx
 \end{split}
\end{equation*}
and so by \eqref{ccinfi}
\begin{equation}\label{inf6}
 \begin{split}
 \nu_{\infty}= \lim_{R\to \infty}\limsup_{k\to \infty} \int_{\mathbb{R}^{N}}\Phi^{q(x)}_{R}|u_k|^{\alpha(x)}|v_k|^{\beta(x)}\,dx.
 \end{split}
\end{equation}
Arguing similarly as that obtained \eqref{cc2infi} above for with $\Phi_{R}$  is replaced with  $\Phi_{R}^{q(x)}$ we obtain \eqref{cc2inf}.

\medskip 
\textbf{3.} Let us consider again the function $\Phi_{R}$ without loss of generality we assume $\nu_{\infty}>0$. Let $\epsilon \in  (0,1)$ be arbitrary and fixed. By ($\varepsilon_{0}$), we choose $R_1>1$ such that
\begin{equation}\label{inf7}
 \begin{split}
 |p(x,y)-\overline{p}|< \epsilon \mbox{ and } |q(x)-q_{\infty}|< \epsilon \mbox{ for all } |x|,|y|>R_1.
 \end{split}
\end{equation}
From \eqref{SS}, we obtain
 \begin{equation}\label{inf8}
  \begin{split}
\mathcal{S}_{\alpha\beta} \bigg\|(\Phi_{R}u_{k})^{\frac{\alpha(\cdot)}{q(\cdot)}}(\Phi_{R}v_{k})^{\frac{\beta(\cdot)}{q(\cdot)}}\bigg\|_{L^{q(\cdot)}(\mathbb{R}^{N}) } \leqslant  \|( \Phi_{R}u_k,\Phi_{R}v_k)\|.
 \end{split}
\end{equation}
For $R>R_1$, using \eqref{inf7} and Proposition \ref{masmenos}, we have 
\begin{equation*}\label{inf9}
 \begin{split}
 \bigg\|(\Phi_{R}u_{k})^{\frac{\alpha(\cdot)}{q(\cdot)}}(\Phi_{R}v_{k})^{\frac{\beta(\cdot)}{q(\cdot)}}\bigg\|_{L^{q(\cdot)}(\mathbb{R}^{N})}   = &  \bigg\|(\Phi_{R}u_{k})^{\frac{\alpha(\cdot)}{q(\cdot)}}(\Phi_{R}v_{k})^{\frac{\beta(\cdot)}{q(\cdot)}}\bigg\|_{L^{q(\cdot)}(B^{c}_{R})}    \\ \geqslant & \min \bigg \{ \bigg( \int_{B^{c}_{R}} \Phi_{R}^{q(x)}|u_{k}(x)|^{\alpha(x)}|v_{k}(x)|^{\beta(x)}\,dx\bigg)^{\frac{1}{q_{\infty}+\epsilon}},  \\& \bigg( \int_{B^{c}_{R}} \Phi_{R}^{q(x)}|u_{k}(x)|^{\alpha(x)}|v_{k}(x)|^{\beta(x)}\,dx\bigg)^{\frac{1}{q_{\infty}-\epsilon}}\bigg\} 
   \\ \geqslant & \min \bigg \{ \bigg( \int_{B^{c}_{2R}} \Phi_{R}^{q(x)}|u_{k}(x)|^{\alpha(x)}|v_{k}(x)|^{\beta(x)}\,dx\bigg)^{\frac{1}{q_{\infty}+\epsilon}},  \\& \bigg( \int_{B^{c}_{2R}} \Phi_{R}^{q(x)}|u_{k}(x)|^{\alpha(x)}|v_{k}(x)|^{\beta(x)}\,dx\bigg)^{\frac{1}{q_{\infty}-\epsilon}}\bigg\}.
 \end{split}
\end{equation*}
Thus 
\begin{equation}\label{inf10}
  \begin{split}
\liminf_{R\to \infty}\limsup_{k\to \infty} \bigg\|\Phi_{R}u_{k}^{\frac{\alpha(\cdot)}{q(\cdot)}}v_{k}^{\frac{\beta(\cdot)}{q(\cdot)}}\bigg\|_{L^{q(\cdot)}(\mathbb{R}^{N})}  \geqslant \min \big\{ \nu_{\infty}^{\frac{1}{q_{\infty}+\epsilon}}, \nu_{\infty}^{\frac{1}{q_{\infty}-\epsilon}}\big\}.
 \end{split}
\end{equation}
Now, we estimate the right hand side of  \eqref{inf8}.  First, we denote $\eta_{k, R}:=\|(\Phi_{R}u_{k}, \Phi_{R}v_{k})\|$ for simplify. We will show that there exist $R_2 \in  (R_1, \infty)$ and $\eta \in (0,\infty)$ such that 
\begin{equation}\label{inf11}
  \begin{split}
0<\mathcal{S}_{\alpha\beta} \bigg(\frac{\nu_{\infty}}{4}\bigg)^{\frac{1}{q_{\infty}}}   \leqslant \eta_{\star, R}:= \lim_{k\to \infty} \eta_{k,R}< \eta \mbox{ for all } R \in (R_2, \infty).
 \end{split}
\end{equation}
Indeed, we first choose $\overline{\epsilon}>0$ sufficiently small such that
\begin{equation}\label{inf12}
  \begin{split}
\min \bigg\{ \bigg(\frac{\nu_\infty}{2}\bigg)^{\frac{1}{q_{\infty}+\overline{\epsilon}}}, \bigg(\frac{\nu_\infty}{2}\bigg)^{\frac{1}{q_{\infty}-\overline{\epsilon}}}\bigg\} > \bigg( \frac{\nu_{\infty}}{4}\bigg )^{\frac{1}{q_{\infty}}}.
 \end{split}
\end{equation}
Then we can find $\overline{R}_{2}>R_1$ such that
\begin{equation}\label{inf13}
  \begin{split}
 \bigg\|(\Phi_{R}u_{k})^{\frac{\alpha(\cdot)}{q(\cdot)}}(\Phi_{R}v_{k})^{\frac{\beta(\cdot)}{q(\cdot)}}\bigg\|_{L^{q(\cdot)}(\mathbb{R}^{N})} & \geqslant   \min \bigg\{  \bigg( \int_{B^{c}_{R}} \Phi_{R}^{q(x)}|u_{k}(x)|^{\alpha(x)}|v_{k}(x)|^{\beta(x)}\,dx \bigg)^{\frac{1}{q_{\infty}+\overline{\epsilon}}},  \\ &\bigg( \int_{B^{c}_{R}} \Phi_{R}^{q(x)}|u_{k}(x)|^{\alpha(x)}|v_{k}(x)|^{\beta(x)}\,dx \bigg)^{\frac{1}{q_{\infty}-\overline{\epsilon}}} \bigg\}
 \end{split}
\end{equation}
for all $ R>\overline{R}_2.$
Finally, by \eqref{inf6}, we can find $ R_2>\overline{R}_2$  such that
\begin{equation}\label{inf14}
  \begin{split}
 \lim_{k\to \infty}\int_{\mathbb{R}^{N}} \Phi_{R}^{q(x)}|u_{k}(x)|^{\alpha(x)}|v_{k}(x)|^{\beta(x)}\,dx =  \lim_{k\to \infty}\int_{\mathbb{R}^{N}} \Phi_{R}^{q(x)}|u_{k}(x)|^{\alpha(x)}|v_{k}(x)|^{\beta(x)}\,dx > \frac{\nu_{\infty}}{2} \\ 
 \end{split}
\end{equation}
 for all $ R>\overline{R}_2.$
From \eqref{inf13} and \eqref{inf14} we obtain
\begin{equation}\label{inf15}
  \begin{split}
 \limsup_{k\to \infty}\bigg\|(\Phi_{R}u_{k})^{\frac{\alpha(\cdot)}{q(\cdot)}}(\Phi_{R}v_{k})^{\frac{\beta(\cdot)}{q(\cdot)}}\bigg\|_{L^{q(\cdot)}(\mathbb{R}^{N})}  \geqslant   \min \bigg\{  \bigg( \frac{\nu_{\infty}}{2}\bigg)^{\frac{1}{q_{\infty}+\overline{\epsilon}}}, \bigg(\frac{\nu_{\infty}}{2}\bigg)^{\frac{1}{q_{\infty}-\overline{\epsilon}}} \bigg\}
  \end{split}
\end{equation}
and hence by \eqref{inf12}
\begin{equation}\label{inf16}
  \begin{split}
 \limsup_{k\to \infty}\bigg\|(\Phi_{R}u_{k})^{\frac{\alpha(\cdot)}{q(\cdot)}}(\Phi_{R}v_{k})^{\frac{\beta(\cdot)}{q(\cdot)}}\bigg\|_{L^{q(\cdot)}(\mathbb{R}^{N})}  \geqslant  \bigg( \frac{\nu_\infty}{4}\bigg)^{\frac{1}{q_{\infty}-\overline{\epsilon}}} 
  \end{split}
\end{equation}
for all $ R>\overline{R}_2.$
This and \eqref{inf8} yield, for all $ R \in (R_2, \infty)$,
\begin{equation}\label{inf17}
  \begin{split}
\mathcal{S}_{\alpha\beta} \bigg(  \frac{\nu_{\infty}}{4}  \bigg) ^{\frac{1}{q_{\infty}}}   \leqslant \eta_{\star, R} .
 \end{split}
\end{equation}
By a similar argument that obtained \eqref{5.17}, involving Lemma \ref{4.5inf} and choosing $R_2$ larger if necessary we can show that there exist $\eta \in (0, \infty)$ such that  $\eta_{\star, R} \leqslant \eta$ for all  $ R \in (R_2, \infty)$. Thus \eqref{inf11} has been proved.

\noindent We now turn to estimate the right-hand side of \eqref{inf8}. For each $R>R_2$ given let $k_j=k_j(R)$ $(j\in \mathbb{N})$ be a sequence such that 
\begin{equation}\label{inf170}
  \begin{split}
\lim_{j\to \infty}\eta_{k_j,R} =\limsup_{j\to \infty} \, \eta_{k_j,R}  = \eta_{\star,R}.
 \end{split}
\end{equation}
Utilizing Proposition \ref{norma} and \eqref{desiab}    
again, we obtain
\begin{equation}\label{inf190}
    \begin{split}
    1=&\int_{\mathbb{R}^{N}} \bigg(\frac{|\Phi_{r}u_{k_j}|}{\eta^{\overline{p}}_{k_{j},R}}\bigg)^{\overline{p}}\,dx +   \int_{\mathbb{R}^{N}}\int_{\mathbb{R}^{N}}\frac{|(\Phi_{R}  u_{k_{j}})(x)-(\Phi_{R} u_{k_{j}})(y)|^{p(x,y)}}{\eta_{k_{j},R}^{p(x,y)}|x-y|^{N+sp(x,y)}} \,dy\,dx  \\ & + \int_{\mathbb{R}^{N}}\bigg(\frac{|\Phi_{r}v_{k_j}|}{\eta^{\overline{p}}_{k_{j},R}}\bigg)^{\overline{p}}\,dx +   \int_{\mathbb{R}^{N}}\int_{\mathbb{R}^{N}}\frac{|(\Phi_{R}  v_{k_{j}})(x)-(\Phi_{R} v_{k_{j}})(y)|^{p(x,y)}}{\eta_{k_{j},R}^{p(x,y)}|x-y|^{N+sp(x,y)}} \,dy\,dx\\ = & \int_{\mathbb{R}^{N}}\bigg[ \bigg(\frac{|\Phi_{r}u_{k_j}|}{\eta^{\overline{p}}_{k_{j},R}}\bigg)^{\overline{p}}+ \bigg(\frac{|\Phi_{r}v_{k_j}|}{\eta^{\overline{p}}_{k_{j},R}}\bigg)^{\overline{p}}\bigg]\,dx \\ & + 2\int_{B^{c}_{R} }\int_{B_R}\frac{|\Phi_{R}(x)|^{p(x,y)} (| u_{k_{j}}(x)|^{p(x,y)}+ |v_{k_{j}}(y)|^{p(x,y)})}{\eta_{k_{j},R}^{p(x,y)}|x-y|^{N+sp(x,y)}} \,dy\,dx 
    \\&  + \int_{B^{c}_{R} }\int_{B^{c}_{R}}\frac{|(\Phi_{R}  u_{k_{j}})(x)-(\Phi_{R} u_{k_{j}})(y)|^{p(x,y)}}{\eta_{k_{j},R}^{p(x,y)}|x-y|^{N+sp(x,y)}} \,dy\,dx \\&  + \int_{B^{c}_{R} }\int_{B^c_{R}}\frac{|(\Phi_{R}  v_{k_{j}})(x)-(\Phi_{R}v_{k_{j}})(y)|^{p(x,y)}}{\eta_{k_{j},R}^{p(x,y)}|x-y|^{N+sp(x,y)}} \,dy\,dx\\ \leqslant &   \int_{B^c_{R}}\bigg[ \bigg(\frac{|\Phi_{r}u_{k_j}|}{\eta^{\overline{p}}_{k_{j},R}}\bigg)^{\overline{p}}+ \bigg(\frac{|\Phi_{r}v_{k_j}|}{\eta^{\overline{p}}_{k_{j},R}}\bigg)^{\overline{p}}\bigg]\,dx \\&  + \int_{B^c_{R} }\int_{B_R}\frac{|\Phi_{R}(x)-\Phi_{R}(y)|^{p(x,y)} (| u_{k_{j}}(x)|^{p(x,y)}+ |v_{k_{j}}(y)|^{p(x,y)})}{\eta_{k_{j},R}^{p(x,y)}|x-y|^{N+sp(x,y)}} \,dy\,dx\\ &+ C(\epsilon)\int_{B^c_{R} }\int_{B^c_{R}}\frac{|\Phi_{R}(x)-\Phi_{R}(y)|^{p(x,y)} (| u_{k_{j}}(x)|^{p(x,y)}+ |v_{k_{j}}(y)|^{p(x,y)})}{\eta_{k_{j},R}^{p(x,y)}|x-y|^{N+sp(x,y)}} \,dy\,dx \\ & + (1+\epsilon) \int_{B^c_{R} }\int_{B^c_{R}} \frac{|\Phi_{R}(y)|^{p(x,y)}}{\eta_{k_j,R}^{p(x,y)}}\frac{\big | u_{k_{j}}(x)- u_{k_{j}}(y)|^{p(x,y)}}{|x-y|^{N+sp(x,y)}}\,dy\,dx \\ & + (1+\epsilon) \int_{B^c_{R} }\int_{B^c_{R}} \frac{|\Phi_{R}(y)|^{p(x,y)}}{\eta_{k_j,R}^{p(x,y)}}\frac{ | v_{k_{j}}(x)- v_{k_{j}}(y)|^{p(x,y)} }{|x-y|^{N+sp(x,y)}}\,dy\,dx. 
\end{split}
    \end{equation}
Thus, by \eqref{inf190} and the fact that $0\leqslant \Phi_{R}\leqslant 1$
yield
\begin{equation}\label{inf20}
    \begin{split}
    1 \leqslant & \frac{C(\epsilon)}{    \min\{ \eta_{k_j,R}^{\overline{p}}, \eta_{k_j,R}^{p^{-}}\}} 
 \int_{B^c_{R} }\int_{\mathbb{R}^{N}}\frac{|\Phi_{R}(x)-\Phi_{R}(y)|^{p(x,y)} (| u_{k_{j}}(x)|^{p(x,y)}+ |v_{k_{j}}(y)|^{p(x,y)})}{|x-y|^{N+sp(x,y)}} \,dy\,dx \\ & +  \frac{(1+\epsilon)}{\min\{ \eta_{k_j,R}^{\overline{p}+\epsilon}, \eta_{k_j,R}^{\overline{p}-\epsilon}\}}\int_{B^c_{R}} \Phi_{R}(x)\big(U_{k_j}(x) +V_{k_j}(x)\big)\,dx. 
\end{split}
    \end{equation}
Taking the limit superior as $k \to \infty$ in \eqref{inf20} and using \eqref{inf11} and \eqref{inf170}, we obtain 
   \begin{equation}\label{lala} 
   \begin{split}
    1 \leqslant & \frac{C(\epsilon)}{    \min\{ \eta_{\star,R}^{\overline{p}}, \eta_{\star,R}^{p^{-}}\}} 
\limsup_{k\to \infty} \int_{B^c_{R} }\int_{\mathbb{R}^{N}}\frac{|\Phi_{R}(x)-\Phi_{R}(y)|^{p(x,y)} (| u_{k}(x)|^{p(x,y)}+ |v_{k}(y)|^{p(x,y)})}{|x-y|^{N+sp(x,y)}} \,dy\,dx \\ & +  \frac{(1+\epsilon)}{\min\{ \eta_{\star,R}^{\overline{p}+\epsilon}, \eta_{\star,R}^{\overline{p}-\epsilon}\}}\limsup_{k\to \infty}\int_{B^c_{R}} \Phi_{R}(x)\big(U_{k}(x) +V_{k}(x)\big)\,dx. 
\end{split}
    \end{equation}

    \noindent Now, taking the limit as $R\to \infty$ in \eqref{lala}, using the  Lemma \ref{4.5inf} and   \eqref{inf3}, we conclude
\begin{equation*}\label{inf20}
    \begin{split}
    1 \leqslant & \frac{ 1+ \epsilon }{    \min\{ \eta_{\star}^{\overline{p}+\epsilon}, \eta_{\star}^{\overline{p}-\epsilon}\}} \mu_{\infty}, 
\end{split}
    \end{equation*}
this is,
\begin{equation*}\label{inf20}
    \begin{split}
    \eta_{\star} \leqslant (1+\epsilon)^{\frac{1}{\overline{p}-\epsilon}}\max \{ \mu_{\infty}^{\frac{1}{\overline{p}+\epsilon}}, \mu_{\infty}^{\frac{1}{\overline{p}-\epsilon}}  \}
\end{split}
    \end{equation*}
    where $\eta_{\star}:= \displaystyle\liminf_{R\to \infty}\eta_{\star, R}$ and hence $0< \eta_{\star}<\eta$ due to \eqref{inf11}. From this \eqref{inf8} and \eqref{inf10}, we obtain
    \begin{equation*}\label{inf21}
  \begin{split}
\mathcal{S}_{\alpha\beta}  \min \{\nu_{\infty}^{\frac{1}{q_{\infty}+\epsilon}},  \nu_{\infty}^{\frac{1}{q_{\infty}-\epsilon}}\} \leqslant  (1+\epsilon)^{\frac{1}{\overline{p}-\epsilon}}\max \{ \mu_{\infty}^{\frac{1}{\overline{p}+\epsilon}}, \mu_{\infty}^{\frac{1}{\overline{p}-\epsilon}}\}.
 \end{split}
\end{equation*}
    Since $\epsilon$ was chosen arbitrarily in the last inequality \eqref{SSin}, the proof of Theorem \ref{lionsinfity} is complete.
    \end{proof}
    
\section{\textbf{Existence of nontrivial solutions for the system \eqref{s1}}}
\label{aplication}
 In this section, we  assume without further mentioning, that the assumptions in Theorem \ref{aplica} are satisfied. Moreover,  we denote by    $C_{i}$, a positive constant for all $i \in \mathbb{N}$.
    \subsection{ Functional properties of the operator $\mathcal{L}_{\mathcal{A}K}$}
    
The following result is used to obtain compactness properties for the operator $\mathcal{L}_{\mathcal{A}K}$, for a proof e.g.   \cite[Lemma 2.10]{bonaldo}.
\begin{lemma} 
\label{s+}
Assume that  $(a_{1})$-$(a_{3})$, and $(\mathcal{K})$   hold. Let $\Phi:\mathit{W}\to \mathbb{R}$ be defined by
\begin{equation*}
\Phi(u,v):= \mathcal{B}(u)+\mathcal{B}(v) \mbox{ for all } (u,v)\in \mathit{W}, 
\end{equation*}
where
$$\mathcal{B}(u):=  \int_{\mathbb{R}^{N}}\int_{\mathbb{R}^{N}}\mathscr{A}(u(x)-u(y))K(x,y)\,dy\,dx+ \int_{\mathbb{R}^{N}}\frac{|u|^{\overline{p}}}{\overline{p}}\,dx,$$  
  $$\mathcal{B}(v):=  \int_{\mathbb{R}^{N}}\int_{\mathbb{R}^{N}}\mathscr{A}(v(x)-v(y))K(x,y)\,dy\,dx+ \int_{\mathbb{R}^{N}}\frac{|v|^{\overline{p}}}{\overline{p}}\,dx,$$
have the following  properties:
\begin{itemize}
\item[$(\mathcal{L}_{1})$] The functional $\Phi$ is well defined on $\mathit{W}$, it is of   class  $C^{1}(\mathit{W}, \mathbb{R})$, and its G\^ateaux derivative is given by where
\begin{equation*}\label{phi'}
\langle \Phi'(u,v),(\varphi, \psi) \rangle = \langle\mathcal{B}'(u),\varphi\rangle+ \langle\mathcal{B}'(v),\psi\rangle \mbox{ for all } (u, v), (\varphi,\psi) \in \mathit{W},
\end{equation*} 
$$\langle\mathcal{B}'(u),\varphi\rangle=  \int_{\mathbb{R}^{N}}\int_{\mathbb{R}^{N}}\mathcal{A}(u(x)-u(y))(\varphi(x)-\varphi(y))K(x,y)\,dy\,dx+ \int_{\mathbb{R}^{N}}|u|^{\overline{p}-2}u\varphi\,dx,$$  
 $$\langle\mathcal{B}'(v),\psi\rangle=  \int_{\mathbb{R}^{N}}\int_{\mathbb{R}^{N}}\mathcal{A}(v(x)-v(y))(\psi(x)-\psi(y))K(x,y)\,dy\,dx+ \int_{\mathbb{R}^{N}}|v|^{\overline{p}-2}v\psi\,dx.$$
\item[$(\mathcal{L}_{2})$]  The functional $\Phi$ is weakly lower semicontinuous, that is, $(u_k,v_k) \rightharpoonup (u,v)$ in $\mathit{W}$ as $ k \to +\infty$ implies that $\displaystyle{\Phi(u,v) \leqslant\liminf_{k\to +\infty} \Phi(u_k,v_k)}.$ 
\item[$(\mathcal{L}_{3})$] The  functional $\Phi' : \mathit{W}\to \mathit{W}'$ is an operator of type $(S_{+})$ on $\mathit{W}$, that is, if 
  \begin{equation*} \label{inffo}
 ( u_k,v_k) \rightharpoonup (u,v)  \mbox{ in }\mathit{W} \mbox{ and } \limsup_{k \to +\infty}\,\langle \,\Phi'(u_k,v_k), (u_k-u,v_k-v) \rangle\leqslant 0,
  \end{equation*}
  then $(u_k,v_k)\to( u,v)$ in $\mathit{W}$ as $k\to +\infty$.
\end{itemize}
\end{lemma}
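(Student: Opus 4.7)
The strategy is to verify the three properties separately, exploiting that $\Phi(u,v)=\mathcal{B}(u)+\mathcal{B}(v)$ splits as a sum in the two variables, and reducing each claim to the corresponding scalar statement on $W^{s,p(\cdot,\cdot)}(\mathbb{R}^N)$ via the structural hypotheses $(a_1)$--$(a_3)$ together with the two-sided bound on $K$ from $(\mathcal{K})$. Throughout, one trades the kernel $K(x,y)\,dx\,dy$ for the singular measure $|x-y|^{-N-sp(x,y)}\,dx\,dy$ (up to constants $b_0,b_1$) so that all estimates are carried out in the Gagliardo modular of $W^{s,p(\cdot,\cdot)}(\mathbb{R}^N)$ controlled by Proposition \ref{norma}.

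For $(\mathcal{L}_1)$, I would first note that integrating $(a_2)$ gives $\mathscr{A}(\zeta)\leqslant (C_{\mathcal{A}}/p^{-})|\zeta|^{p(x,y)}$, so $|\Phi(u,v)|$ is bounded by a constant multiple of $\rho_{\mathit{W}}(u,v)$ plus a term controlled by $\|(u,v)\|^{\overline{p}}$, proving well-definedness on $\mathit{W}$. To extract the Gateaux derivative I would differentiate pointwise under the integral sign, using the mean value theorem and the bound $|\mathcal{A}(\zeta)|\leqslant C_{\mathcal{A}}|\zeta|^{p(x,y)-1}$ from $(a_2)$ to produce an $L^{1}$ dominating function (via H\"older's inequality in the variable-exponent Lebesgue space $L^{p(\cdot,\cdot)}$ on $\mathbb{R}^{2N}$ with the kernel measure), which justifies the interchange. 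Continuity of $\Phi'\colon \mathit{W}\to \mathit{W}'$ then follows from a Krasnoselskii-type argument for variable exponents: strong convergence in $\mathit{W}$ implies a.e.\ convergence (up to subsequence) of the finite differences and the continuity of $\mathcal{A}$ yields convergence of the integrands, with the dominating function again furnished by $(a_2)$.

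For $(\mathcal{L}_2)$, the map $\zeta\mapsto \mathscr{A}(\zeta)$ is strictly convex by $(a_1)$, and $u\mapsto u(x)-u(y)$ is linear, so $u\mapsto \mathcal{B}(u)$ is convex; the $L^{\overline{p}}$ part is trivially convex. Hence $\Phi$ is convex, and combined with the norm-continuity from $(\mathcal{L}_1)$ it is weakly lower semicontinuous by Mazur's lemma.

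The main obstacle is $(\mathcal{L}_3)$. Assume $(u_k,v_k)\rightharpoonup(u,v)$ and $\limsup_{k}\langle\Phi'(u_k,v_k),(u_k-u,v_k-v)\rangle\leqslant 0$. Since by convexity
\begin{equation*}
\langle \mathcal{B}'(u_k),u_k-u\rangle \geqslant \mathcal{B}(u_k)-\mathcal{B}(u),\qquad \langle \mathcal{B}'(v_k),v_k-v\rangle\geqslant \mathcal{B}(v_k)-\mathcal{B}(v),
\end{equation*}
and $\mathcal{B}$ is weakly lower semicontinuous from $(\mathcal{L}_2)$, each bracket has nonnegative $\liminf$; therefore both must tend to zero separately. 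Expanding the pairing and using the elementary monotonicity inequality
\begin{equation*}
\bigl(\mathcal{A}(a)-\mathcal{A}(b)\bigr)(a-b)\geqslant 0 \qquad \text{for all }a,b\in\mathbb{R},
\end{equation*}
valid because $\mathscr{A}$ is strictly convex, I would rewrite $\langle \mathcal{B}'(u_k),u_k-u\rangle-\langle \mathcal{B}'(u),u_k-u\rangle$ as an integral of a nonnegative quantity which tends to zero in $L^{1}(\mathbb{R}^{2N};K\,dx\,dy)$; by Fatou combined with the lower bound $\mathcal{A}(\zeta)\zeta\geqslant c_{\mathcal{A}}|\zeta|^{p(x,y)}$ from $(a_2)$ and $(\mathcal{K})$, this yields
\begin{equation*}
\int_{\mathbb{R}^{N}}\int_{\mathbb{R}^{N}}\frac{|(u_k-u)(x)-(u_k-u)(y)|^{p(x,y)}}{|x-y|^{N+sp(x,y)}}\,dy\,dx\longrightarrow 0,
\end{equation*}
and similarly $\|u_k-u\|_{L^{\overline{p}}}\to 0$, with the analogous statements for $v_k$. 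Hence $\rho_{\mathit{W}}(u_k-u,v_k-v)\to 0$, and Proposition \ref{norma}(d) finally upgrades this to $\|(u_k,v_k)-(u,v)\|\to 0$, completing the proof.
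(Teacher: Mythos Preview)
The paper does not give its own proof of this lemma; it simply refers to \cite[Lemma~2.10]{bonaldo}. Your outline for $(\mathcal{L}_1)$ and $(\mathcal{L}_2)$ is the standard one and is correct: the growth bound in $(a_2)$ together with $(\mathcal{K})$ controls $\mathscr{A}(u(x)-u(y))K(x,y)$ by the Gagliardo density, dominated convergence delivers the derivative, and convexity of $\mathscr{A}$ plus continuity yields weak lower semicontinuity via Mazur.

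The genuine gap is in your argument for $(\mathcal{L}_3)$. You correctly reach
\[
\int_{\mathbb{R}^{2N}}\bigl(\mathcal{A}(\Delta u_k)-\mathcal{A}(\Delta u)\bigr)\,(\Delta u_k-\Delta u)\,K\,dx\,dy\longrightarrow 0,
\qquad \Delta w:=w(x)-w(y),
\]
with a nonnegative integrand. But your next sentence, ``by Fatou combined with the lower bound $\mathcal{A}(\zeta)\zeta\geqslant c_{\mathcal{A}}|\zeta|^{p(x,y)}$\dots this yields $[u_k-u]_{s,p(\cdot,\cdot)}\to 0$'', does not work as stated: the hypothesis $(a_2)$ bounds $\mathcal{A}(\zeta)\zeta$ from below, \emph{not} $(\mathcal{A}(a)-\mathcal{A}(b))(a-b)$. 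Under $(a_1)$--$(a_3)$ alone there is no Simon-type inequality of the form $(\mathcal{A}(a)-\mathcal{A}(b))(a-b)\geqslant c\,|a-b|^{p(x,y)}$, so you cannot read off convergence of the Gagliardo modular of $u_k-u$ directly from this integral.

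A route that does close the argument is the following. From the $L^1$ convergence of the nonnegative integrand, pass to a subsequence converging a.e.; strict monotonicity of $\mathcal{A}$ (equivalently, strict convexity of $\mathscr{A}$) forces $\Delta u_k\to\Delta u$ a.e.\ in $\mathbb{R}^{2N}$ and $u_k\to u$ a.e.\ in $\mathbb{R}^{N}$. Independently, your convexity bracketing already gives $\mathcal{B}(u_k)\to\mathcal{B}(u)$, and since each summand of $\mathcal{B}$ is weakly lower semicontinuous you obtain both $\int\mathscr{A}(\Delta u_k)K\to\int\mathscr{A}(\Delta u)K$ and $\|u_k\|_{\overline{p}}\to\|u\|_{\overline{p}}$. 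The second of these, together with $u_k\rightharpoonup u$ in the uniformly convex space $L^{\overline{p}}(\mathbb{R}^N)$, gives $u_k\to u$ in $L^{\overline{p}}$. For the nonlocal part, apply the generalized dominated convergence theorem: $|\Delta u_k|^{p(x,y)}|x-y|^{-N-sp(x,y)}$ converges a.e.\ and is dominated, via $(a_2)$, $(a_3)$ and $(\mathcal{K})$, by a constant times $\mathscr{A}(\Delta u_k)K$, which converges in $L^{1}$; hence the Gagliardo modulars of $u_k$ converge to that of $u$. A Brezis--Lieb argument (for the variable-exponent modular) then yields $\rho_{W}(u_k-u)\to 0$, and Proposition~\ref{norma}(d) finishes. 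This is essentially the mechanism used in \cite{bonaldo}; what is missing in your sketch is precisely the passage through a.e.\ convergence of the increments and the energy-convergence/Brezis--Lieb step, which the bare inequality $\mathcal{A}(\zeta)\zeta\geqslant c_{\mathcal{A}}|\zeta|^{p(x,y)}$ cannot replace.
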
 

\medskip   
For $(u, v) \in \mathit{W}$, we define the energy functional $\mathcal{J}: \mathit{W}\to \mathbb{R}$ by
  \begin{equation*}
  \mathcal{J}(u,v)= \Phi(u,v)  -\lambda\int_{\mathbb{R}^{N}}H(x,u,v)\,dx- \int_{\mathbb{R}^{N}}\frac{|u|^{\alpha(x)}|v|^{\beta(x)}}{q(x)}
\,dx,
    \end{equation*}
  where the functional $\Phi$ was defined in Lemma \ref{s+}.
Then from standard arguments  Lemma \ref{s+} and the hypothesis  $(\mathcal{H})$ the functional   $\mathcal{J}$ is  of class $C^{1}(\mathit{W};\mathbb{R})$ and  for all $(\varphi, \psi)  \in \mathit{W}$. Moreover, we have 
  \begin{equation*}
  \begin{split}
  \langle \mathcal{J}'(u,v), (\varphi, \psi)\rangle =& \langle\Phi'(u,v), (\varphi, \psi)\rangle  -\lambda\int_{\mathbb{R}^{N}}\big[H_{u}(x,u,v)\varphi+ H_{v}(x,u,v)\psi\big]\,dx \\ &-  \int_{\mathbb{R}^{N}}\frac{\alpha(x)|u|^{\alpha(x)-2}u|v|^{\beta(x)}\varphi}{q(x)}\,dx -  \int_{\mathbb{R}^{N}}\frac{\beta(x)|u|^{\alpha(x)}|v|^{\beta(x)-2}v \psi}{q(x)}\,dx.
  \end{split}
    \end{equation*}
 
 We say that the pair $(u, v) \in \mathit{W}$ is a weak solution of system \eqref{s1}, when
  \begin{equation*}
  \begin{split}
   \int_{\mathbb{R}^{N}}\int_{\mathbb{R}^{N}}\big[\mathcal{A}(u(x)-u(y))(\varphi(x)-\varphi(y))+ \mathcal{A}(v(x)-v(y))(\psi(x)-\psi(y))\big]K(x,y)\,dy\,dx\\ +\int_{\mathbb{R}^{N}}\big(|u|^{\overline{p}-2}u\varphi+|v|^{\overline{p}-2}v\psi\big)\,dx = \lambda \int_{\mathbb{R}^{N}}\big[H_{u}(x,u,v)\varphi+ H_{v}(x,u,v)\psi\big]\,dx\\ +  \int_{\mathbb{R}^{N}}\frac{\alpha(x)|u|^{\alpha(x)-2}u|v|^{\beta(x)}\varphi+\beta(x)|u|^{\alpha(x)}|v|^{\beta(x)-2}v \psi}{q(x)}\,dx
    \end{split}
  \end{equation*}
  for all $(\varphi, \psi)  \in \mathit{W}$.

 Clearly, any (weak) solution of \eqref{s1} is a critical points of the Euler–Lagrange functional $\mathcal{J}: \mathit{W} \to \mathbb{R}$ associated with \eqref{s1}, given for all $(u, v) \in \mathit{W}$.


  \subsection{Proof of Theorem \ref{aplica}}

The structural assumptions of Theorem \ref{aplica} imply that the functional $\mathcal{J}$ possesses the geometric features of the Mountain Pass Theorem of Ambrosetti-Rabinowitz. Then, it will be shown here that, the functional $\mathcal{J}$ has the geometric features to get the existence of a Palais–Smale sequence at special levels.
  \begin{lemma}\label{lemaA}
  For any $\lambda \in \mathbb{R}^{+}$, there exist $\delta_{0}>0$  and $\varrho_{0}\in(0,1]$, such that $\mathcal{J}(u,v)\geqslant\delta_{0}>0$ for any $(u,v)\in \mathit{W}$, with $\|(u,v)\|=\varrho_{0}$.  
  \end{lemma}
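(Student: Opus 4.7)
\textbf{Proof strategy for Lemma \ref{lemaA}.} The plan is to show that on a sufficiently small sphere the leading $\overline{p}$-th power of $\|(u,v)\|$ extracted from $\Phi(u,v)$ dominates both the lower-order perturbation term and the critical term, which carry strictly larger powers of the norm.

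First, I control the modular part from below. Conditions $(a_2)$ and $(a_3)$ give $\mathscr{A}(\zeta)\geqslant \mathcal{A}(\zeta)\zeta/\overline{p}\geqslant (c_{\mathcal{A}}/\overline{p})|\zeta|^{p(x,y)}$, and combining this with the kernel bound $(\mathcal{K})$ yields a constant $C_{0}>0$ with
$$\Phi(u,v)\;\geqslant\; C_{0}\,\rho_{\mathit{W}}(u,v).$$
Since $p^{+}=\overline{p}$ by $(\mathit{P})$, Proposition \ref{norma}(c) gives $\Phi(u,v)\geqslant C_{0}\|(u,v)\|^{\overline{p}}$ whenever $\|(u,v)\|\leqslant 1$.

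Next, I estimate the two subtractive terms from above. From the AR-type inequality in $(\mathcal{H})$ applied at $z=(0,0)$ together with $\mathrm{H}\geqslant 0$, one has $\mathrm{H}(x,0,0)=0$; integrating the gradient bound along the segment from $0$ to $(u,v)$ and applying Young's inequality to absorb the mixed terms gives, up to a multiplicative constant,
$$\mathrm{H}(x,u,v)\;\leqslant\; c\bigl(a(x)|u|^{\theta(x)}+b(x)|v|^{\sigma(x)}\bigr).$$
The Hölder inequality of Proposition \ref{hold3}(b) with the conjugate pair $q(\cdot)/\theta(\cdot)$ and $q(\cdot)/(q(\cdot)-\theta(\cdot))$, combined with Proposition \ref{mista}, the integrability hypotheses on $a$ and $b$, and the embedding $\mathit{W}\hookrightarrow L^{q(\cdot)}(\mathbb{R}^{N})\times L^{q(\cdot)}(\mathbb{R}^{N})$ from \eqref{remark}, yields
$$\lambda\int_{\mathbb{R}^{N}}\mathrm{H}(x,u,v)\,dx\;\leqslant\; C_{1}\lambda\|(u,v)\|^{\theta^{-}}+C_{2}\lambda\|(u,v)\|^{\sigma^{-}}$$
whenever $\|(u,v)\|\leqslant 1$. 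For the critical term, estimate \eqref{holder} already established in the paper gives directly
$$\int_{\mathbb{R}^{N}}\frac{|u|^{\alpha(x)}|v|^{\beta(x)}}{q(x)}\,dx\;\leqslant\;\frac{C_{3}}{q^{-}}\|(u,v)\|^{q^{-}}$$
in the same small-norm regime.

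Assembling these bounds,
$$\mathcal{J}(u,v)\;\geqslant\;\|(u,v)\|^{\overline{p}}\Bigl(C_{0}-C_{1}\lambda\|(u,v)\|^{\theta^{-}-\overline{p}}-C_{2}\lambda\|(u,v)\|^{\sigma^{-}-\overline{p}}-\tfrac{C_{3}}{q^{-}}\|(u,v)\|^{q^{-}-\overline{p}}\Bigr).$$
The hypothesis $\overline{p}<\theta^{-}\leqslant\sigma^{-}<q^{-}$ makes every exponent inside the parenthesis strictly positive, so for $\varrho_{0}\in(0,1]$ chosen small enough (the choice may depend on $\lambda$, which is fixed) the parenthesis exceeds $C_{0}/2$; setting $\delta_{0}:=(C_{0}/2)\varrho_{0}^{\overline{p}}$ finishes the proof. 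The only slightly delicate step is passing from the Luxemburg norms produced by Hölder/Proposition \ref{mista} to clean powers of $\|(u,v)\|$, since the exponents differ in the regimes $\|\cdot\|\lessgtr 1$; this is why we restrict from the outset to $\varrho_{0}\leqslant 1$, where Proposition \ref{norma}(c) and Proposition \ref{mista}(a) give consistent one-sided bounds.
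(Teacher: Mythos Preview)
Your proposal is correct and follows essentially the same route as the paper: bound $\Phi(u,v)$ below by a constant times $\|(u,v)\|^{\overline{p}}$ via $(a_2)$, $(a_3)$, $(\mathcal{K})$ and Proposition~\ref{norma}(c), bound the $\mathrm{H}$-term and the critical term above by higher powers of $\|(u,v)\|$ using $(\mathcal{H})$, H\"older, and \eqref{holder}, then choose $\varrho_0\in(0,1]$ small. The only cosmetic difference is that the paper's estimate of the $\mathrm{H}$-integral produces an extra $\|(u,v)\|^{2q^{-}}$ contribution (from a slightly different splitting of the mixed terms), whereas your treatment via integration along the segment plus Young's inequality yields only the $\theta^{-}$ and $\sigma^{-}$ powers; both are harmless since every exponent exceeds $\overline{p}$.
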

  \begin{proof}
  Let $(u, v) \in \mathit{W}$ be such that, $\|(u,v)\|\leqslant 1$.
  Then from $(a_{2})$, $(a_{3})$, $(\mathcal{K})$, $(\mathcal{H})$,  H\"older inequality (see Proposition \ref{hold3}) and  \eqref{holder}, we obtain
  \begin{equation}\label{eq1}
  \begin{split}
  \mathcal{J}(u,v)\geqslant & d_1 \|(u,v)\|^{\overline{p}} -\frac{\lambda C_1}{\theta^{-}}\bigg[\|(u,v)\|^{\sigma^{-}}+\|(u,v)\|^{\theta^{-}}\\ & +\bigg(\|a\|^{\frac{q^{-}}{q^{-}-\theta^{-}}}_{L^{\frac{q(\cdot)}{q(\cdot)-\theta(\cdot)}}(\mathbb{R}^{N})} + \|b\|^{\frac{q^{-}}{q^{-}-\sigma^{-}}}_{L^{\frac{q(\cdot)}{q(\cdot)-\sigma(\cdot)}}(\mathbb{R}^{N})}\bigg)\|(u,v)\|^{2q^{-}}\bigg] - \frac{c_{2}}{q^{-}}\|(u,v)\|^{q^{-}} \\ \geqslant & d_1 \|(u,v)\|^{\overline{p}} -\lambda C_2\bigg[\|(u,v)\|^{\sigma^{-}}+\|(u,v)\|^{\theta^{-}}+\|(u,v)\|^{2q^{-}}\bigg]- C_{3}\|(u,v)\|^{q^{-}}, 
  \end{split}
  \end{equation}
  where $\displaystyle{d_1= \min\bigg\{\frac{c_{\mathcal{A}b_{0}}}{\overline{p}},\frac{\tilde{c}_{\mathcal{A}\tilde{b}_{0}}}{\overline{p}}, \frac{1}{\overline{p}}\bigg\}>0}.$
 
  Hence, since $\overline{p}<\theta^{-}<\sigma^{-}< q^{-}< 2q^{-}$ it follows that, there are $0<\varrho_{0}<1$ small enough and
$\delta_{0} > 0$ such that
   \begin{equation*}\label{eq1}
  \begin{split}
  \mathcal{J}(u,v)\geqslant \delta_{0} >0 \,\,\,\, \mbox{ when }  \,\,\,\,  \|(u,v)\|=\varrho_{0}.
  \end{split}
  \end{equation*}
   
  \end{proof}
  \begin{lemma}\label{lemaB}
  There exists a pair $(\mathfrak{e}_1,\mathfrak{e}_2) \in C^{\infty}_{0}(\mathbb{R}^{N})\times C^{\infty}_{0}(\mathbb{R}^{N})$, such that $\mathfrak{e}_1\geqslant 0$ and  $\mathfrak{e}_2\geqslant 0$ in $\mathbb{R}^{N}$, $\mathcal{J}(\mathfrak{e}_1,\mathfrak{e}_2)<0$,  $\|(\mathfrak{e}_1,\mathfrak{e}_2)\|\geqslant 2$ and $\displaystyle{\int_{\mathbb{R}^{N}}|\mathfrak{e}_1|^{\alpha(x)}|\mathfrak{e}_2|^{\beta(x)}\,dx>0}$, for any $\lambda \in \mathbb{R}^{+}$.
  \end{lemma}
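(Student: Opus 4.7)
\textbf{Proof proposal for Lemma \ref{lemaB}.} The plan is the standard fibration argument: pick a fixed nonnegative test pair and let the scaling parameter tend to infinity, showing that the supercritical vectorial term beats every other contribution to $\mathcal{J}$.

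Concretely, choose any $\varphi \in C^{\infty}_{0}(\mathbb{R}^{N})$ with $\varphi \geqslant 0$ and $\varphi \not\equiv 0$, and set $\varphi_{1}=\varphi_{2}=\varphi$. Since $\alpha,\beta \in C_{+}(\mathbb{R}^{N})$, on the open set where $\varphi>0$ we have $\varphi^{\alpha(x)}\varphi^{\beta(x)}>0$, so
\[
\int_{\mathbb{R}^{N}}|\varphi_1|^{\alpha(x)}|\varphi_2|^{\beta(x)}\,dx>0.
\]
For $t\geqslant 1$ we define $(\mathfrak{e}_1,\mathfrak{e}_2):=(t\varphi_{1},t\varphi_{2})$, which manifestly stays in $C^{\infty}_{0}(\mathbb{R}^{N})\times C^{\infty}_{0}(\mathbb{R}^{N})$ with nonnegative components, and it remains to tune $t$.

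Next I would estimate $\mathcal{J}(t\varphi_{1},t\varphi_{2})$ term by term for $t \geqslant 1$. Using $(a_{2})$, $(a_{3})$ and $(\mathcal{K})$ one has $\mathscr{A}(\zeta)\leqslant \tfrac{C_{\mathcal{A}}}{p(x,y)}|\zeta|^{p(x,y)}$ and $K(x,y)\leqslant b_{1}|x-y|^{-N-sp(x,y)}$; since the integrand of the modular $\rho_{\mathit{W}}(t\varphi_{1},t\varphi_{2})$ is then bounded above by $t^{p(x,y)}$ times the corresponding integrand for $(\varphi_{1},\varphi_{2})$, and $t^{p(x,y)}\leqslant t^{\overline{p}}$ for $t\geqslant 1$, one obtains
\[
\Phi(t\varphi_{1},t\varphi_{2})\leqslant C_{4}\,t^{\overline{p}},
\]
with $C_{4}=C_{4}(\varphi)>0$ independent of $t$. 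By $(\mathcal{H})$, $\mathrm{H}\geqslant 0$, so the perturbation term can only decrease $\mathcal{J}$. For the critical term, since $\alpha(x)+\beta(x)=q(x)$ and $t\geqslant 1$,
\[
\int_{\mathbb{R}^{N}}\frac{|t\varphi_{1}|^{\alpha(x)}|t\varphi_{2}|^{\beta(x)}}{q(x)}\,dx
=\int_{\mathbb{R}^{N}}\frac{t^{q(x)}}{q(x)}|\varphi_{1}|^{\alpha(x)}|\varphi_{2}|^{\beta(x)}\,dx
\geqslant \frac{t^{q^{-}}}{q^{+}}\int_{\mathbb{R}^{N}}|\varphi_{1}|^{\alpha(x)}|\varphi_{2}|^{\beta(x)}\,dx =: C_{5}\,t^{q^{-}}
\]
with $C_{5}>0$. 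Combining these,
\[
\mathcal{J}(t\varphi_{1},t\varphi_{2})\leqslant C_{4}\,t^{\overline{p}}-C_{5}\,t^{q^{-}}\qquad(t\geqslant 1).
\]

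Since $(\mathcal{H})$ enforces $\overline{p}<\theta^{-}\leqslant \sigma^{+}<q^{-}$, we have $q^{-}>\overline{p}$, so the right hand side tends to $-\infty$ as $t\to+\infty$. In particular there is $t_{0}\geqslant 1$ with $\mathcal{J}(t_{0}\varphi_{1},t_{0}\varphi_{2})<0$. Moreover, for $t\geqslant 1$ large the modular $\rho_{\mathit{W}}(t\varphi_{1},t\varphi_{2})\to+\infty$, hence by Proposition \ref{norma}(e) also $\|(t\varphi_{1},t\varphi_{2})\|\to+\infty$. Enlarging $t_{0}$ if necessary we ensure $\|(t_{0}\varphi_{1},t_{0}\varphi_{2})\|\geqslant 2$, and the pair $(\mathfrak{e}_{1},\mathfrak{e}_{2}):=(t_{0}\varphi_{1},t_{0}\varphi_{2})$ satisfies all the requirements uniformly in $\lambda\in \mathbb{R}^{+}$ (the $\lambda$--term only helps as $\mathrm{H}\geqslant 0$). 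The only mildly delicate step is the upper bound $\Phi(t\varphi_{1},t\varphi_{2})\leqslant C_{4}\,t^{\overline{p}}$, which relies on using $t^{p(x,y)}\leqslant t^{\overline{p}}$ for $t\geqslant 1$ together with $(a_{2})$--$(a_{3})$ and $(\mathcal{K})$; everything else is elementary and independent of $\lambda$.
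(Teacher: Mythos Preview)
Your proof is correct and follows essentially the same fibration argument as the paper: fix a nonnegative test pair with positive vectorial integral, use $(a_2)$, $(a_3)$, $(\mathcal{K})$ and $\mathrm{H}\geqslant 0$ to bound $\mathcal{J}(t\varphi_1,t\varphi_2)\leqslant C\,t^{\overline{p}}-C'\,t^{q^-}$ for $t\geqslant 1$, and conclude since $q^->\overline{p}$. The only cosmetic differences are that the paper normalises $\|(u,v)\|=1$ before scaling, whereas you absorb this into the constant $C_4(\varphi)$, and you are slightly more explicit about the norm blow-up via Proposition~\ref{norma}(e) and about why the choice $\varphi_1=\varphi_2$ forces the vectorial integral to be positive.
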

  \begin{proof}
 Let $(u,v) \in C^{\infty}_{0}(\mathbb{R}^{N})\times C^{\infty}_{0}(\mathbb{R}^{N})$, be such that $u\geqslant 0 $ and  $v \geqslant 0 $ in $\mathbb{R}^{N}$, $\|(u,v)\|=1$ and  $\displaystyle{\int_{\mathbb{R}^{N}}|u|^{\alpha(x)}|v|^{\beta(x)}\,dx>0}$. Then by $(a_2),$  $(a_3),$ $(\mathcal{K})$, and  $(\mathcal{H})$ for all $\zeta \in \mathbb{R}$, with $\zeta >1$, we have
  \begin{equation}\label{eq4}
  \begin{split}
  \mathcal{J}(\zeta u,\zeta v)\leqslant  & \zeta^{\overline{p}}d_2\|(u, v)\|^{\overline{p}}- \frac{\zeta^{q^{-}}}{q^{+}}\int_{\mathbb{R}^{N}}|u|^{\alpha(x)}|v|^{\beta(x)}\,dx ,
  \end{split}
  \end{equation}
  where $\displaystyle{d_2= \max \bigg \{\frac{C_{\mathcal{A}}b_1}{\overline{p}}, \frac{\tilde{C}_{\mathcal{A}}\tilde{b}_1}{\overline{p}}, \frac{1}{\overline{p}}\bigg\}}$.
 Thus, once that $\overline{p}< q^{-} $, taking   $\zeta \to+\infty$ in \eqref{eq4}, we obtain
  $$\mathcal{J}(\zeta u,\zeta v)\to -\infty.$$
   Therefore, taking $(\mathfrak{e}_1,\mathfrak{e}_2)=\zeta_{0} \, (u,v)$ with $\zeta_{0}>0$ large enough, we conclude the proof.
  \end{proof}
  
  \medskip
  Now, we discuss the compactness property for the functional $\mathcal{J}$, given by the (PS) condition at a suitable level. To this aim, we fix $\lambda > 0$  and set
  $$c_{\lambda}=\inf_{\gamma\in \Gamma}\max_{\zeta\in[0,1]}\mathcal{J}(\gamma(\zeta)),$$
  where 
  $$\Gamma=\big\{ \gamma \in C([0,1], \mathit{W}): \gamma(0)=(0,0),\, \mathcal{J}(\gamma(1))<0\big\}.$$
 Note that, by Lemma \ref{lemaA}, we have that $c_{\lambda}>0$ for $\lambda >0$. In particular $\|(\mathfrak{e}_1,\mathfrak{e}_2)\|\geqslant 2>\varrho_{0}$ since 
 $\varrho_{0}\in(0,1]$. The following result reports an important assymptotic limit of the level $c_{\lambda}$ as $\lambda\to \infty$.
  
  \begin{lemma}\label{lemaC}The set of critical levels $\{c_{\lambda}\}_{\lambda\in \mathbb{R}^{+}}$ verifies
  $$\lim_{\lambda\to + \infty}c_{\lambda}=0.$$
    \end{lemma}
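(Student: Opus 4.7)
The plan is to exploit the $\lambda$-uniform mountain pass geometry that the signs in $\mathcal{J}$ already provide, and to drive the level to zero as $\lambda \to \infty$ through a contradiction argument built on a single admissible path.

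I would first fix once and for all a pair $(u_{0}, v_{0}) \in C^{\infty}_{0}(\mathbb{R}^{N}) \times C^{\infty}_{0}(\mathbb{R}^{N})$ with $u_{0}, v_{0} \geqslant 0$ and with $\{u_{0} > 0\} \cap \{v_{0} > 0\}$ of positive measure, so that by $(\mathcal{H})$ we simultaneously have $\int_{\mathbb{R}^{N}} |u_{0}|^{\alpha(x)} |v_{0}|^{\beta(x)}\,dx > 0$ and $\int_{\mathbb{R}^{N}} H(x, t u_{0}, t v_{0})\,dx > 0$ for every $t > 0$. Reusing the upper bound from the proof of Lemma \ref{lemaB} and simply discarding the nonpositive term $-\lambda \int H$, one has, for every $\zeta \geqslant 1$ and every $\lambda > 0$,
\[
\mathcal{J}(\zeta u_{0}, \zeta v_{0}) \leqslant \zeta^{\overline{p}} d_{2} \|(u_{0}, v_{0})\|^{\overline{p}} - \frac{\zeta^{q^{-}}}{q^{+}} \int_{\mathbb{R}^{N}} |u_{0}|^{\alpha(x)} |v_{0}|^{\beta(x)}\,dx.
\]
Since $\overline{p} < q^{-}$, there exists $T > 1$, chosen independently of $\lambda$, such that $\mathcal{J}(T u_{0}, T v_{0}) < 0$ for all $\lambda > 0$. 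Hence $\gamma(\zeta) := \zeta T (u_{0}, v_{0})$ lies in $\Gamma$ for every $\lambda$, yielding
\[
0 < c_{\lambda} \leqslant M_{\lambda} := \max_{t \in [0, T]} h_{\lambda}(t), \qquad h_{\lambda}(t) := \mathcal{J}(t u_{0}, t v_{0}).
\]

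The substantive step will be to show $M_{\lambda} \to 0$ as $\lambda \to \infty$, which I would argue by contradiction. Suppose $M_{\lambda_{n}} \geqslant \varepsilon_{0} > 0$ along some $\lambda_{n} \to \infty$, and let $t_{n} \in (0, T]$ be a maximizer of $h_{\lambda_{n}}$. Dropping both nonpositive summands of $\mathcal{J}(t_{n} u_{0}, t_{n} v_{0})$, namely $-\lambda_{n} \int H(\cdot, t_{n} u_{0}, t_{n} v_{0})$ and $-\int t_{n}^{q(x)}|u_{0}|^{\alpha(x)}|v_{0}|^{\beta(x)}/q(x)\,dx$, produces
\[
\varepsilon_{0} \leqslant h_{\lambda_{n}}(t_{n}) \leqslant \Phi(t_{n} u_{0}, t_{n} v_{0}).
\]
Since $t \mapsto \Phi(t u_{0}, t v_{0})$ is continuous with value $0$ at $t = 0$, this inequality forces $t_{n} \geqslant \delta$ for some $\delta > 0$ independent of $n$.

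Finally, the map $\Xi(t) := \int_{\mathbb{R}^{N}} H(x, t u_{0}, t v_{0})\,dx$ is continuous on $[0, T]$ by dominated convergence (with an integrable majorant coming from the subcritical growth bound in $(\mathcal{H})$) and strictly positive on $[\delta, T]$, so by compactness $c_{0} := \min_{t \in [\delta, T]} \Xi(t) > 0$. Consequently
\[
h_{\lambda_{n}}(t_{n}) \leqslant \Phi(t_{n} u_{0}, t_{n} v_{0}) - \lambda_{n} c_{0} \longrightarrow -\infty,
\]
contradicting $h_{\lambda_{n}}(t_{n}) \geqslant \varepsilon_{0}$. The only delicate point I anticipate is arranging that the admissible path $\gamma$ be chosen once and for all, independently of $\lambda$; this is resolved precisely because every $\lambda$-dependent term in $\mathcal{J}$ has a fixed sign, so the endpoint estimate for $\mathcal{J}(T u_{0}, T v_{0})$ is already $\lambda$-uniform.
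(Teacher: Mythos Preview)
Your argument is correct. Both your proof and the paper's exploit the same underlying mechanism---fixing a single test direction $(u_0,v_0)$ (the paper's $(\mathfrak{e}_1,\mathfrak{e}_2)$), bounding $c_\lambda$ by the maximum of $\mathcal{J}$ along that ray, and then using that $\lambda\int H$ dominates as $\lambda\to\infty$---but the mechanics differ. The paper works with the global maximizer $\zeta_\lambda$ of $\zeta\mapsto\mathcal{J}(\zeta\mathfrak{e}_1,\zeta\mathfrak{e}_2)$, invokes the first-order condition $\langle\mathcal{J}'(\zeta_\lambda\mathfrak{e}_1,\zeta_\lambda\mathfrak{e}_2),(\mathfrak{e}_1,\mathfrak{e}_2)\rangle=0$ to bound $\zeta_\lambda$, and then shows $\zeta_\lambda\to0$ via a subsidiary contradiction. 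You instead fix the interval $[0,T]$ once (using that the $\lambda$-dependent term in $\mathcal{J}$ has a definite sign, so the endpoint condition is $\lambda$-uniform) and run a single contradiction directly on $M_\lambda$: the lower bound $t_n\geqslant\delta$ comes for free from $\Phi(t_n u_0,t_n v_0)\geqslant\varepsilon_0$, and compactness of $[\delta,T]$ then gives the uniform positive lower bound on $\Xi$. Your route is more elementary---it avoids the derivative computation entirely---and it sidesteps a minor awkwardness in the paper's presentation (the claim ``there exists $\zeta_\lambda>1$'' sits uneasily with the subsequent conclusion $\zeta_\lambda\to0$). The paper's approach, on the other hand, yields the slightly stronger intermediate information that the maximizer itself tends to zero, which is not needed here but is occasionally useful in related problems.
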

    \begin{proof}
    From Lemmas \ref{lemaA} and  \ref{lemaB}, the functional $\mathcal{J}$ satisfies the mountain pass geometry at $ (0, 0)$ and $(\mathfrak{e}_1, \mathfrak{e}_2)$, hence there exists $\zeta_{\lambda}>1$, such that
     $$\mathcal{J}(\zeta_{\lambda}\mathfrak{e}_{1},\zeta_{\lambda}\mathfrak{e}_{2})= \max_{\zeta>0}\mathcal{J}(\zeta \mathfrak{e}_1,\zeta \mathfrak{e}_2).$$
     Consequently, it follows that
     $$\langle  \mathcal{J}'(\zeta_{\lambda}\mathfrak{e}_{1},\zeta_{\lambda}\mathfrak{e}_{2}),(\mathfrak{e}_1, \mathfrak{e}_2) \rangle= 0.$$
 Thus, we have
 \begin{equation}\label{limitado}
 \begin{split}
 \int_{\mathbb{R}^{N}}\int_{\mathbb{R}^{N}}\big[\mathcal{A}(\zeta_{\lambda}\mathfrak{e}_{1}(x)-\zeta_{\lambda}\mathfrak{e}_{1}(y))(\mathfrak{e}_{1}(x)-\mathfrak{e}_{1}(y))+\mathcal{A}(\zeta_{\lambda}\mathfrak{e}_{2}(x)-\zeta_{\lambda}\mathfrak{e}_{2}(y))(\mathfrak{e}_{2}(x)-\mathfrak{e}_{2}(y))\big]K(x,y)\,dy\,dx\\+\int_{\mathbb{R}^{N}}|\zeta_{\lambda}|^{\overline{p}-1}|\mathfrak{e}_{1}|^{\overline{p}}\,dx +\int_{\mathbb{R}^{N}}|\zeta_{\lambda}|^{\overline{p}-1}|\mathfrak{e}_{2}|^{\overline{p}}\,dx = \lambda \int_{\mathbb{R}^{N}}\big[ \mathrm{H}_{u}(x, \zeta_{\lambda}\mathfrak{e}_{1}, \zeta_{\lambda}\mathfrak{e}_{2} )\mathfrak{e}_{1}+\mathrm{H}_{v}(x, \zeta_{\lambda}\mathfrak{e}_{1}, \zeta_{\lambda}\mathfrak{e}_{2} )\mathfrak{e}_{2}\big]\,dx\\+ \int_{\mathbb{R}^{N}} \zeta_{\lambda}^{q(x)-1}|\mathfrak{e}_1|^{\alpha(x)}|\mathfrak{e}_2|^{\beta(x)}\,dx.
 \end{split}
\end{equation}     
    Then, since $\lambda>0$ and taking into account the hypothesis $(\mathcal{H})$, we conclude that
     \begin{equation}\label{limitada}
 \begin{split}
 \int_{\mathbb{R}^{N}}\int_{\mathbb{R}^{N}}\big[\mathcal{A}(\zeta_{\lambda}\mathfrak{e}_{1}(x)-\zeta_{\lambda}\mathfrak{e}_{1}(y))(\mathfrak{e}_{1}(x)-\mathfrak{e}_{1}(y))+\mathcal{A}(\zeta_{\lambda}\mathfrak{e}_{2}(x)-\zeta_{\lambda}\mathfrak{e}_{2}(y))(\mathfrak{e}_{2}(x)-\mathfrak{e}_{2}(y))\big]K(x,y)\,dy\,dx\\+\int_{\mathbb{R}^{N}}|\zeta_{\lambda}|^{\overline{p}-1}|\mathfrak{e}_{1}|^{\overline{p}}\,dx +\int_{\mathbb{R}^{N}}|\zeta_{\lambda}|^{\overline{p}-1}|\mathfrak{e}_{2}|^{\overline{p}}\,dx \geqslant \int_{\mathbb{R}^{N}} \zeta_{\lambda}^{q(x)-1}|\mathfrak{e}_1|^{\alpha(x)}|\mathfrak{e}_2|^{\beta(x)}\,dx.
 \end{split}
\end{equation}  
  \textbf{Claim.} The sequence $ \{\zeta_{\lambda}\}_{\lambda\in \mathbb{R}^{+}}$ is bounded in $\mathbb{R}$. 
  
  \noindent \textit{Proof of Claim.} Define the set $\Xi=\{\lambda>0:\zeta_{\lambda}\|(\mathfrak{e}_1,\mathfrak{e}_2)\| \geqslant 1\}$, with  $\zeta_{\lambda}>1$. Thus, from $(a_1)$, $(a_2)$, $(a_3)$, $(\mathcal{K})$, and \eqref{limitada}, we see that
     \begin{equation*}
     \begin{split}
    \zeta_{\lambda}^{-1}d_3\bigg\lbrace \int_{\mathbb{R}^{N}}\int_{\mathbb{R}^{N}}\bigg[\frac{|\zeta_{\lambda}\mathfrak{e}_1 (x)-\zeta_{\lambda} \mathfrak{e}_1 (y)|^{p(x,y)}}{|x-y|^{N+sp(x,y)}}+ \frac{|\zeta_{\lambda}\mathfrak{e}_2 (x)-\zeta_{\lambda}\mathfrak{e}_2 (y)|^{p(x,y)}}{|x-y|^{N+sp(x,y)}}\bigg]\,dy\,dx\\
     + \int_{\mathbb{R}^{N}}\big(|\zeta^{-1}_{\lambda}\mathfrak{e}_1|^{\overline{p}}+|\zeta^{-1}_{\lambda}\mathfrak{e}_2|^{\overline{p}}\big)\,dx\bigg\rbrace\geqslant \zeta_{\lambda}^{-1} \int_{\mathbb{R}^{N}}\zeta_{\lambda}^{q(x)}|\mathfrak{e}_1|^{\alpha(x)}|\mathfrak{e}_2|^{\beta(x)}\,dx,
     \end{split}
     \end{equation*}
    where $d_3 = \max  \bigg\{ 1, \frac{\overline{p}C_{\mathcal{A}}b_1}{p^{-}}, \frac{\overline{p}\tilde{C}_{\mathcal{A}}\tilde{b}_1}{p^{-}}  \bigg \}$.  Hence, on account of Proposition \ref{norma}, we obtain 
  \begin{equation*}
   d_{3}\|(\mathfrak{e}_1,\mathfrak{e}_2)\|^{\overline{p}}\geqslant \zeta_{\lambda}^{q^{-}-  \overline{p}}\int_{\mathbb{R}^{N}}|\mathfrak{e}_1|^{\alpha(x)}|\mathfrak{e}_2|^{\beta(x)}\,dx
\end{equation*}
for any $\lambda \in \Xi$, which yields that $ \{\zeta_{\lambda}\}_{\lambda\in \mathbb{R}^{+}}$ is bounded  since $q^{-}>\overline{p}$ and $\displaystyle{\int_{\mathbb{R}^{N}}|\mathfrak{e}_1|^{\alpha(x)}|\mathfrak{e}_2|^{\beta(x)}\,dx}>0$.
     
\noindent Fix now a sequence $\{\lambda_{k}\}_{k \in \mathbb{N}}\subset\mathbb{R}^{+}$, such that $\lambda_{k}\to+\infty$ as $k\to+\infty$. Then the sequence $\{\zeta_{\lambda_{k}}\}_{k \in\mathbb{N}}$ is bounded. Hence, there exist $\zeta_0\geqslant0$ and a subsequence of $\{\lambda_{k}\}_{k \in \mathbb{N}}$, still denoted by $\{\lambda_{k}\}_{k \in \mathbb{N}}$, such that
\begin{equation*}\label{falta}
\lim_{k\to+\infty}\zeta_{\lambda_{k}}= \zeta_{0}.
\end{equation*}     
 From  $(a_1)$ 
     and \eqref{limitado},  for all $k \in \mathbb{R}^{N}$, we obtain that 
     \begin{equation}\label{c.2.0}
     \begin{split}
     \lambda_{k}\int_{\mathbb{R}^{N}} \big[\mathrm{H}_{u}(x, \zeta_{\lambda_{k}}\mathfrak{e}_1,\zeta_{\lambda_{k}}\mathfrak{e}_2 )\mathfrak{e}_1+ \mathrm{H}_{v}(x, \zeta_{\lambda_{k}}\mathfrak{e}_1, \zeta_{\lambda_{k}}\mathfrak{e}_2 )\mathfrak{e}_2\big]\,dx+ \zeta_{\lambda_{k}}^{q^{-}-1}\int_{\mathbb{R}^{N}}|\mathfrak{e}_1|^{\alpha(x)}|\mathfrak{e}_2|^{\beta(x)}\,dx\leqslant C_{4}.
     \end{split}
     \end{equation}
     We assert that $\zeta_0=0$. Otherwise, due to the hypothesis $(\mathcal{H})$ and the Dominated Convergence Theorem, we have
     $$\lim_{k\to +\infty}\int_{\mathbb{R}^{N}} \mathrm{H}_{u}(x, \zeta_{\lambda_{k}}\mathfrak{e}_1, \zeta_{\lambda_{k}}\mathfrak{e}_2 )\mathfrak{e}_1\,dx= \int_{\mathbb{R}^{N}} \mathrm{H}_{u}(x, \zeta_{0}\mathfrak{e}_1,\zeta_0 \mathfrak{e}_2)e_1\,dx$$
and     
     
      $$\lim_{k\to +\infty}\int_{\mathbb{R}^{N}} \mathrm{H}_{v}(x, \zeta_{\lambda_{k}}\mathfrak{e}_1, \zeta_{\lambda_{k}}\mathfrak{e}_2 )\mathfrak{e}_2\,dx=\int_{\mathbb{R}^{N}} \mathrm{H}_{v}(x, \zeta_{0}\mathfrak{e}_1,\zeta_0 \mathfrak{e}_2)\mathfrak{e}_2\,dx. $$
     In particular, as  $k \to +\infty$, 
  \begin{equation}\label{convera}
    \begin{split}
    \lim_{k\to +\infty}\int_{\mathbb{R}^{N}} \big[\mathrm{H}_{u}(x, \zeta_{\lambda_{k}}\mathfrak{e}_1, \zeta_{\lambda_{k}}\mathfrak{e}_2 )\mathfrak{e}_1+\mathrm{H}_{v}(x, \zeta_{\lambda_{k}}\mathfrak{e}_1, \zeta_{\lambda_{k}}\mathfrak{e}_2 )\mathfrak{e}_2\big]\,dx=
    \end{split}
\end{equation}  
$$\int_{\mathbb{R}^{N}} \big[\mathrm{H}_{u}(x, \zeta_{0}\mathfrak{e}_1,\zeta_0 \mathfrak{e}_2)\mathfrak{e}_1+\mathrm{H}_{u}(x, \zeta_{0}\mathfrak{e}_1, \zeta_0 \mathfrak{e}_2)\mathfrak{e}_2\big]\,dx>0.$$
 
    \noindent Then by, \eqref{convera}, $(\mathcal{H})$ and the fact that
     \begin{equation*}\label{c4}
     \int_{\mathbb{R}^{N}}|e_1|^{\alpha(x)}|e_2|^{\beta(x)}\,dx>0
     \end{equation*}
(see  Lemma \ref{lemaB}), recalling that $\lambda_{k}\to +\infty$, we conclude that
   \begin{equation*}
   \begin{split}
  & \lim_{k\to+\infty}\bigg[\lambda_{k}\int_{\mathbb{R}^{N}} \big[\mathrm{H}_{u}(x, \zeta_{\lambda_{k}}\mathfrak{e}_1,\zeta_{\lambda_{k}}\mathfrak{e}_2 )\mathfrak{e}_1+ \mathrm{H}_{v}(x, \zeta_{\lambda_{k}}\mathfrak{e}_1, \zeta_{\lambda_{k}}\mathfrak{e}_2 )\mathfrak{e}_2\big]\,dx \\  & +  \zeta_{\lambda_{k}}^{q^{-}-1}\int_{\mathbb{R}^{N}}|\mathfrak{e}_1|^{\alpha(x)}|\mathfrak{e}_2|^{\beta(x)}\,dx\bigg]=+ \infty, 
   \end{split}
\end{equation*}     
which is a contradiction with \eqref{c.2.0}. Thus $\zeta_0 = 0$ and $\zeta_{\lambda}\to 0$ as $\lambda \to +\infty$, since the sequence $\{\lambda_{k}\}_{k \in  \mathbb{N}}$ is arbitrary.
    
\medskip    
 Finally, let $\overline{\gamma}(\zeta)=\zeta(\mathfrak{e}_1,\mathfrak{e}_2)$, $\zeta\in[0,1]$, thus $\overline{\gamma} \in \Gamma$, and
    \begin{equation}\label{estrela}
    0<c_{\lambda}\leqslant\max_{\zeta\geqslant 0}\mathcal{J}(\overline{\gamma}(\zeta))\leqslant \mathcal{J}(\zeta_{\lambda}\mathfrak{e}_1,\zeta_{\lambda}\mathfrak{e}_2)\leqslant \zeta_{\lambda}^{\overline{p}}d_2\|(\mathfrak{e}_1, \mathfrak{e}_2)\|^{\overline{p}}- \frac{\zeta_{\lambda}^{q^{-}-1}}{q^{+}}\int_{\mathbb{R}^{N}}|\mathfrak{e}_1|^{\alpha(x)}|\mathfrak{e}_2|^{\beta(x)}\,dx.
    \end{equation}
     Once that $\zeta_{\lambda}\to 0 $ as $\lambda\to+\infty$, and $\mathfrak{e}_1$ and $\mathfrak{e}_2$ do not depend on $\lambda$,  from \eqref{estrela},  we conclude  that
     $$\lim_{\lambda\to+\infty}c_{\lambda}=0.$$
     
    \end{proof}

    \begin{proposition} 
    \label{lemaD}
   There exists $\lambda^{\star}>0$ such that, for all $ \lambda \geqslant \lambda^{\star}$, $\mathcal{J}$ satisfies the $(PS)_{c_{\lambda}}$ condition on $\mathit{W}$.
  \end{proposition}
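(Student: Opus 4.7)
The plan is to follow the classical variational scheme adapted to the nonlocal variable-exponent critical vectorial setting: first prove boundedness of a Palais--Smale sequence at level $c_{\lambda}$, then extract a weak limit and apply Theorems \ref{lions} and \ref{lionsinfity}, rule out concentration both at finite atoms and at infinity whenever $\lambda$ is large enough, and finally invoke the $(S_{+})$ property $(\mathcal{L}_{3})$ of Lemma \ref{s+} to upgrade weak convergence to strong convergence. For boundedness I would compute $\mathcal{J}(u_k,v_k)-\frac{1}{\theta^{-}}\langle\mathcal{J}'(u_k,v_k),(u_k,v_k)\rangle$ and exploit $(a_{3})$ (so that $\mathscr{A}(\zeta)-\frac{1}{\theta^{-}}\mathcal{A}(\zeta)\zeta\geq(\frac{1}{\overline{p}}-\frac{1}{\theta^{-}})\mathcal{A}(\zeta)\zeta$, which via $(a_{2})$ and $(\mathcal{K})$ dominates a positive multiple of $|\zeta|^{p(x,y)}/|x-y|^{N+sp(x,y)}$), the Ambrosetti--Rabinowitz-type bound in $(\mathcal{H})$ (giving $\mathrm{H}\leq\frac{1}{\theta^{-}}\mathrm{H}_{z}\cdot z$), and the ordering $\overline{p}<\theta^{-}<q^{-}$; each contribution is nonnegative, the dominant one controls a positive multiple of $\rho_{\mathit{W}}(u_k,v_k)$, and Proposition \ref{norma} closes the step.

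After passing to a subsequence so that $(u_k,v_k)\rightharpoonup(u,v)$ in $\mathit{W}$ and pointwise a.e.\ (Lemma \ref{mergulho}), Theorems \ref{lions} and \ref{lionsinfity} furnish nonnegative Radon measures $\mu,\nu$, a countable family of atoms $\{x_i\}_{i\in I}\subset\mathscr{C}$ with associated masses $\{\mu_i\}$, $\{\nu_i\}$ satisfying \eqref{cc1}--\eqref{cc3}, and the infinity quantities $\mu_{\infty},\nu_{\infty}$ subject to \eqref{SSin}. To exclude atoms I would test $\mathcal{J}'(u_k,v_k)\to 0$ against $(\Psi_{\rho}u_k,\Psi_{\rho}v_k)$ with $\Psi_{\rho}$ centered at $x_i$ as in Lemma \ref{4.4a}, using the decomposition
\begin{equation*}
\begin{split}
&\mathcal{A}(u_k(x)-u_k(y))\bigl[\Psi_{\rho}(x)u_k(x)-\Psi_{\rho}(y)u_k(y)\bigr] \\
&\qquad = \mathcal{A}(u_k(x)-u_k(y))(u_k(x)-u_k(y))\Psi_{\rho}(y) + \mathcal{A}(u_k(x)-u_k(y))u_k(x)\bigl(\Psi_{\rho}(x)-\Psi_{\rho}(y)\bigr)
\end{split}
\end{equation*}
(and analogously for $v_k$): by $(a_{2})$ and $(\mathcal{K})$ the diagonal piece bounds from below a positive multiple of the fractional energy density with weight $\Psi_{\rho}$, while the cross piece vanishes in the double limit $k\to\infty$, $\rho\to 0^{+}$ via H\"older's inequality and Lemma \ref{4.4a}; the subcritical $\mathrm{H}$-contribution vanishes by Lemma \ref{mergulhocom}. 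Setting $C:=\min\{1,c_{\mathcal{A}}b_{0}\}$, this yields $C\mu_{i}\leq\nu_{i}$, which together with \eqref{cc3} and the identity $\frac{1}{\overline{p}}-\frac{1}{\overline{p}^{\star}_{s}}=\frac{s}{N}$ produces the a priori lower bound $\nu_{i}\geq(\mathcal{S}_{\alpha\beta}C^{1/\overline{p}})^{N/s}=:K_{1}>0$. An analogous analysis at infinity, via $\Phi_{R}$, Lemma \ref{4.5inf} and \eqref{SSin}, delivers a strictly positive lower bound on $\nu_{\infty}$ whenever $\nu_{\infty}>0$. Returning to the boundedness identity one then obtains, recalling that $q(x_i)=\overline{p}^{\star}_{s}>\theta^{-}$ at atoms and $q_{\infty}>\theta^{-}$ at infinity, the estimate $c_{\lambda}\geq\bigl(\frac{1}{\theta^{-}}-\frac{1}{\overline{p}^{\star}_{s}}\bigr)\sum_{i\in I}\nu_{i}+\bigl(\frac{1}{\theta^{-}}-\frac{1}{q_{\infty}}\bigr)\nu_{\infty}\geq\kappa>0$ for some $\kappa$ independent of $\lambda$ whenever $I\neq\emptyset$ or $\nu_{\infty}>0$; Lemma \ref{lemaC} then furnishes $\lambda^{\star}>0$ such that $c_{\lambda}<\kappa$ for every $\lambda\geq\lambda^{\star}$, forcing $I=\emptyset$ and $\nu_{\infty}=0$.

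For $\lambda\geq\lambda^{\star}$, \eqref{cc1} and \eqref{cc2inf} deliver $\int_{\mathbb{R}^{N}}|u_k|^{\alpha(x)}|v_k|^{\beta(x)}\,dx\to\int_{\mathbb{R}^{N}}|u|^{\alpha(x)}|v|^{\beta(x)}\,dx$, and combining Lemma \ref{lieb} with the H\"older inequality of Proposition \ref{hold3} one deduces that the critical derivative terms tested against $(u_k-u,v_k-v)$ tend to zero; together with the compactness of the $\mathrm{H}$-terms via Lemma \ref{mergulhocom} and $\mathcal{J}'(u_k,v_k)\to 0$, this yields $\limsup_{k\to\infty}\langle\Phi'(u_k,v_k),(u_k-u,v_k-v)\rangle\leq 0$, and the $(S_{+})$ property $(\mathcal{L}_{3})$ of Lemma \ref{s+} delivers $(u_k,v_k)\to(u,v)$ strongly in $\mathit{W}$.

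The hard part will be the atom-exclusion step: the nonlocal operator with variable exponent $p(x,y)$ couples values at distant points, so $\langle\mathcal{J}'(u_k,v_k),(\Psi_{\rho}u_k,\Psi_{\rho}v_k)\rangle$ generates nontrivial cross terms that must be shown to vanish uniformly in $k$ as $\rho\to 0^{+}$; controlling them is precisely what Lemma \ref{4.4a} is designed for, and an equally delicate parallel analysis via $\Phi_{R}$ and Lemma \ref{4.5inf} is required at infinity. The fact that $p$ and $q$ are only continuous (rather than constant), together with the coupling of the two components $u_k$ and $v_k$ through the critical term $|u_k|^{\alpha(x)}|v_k|^{\beta(x)}$, makes both estimates considerably more involved than in the scalar constant-exponent setting.
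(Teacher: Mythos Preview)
Your proposal is correct and follows essentially the same route as the paper's proof: boundedness via the combination $\mathcal{J}-\tfrac{1}{\theta^{-}}\langle\mathcal{J}',\cdot\rangle$, application of Theorems \ref{lions} and \ref{lionsinfity}, exclusion of atoms and of mass at infinity by testing $\mathcal{J}'(u_k,v_k)$ against $(\Psi_{\rho}u_k,\Psi_{\rho}v_k)$ and $(\Phi_{R}u_k,\Phi_{R}v_k)$ with Lemmas \ref{4.4a}--\ref{4.5inf}, the threshold comparison with Lemma \ref{lemaC}, and the final upgrade to strong convergence via $(\mathcal{L}_{3})$. Your explicit tracking of the constant $C=\min\{1,c_{\mathcal{A}}b_{0}\}$ in the inequality $C\mu_{i}\leq\nu_{i}$ is in fact slightly more careful than the paper's display \eqref{26}, which tacitly absorbs this factor.
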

    \begin{proof}
    Let $\{(u_k,v_k)\}_{k \in\mathbb{N}}\subset\mathit{W}$ be a Palais-Smale sequence of $\mathcal{J}$ at level $c_{\lambda}$ for any $\lambda>0$, that is,
    \begin{equation}\label{pale}
    \mathcal{J}(u_k,v_k)\to c_{\lambda} \mbox{ and }  \mathcal{J}'(u_k,v_k)\to  0 \mbox{ in } \mathit{W}' \mbox{ as } k\to+\infty.
    \end{equation}
Then, using $(a_1)$-$(a_3)$, $(\mathcal{K})$, and due to the hypothesis $(\mathcal{H})$, we have
 \begin{equation}\label{2.13}
 \begin{split}
 \mathcal{J}(u_k,v_k)-\frac{1}{\theta^{-}}\langle \mathcal{J}'(u_k,v_k), (u_k,v_k)\rangle \geqslant & d_{4}\bigg[  \int_{\mathbb{R}^{N}}\int_{\mathbb{R}^{N}}\frac{|u_{k}(x)-u_k(y)|^{p(x,y)}}{|x-y|^{N+sp(x,y)}}\,dy\,dx+\int_{\mathbb{R}^{N}}|u_k|^{\overline{p}}\,dx \\& +  \int_{\mathbb{R}^{N}}\int_{\mathbb{R}^{N}}\frac{|v_{k}(x)-v_k(y)|^{p(x,y)}}{|x-y|^{N+sp(x,y)}}\,dy\,dx+ \int_{\mathbb{R}^{N}} |v_k|^{\overline{p}}\,dx \bigg] \\& + \big(\frac{1}{\theta^{-}}-\frac{1}{q^{-}}\big ) \int
 _{\mathbb{R}^{N}}|u_k|^{\alpha(x)}|v_{k}|^{\beta(x)}\,dx\bigg] \\ \geqslant & d_{4}\bigg[  \int_{\mathbb{R}^{N}}\int_{\mathbb{R}^{N}}\frac{|u_{k}(x)-u_k(y)|^{p(x,y)}}{|x-y|^{N+sp(x,y)}}\,dy\,dx+\int_{\mathbb{R}^{N}}|u_k|^{\overline{p}}\,dx \\& +  \int_{\mathbb{R}^{N}}\int_{\mathbb{R}^{N}}\frac{|v_{k}(x)-v_k(y)|^{p(x,y)}}{|x-y|^{N+sp(x,y)}}\,dy\,dx+ \int_{\mathbb{R}^{N}} |v_k|^{\overline{p}}\,dx \bigg]
 \end{split}
\end{equation}   
where $d_{4}= \min \bigg\{ \big(\frac{1}{\overline{p}}-\frac{1}{\theta^{-}}\big ) c_{\mathcal{A}}b_{0}, \big(\frac{1}{\overline{p}}-\frac{1}{\theta^{-}}\big ) \tilde{c}_{\mathcal{A}}\tilde{b}_{0} \bigg\}>0$. 

\medskip  
Now, we suppose  that the sequence $\{(u_k,v_k)\}_{k \in\mathbb{N}}$ is unbounded in  $\mathit{W}$. Without loss of generality we consider $\|(u_k,v_k)\|\geqslant 1$. Then, from \eqref{pale} and \eqref{2.13} we conclude that, there exists $d_{\lambda}>0$, such that as $k \to +\infty$, 
  \begin{equation}\label{2.14}
  c_{\lambda}+ d_{\lambda}\|(u_k,v_k)\|+o(1)\geqslant d_{4}\|(u_k,v_k)\|^{p^{-}}.
\end{equation} 
Dividing \eqref{2.14} by  $\|(u_k,v_k)\|^{p^{-}}$ and taking $k\to+\infty$ once that $p^{-}>1$, we obtain a contradiction.
\noindent Therefore, the sequence $\{(u_k,v_k)\}_{k \in\mathbb{N}}$ is bounded in $\mathit{W}$ and hence
\begin{equation}\label{limite}
\begin{split}
C_{\star}:= \sup_{k \in \mathbb{N}} \bigg\{  \int_{\mathbb{R}^{N}}\int_{\mathbb{R}^{N}}\bigg(\frac{|u_{k}(x)-u_k(y)|^{p(x,y)}}{|x-y|^{N+sp(x,y)}}  + \frac{|v_{k}(x)-v_k(y)|^{p(x,y)}}{|x-y|^{N+sp(x,y)}}\bigg)\,dy\,dx  &  \\+\int_{\mathbb{R}^{N}}\big(|u_k|^{\overline{p}}+ |v_k|^{\overline{p}}\big)\,dx \bigg\}< \infty.
\end{split}
\end{equation}  
Moreover, from Proposition \ref{mista},  H\"older inequality (see Proposition \ref{hold3}), Lemma \ref{mergulho}, and since the sequence $\{(u_k,v_k)\}_{k \in \mathbb{N}}$ is bounded in $\mathit{W}$,  we have
   \begin{equation*}
   \begin{split}
   \int_{\mathbb{R^{N}}} ||u_{k}|^{\alpha(x)-2}u_{k}|v_k|^{\beta(x)}|^{\frac{q(x)}{q(x)-1}}\,dx \leqslant C_5.
      \end{split}
\end{equation*}     
 Similarly, we conclude that
    \begin{equation*}
   \begin{split}
   \int_{\mathbb{R^{N}}} ||u_{k}|^{\alpha(x)}|v_k|^{\beta(x)-2}v_{k}|^{\frac{q(x)}{q(x)-1}}\,dx \leqslant C_6.
   \end{split}
\end{equation*}  
  
 \noindent  Thus, since the sequence  $\{(u_k,v_k)\}_{k \in\mathbb{N}}$ is bounded in   the reflexive Banach space $\mathit{W}$,  there exist $(u_\lambda,v_\lambda)\in \mathit{W}$,   and bounded nonnegative Radon measures $\mu$, $\nu$ in $\mathbb{R}^{N}$ (see \cite[Proposition 1.202]{fonseca}) such that, up to a subsequence, still denoted by $\{(u_k,v_k)\}_{k \in\mathbb{N}}$, we obtain
\begin{equation}\label{f01}
 (u_{k},v_{k})\rightharpoonup (u_{\lambda},v_{\lambda}) \mbox{ in } \mathit{W},
  \end{equation}
\begin{equation}\label{f1}(u_{k}(x),v_{k}(x))\to (u_{\lambda}(x),v_{\lambda}(x)) \mbox{ a.e. in } \mathbb{R}^{N},\\
 \end{equation}
 
  \begin{equation}\label{f2} (u_{k},v_{k})\to (u_{\lambda},v_{\lambda}) \mbox{ in } L^{r(\cdot)}_{loc}(\mathbb{R}^{N})\times  L^{r(\cdot)}_{loc}(\mathbb{R}^{N}), \,\,\,\,  r(x)<p^{*}_{s}(x) \mbox{ for all } x \in \mathbb{R}^{N},
  \end{equation}
 
  \begin{equation}\label{f02} |u_{k}|^{\alpha(x)-2}u_k|v_{k}|^{\beta(x)}\rightharpoonup  |u_{\lambda}|^{\alpha(x)-2}u_{\lambda}|v_{\lambda}|^{\beta(x)} \mbox{  in } L^{\frac{q(\cdot)}{q(\cdot)-1}}(\mathbb{R}^{N}), \end{equation}
  \begin{equation}\label{f3} 
     |u_{k}|^{\alpha(x)}|v_{k}|^{\beta(x)-2}v_k\rightharpoonup  |u_{\lambda}|^{\alpha(x)}|v_{\lambda}|^{\beta(x)-2}v_{\lambda} \mbox{  in } L^{\frac{q(\cdot)}{q(\cdot)-1}}(\mathbb{R}^{N}), 
     \end{equation}
  \begin{equation}\label{f4} 
  \bigg[ |u_k|^{\overline{p}}+|v_{k}|^{\overline{p}}+ \int_{\mathbb{R}^{N} }\frac{|u_k(x)-u_k(y)|^{p(x,y)}}{\zeta^{p(x,y)}|x-y|^{N+sp(x,y)}} \,dy+\int_{\mathbb{R}^{N} }\frac{|v_k(x)-v_k(y)|^{p(x,y)}}{\zeta^{p(x,y)}|x-y|^{N+sp(x,y)}} \,dy\bigg] \overset{\ast}{\rightharpoonup}  \mu \mbox{ in }  \mathcal{M}(\mathbb{R}^{N}),
   \end{equation}
    \begin{equation}\label{2.16}
    \begin{split}
     |u_k|^{\alpha(x)}|v_k|^{\beta(x)} \overset{\ast}{\rightharpoonup} \nu \mbox{ in }  \mathcal{M}(\mathbb{R}^{N}),
    \end{split}
    \end{equation}
where $\alpha(x)+\beta(x)=q(x)$ for all $x \in \mathbb{R}^{N}$.

Now, applying Theorem \ref{lions}, there exist at most a countable index set $I$,  families of points $\{x_i\}_{i \in I}$ and of nonnegative numbers $ \{ \mu_i \}_{i \in I}$ and $\{ \nu_{i}\}_{i \in I}$, such that
    \begin{equation}\label{f5a} 
\nu= |u_{\lambda}|^{\alpha(x)}|v_{\lambda}|^{\beta(x)}+ \sum_{i \in I}\nu_{i}\delta_{x_i},
\end{equation}
\begin{equation*}\label{f6} 
\mu \geqslant|u_{\lambda}|^{\overline{p}}+|v_{\lambda}|^{\overline{p}} +  \int_{\mathbb{R}^{N} }\frac{|u_{\lambda}(x)-u_{\lambda}(y)|^{p(x,y)}}{\zeta^{p(x,y)}|x-y|^{N+sp(x,y)}} \,dy+\int_{\mathbb{R}^{N} }\frac{|v_{\lambda}(x)-v_{\lambda}(y)|^{p(x,y)}}{\zeta^{p(x,y)}|x-y|^{N+sp(x,y)}} \,dy  + \sum_{i \in I}\mu_{i}\delta_{x_{i}},
\end{equation*}
\begin{equation}\label{aplicalions}
\mathcal{S}_{\alpha\beta}\nu_{i}^{\frac{1}{\overline{p}^{\star}_{s}}}\leqslant \mu_{i}^{\frac{1}{\overline{p}}} \mbox{ for all } i \in I.
\end{equation}
 Moreover, we have
 \begin{equation*}\label{i1}
 \begin{split}
 \limsup_{k\to+\infty}\int_{\mathbb{R}^{N}}\bigg[ |u_k|^{\overline{p}}+|v_{k}|^{\overline{p}}+ \int_{\mathbb{R}^{N} }\frac{|u_k(x)-u_k(y)|^{p(x,y)}}{\zeta^{p(x,y)}|x-y|^{N+sp(x,y)}} \,dy \\ +\int_{\mathbb{R}^{N} }\frac{|v_k(x)-v_k(y)|^{p(x,y)}}{\zeta^{p(x,y)}|x-y|^{N+sp(x,y)}}dy\bigg]\,dx= \mu(\mathbb{R}^{N})+ \mu_{\infty},
  \end{split}
 \end{equation*}
  \begin{equation}\label{i2}
 \limsup_{k\to+\infty}\int_{\mathbb{R}^{N}} |u_{k}|^{\alpha(x)}|v_{k}|^{\beta(x)}\,dx= \nu(\mathbb{R}^{N})+ \nu_{\infty},
  \end{equation} 
   \begin{equation}\label{i3}
\mathcal{S}_{\alpha\beta}\nu_{\infty}^{\frac{1}{q_{\infty}}}\leqslant \mu_{\infty}^{\frac{1}{\overline{p}}}.
  \end{equation} 
  
\noindent Hence, by \eqref{2.13}-\eqref{2.16} as $ k \to +\infty$, we obtain that
 \begin{equation}\label{2.19}
 c_{\lambda}+ o(1) \geqslant d_4\|(u_k,v_k)\|^{p^{-}}+ \bigg( \frac{1}{\theta^{-}}- \frac{1}{q^{-}}\bigg)\int
 _{\mathbb{R}^{N}}|u_k|^{\alpha(x)}|v_{k}|^{\beta(x)}\,dx,
\end{equation}  
and combining this with \eqref{i2}, we obtain
   \begin{equation*}\label{2.123}
 c_{\lambda}+ o(1) \geqslant  \bigg( \frac{1}{\theta^{-}}- \frac{1}{q^{-}}\bigg)(\nu(\mathbb{R}^{N})+ \nu_{\infty}).
\end{equation*}
   We will show that $I= \emptyset$ and $\nu_{\infty}=0$. First, we suppose on the contrary that $I\neq\emptyset$. Let $i \in I $  and for  $\rho>0$, define $\Psi_{\rho}$ as in
Lemma \ref{4.4a} with $x_0$ replaced by $x_i$, this is  $\Psi_{\rho,i}(x):= \Psi\big( \frac{x-x_i}{\rho}\big)$ for $  x \in \mathbb{R}^{N}$.  Then $\Psi_{\rho,i}(u_k,v_k) \in \mathit{W}$ and so $\langle \mathcal{J}'(u_k,v_k), \Psi_{\rho,i}(u_k,v_k)\rangle= o(1)$ as $k\to \infty.$
Therefore
   \begin{equation}\label{22}
   \begin{split}
   o(1)= & \langle \mathcal{J}'(u_k,v_k), \Psi_{\rho,i}(u_k,v_k)\rangle\\  =& \int_{\mathbb{R}^{N}}\int_{\mathbb{R}^{N}} \mathcal{A}(u_k(x)-u_{k}(y))u_k(x)(\Psi_{\rho,i}(x)-\Psi_{\rho,i}(y))K(x,y)\,dy\,dx\\ &+ \int_{\mathbb{R}^{N}}\int_{\mathbb{R}^{N}} \mathcal{A}(v_k(x)-v_{k}(y))v_k(x)(\Psi_{\rho,i}(x)-\Psi_{\rho,i}(y))K(x,y)\,dy\,dx \\&+  \int_{\mathbb{R}^{N}}\int_{\mathbb{R}^{N}} \mathcal{A}(u_k(x)-u_{k}(y))(u_k(x)-u_{k}(y))\Psi_{\rho,i}(y)K(x,y)\,dy\,dx+ \int_{\mathbb{R}^{N}}|u_k|^{\overline{p}}\Psi_{\rho,i}(x)\,dx\\&+  \int_{\mathbb{R}^{N}}\int_{\mathbb{R}^{N}} \mathcal{A}(v_k(x)-v_{k}(y))(v_k(x)-v_{k}(y))\Psi_{\rho,i}(y)K(x,y)\,dy\,dx + \int_{\mathbb{R}^{N}}|v_k|^{\overline{p}}\Psi_{\rho,i}(x)\,dx \\ & 
   - \lambda \int_{\mathbb{\mathbb{R}^{N}}}\Psi_{\rho,i}\big[ \mathrm{H}_{u}(x, u_k, v_k)u_k+ \mathrm{H}_{v}(x, u_k, v_k)v_k\big]\,dx- \int_{\mathbb{\mathbb{R}^{N}}}\Psi_{\rho,i}|u_k|^{\alpha(x)}|v_k|^{\beta(x)}\,dx.
   \end{split}
\end{equation}    
    Now, let $\delta>0$ be arbitrary but fixed, from $(a_2)$, $(\mathcal{K})$, $(\mathcal{H})$,  Young's inequality, and \eqref{limite},  we conclude that
    \begin{equation}\label{limites}
    \begin{split}
     \bigg| \int_{\mathbb{R}^{N}}\int_{\mathbb{R}^{N}} \big[\mathcal{A}(u_k(x)-&u_{k}(y))u_k(x)+\mathcal{A}(v_k(x)-v_{k}(y))v_k(x)\big](\Psi_{\rho,i}(x)-\Psi_{\rho,i}(y))K(x,y)\,dy\,dx\bigg| \\ \leqslant & d_5 \int_{\mathbb{R}^{N}}\int_{\mathbb{R}^{N}} \frac{|u_k(x)-u_k(y)|^{p(x,y)-1}|u_k(y)|(\Psi_{\rho,i}(x)-\Psi_{\rho,i}(y))}{|x-y|^{N+sp(x,y)}}\,dy\,dx \\ & +   d_5 \int_{\mathbb{R}^{N}}\int_{\mathbb{R}^{N}} \frac{|v_k(x)-v_k(y)|^{p(x,y)-1}|v_k(y)|(\Psi_{\rho,i}(x)-\Psi_{\rho,i}(y))}{|x-y|^{N+sp(x,y)}}\,dy\,dx \\      \leqslant & d_5 \delta \int_{\mathbb{R}^{N}}\int_{\mathbb{R}^{N}} \frac{\big(|u_k(x)-u_k(y)|^{p(x,y)}+|v_k(x)-v_k(y)|^{p(x,y)}\big)}{|x-y|^{N+sp(x,y)}}\,dy\,dx  \\ & + C_{7}  \int_{\mathbb{R}^{N}}\int_{\mathbb{R}^{N}}\big(|u_k(y)|^{p(x,y)}+ |v_k(y)|^{p(x,y)}\big)\frac{|\Psi_{\rho,i}(x)-\Psi_{\rho,i}(y)|^{p(x,y)}}{|x-y|^{N+sp(x,y)}}\,dy\,dx   \\
\leqslant & d_5\delta C_{\star}+ C_{7}  \int_{\mathbb{R}^{N}}\int_{\mathbb{R}^{N}}|u_k(y)|^{p(x,y)}\frac{|\Psi_{\rho,i}(x)-\Psi_{\rho,i}(y)|^{p(x,y)}}{|x-y|^{N+sp(x,y)}}\,dy\,dx \\ & +  C_{7}  \int_{\mathbb{R}^{N}}\int_{\mathbb{R}^{N}} |v_k(y)|^{p(x,y)}\frac{|\Psi_{\rho,i}(x)-\Psi_{\rho,i}(y)|^{p(x,y)}}{|x-y|^{N+sp(x,y)}}\,dy\,dx, 
    \end{split}
    \end{equation}
    where $d_5= \max \{ C_{\mathcal{A}}b_1, \tilde{C}_{\mathcal{A}}\tilde{b}_1\}$.
 Taking limit superior in \eqref{limites} as $k \to +\infty$ and limit superior as $ \rho \to 0^{+}$ with taking Lemma \ref{4.4a} into account, we arrive at   
 \begin{equation*}\label{limite1}
 \begin{split}
  \limsup_{\rho\to 0^{+}} \limsup_{k\to +\infty}\bigg| \int_{\mathbb{R}^{N}}\int_{\mathbb{R}^{N}}\big[\mathcal{A}(u_k(x)-u_{k}(y))u_k(x)\big](\Psi_{\rho,i}(x)-\Psi_{\rho,i}(y))K(x,y)\,dy\,dx \\   + \int_{\mathbb{R}^{N}}\int_{\mathbb{R}^{N}}\big[\mathcal{A}(v_k(x)-v_{k}(y))v_k(x)\big](\Psi_{\rho,i}(x)-\Psi_{\rho,i}(y))K(x,y)\,dy\,dx \bigg|\leqslant  d_5C_{\star}\delta.
 \end{split}
\end{equation*} 
   Since $\delta >0$ was chosen arbitrarily, we obtain
   \begin{equation}\label{limite1}
   \begin{split}
 \limsup_{\rho\to 0^{+}} \limsup_{k\to +\infty}\bigg| \int_{\mathbb{R}^{N}}\int_{\mathbb{R}^{N}}\big[\mathcal{A}(u_k(x)-u_{k}(y))u_k(x)\big](\Psi_{\rho,i}(x)-\Psi_{\rho,i}(y))K(x,y)\,dy\,dx \\   + \int_{\mathbb{R}^{N}}\int_{\mathbb{R}^{N}}\big[\mathcal{A}(v_k(x)-v_{k}(y))v_k(x)\big](\Psi_{\rho,i}(x)-\Psi_{\rho,i}(y))K(x,y)\,dy\,dx \bigg|=0.
 \end{split}
\end{equation} 
   Note that the boundedness of $\{(u_k,v_k)\}_{k \in\mathbb{N}}$ in  $\mathit{W}$ implies the the boundedness of $\{(u_k,v_k)\}_{k \in\mathbb{N}}$ in $L^{q(\cdot)}(\mathbb{R}^{N})\times L^{q(\cdot)}(\mathbb{R}^{N})$ due to Theorem \ref{critico} and \eqref{remark}.  
    On the other hand, on account the hypothesis $(\mathcal{H})$,  H\"older inequality (see Proposition \ref{hold3}), we obtain
    \begin{equation}\label{contas} 
    \begin{split}
  0 \leqslant &  \int_{\mathbb{R}^{N}}\big[\Psi_{\rho,i}\mathrm{H}_{u}(x,u_k,v_k)u_k+ \Psi_{\rho,i}\mathrm{H}_{v}(x,u_k,v_k)v_k\big]\,dx \\\leqslant&   \int_{(B(x_{i},2\rho))}\big[a(x)|u_k|^{\theta(x)}+b(x)|v_k|^{\sigma(x)-1}|u_k|+ a(x)|u_k|^{\theta(x)-1}|v_k|+b(x)|v_k|^{\sigma(x)}\big]\,dx
  \\ \leqslant &  \|a\|_{L^{\frac{q(\cdot)}{q(\cdot)-\theta(\cdot)}}(B(x_{i},2\rho))}[1+\|u_k\|^{\theta^{-}}_{L^{q(\cdot)}(\mathbb{R}^{N})}]+  \|b\|_{L^{\frac{q(\cdot)}{q(\cdot)-\sigma(\cdot)}}(B(x_{i},2\rho))}\|v_k\|^{\theta^{-}}_{L^{q(\cdot)}(\mathbb{R}^{N})}\|u_k\|_{L^{q(\cdot)}(\mathbb{R}^{N})} \\ & + \|a\|_{L^{\frac{q(\cdot)}{q(\cdot)-\theta(\cdot)}}(B(x_{i},2\rho))}\|v_k\|^{\sigma^{+}}_{L^{q(\cdot)}(\mathbb{R}^{N})}\|u_k\|_{L^{q(\cdot)}(\mathbb{R}^{N})}+\|b\|_{L^{\frac{q(\cdot)}{q(\cdot)-\sigma(\cdot)}}(B(x_{i},2\rho))}[1+\|u_k\|^{\sigma^{+}}_{L^{q(\cdot)}(\mathbb{R}^{N})}]  \\ \leqslant & C_8\bigg[ \|a\|_{L^{\frac{q(\cdot)}{q(\cdot)-\theta(\cdot)}}(B(x_{i},2\rho))}+  \|b\|_{L^{\frac{q(\cdot)}{q(\cdot)-\sigma(\cdot)}}(B(x_{i},2\rho))}\bigg]. 
    \end{split} 
    \end{equation}
 Hence by \eqref{contas}, we conclude
 \begin{equation}\label{as}
 \lim_{\rho\to 0^{+}}\lim_{k \to +\infty} \int_{\mathbb{R}^{N}}\big[\Psi_{\rho,i}\mathrm{H}_{u}(x,u_k,v_k)u_k+ \Psi_{\rho,i}\mathrm{H}_{v}(x,u_k,v_k)v_k\big]\,dx=0.
\end{equation}    
    In conclusion, passing to the limit in \eqref{22}, using  $(a_2)$, $(\mathcal{K})$, \eqref{f01}-\eqref{2.16}, and  \eqref{limite1}-\eqref{as}, we obtain the crucial inequality for $i \in I$,  as $ \rho\to 0^{+}$
    \begin{equation}\label{26}
    \int_{\mathbb{R}^{N}}\Psi_{\rho,i}\,d\nu + o(1) \geqslant  \int_{\mathbb{R}^{N}}\int_{\mathbb{R}^{N}}\Psi_{\rho,i}\,d\mu.
    \end{equation}
    By Lemma \ref{lemaC} there exists $\lambda^{\star}>0$ sufficiently large, such that
    \begin{equation}\label{27}
    c_{\lambda}< \bigg(\frac{1}{\theta^{-}}- \frac{1}{q^{-}}\bigg)\min \bigg \{ \mathcal{S}_{\alpha\beta}^{(qh)^{+}}, \mathcal{S}_{\alpha\beta}^{(qh)^{-}}\bigg\} 
    \end{equation}
   for  all $ \lambda \geqslant \lambda^{\star}$ and  $h(x)= \frac{\overline{p}}{q(x)-\overline{p}}$ for  all $x \in \mathbb{R}^{N}$ .

Now, we note that from \eqref{aplicalions} and \eqref{26}, for all $i \in I$, we obtain
 $$\mathcal{S}_{\alpha\beta}\nu_{i}^{\frac{\overline{p}}{\overline{p}^{\star}_{s}}}\leqslant \mu_{i}\leqslant\nu_{i}.$$ 
 
  \noindent \textbf{Claim:} We claim that, $\nu_{i}=0$.  
 
  \noindent \textit{Proof of Claim.} Assume by contradiction that $\nu_{i}>0$  for some $i \in I$. Then, we have $\nu_{i}\geqslant \min\bigg\{ \mathcal{S}_{\alpha\beta}^{(qh)^{+}}, \mathcal{S}_{\alpha\beta}^{(qh)^{-}}\bigg\}$ and by \eqref{2.19}, as $ k \to +\infty$, we conclude that  
    \begin{equation*}
    c_\lambda +o(1)\geqslant \bigg(\frac{1}{\theta^{-}}- \frac{1}{q^{-}}\bigg) \int_{\mathbb{R}^{N}}\Psi_{\rho,i}\,d\nu. 
    \end{equation*}
   On the other hand, as $k\to+\infty$ and $\rho\to 0^{+}$, it follows that
    \begin{equation*}
    c_\lambda \geqslant \bigg(\frac{1}{\theta^{-}}- \frac{1}{q^{-}}\bigg)\nu_{i} \geqslant \bigg(\frac{1}{\theta^{-}}- \frac{1}{q^{-}}\bigg)\min\bigg\{ \mathcal{S}_{\alpha\beta}^{(qh)^{+}}, \mathcal{S}_{\alpha\beta}^{(qh)^{-}}\bigg\},  
    \end{equation*}
   which is a contradiction with \eqref{27}. Therefore, $\nu_{i}=0$ for  all $i \in I$. 
   
 \noindent  Consequently, $\mu_{i}=0$ for all  $i \in I$ and for all $\lambda \geqslant \lambda^{\star}$, that is $I= \emptyset$. Thus, as  $k \to +\infty$, we conclude that
    \begin{equation*}
    \begin{split}
     |u_k|^{\alpha(x)}|v_k|^{\beta(x)} \overset{\ast}{\rightharpoonup} \nu = |u|^{\alpha(x)}|v|^{\beta(x)}  \mbox{ in }  \mathcal{M}(\mathbb{R}^{N}).
    \end{split}
    \end{equation*}
 Next, we prove that $\nu_\infty =0$. Suppose on the contrary that    $\nu_\infty >0$. Let $\Phi_{R}$  be the same as in Lemma \ref{4.5inf}. Then, from a similar argument obtained  in \eqref{22}, we arrive at 
 
   $$\langle \mathcal{J}'(u_k,v_k), \Phi_{R}(u_k,v_k)\rangle= o(1).$$
Therefore, we obtain
   \begin{equation}\label{223}
   \begin{split}
   o(1)= & \langle \mathcal{J}'(u_k,v_k), \Phi_{R}(u_k,v_k)\rangle\\  =& \int_{\mathbb{R}^{N}}\int_{\mathbb{R}^{N}} \mathcal{A}(u_k(x)-u_{k}(y))u_k(x)(\Phi_{R}(x)-\Phi_{R}(y))K(x,y)\,dy\,dx\\ &+ \int_{\mathbb{R}^{N}}\int_{\mathbb{R}^{N}} \mathcal{A}(v_k(x)-v_{k}(y))v_k(x)(\Phi_{R}(x)-\Phi_{R}(y))K(x,y)\,dy\,dx \\&+  \int_{\mathbb{R}^{N}}\int_{\mathbb{R}^{N}} \mathcal{A}(u_k(x)-u_{k}(y))(u_k(x)-u_{k}(y))\Phi_{R}(y)K(x,y)\,dy\,dx+ \int_{\mathbb{R}^{N}}|u_k|^{\overline{p}}\Phi_{R}(x)\,dx\\&+  \int_{\mathbb{R}^{N}}\int_{\mathbb{R}^{N}} \mathcal{A}(v_k(x)-v_{k}(y))(v_k(x)-v_{k}(y))\Phi_{R}(y)K(x,y)\,dy\,dx + \int_{\mathbb{R}^{N}}|v_k|^{\overline{p}}\Phi_{R}(x)\,dx \\ & 
   - \lambda \int_{\mathbb{\mathbb{R}^{N}}}\Phi_{R}(x)\big[ \mathrm{H}_{u}(x, u_k, v_k)u_k + \mathrm{H}_{v}(x, u_k, v_k)v_k\big]\,dx - \int_{\mathbb{\mathbb{R}^{N}}}\Phi_{R}(x)|u_k|^{\alpha(x)}|v_k|^{\beta(x)}\,dx.
   \end{split}
\end{equation}    
Now, let $\delta>0$ be arbitrary but fixed, from  
$(a_2)$, $(\mathcal{K})$, $(\mathcal{H})$,  Young's inequality, and \eqref{limite},  we have
    $$\bigg| \int_{\mathbb{R}^{N}}\int_{\mathbb{R}^{N}} \mathcal{A}(u_k(x)-u_{k}(y))u_k(x)(\Phi_{R}(x)-\Phi_{R}(y))\,dy\,dx$$ $$ + \int_{\mathbb{R}^{N}}\int_{\mathbb{R}^{N}} \mathcal{A}(v_k(x)-v_{k}(y))v_k(x)(\Phi_{R}(x)-\Phi_{R}(y))\,dy\,dx\bigg|$$
    \begin{equation}\label{limitesa}
    \begin{split}
&\leqslant d_5C_{\star}\delta + C_{9}  \int_{\mathbb{R}^{N}}\int_{\mathbb{R}^{N}}\big(|u_k(y)|^{p(x,y)}+ |v_k(y)|^{p(x,y)}\big)\frac{|\Phi_{R}(x)-\Phi_{R}(y)|^{p(x,y)}}{|x-y|^{N+sp(x,y)}}\,dy\,dx. 
    \end{split}
    \end{equation}
    
   \noindent Taking the limit superior in \eqref{limitesa} as $k \to +\infty$, the limit superior as $ R \to \infty$ with taking Lemma \ref{4.5inf}, and since $\delta >0$ was arbitrarily, we get
   \begin{equation}\label{limite1a}
   \begin{split}
 \limsup_{R \to \infty} \limsup_{k\to +\infty}\bigg| \int_{\mathbb{R}^{N}}\int_{\mathbb{R}^{N}} \big[ \mathcal{A}(u_k(x)-u_{k}(y))u_k(x)\big](\Phi_{R}(x)-\Phi_{R}(y))K(x,y)\,dy\,dx \\ + \int_{\mathbb{R}^{N}}\int_{\mathbb{R}^{N}} \big[ \mathcal{A}(v_k(x)-v_{k}(y))v_k(x)\big](\Phi_{R}(x)-\Phi_{R}(y))K(x,y)\,dy\,dx \bigg|=0.
 \end{split}
\end{equation} 
  
   \noindent On the other hand, due to the hypothesis and using similar argument of \eqref{as}, we obtain
  \begin{equation}\label{asa}
 \lim_{R\to +\infty}\lim_{k \to +\infty} \int_{\mathbb{R}^{N}}\big[\Phi_{R}\mathrm{H}_{u}(x,u_k,v_k)u_k+ \Phi_{R}\mathrm{H}_{v}(x,u_k,v_k)v_k\big]\,dx=0.
\end{equation}
  
  \noindent  Therefore by  $(a_2)$, $(\mathcal{K})$, \eqref{223},  \eqref{f01}-\eqref{2.16},  \eqref{limite1a} and \eqref{asa} as $R\to \infty$,  we get
  
  \begin{equation}\label{desiin}
 \mu_{\infty}\leqslant\nu_{\infty}.
\end{equation}
  
   \noindent Here we have used \eqref{inf2}  and the fact that  
  \begin{equation*}\label{akas}
\mu_{\infty}= \lim_{R\to \infty}\limsup_{k\to \infty} \int_{\mathbb{R}^{N}}\Phi_{R}(x)(U_{k}(x)+V_{k}(x))\,dx ,
\end{equation*}
  which can be obtained applying $\Phi_{R}$ instead of $\Phi_{R}^{q(x)}$ in \eqref{inf4.a}. Combining \eqref{desiin} with \eqref{i3}, we have
  \begin{equation}\label{i7}
 \mathcal{S}_{\alpha\beta}\nu_{\infty}^{\frac{\overline{p}q_{\infty}}{q_{\infty}-\overline{p}}}\leqslant \mu_{\infty}\leqslant\nu_{\infty}.
  \end{equation}
  The fact that $q_{\infty}= \displaystyle\lim_{|x|\to \infty}q(x) \in [q^{-}, q^{+}]$ yields
  $$(qh)^{-}\leqslant \frac{q_{\infty}\overline{p}}{q_{\infty}-\overline{p}}\leqslant (qh)^{+} \,\,\,\,\, \mbox{ and }  \,\,\,\,\, h^{-}\leqslant \frac{\overline{p}}{q_{\infty}-\overline{p}}\leqslant h^{+},$$
and from \eqref{i7}
 \begin{equation}\label{i8}
 \nu_{\infty} \geqslant \min\{  \mathcal{S}_{\alpha\beta}^{(qh)^{+}},  \mathcal{S}_{\alpha\beta}^{(qh)^{-}}  \}.
\end{equation}  
  Then, by  \eqref{i2}, \eqref{2.19} and \eqref{i8}, we obtain
\begin{equation*}\label{contra}
 c_{\lambda}+ o(1) \geqslant  \bigg( \frac{1}{\theta^{+}}- \frac{1}{q^{+}}\bigg)\min\{  \mathcal{S}_{\alpha\beta}^{(qh)^{+}},  \mathcal{S}_{\alpha\beta}^{(qh)^{-}}  \},
\end{equation*}
  which is a contradiction to  \eqref{27}. Therefore $ \nu_{\infty}=0$.
 
Combining the facts that,   $I= \emptyset$, and $\nu_{\infty}=0$ with \eqref{f5a}, \eqref{i2}, it follows that
  \begin{equation}\label{aw}
    \begin{split}
     \limsup_{k \to +\infty }\int_{\mathbb{R}^{N}}|u_k|^{\alpha(x)}|v_k|^{\beta(x)} \,dx = \int_{\mathbb{R}^{N}} |u_{\lambda}|^{\alpha(x)}|v_{\lambda}|^{\beta(x)}\,dx. 
    \end{split}
    \end{equation}
    By the Fatou's Lemma, we get from  \eqref{f1}  
  \begin{equation}\label{aw1}
    \begin{split}
     \int_{\mathbb{R}^{N}} |u_{\lambda}|^{\alpha(x)}|v_{\lambda}|^{\beta(x)}\,dx \leqslant  \liminf_{k \to +\infty }\int_{\mathbb{R}^{N}}|u_k|^{\alpha(x)}|v_k|^{\beta(x)} \,dx. 
    \end{split}
    \end{equation}
  Thus from \eqref{aw} and \eqref{aw1}, we conclude that
   \begin{equation}\label{aw22}
    \begin{split}
     \lim_{k \to +\infty }\int_{\mathbb{R}^{N}}|u_k|^{\alpha(x)}|v_k|^{\beta(x)} \,dx = \int_{\mathbb{R}^{N}} |u_{\lambda}|^{\alpha(x)}|v_{\lambda}|^{\beta(x)}\,dx. 
    \end{split}
    \end{equation}
  Therefore, by Lemma \ref{lieb} and \eqref{aw22}, we have
    \begin{equation*}\label{aw10}
    \begin{split}
    \lim_{k\to \infty} \int_{\mathbb{R}^{N}} |u_k-u_{\lambda}|^{\alpha(x)}|v_k-v_{\lambda}|^{\beta(x)}\,dx=0.  
    \end{split}
    \end{equation*}
    
  Now, we observe that 
  \begin{equation*}\label{prova}
   \begin{split}
 \langle  \Phi'(u_k,v_k ), (u_k-u_{\lambda}, v_k-v_{\lambda})\rangle  = & \langle 	\mathcal{J}'(u_k,v_k), (u_k, v_k)\rangle \\& +  \lambda \int_{\mathbb{R}^{N}}\big[\mathrm{H}_{u}(x,u_k,v_k)(u_k-u_{\lambda})+\mathrm{H}_{v}(x,u_k,v_k)(v_k-v_{\lambda})\big]\,dx \\& + \int_{\mathbb{R}^{N}}\frac{\alpha(x)}{q(x)}|u_k|^{\alpha(x)-2}u_k|v_k|^{\beta(x)}(u_k-u_{\lambda})\,dx \\&  +\int_{\mathbb{R}^{N}}\frac{\beta(x)}{q(x)}|u_k|^{\alpha(x)}v_k|v_k|^{\beta(x)-2}(v_k-v_{\lambda})\,dx.
    \end{split}
  \end{equation*}
Thus, we have 
  \begin{equation}\label{fim}
   \begin{split}
 \langle  \Phi'(u_k,v_k ), (u_k-u_{\lambda}, v_k-v_{\lambda})\rangle  = 0.
    \end{split}
  \end{equation}
  Indeed, as $(u_k,v_k)\rightharpoonup(u_{\lambda},v_{\lambda})$ in $\mathit{W}$ and $ \mathcal{J}'(u_k,v_k)\to  0 $ as $k \to \infty$. Then, we obtain  
  \begin{equation} \label{fim1}
 \langle\mathcal{J}_{\lambda}'(u_k,v_k), (u_k-u_{\lambda}, v_k-v_{\lambda})\rangle \longrightarrow 0.
  \end{equation}
 Moreover, by $(\mathcal{H})$ and  H\"older inequality (see Proposition \ref{hold3}), we have
 $$\int_{\mathbb{R}^{N}}\big[\mathrm{H}_{u}(x,u_k,v_k)(u_k-u_{\lambda})+ \mathrm{H}_{v}(x,u_k,v_k)(v_k-v_{\lambda})\big]\,dx$$
 \begin{equation}\label{fim2}
 \begin{split}
   \leqslant &  \int_{\mathbb{R}^{N}}\big[|\mathrm{H}_{u}(x,u_k,v_k)||u_k-u_{\lambda}|+|\mathrm{H}_{v}(x,u_k,v_k)||v_k-v_{\lambda}|\big]\,dx \\ \leqslant & \int_{\mathbb{R}^{N}}\big[ a(x)|u_k|^{\theta(x)-1}+b(x)|v_k|^{\sigma(x)-1}\big]|u_k-u_{\lambda}|\,dx \\ & + \int_{\mathbb{R}^{N}}\big[ a(x)|u_k|^{\theta(x)-1}+b(x)|v_k|^{\sigma(x)-1}\big]|v_k-v_{\lambda}|\,dx \\ \leqslant &C_{10}\big[\|u_k\|^{\theta^{-}-1}_{L^{\theta(\cdot)}(\mathbb{R}^{N})}\|a(x)(u_k-u_{\lambda})\|_{L^{\theta(\cdot)}(\mathbb{R}^{N})}\\ & +\|v_k\|^{\sigma^{-}-1}_{L^{\sigma(\cdot)}(\mathbb{R}^{N})}\|b(x)(u_k-u_{\lambda})\|_{L^{\sigma(\cdot)}(\mathbb{R}^{N})} \\ & + \|u_k\|^{\theta^{-}-1}_{L^{\theta(\cdot)}(\mathbb{R}^{N})}\|a(x)(v_k-v_{\lambda})\|_{L^{\theta(\cdot)}(\mathbb{R}^{N})}\\ & +\|v_k\|^{\sigma^{-}-1}_{L^{\sigma(\cdot)}(\mathbb{R}^{N})}\|b(x)(v_k-v_{\lambda})\|_{L^{\sigma(\cdot)}(\mathbb{R}^{N})}\big].
    \end{split}
 \end{equation}
  Then, by \eqref{f01}-\eqref{f2},  \eqref{fim2}, and Lemma \ref{mergulhocom}, as $k \to \infty$, we conclude that 
  \begin{equation}\label{fim22}
 \begin{split}
 \int_{\mathbb{R}^{N}}\big[\mathrm{H}_{u}(x,u_k,v_k)(u_k-u_{\lambda})+ \mathrm{H}_{v}(x,u_k,v_k)(v_k-v_{\lambda})\big]\,dx \longrightarrow 0  .
    \end{split}
 \end{equation}
  Furthermore,  we obtain
  \begin{equation*}\label{fim3}
   \begin{split}
 \int_{\mathbb{R}^{N}}\bigg[\frac{\alpha(x)}{q(x)}|u_k|^{\alpha(x)-2}u_k|v_k|^{\beta(x)}(u_k-u_{\lambda})  +\frac{\beta(x)}{q(x)}|u_k|^{\alpha(x)}v_k|v_k|^{\beta(x)-2}(v_k-v_{\lambda})\bigg]\,dx 
    \end{split}
  \end{equation*}
  \begin{equation}\label{fim3}
   \begin{split}
 = & \int_{\mathbb{R}^{N}}\bigg[\frac{\alpha(x)}{q(x)}|u_k|^{\alpha(x)}|v_k|^{\beta(x)}- \frac{\alpha(x)}{q(x)}|u_k|^{\alpha(x)-2}u_ku_{\lambda}|v_k|^{\beta(x)}\bigg]\,dx\\  &+\int_{\mathbb{R}^{N}}\bigg[\frac{\beta(x)}{q(x)}|u_k|^{\alpha(x)}|v_k|^{\beta(x)} -\frac{\beta(x)}{q(x)}|u_k|^{\alpha(x)}|v_k|^{\beta(x)-2}v_kv_{\lambda}\bigg]\,dx \\  =&  \int_{\mathbb{R}^{N}}|u_k|^{\alpha(x)}|v_k|^{\beta(x)}\,dx -\int_{\mathbb{R}^{N}} \frac{\alpha(x)}{q(x)}|u_k|^{\alpha(x)-2}u_k|v_k|^{\beta(x)}u_{\lambda}\,dx - \int_{\mathbb{R}^{N}} \frac{\beta(x)}{q(x)}|u_k|^{\alpha(x)}|v_k|^{\beta(x)-2}v_kv_{\lambda}\,dx.
    \end{split}
  \end{equation}
  Then,  since $\alpha(x)+\beta(x)=q(x)$ for all $x \in \mathbb{R}^{N}$, by \eqref{f02}, \eqref{f3},  \eqref{aw}, \eqref{aw22}, \eqref{fim3} and Dominated Convergence Theorem,  as $k \to \infty$,  we conclude that
  
  \begin{equation}\label{fim4}
   \begin{split}
 \int_{\mathbb{R}^{N}}\bigg[\frac{\alpha(x)}{q(x)}|u_k|^{\alpha(x)-2}u_k|v_k|^{\beta(x)}(u_k-u_{\lambda})  +\frac{\beta(x)}{q(x)}|u_k|^{\alpha(x)}v_k|v_k|^{\beta(x)-2}(v_k-v_{\lambda})\bigg]\,dx \longrightarrow 0.
    \end{split}
  \end{equation}
  Therefore, \eqref{fim1}, \eqref{fim22} and \eqref{fim4} we conclude  \eqref{fim}. Once that $\Phi'$ is an operator of type  $(S_{+})$   on $\mathit{W}$ (see Lemma \ref{s+}), it follows that $(u_k,v_k)\to(u_{\lambda}, v_{\lambda})$ in $\mathit{W},$  as $k \to \infty$,
 and  we conclude that $\mathcal{J}$  sastifies the $(PS)_{c_\lambda}$ condition.
  
    \end{proof}
    
     \subsubsection*{Proof of Theorem \ref{aplica}}
    \begin{proof}
    By Lemmas \ref{lemaA}, \ref{lemaB} and  Proposition \ref{lemaD} the hypotheses of Mountain Pass Theorem are satisfied. Hence, we conclude that  $(u_{\lambda},v_{\lambda})$  is weak solution for problem \eqref{s1} for $\lambda\geqslant \lambda^{\star} $.
    Moreover,  $(u_{\lambda},v_{\lambda})$ is  weak solution nontrivial. Indeed, suppose that $(u_{\lambda},v_{\lambda})=(0,0)$, hence $\mathcal{J}(u_{\lambda},v_{\lambda})=0$ and $\mathcal{J}'(u_{\lambda},v_{\lambda})=0$, but  $\mathcal{J}'(u_{\lambda},v_{\lambda})=c_{\lambda}$, hence $c_{\lambda}=0$ that is a contradiction, because from Lemma \ref{lemaA} is know that $c_{\lambda}>0$. Therefore,   $(u_{\lambda},v_{\lambda})$ is nontrivial weak solution for problem \eqref{s1}.
    Finally, note that $\lambda\geqslant \lambda^{\star}$, by using $(a_{1})$-$(a_{3})$, $(\mathcal{K})$, $(\mathcal{H}),$  \eqref{2.13} and  Proposition \ref{norma},    we have
 \begin{equation}\label{2.130}
\begin{split}
c_{\lambda}+ o(1) \geqslant & d_{4}\bigg[  \int_{\mathbb{R}^{N}}\int_{\mathbb{R}^{N}}\frac{|u_{\lambda}(x)-u_{\lambda}(y)|^{p(x,y)}}{|x-y|^{N+sp(x,y)}}\,dy\,dx+\int_{\mathbb{R}^{N}}|u_{\lambda}|^{\overline{p}}\,dx \\& +  \int_{\mathbb{R}^{N}}\int_{\mathbb{R}^{N}}\frac{|v_{\lambda}(x)-v_{\lambda}(y)|^{p(x,y)}}{|x-y|^{N+sp(x,y)}}\,dy\,dx+ \int_{\mathbb{R}^{N}} |v_k|^{\overline{p}}\,dx \bigg] \\& + \bigg( \frac{1}{\theta^{-}}- \frac{1}{q^{-}}\bigg)\int
 _{\mathbb{R}^{N}}|u_{\lambda}|^{\alpha(x)}|v_{\lambda}|^{\beta(x)}\,dx, 
\\  \geqslant & d_4 \min \bigg\{ \|(u_{\lambda}, v_{\lambda})\|^{p^{-}}, \|(u_{\lambda}, v_{\lambda})\|^{\overline{p}} \bigg\}
\end{split}
\end{equation}   
where $d_{4}= \min \bigg\{ \big(\frac{1}{\overline{p}}-\frac{1}{\theta^{-}}\big ) c_{\mathcal{A}}b_{0}, \big(\frac{1}{\overline{p}}-\frac{1}{\theta^{-}}\big ) \tilde{c}_{\mathcal{A}}\tilde{b}_{0} \bigg\}>0$.
 
\noindent   Then, from  Lemma \ref{lemaC} and \eqref{2.130}, we get  
\begin{equation}\label{nona}
\lim_{\lambda \to \infty}\|(u_{\lambda}, v_{\lambda})\|=0.
\end{equation}
 
\noindent  Finally, we proof that both components of solution $(u_{\lambda}, v_{\lambda})$ are nontrivial. Indeed, assume by contradiction  that $u_{\lambda}\neq 0$, but $v_{\lambda}=0$ a.e. in $\mathbb{R}^{N}$. Then, taking $(\varphi,\psi)=(u_{\lambda},0)\in \mathit{W}$ in the definition of weak solution, we have by conditions $(a_1)$-$(a_3)$, $(\mathcal{K})$, $(\mathcal{H})$, H\"older inequality (see Proposition \ref{hold3}) and Proposition \ref{mista}, we obtain 
  \begin{equation}\label{nono}
  \begin{split}
 c_{\mathcal{A}}b_{0}\min\{\|u_{\lambda}\|^{\overline{p}}, \|u_{\lambda}\|^{p^{-}}\} \leqslant & \int_{\mathbb{R}^{N}}\int_{\mathbb{R}^{N}}\big[\mathcal{A}(u{\lambda}(x)-u_{\lambda}(y))(u_{\lambda}(x)-u_{\lambda}(y))K(x,y)\,dy\,dx +\int_{\mathbb{R}^{N}}\big|u_{\lambda}|^{\overline{p}}\,dx \\ = &\lambda \int_{\mathbb{R}^{N}}H_{u}(x,u_{\lambda},0)u_{\lambda}\,dx  \leqslant  \lambda C_{10} \max\{ \| u_{\lambda}\|^{\theta^{-}}, \| u_{\lambda}\|^{\theta^{+}} \}.
  \end{split}
  \end{equation}
 
 \noindent Thus,   using    \eqref{nona} and fact that $p^{-}\leqslant \overline{p}<\theta^{-}\leqslant \theta^{+}<\sigma^{-}\leqslant \sigma^{+}< q^{-}$, in \eqref{nono}, we obtain that $u_{\lambda}=0$ a.e. in $\mathbb{R}^{N}$,  which is a contradiction. Similarly, the case $u_{\lambda}=0$ a.e. in $\mathbb{R}^{N}$ and $v_{\lambda}\neq 0$ cannot occur.
  \noindent Therefore we conclude the proof of this Theorem.
    \end{proof}
    
      \subsection{Proof of Theorem \ref{aplica2}}

In this section we establish important results, which implies that $\mathit{H}$  satisfies $(\mathcal{H}')$, when the function $\mathit{H}$ in \eqref{s1} is independent of $x$.   
    
To begin, we  consider the Euler–Lagrange functional $ \tilde{\mathcal{J}}: \mathit{W}\to \mathbb{R}$ associated to \eqref{ss2},  satisfying $(\mathcal{H}')$,  by

\begin{equation*}
  \tilde{\mathcal{J}}(u,v)= \Phi(u,v)  -\lambda\int_{\mathbb{R}^{N}}H(u,v)\,dx- \int_{\mathbb{R}^{N}}\frac{|u|^{\alpha(x)}|v|^{\beta(x)}}{q(x)}
\,dx,
    \end{equation*}
  where the functional $\Phi$ was defined in Lemma \ref{s+}.

Of course, since $ \tilde{\mathcal{J}}$ is obtained as a particular case of  $ \mathcal{J}$, it is well defined, of class $C^{1}(\mathit{W})$  and the weak solutions
of  \eqref{ss2} are exactly its critical points.
 
Furthermore, we consider 
$$\tilde{c}_{\lambda}=\inf_{\gamma\in \Gamma}\max_{\zeta\in[0,1]}\tilde{\mathcal{J}}(\gamma(\zeta)),$$
  where 
  $$\Gamma=\big\{ \gamma \in C([0,1], \mathit{W}): \gamma(0)=(0,0),\, \tilde{\mathcal{J}}(\gamma(1))<0\big\}.$$ and  $\tilde{c}_{\lambda}$ for $\lambda >0$. Moreover, Lemmas \ref{lemaA}–\ref{lemaC} still hold with much simpler proofs for system \eqref{ss2}.   
  
  Following \cite{silva}, we obtain a version of the Lion's Lemma for   fractional  Sobolev spaces  with variable exponents-$\mathit{W}$ satisfying $(\mathfrak{p})$ and $(\mathfrak{q})$. 
  
  In this result, we will see which gives conditions guaranteeing strong in $ L^{q(\cdot)}(\mathbb{R}^{N})$ convergence of a bounded sequence of $\mathit{W}$.  
Such result will see play a very important role in order to obtain a nontrivial solution for Problem \eqref{ss2} satisfying $(\mathcal{H}')$.

    \begin{lemma}\label{lemalions}
Let  $r(x)\in [p^{+},p^{\star}_{s}(x)[$ for all $x\in \mathbb{R}^{N}.$ Assume that $\{(u_k,u_k)\}_{k\in \mathbb{N}}$ be a bounded sequence in $\mathit{W}$ verifying
\begin{equation}\label{lionsa}
    \lim_{k\to \infty}\sup_{y \in \mathbb{R}^{N}}\int_{B_{R}(y)}(|u_k|^{p(x)}+|v_k|^{p(x)})\,dx=0,
\end{equation}
for some $R>0.$ Then $u_{k}\to 0$ and $v_{k}\to 0$ in $L^{r(\cdot)}(\mathbb{R}^{N})$ for $p(x)\leqslant r(x)<p_{s}^{\star}(x)$ for all $x\in \mathbb{R}^{N}$. 
\end{lemma}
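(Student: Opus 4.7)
The plan is to adapt the classical Lions vanishing lemma to the variable-exponent fractional setting, treating only the component $u_k$ since the argument for $v_k$ is identical by symmetry of the hypothesis. The strategy combines a Besicovitch-type covering of $\mathbb{R}^{N}$ by balls of radius $R$ with uniform finite overlap, together with an interpolation estimate on each ball that trades the vanishing of $\int_{B_R(y)}|u_k|^{p(x)}\,dx$ against the boundedness of $\{u_k\}$ in the critical target space furnished by Theorem \ref{critico}.

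First I would fix a countable cover $\{B_R(y_i)\}_{i \in \mathbb{N}}$ of $\mathbb{R}^{N}$ such that each point lies in at most $N_{0}=N_{0}(N)$ balls, and set $\varepsilon_k := \sup_{y \in \mathbb{R}^N} \int_{B_R(y)}\big(|u_k|^{p(x)}+|v_k|^{p(x)}\big)\,dx$, so that $\varepsilon_k \to 0$ by \eqref{lionsa}. By Proposition \ref{masmenos} this translates into $\|u_k\|_{L^{p(\cdot)}(B_R(y))}\to 0$ uniformly in $y\in\mathbb{R}^{N}$.

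Next I would establish the following local interpolation inequality on each ball $B_i:=B_R(y_i)$: there exist $\tau \in (0,1)$ and $C>0$, independent of $i$ and $k$, such that
\begin{equation*}
\int_{B_i}|u_k|^{r(x)}\,dx \le C\bigg(\int_{B_i}|u_k|^{p(x)}\,dx\bigg)^{\!\tau}\Big(\|u_k\|^{p^{+}}_{W^{s,p(\cdot,\cdot)}(B_i)}+\|u_k\|^{p^{-}}_{L^{p(\cdot)}(B_i)}+1\Big)^{\!1-\tau}.
\end{equation*}
The proof of this estimate uses the critical embedding $W^{s,p(\cdot,\cdot)}(B_i)\hookrightarrow L^{p^{\star}_{s}(\cdot)}(B_i)$ from Theorem \ref{critico}, combined with the variable-exponent Hölder inequality (Proposition \ref{hold3}) applied to the pointwise decomposition $|u|^{r(x)}=|u|^{(1-t(x))p(x)}\cdot|u|^{t(x)p^{\star}_{s}(x)}$, where $t(x)\in(0,1)$ is chosen so that $r(x)=(1-t(x))p(x)+t(x)p^{\star}_{s}(x)$; the hypothesis $r(x)<p^{\star}_{s}(x)$ on a bounded-exponent setting yields a uniform gap $\inf_{x}\{p^{\star}_{s}(x)-r(x)\}>0$, which in turn forces $t(x)\le 1-\tau<1$ uniformly. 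To bypass the fact that Proposition \ref{hold3} is cleanest for conjugate exponents, I would first pass to constant sub/super-exponents on each ball (legitimate because $p,r$ are uniformly continuous and the covering balls are of bounded radius $R$) and then recover variable-exponent control via Proposition \ref{masmenos} at the cost of a multiplicative constant independent of $i$.

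Finally I would sum the local inequality over $i$, invoking the finite-overlap property and the boundedness of $\{u_k\}$ in $W^{s,p(\cdot,\cdot)}(\mathbb{R}^N)$, to obtain
\begin{equation*}
\int_{\mathbb{R}^N}|u_k|^{r(x)}\,dx \le N_{0}\sum_{i}\int_{B_i}|u_k|^{r(x)}\,dx \le C'\,\varepsilon_k^{\tau}\,\big(1+\sup_k\|u_k\|^{p^{+}}_{W^{s,p(\cdot,\cdot)}(\mathbb{R}^N)}\big)^{1-\tau}\longrightarrow 0,
\end{equation*}
where in the middle inequality I use Jensen (or Hölder with the discrete index $i$) to factor out the supremum giving $\varepsilon_k$, together with $\sum_{i}\|u_k\|^{p^{+}}_{W^{s,p(\cdot,\cdot)}(B_i)}\le N_0\|u_k\|^{p^{+}}_{W^{s,p(\cdot,\cdot)}(\mathbb{R}^N)}$. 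Applying Proposition \ref{masmenos} one last time converts modular convergence to norm convergence, yielding $u_k\to 0$ in $L^{r(\cdot)}(\mathbb{R}^N)$. The same argument applied to $v_k$ closes the proof.

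\textbf{Main obstacle.} The delicate step is the local interpolation: the standard Gagliardo-Nirenberg-type bound between $L^{p(\cdot)}$ and $L^{p^{\star}_{s}(\cdot)}$ becomes awkward in the variable exponent category because the natural decomposition of $r(x)$ as a convex combination of $p(x)$ and $p^{\star}_{s}(x)$ produces $x$-dependent Hölder exponents, whereas Proposition \ref{hold3} is stated for pairs of conjugate exponents. Circumventing this requires either freezing the exponents on each small ball (and controlling the oscillation via the log-Hölder condition \eqref{3.2}) or using the modular form of Hölder directly, combined with Proposition \ref{mista} to convert powers of norms into modulars. Ensuring that the resulting interpolation exponent $\tau$ is uniform in $i$ — so that summation over the cover produces a genuine $\varepsilon_k^{\tau}$ factor — is the technical heart of the argument.
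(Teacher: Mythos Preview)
Your covering-plus-interpolation route is the classical Lions strategy and is genuinely different from what the paper does. The paper follows the level-set method of \cite{silva}: it splits $\int_{\mathbb{R}^N}|u_k|^{r(x)}\,dx$ over the three regions $\{|u_k|\le\delta\}$, $\{\delta<|u_k|<\Gamma\}$, $\{|u_k|\ge\Gamma\}$. The first region is controlled pointwise by $|u_k|^{p(x)}$ (since $r\ge p$ and $|u_k|$ is small), the third by $|u_k|^{p^\star_s(x)}$ (since $r<p^\star_s$ and $|u_k|$ is large), and for the intermediate region one shows that if $|\{\delta<|u_k|<\Gamma\}|$ does not tend to zero then some fixed ball $B_R(y_0)$ carries a uniformly positive measure of this set, contradicting \eqref{lionsa}. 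No interpolation inequality and no summation over a cover are needed; the variable exponents are handled by purely pointwise comparisons.

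Your plan, by contrast, has a genuine gap in the summation step. From the local interpolation you obtain, schematically,
\[
\int_{B_i}|u_k|^{r(x)}\,dx \le C\,a_i^{\tau}\,b_i^{1-\tau},\qquad a_i:=\int_{B_i}|u_k|^{p(x)}\,dx,\quad b_i:=\|u_k\|^{p^{+}}_{W^{s,p(\cdot,\cdot)}(B_i)}+1,
\]
and you claim $\sum_i a_i^{\tau}b_i^{1-\tau}\le C\varepsilon_k^{\tau}(\sum_i b_i)^{1-\tau}$. But pulling out $\sup_i a_i\le\varepsilon_k$ leaves $\sum_i b_i^{1-\tau}$, and since $1-\tau<1$ this sum of sub-unit powers is \emph{not} controlled by $\sum_i b_i$; it can diverge even when $\sum_i b_i<\infty$. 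Discrete H\"older goes the wrong way here, and your displayed bound $\sum_{i}\|u_k\|^{p^{+}}_{W(B_i)}\le N_0\|u_k\|^{p^{+}}_{W(\mathbb{R}^N)}$ (which is already suspect, since it mixes powers of norms rather than modulars) would not rescue it. In the constant-exponent Lions lemma this is circumvented by first proving the result for the single value $r_0$ for which the Sobolev factor appears with exponent exactly $p$ (so the local energies sum to the global modular with exponent $1$), and only then interpolating to general $r$. With variable exponents that special $r_0(x)$ is $x$-dependent, and turning this into a uniform two-step argument across translated balls---with uniform embedding constants and a summable energy factor---is substantially harder than your sketch acknowledges. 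The paper's level-set argument sidesteps all of this.
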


    \begin{proof}
Let  $\{(u_k,u_k)\}_{k\in \mathbb{N}} \subset \mathit{W}$ verifying \eqref{lionsa}, by Lemma \ref{mergulho} the sequences $\{u_k\}_{k\in \mathbb{N}} \subset \mathit{W}$ and $\{v_k\}_{k\in \mathbb{N}} \subset \mathit{W}$ are bounded in $L^{r(\cdot)}(\mathbb{R}^{N})$. Once that $p(x)\leqslant r(x)<p_{s}^{\star}(x)$ for all $x\in \mathbb{R}^{N}$ for given $\varepsilon>0,$   there exists $\Gamma>0$ such that
\begin{equation}\label{322}
    \frac{|\sigma|^{r(x)}}{|\sigma|^{p_{s}^{\star}(x)}}\leqslant \frac{\varepsilon}{3 \mathfrak{k}},\quad |\sigma|\geqslant \Gamma,
\end{equation}
where
$$\mathfrak{k}=\sup_{k}\int_{\mathbb{R}^{N}}|u_{k}|^{p_{s}^{\star}(x)}\,dx.$$
Since $p(x)\leqslant r(x)<p_{s}^{\star}(x)$ for all $x\in \mathbb{R}^{N}$,  we have $|\sigma|^{r(x)}/|\sigma|^{p_{s}^{\star}(x)}\to 0$ as $|\sigma|\to 0,$ then, there exists $\delta>0$ such that

\begin{equation}\label{323}
    \frac{|\sigma|^{r(x)}}{|\sigma|^{p_{s}^{\star}(x)}}\leqslant \frac{\varepsilon}{3 \vartheta},\quad |\sigma|<\delta,
\end{equation}
where 

$$\vartheta:=\sup_{k}\int_{\mathbb{R}^{N}}|u_{k}|^{p(x)}\,dx .$$
\noindent Let us write 
\begin{equation}\label{324}
    \int_{\mathbb{R}^{N}}|u_{k}|^{r(x)}\,dx=\int_{\{|u_{k}|\leqslant \delta \} }|u_{k}|^{r(x)}\,dx+\int_{\{\delta<|u_{k}|< \Gamma \} }|u_{k}|^{r(x)}\,dx+\int_{\{|u_{k}|\geqslant \Gamma \} }|u_{k}|^{r(x)}\,dx.
\end{equation}

\noindent By using \eqref{322} we have 

\begin{equation}\label{325}
    \int_{\{|u_{k}|\geqslant \Gamma\}}|u_{k}|^{r(x)}\,dx\leqslant \frac{\varepsilon}{3 \mathfrak{k}}\int_{\mathbb{R}^{N}}|u_{k}|^{p_{s}^{\star}(x)}\,dx\leqslant \frac{\varepsilon}{3}.
\end{equation}

\noindent From \eqref{323} it follows that
\begin{equation}\label{326}
    \int_{\{|u_{k}|\leqslant \delta \} }|u_{k}|^{r(x)}\,dx\leqslant \frac{\varepsilon}{3\vartheta}\int_{\mathbb{R}^{N} }|u_{k}|^{p^{\star}_{s}(x)}\,dx\leqslant \frac{\varepsilon}{3}.
\end{equation}
There are two cases to analyse. 

\noindent\textbf{Case 1.} We suppose that 
\begin{equation*}
    \lim_{k\to \infty}|\{\delta<|u_{k}|<\Gamma \} |=0.
\end{equation*}
So, there exists $k_{0}\in \mathbb{N}$ such that 

\begin{equation}\label{328}
    |\{\delta<|u_{k}|<\Gamma \}|<\frac{\min\{\delta^{p^-}, \delta^{p^+}\}}{\max \{|\Gamma|^{r^{-}}, |\Gamma|^{r^{+}}\}\max \{|\Gamma|^{p^{-}}, |\Gamma|^{p^{+}}\} }\frac{\varepsilon}{3}\,,\quad k\geqslant k_{0}.
\end{equation}
Hence,

\begin{equation}\label{329}
    |\{\delta<|u_{k}|<\Gamma \} |\leqslant \frac{1}{\min\{\delta^{p^-}, \delta^{p^+} \}}\int_{\{\delta<|u_{k}|<\Gamma \}}|u_{k}|^{p(x)}\,dx\leqslant \frac{ \max \{|\Gamma|^{p^{-}}, |\Gamma|^{p^{+}}\}  }{\min\{\delta^{p^-}, \delta^{p^+} \}}|\{  \delta<|u_{k}|<\Gamma  \}|.
\end{equation}
For $k\geqslant k_{0},$ from \eqref{328} and \eqref{329} it follows that
\begin{equation}\label{330}
    \int_{\{\delta<|u_{k}|<\Gamma\}} |u_{k}|^{r(x)}\,dx\leqslant \frac{\max \{|\Gamma|^{r^{-}}, |\Gamma|^{r^{+}}\}}{\min\{\delta^{p^-}, \delta^{p^+} \}} \int_{\{\delta<|u_{k}|<\Gamma\}}|u_{k}|^{p(x)}\,dx< \frac{\varepsilon}{3}.
\end{equation}

\noindent Hence, from \eqref{325}, \eqref{326} and \eqref{330}, we get
\begin{equation}\label{331}
   \int_{\mathbb{R}^{N}}  |u_{k}|^{r(x)}\,dx\leqslant \varepsilon
\end{equation}
holds true for each $\varepsilon>0,$ which finishes the proof for the Case 1.

\noindent\textbf{Case 2.} Up to a subsequence, we assume that
\begin{equation}\label{332}
    \lim_{k\to \infty}|\delta<|u_{k}|<\Gamma|=\ell\in (0,\infty). 
\end{equation}

\noindent Let us prove that this case does not occur. For this purpose, we show the following statement.

\noindent\textbf{Claim 1.} There exist $y_{0}\in \mathbb{R}^{N}$ and $\tau>0$ such that
\begin{equation}\label{333}
0<\tau\leqslant | \{ \delta < | u_k |< \Gamma \} \cap B_{r}(y_0)|
\end{equation}
holds true for a subsequence of $\{u_{k} \}_{k\in \mathbb{N}}$ which is denoted by $\{u_{k} \}_{k\in \mathbb{N}}.$

\noindent \textit{Proof Claim 1.} Indeed, by contradiction, for each $\varepsilon>0,$ $\iota\in \mathbb{N}$ we have
\begin{equation}\label{334}
| \{ \delta < | u_k |< \Gamma \} \cap B_{r}(y)|< \frac{\epsilon}{2^{\iota}}
\end{equation}
for all $y\in\mathbb{R}^{N}$. Note that the last inequality holds for any subsequence of $\{u_{k} \}_{k\in \mathbb{N}}.$ Without loss of generality, we consider just the sequence $\{u_{k} \}_{k\in \mathbb{N}}.$ Now, choosing $\{y_{\iota}\}_{\iota\in \mathbb{N}}\subset\mathbb{R}^{N}$ such that $\displaystyle\bigcup_{\iota=1}^{\infty}B_{r}(y_{\iota})=\mathbb{R}^{N}$ and using \eqref{334}, we have
\begin{equation}\label{335}
\begin{split}
   \displaystyle | \{ \delta < | u_k |< \Gamma \}  |&=\Bigg| \{ \delta < | u_k |< \Gamma \} \bigcap \bigg(\bigcup_{\iota=1}^{\infty}B_{r}(y_{\iota})\bigg) \Bigg| \\
    &\leqslant \sum_{\iota=1}^{\infty}|\{\delta < | u_k |< \Gamma \} \cap (B_{r}(y_{\iota})) |\\
    &\leqslant \sum_{\iota=1}^{\infty} \frac{\varepsilon}{2^{\iota}}=\varepsilon,
    \end{split}
\end{equation}
where $\varepsilon>0$ is arbitrary. Up to a subsequence by  \eqref{335}, we have that 

\begin{equation*}
    0<\ell=\lim_{k\to \infty}|\delta<|u_{k}|<\Gamma | \leqslant \varepsilon,
\end{equation*}
which makes no sense for $\varepsilon\in (0,\ell),$ which proves the Claims 2.  

 \noindent Now, using the Claim 1 and \eqref{lionsa}, we get
\begin{equation*}
    \begin{split}
        0<\tau\leqslant |\{ \delta<|u_{k}|<\Gamma   \} \cap B_{r}(y_{0})|& \leqslant \frac{1}{\min\{\delta^{p^-}, \delta^{p^+} \}}\int_{B_{r}(y_{0})}|u_{k}|^{p(x)}\,dx
        \\ &\leqslant \frac{1}{\min\{\delta^{p^-}, \delta^{p^+} \}}\sup_{y\in \mathbb{R}^{N} }\int_{B_{r}(y)}|u_{k}|^{p(x)}\,dx \longrightarrow 0
    \end{split}
\end{equation*}
as $k\to \infty.$ This contradiction proves that second case is impossible, i.e., $\ell=0.$ The result then follows from the first case. The same holds for $v_k$  which finishes the proof.
\end{proof}

      \begin{lemma}\label{weak}
Let  $\{(u_k,u_k)\}_{k\in \mathbb{N}}$ be a Palais-Smale sequence in $\mathit{W}$ of $\tilde{\mathcal{J}}$ at the level ${\tilde{c}_{\lambda}}$ for $\lambda>0$. Then 
\begin{itemize}
\item[$(a)$] Up a subsequence $(u_k,v_k)\rightharpoonup (u_{\lambda},v_{\lambda})$ in  $\mathit{W}$ as $k \to \infty$;
\item[$(b)$] The weak limit  $(u_{\lambda},v_{\lambda})$ is a critical point of $\tilde{\mathcal{J}}$.
\end{itemize}
\end{lemma}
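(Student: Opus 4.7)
The plan is to first establish boundedness of the Palais--Smale sequence and extract a weakly convergent subsequence, and then identify the weak limit as a critical point by passing to the limit in $\langle\tilde{\mathcal{J}}'(u_k,v_k),(\varphi,\psi)\rangle$ for an arbitrary test pair $(\varphi,\psi)\in \mathit{W}$. For part $(a)$, I would repeat verbatim the boundedness estimate from the first part of the proof of Proposition \ref{lemaD}: combining $\tilde{\mathcal{J}}(u_k,v_k)\to \tilde c_{\lambda}$ and $\tilde{\mathcal{J}}'(u_k,v_k)\to 0$ with $(a_1)$--$(a_3)$, $(\mathcal{K})$ and the Ambrosetti--Rabinowitz part of $(\mathcal{H}')$ (the inequality $0\leq\theta^{-}\mathrm{H}(z)\leq \mathrm{H}_z(z)\cdot z$) yields, for $\|(u_k,v_k)\|\geq 1$, an inequality of the form $\tilde c_{\lambda}+d_{\lambda}\|(u_k,v_k)\|+o(1)\geq d_4\|(u_k,v_k)\|^{p^{-}}$, whence boundedness follows since $p^{-}>1$. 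Reflexivity of $\mathit{W}$ then gives, up to a subsequence, $(u_k,v_k)\rightharpoonup(u_{\lambda},v_{\lambda})$ in $\mathit{W}$, $(u_k,v_k)\to(u_{\lambda},v_{\lambda})$ in $L^{r(\cdot)}_{loc}(\mathbb{R}^N)\times L^{r(\cdot)}_{loc}(\mathbb{R}^N)$ for any $r$ with $r(x)<\overline{p}^{\star}_{s}$, and $(u_k(x),v_k(x))\to(u_{\lambda}(x),v_{\lambda}(x))$ a.e.

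For part $(b)$, I would fix $(\varphi,\psi)\in \mathit{W}$ and argue term by term. The critical term is handled exactly as in \eqref{f02}--\eqref{f3} of the proof of Proposition \ref{lemaD}: the boundedness of $\{|u_k|^{\alpha(\cdot)-2}u_k|v_k|^{\beta(\cdot)}\}$ in $L^{q(\cdot)/(q(\cdot)-1)}(\mathbb{R}^N)$ together with the pointwise a.e.\ convergence give weak convergence in that dual space, and the duality pairing with $\varphi\in L^{q(\cdot)}(\mathbb{R}^N)$ (via Lemma \ref{mergulho}) passes to the limit; the same argument applies to the term containing $\psi$. For the perturbation term, one uses $(\mathcal{H}')$ and applies the compact embedding of Lemma \ref{mergulhocom} with the weights $\kappa=a^{q(\cdot)/(q(\cdot)-\theta(\cdot))}$ and $\kappa=b^{q(\cdot)/(q(\cdot)-\sigma(\cdot))}$ (here reduced to pure powers since $a,b$ are constants in $(\mathcal{H}')$) to obtain strong convergence $u_k\to u_{\lambda}$ in $L^{\theta(\cdot)}(\mathbb{R}^N)$ and $v_k\to v_{\lambda}$ in $L^{\sigma(\cdot)}(\mathbb{R}^N)$; a standard continuity-of-the-Nemytskii-operator argument in variable exponent spaces (together with H\"older, Proposition \ref{hold3}) then yields $\int \mathrm{H}_u(u_k,v_k)\varphi\,dx\to \int \mathrm{H}_u(u_{\lambda},v_{\lambda})\varphi\,dx$ and similarly for the $\psi$-term.

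The main obstacle is passing to the limit in the nonlocal operator term, i.e.\ in
\begin{equation*}
\int_{\mathbb{R}^N}\!\!\int_{\mathbb{R}^N}\mathcal{A}(u_k(x)-u_k(y))(\varphi(x)-\varphi(y))K(x,y)\,dy\,dx + \int_{\mathbb{R}^N}|u_k|^{\overline{p}-2}u_k\varphi\,dx,
\end{equation*}
and likewise for the $v$-component. The $L^{\overline{p}}$ term is immediate from the a.e.\ convergence and the boundedness of $\{|u_k|^{\overline{p}-2}u_k\}$ in $L^{\overline{p}/(\overline{p}-1)}(\mathbb{R}^N)$. For the double integral, $(a_2)$ gives the pointwise bound $|\mathcal{A}(u_k(x)-u_k(y))|\leq C_{\mathcal{A}}|u_k(x)-u_k(y)|^{p(x,y)-1}$, so that the family $\{\mathcal{A}(u_k(x)-u_k(y))K(x,y)^{(p(x,y)-1)/p(x,y)}\}$ is bounded in $L^{p(\cdot,\cdot)/(p(\cdot,\cdot)-1)}$ of the product measure. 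Combined with the a.e.\ pointwise limit coming from continuity of $\mathcal{A}$, a Brezis--Lieb/Fatou-style argument (together with the density of $C^{\infty}_c\times C^{\infty}_c$ in $\mathit{W}$, already used in Lemma \ref{s+}) yields the weak convergence needed to pair against the test difference quotient $(\varphi(x)-\varphi(y))K(x,y)^{1/p(x,y)}$. Putting these steps together, $\langle\tilde{\mathcal{J}}'(u_k,v_k),(\varphi,\psi)\rangle\to\langle\tilde{\mathcal{J}}'(u_{\lambda},v_{\lambda}),(\varphi,\psi)\rangle$, while the left side tends to $0$ by hypothesis, proving $\tilde{\mathcal{J}}'(u_{\lambda},v_{\lambda})=0$.
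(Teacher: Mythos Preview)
Your treatment of part $(a)$ and of the critical and nonlocal terms in part $(b)$ is essentially the paper's argument (boundedness via the Ambrosetti--Rabinowitz inequality from $(\mathcal{H}')$, reflexivity, and the identification of the weak limit of $\mathcal{A}(u_k(x)-u_k(y))K(x,y)^{1/p'(x,y)}$ in $L^{p'(\cdot,\cdot)}(\mathbb{R}^{2N})$ via a.e.\ convergence).

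There is, however, a genuine gap in your handling of the subcritical perturbation term. You propose to use Lemma \ref{mergulhocom} with the constant weights $a,b$ coming from $(\mathcal{H}')$ in order to obtain strong convergence $u_k\to u_{\lambda}$ in $L^{\theta(\cdot)}(\mathbb{R}^N)$ globally. This does not work: Lemma \ref{mergulhocom} requires $\kappa\in L^{q(\cdot)/(q(\cdot)-r(\cdot))}_{+}(\mathbb{R}^N)$, and a positive constant is never in this space on the whole of $\mathbb{R}^N$. The integrability of the weight is precisely what was available under $(\mathcal{H})$ and is \emph{lost} in $(\mathcal{H}')$; this is exactly the reason why the two theorems require different strategies. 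Consequently you cannot pass to the limit in $\int_{\mathbb{R}^N}\mathrm{H}_u(u_k,v_k)\varphi\,dx$ for a general $\varphi\in W^{s,p(\cdot,\cdot)}(\mathbb{R}^N)$ by your route.

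The paper avoids this by testing only against $(\varphi,\psi)\in C^{\infty}_c(\mathbb{R}^N)\times C^{\infty}_c(\mathbb{R}^N)$. With compactly supported test functions, the local compact embedding of Lemma \ref{mergulho}$(b)$ gives an $L^{q(\cdot)}$ dominating function $\mathfrak{g}_R$ on the support, and dominated convergence on $B_R$ handles the $\mathrm{H}$ term directly. One then concludes for all $(\varphi,\psi)\in\mathit{W}$ by density of $C^{\infty}_c$ in $W^{s,p(\cdot,\cdot)}(\mathbb{R}^N)$. You should reorganise part $(b)$ accordingly: restrict first to smooth compactly supported test pairs, run your (otherwise correct) term-by-term limits there, and invoke density at the end.
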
 
    \begin{proof}
    $(a)$ The proof of this item follows a similar argument of \eqref{f01} in   Proposition \ref{lemaD}.
    
 \noindent $(b)$ Proving now that the weak limit $(u_{\lambda},v_{\lambda})$ is a weak solution of \eqref{ss2} namely $\langle \tilde{\mathcal{J}}'(u_k,v_k), (\varphi, \psi)\rangle =0 $ for all $(\varphi, \psi) \in \mathit{W}$.
 
 \noindent Indeed, given any couple  $(\varphi, \psi) \in C_{c}^{\infty}(\mathbb{R}^{N})\times C_{c}^{\infty}(\mathbb{R}^{N})$  using  \eqref{pale}, we have
  \begin{equation}\label{223}
   \begin{split}
   o(1)= & \langle \tilde{\mathcal{J}}'(u_k,v_k), (\varphi, \psi)\rangle \\  =& \int_{\mathbb{R}^{N}}\int_{\mathbb{R}^{N}} \mathcal{A}(u_k(x)-u_{k}(y))(\varphi(x)-\varphi(y))K(x,y)\,dy\,dx+   \int_{\mathbb{R}^{N}}|u_k|^{\overline{p}-2}u_k\varphi(x)\,dx \\ &+ \int_{\mathbb{R}^{N}}\int_{\mathbb{R}^{N}} \mathcal{A}(v_k(x)-v_{k}(y))(\psi(x)-\psi(y))K(x,y)\,dy\,dx +   \int_{\mathbb{R}^{N}}|v_k|^{\overline{p}-2}v_k\psi(x)\,dx \\&
   - \lambda \int_{\mathbb{\mathbb{R}^{N}}}\big[ \mathrm{H}_{u}( u_k, v_k)\varphi(x) + \mathrm{H}_{v}( u_k, v_k)\psi(x)\big]\,dx \\& - \int_{\mathbb{\mathbb{R}^{N}}}\frac{\alpha(x)|u_k|^{\alpha(x)-2}u_k|v_k|^{\beta(x)}\varphi(x)}{q(x)}\,dx - \int_{\mathbb{\mathbb{R}^{N}}}\frac{\beta(x)|u_k|^{\alpha(x)}|v_k|^{\beta(x)-2}v_k\psi(x)}{q(x)}\,dx.
   \end{split}
\end{equation} 

\noindent Now, from part $(a)$,  up to a subsequence we can assume that  is valid \eqref{f1}, \eqref{f2}, \eqref{f02}, \eqref{f3} and  by \eqref{f2}, we obtain

\begin{equation}\label{acres}
|u_k|<\mathfrak{g}_{R} \,\,\,\,\,  |v_k|<\mathfrak{g}_{R} \,\,\,\, \mbox{ a.e. in } \mathbb{R}^{N}
\end{equation}
for some $\mathfrak{g}_{R}  \in L^{q(\cdot)}(\mathbb{R}^{N})$ and $\overline{p}<q(x)\leqslant \overline{p}^{\star}_{s}$ for all $x \in \mathbb{R}^{N}$.

\noindent Consider  now the sequences
\begin{equation*}
\mathrm{U}_k(x,y):=  \mathcal{A}(u_k(x)-u_{k}(y))K(x,y)|x-y|^{\frac{N}{p(x,y)}+s}
\end{equation*}
and
\begin{equation*}
\mathrm{V}_k(x,y):=  \mathcal{A}(v_k(x)-v_{k}(y))K(x,y)|x-y|^{\frac{N}{p(x,y)}+s}
\end{equation*}
and put 
\begin{equation*}
\mathrm{U}(x,y) :=  \mathcal{A}(u(x)-u(y))K(x,y)|x-y|^{\frac{N}{p(x,y)}+s}  
\end{equation*}

\begin{equation*}
 \mathrm{V}(x,y):=  \mathcal{A}(v(x)-v(y))K(x,y)|x-y|^{\frac{N}{p(x,y)}+s}
\end{equation*}

\noindent \textbf{Claim.} $\{(\mathrm{U}_k, \mathrm{V}_k\}_{k \in \mathbb{N}}$ is  bounded in $L^{p'(\cdot, \cdot)}(\mathbb{R}^{2N};\mathbb{R}^{2})$ for $p'(\cdot, \cdot)=\frac{p(\cdot, \cdot)}{p(\cdot, \cdot)-1}$.

\noindent \textit{Proof of claim.}Indeed, 
\begin{equation*}
\begin{split}
&\int_{\mathbb{R}^{N}}\int_{\mathbb{R}^{N}} |\mathcal{A}(u_k(x)-u_{k}(y))|^{p'(x,y)}|K(x,y)|^{p'(x,y)}\bigg(|x-y|^{\frac{N}{p(x,y)}+s}\bigg)^{\frac{p(x,y)}{p(x,y)-1}}\,dy\,dx   \\ \leqslant & C_{\mathcal{A}}b_1 \int_{\mathbb{R}^{N}}\int_{\mathbb{R}^{N}} |u_k(x)-u_{k}(y)|^{(p(x,y)-1)\cdot \frac{p(x,y)}{p(x,y)-1}} \bigg( \frac{1}{|x-y|^{N+sp(x,y)}} \bigg)^{\frac{p(x,y)}{p(x,y)-1}}|x-y|^{\frac{N+sp(x,y)}{p(x,y)-1}}\,dy\,dx \\  = & C_{\mathcal{A}}b_1 \int_{\mathbb{R}^{N}}\int_{\mathbb{R}^{N}}\frac{ |u_k(x)-u_{k}(y)|^{(p(x,y)}}{|x-y|^{N+sp(x,y)}} \,dy\,dx  < \infty.
\end{split}
\end{equation*}

\noindent Then the sequence   $\{\mathrm{U}_k\}_{k \in \mathbb{N}}$ is  bounded in $L^{p'(\cdot, \cdot)}(\mathbb{R}^{2N})$.  Analogously the sequence   $\{\mathrm{V}_k\}_{k \in \mathbb{N}}$ is  bounded in $L^{p'(\cdot, \cdot)}(\mathbb{R}^{2N})$. Therefore the sequence  $\{(\mathrm{U}_k, \mathrm{V}_k)\}_{k \in \mathbb{N}}$ is  bounded in $L^{p'(\cdot, \cdot)}(\mathbb{R}^{2N};\mathbb{R}^{2})$ and so there exist $(\mathcal{U}, \mathcal{V}) \in L^{p'(\cdot, \cdot)}(\mathbb{R}^{2N}) $ such that  $\mathrm{U}_k \rightharpoonup \mathcal{U} $  and  $\mathrm{V}_k \rightharpoonup \mathcal{V} $ 
in $L^{p'(\cdot, \cdot)}(\mathbb{R}^{2N}) $. But  $(\mathrm{U}_k,  \mathrm{V}_k)\longrightarrow (\mathrm{U}, \mathrm{V}) $  a.e in $\mathbb{R}^{2N}$ and so $(\mathrm{U}, \mathrm{V})=(\mathcal{U}, \mathcal{V})$. 
\noindent Therefore,
\begin{equation*}
\begin{split}
&\int_{\mathbb{R}^{N}}\int_{\mathbb{R}^{N}} |\mathcal{A}(u_k(x)-u_{k}(y))|K(x,y)|x-y|^{\frac{N}{p(x,y)}+s}\tilde{\varphi}(x,y)\,dy\,dx   \\ &\longrightarrow \int_{\mathbb{R}^{N}}\int_{\mathbb{R}^{N}} |\mathcal{A}(u(x)-u(y))|K(x,y)|x-y|^{\frac{N}{p(x,y)}+s}\tilde{\varphi}(x,y)\,dy\,dx 
\end{split}
\end{equation*}
and
\begin{equation*}
\begin{split}
&\int_{\mathbb{R}^{N}}\int_{\mathbb{R}^{N}} |\mathcal{A}(v_k(x)-v_{k}(y))|K(x,y)|x-y|^{\frac{N}{p(x,y)}+s}\tilde{\psi}(x,y)\,dy\,dx   \\ &\longrightarrow \int_{\mathbb{R}^{N}}\int_{\mathbb{R}^{N}} |\mathcal{A}(v(x)-v(y))|K(x,y)|x-y|^{\frac{N}{p(x,y)}+s}\tilde{\psi}(x,y)\,dy\,dx 
\end{split}
\end{equation*}
for all $\tilde{\varphi} \in L^{p(\cdot, \cdot)}(\mathbb{R}^{2N}) $ and $\tilde{\psi} \in L^{p(\cdot, \cdot)}(\mathbb{R}^{2N}).$
\noindent
This holds in particular if we  consider for any $\varphi, \psi \in C^{\infty}_{c}(\mathbb{R}^{N})$ 
\begin{equation*}
\begin{split}
\tilde{\varphi}(x,y)=\frac{\varphi(x)-\varphi(y)}{|x-y|^{\frac{N}{p(x,y)}+s}} \,\,\,\,\,\, \tilde{\psi}(x,y)=\frac{\psi(x)-\psi(y)}{|x-y|^{\frac{N}{p(x,y)}+s}}
\end{split}
\end{equation*}
since that  $\tilde{\varphi}, \tilde{\psi} \in   L^{p(\cdot, \cdot)}(\mathbb{R}^{2N}).$
Then 
\begin{equation*}
\begin{split}
&\int_{\mathbb{R}^{N}}\int_{\mathbb{R}^{N}} |\mathcal{A}(u_k(x)-u_{k}(y))K(x,y)|x-y|^{\frac{N}{p(x,y)}+s}\frac{\varphi(x)-\varphi(y)}{|x-y|^{\frac{N}{p(x,y)}+s}}\,dy\,dx   \\ &\longrightarrow \int_{\mathbb{R}^{N}}\int_{\mathbb{R}^{N}} |\mathcal{A}(u(x)-u(y))K(x,y)|x-y|^{\frac{N}{p(x,y)}+s}\frac{\varphi(x)-\varphi(y)}{|x-y|^{\frac{N}{p(x,y)}+s}}\,dy\,dx 
\end{split}
\end{equation*}
and 
\begin{equation*}
\begin{split}
&\int_{\mathbb{R}^{N}}\int_{\mathbb{R}^{N}} |\mathcal{A}(v_k(x)-v_{k}(y))K(x,y)|x-y|^{\frac{N}{p(x,y)}+s}\frac{\psi(x)-\psi(y)}{|x-y|^{\frac{N}{p(x,y)}+s}}\,dy\,dx  \\ &\longrightarrow \int_{\mathbb{R}^{N}}\int_{\mathbb{R}^{N}} |\mathcal{A}(v(x)-v(y))K(x,y)|x-y|^{\frac{N}{p(x,y)}+s}\frac{\psi(x)-\psi(y)}{|x-y|^{\frac{N}{p(x,y)}+s}}\,dy\,dx, 
\end{split}
\end{equation*}
this is, 
\begin{equation}\label{ww3}
\begin{split}
&\int_{\mathbb{R}^{N}}\int_{\mathbb{R}^{N}} |\mathcal{A}(u_k(x)-u_{k}(y))(\varphi(x)-\varphi(y))K(x,y)\,dy\,dx   \\ & \longrightarrow \int_{\mathbb{R}^{N}}\int_{\mathbb{R}^{N}} |\mathcal{A}(u(x)-u(y))(\varphi(x)-\varphi(y))K(x,y)\,dy\,dx 
\end{split}
\end{equation}
and 
\begin{equation}\label{ww2}
\begin{split}
&\int_{\mathbb{R}^{N}}\int_{\mathbb{R}^{N}} |\mathcal{A}(v_k(x)-v_{k}(y))(\psi(x)-\psi(y))K(x,y)\,dy\,dx   \\ & \longrightarrow \int_{\mathbb{R}^{N}}\int_{\mathbb{R}^{N}} |\mathcal{A}(v(x)-v(y))(\psi(x)-\psi(y))K(x,y)\,dy\,dx.
\end{split}
\end{equation}
On the hand, since $\mathrm{H}$ has standard subcritical growth, we have by  $(\mathcal{H}')$
\begin{equation}\label{dominada}
\begin{split}
|\mathrm{H}_{u}(u_k,v_k)\varphi|+|\mathrm{H}_{v}(u_k,v_k)\psi| \leqslant a|u_{k}|^{\theta(x)-1}(|\varphi|+|\psi|)+ b|v_{k}|^{\sigma(x)-1}(|\varphi|+|\psi|)\leqslant \mathsf{B}\in L^{1}(B_R).
\end{split}
\end{equation}
Thus by  $(\mathcal{H}')$, \eqref{f1}, \eqref{dominada},  and the Dominated Convergence Theorem yields, as $k\to \infty$,
\begin{equation}\label{ww4}
\begin{split}
\int_{\mathbb{R}^{N}}[\mathrm{H}_{u}(u_k,v_k)\varphi+\mathrm{H}_{v}(u_k,v_k)\psi]\,dx \longrightarrow  \int_{\mathbb{R}^{N}}[\mathrm{H}_{u}(u,v)\varphi+\mathrm{H}_{v}(u,v)\psi]\,dx.
\end{split}
\end{equation}
 In conclusion, passing to the limit in \eqref{223}, using \eqref{f1}, \eqref{f2}, \eqref{f02}, \eqref{f3}, \eqref{acres}, \eqref{ww3},    \eqref{ww2}, and \eqref{ww4}, we obtain
 \begin{equation*}\label{223}
   \begin{split}
  \langle \tilde{\mathcal{J}}'(u_k,v_k), (\varphi, \psi)\rangle =0 \,\,\,\,\, \mbox{ for all }  (\varphi, \psi)\in C^{\infty}_{c}(\mathbb{R}^{N})\times C^{\infty}_{c}(\mathbb{R}^{N}). 
   \end{split}
\end{equation*} 
 Consequently, from the density of $C^{\infty}_{c}(\mathbb{R}^{N})$ in $W^{s,p(\cdot, \cdot)}(\mathbb{R}^{N})$, (see \cite[Theorem 3.2]{baalal}), we obtain that $(u_{\lambda}, v_{\lambda})$ is critical point of $\tilde{\mathcal{J}}$ for any $\lambda>0.$
      \end{proof}

    \begin{proposition}\label{bounded}
    There exists $\lambda>0$ and $\{(u_k,u_k)\}_{k\in \mathbb{N}}\subset\mathit{W}$ be a $(PS)_{\tilde{c}_{\lambda}}$ of $\tilde{\mathcal{J}}$ at level $\tilde{c}_{\lambda}$ such that $(u_k,u_k) \rightharpoonup (0,0)$ in $\mathit{W}$ as $k \to \infty$. Then either
    \begin{itemize}
    \item[ $(a)$]  $(u_k,u_k) \longrightarrow (0,0)$ in $\mathit{W}$ or
    \item[ $(b)$] there exists $R>0$ and a sequence $\{y_k\}_{k\in \mathbb{N}}\subset \mathbb{R}^{N}$ such  that 
    $$\limsup_{k \to \infty}\int _{B_{R}(y_k)}(|u_k|^{\overline{p}}+|v_k|^{\overline{p}})\,dx>0.$$
    \end{itemize}
    Moreover, $\{y_k\}_{k\in \mathbb{N}}$ is not bounded in $\mathbb{R}^{N}.$
    \end{proposition}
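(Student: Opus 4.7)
The plan is to prove the contrapositive: if (b) fails, show (a) holds; then handle the ``moreover'' assertion via a compact embedding argument.

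If (b) fails, then for every $R>0$ and every sequence $\{y_k\}\subset\mathbb{R}^{N}$ one has $\limsup_{k\to\infty}\int_{B_R(y_k)}(|u_k|^{\overline{p}}+|v_k|^{\overline{p}})\,dx=0$, equivalently $\sup_{y\in\mathbb{R}^{N}}\int_{B_R(y)}(|u_k|^{\overline{p}}+|v_k|^{\overline{p}})\,dx\to 0$. Because $(\mathit{P})$ forces $p(x,x)=\overline{p}$, Lemma \ref{lemalions} applies and yields $u_k,v_k\to 0$ strongly in $L^{r(\cdot)}(\mathbb{R}^{N})$ for every uniformly continuous $r\in C_+(\mathbb{R}^{N})$ with $\overline{p}\leqslant r(x)<\overline{p}^{\star}_{s}$. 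Specializing to $r=\theta$ and $r=\sigma$, which are strictly subcritical by $(\mathcal{H}')$, and combining with the variable-exponent H\"older inequalities (Propositions \ref{hold3} and \ref{mista}), the subcritical perturbation terms vanish: $\int_{\mathbb{R}^{N}}H(u_k,v_k)\,dx\to 0$ and $\int_{\mathbb{R}^{N}}[H_u(u_k,v_k)u_k+H_v(u_k,v_k)v_k]\,dx\to 0$.

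For the critical term, extract a subsequence and apply Theorems \ref{lions} and \ref{lionsinfity}. Since $(u,v)=(0,0)$, the limit measures reduce to $\nu=\sum_{i\in I}\nu_i\delta_{x_i}$ with $\{x_i\}\subset\mathscr{C}$, together with $\nu_\infty$, and analogously $\mu\geqslant\sum_{i\in I}\mu_i\delta_{x_i}$ plus $\mu_\infty$, subject to $\mathcal{S}_{\alpha\beta}\nu_i^{1/\overline{p}^{\star}_{s}}\leqslant\mu_i^{1/\overline{p}}$ and $\mathcal{S}_{\alpha\beta}\nu_\infty^{1/q_\infty}\leqslant\mu_\infty^{1/\overline{p}}$. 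Mirroring the truncation argument of Proposition \ref{lemaD}, one tests $\tilde{\mathcal{J}}'(u_k,v_k)$ against $\Psi_{\rho,i}(u_k,v_k)$ and $\Phi_R(u_k,v_k)$, invokes Lemmas \ref{4.4a} and \ref{4.5inf} and the subcritical vanishing to obtain $\mu_i\leqslant\nu_i$ and $\mu_\infty\leqslant\nu_\infty$. Chained with the atom inequalities, each positive $\nu_i$ (resp.\ $\nu_\infty$) must exceed $\min\{\mathcal{S}_{\alpha\beta}^{(qh)^{-}},\mathcal{S}_{\alpha\beta}^{(qh)^{+}}\}$; the analogue of Lemma \ref{lemaC} for $\tilde{\mathcal{J}}$ drives $\tilde{c}_\lambda$ arbitrarily small for large $\lambda$, so the energy upper bound forbids these alternatives and forces $\nu_i=0$ and $\nu_\infty=0$. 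Consequently $\int_{\mathbb{R}^{N}}|u_k|^{\alpha(x)}|v_k|^{\beta(x)}\,dx\to 0$. Plugging this and the subcritical vanishing into $\langle\tilde{\mathcal{J}}'(u_k,v_k),(u_k,v_k)\rangle=o(1)$ gives $\langle\Phi'(u_k,v_k),(u_k,v_k)\rangle\to 0$, and the $(S_+)$ property of $\Phi'$ from Lemma \ref{s+}$(\mathcal{L}_3)$ upgrades the weak convergence $(u_k,v_k)\rightharpoonup(0,0)$ to strong convergence, which is precisely (a).

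For the ``moreover'' claim, suppose $\{y_k\}$ in (b) were bounded; then $B_R(y_k)\subset B_{R+M}(0)$ for some $M>0$, and the compact embedding $\mathit{W}\hookrightarrow\hookrightarrow L^{\overline{p}}_{\mathrm{loc}}(\mathbb{R}^{N})\times L^{\overline{p}}_{\mathrm{loc}}(\mathbb{R}^{N})$ from Lemma \ref{mergulho}$(b)$, combined with $(u_k,v_k)\rightharpoonup(0,0)$, forces $u_k,v_k\to 0$ strongly in $L^{\overline{p}}(B_{R+M}(0))$, so $\int_{B_R(y_k)}(|u_k|^{\overline{p}}+|v_k|^{\overline{p}})\,dx\to 0$, contradicting (b). The main obstacle is the middle step: the Lions-type lemma only produces subcritical strong convergence, and promoting this to vanishing of the critical integral demands the full concentration-compactness analysis of atoms and mass at infinity, which in turn leans on the smallness of $\tilde{c}_\lambda$ supplied by the analogue of Lemma \ref{lemaC}.
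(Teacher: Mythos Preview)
Your treatment of the ``moreover'' claim matches the paper's. The divergence is in how you dispose of the critical integral $\int_{\mathbb{R}^N}|u_k|^{\alpha(x)}|v_k|^{\beta(x)}\,dx$ once (b) fails.

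The paper does not invoke concentration--compactness here at all. After Lemma \ref{lemalions} furnishes $u_k,v_k\to 0$ strongly in $L^{q(\cdot)}(\mathbb{R}^N)$ (recorded in the paper as \eqref{pa0}), the H\"older-type bound \eqref{4.4} together with Proposition \ref{mista} immediately yields $\int_{\mathbb{R}^N}|u_k|^{\alpha(x)}|v_k|^{\beta(x)}\,dx\to 0$. The $(PS)$ relation $\langle\tilde{\mathcal{J}}'(u_k,v_k),(u_k,v_k)\rangle=o(1)$ then collapses to a lower bound of the form $\Phi(u_k,v_k)\geqslant d_6\min\{\|(u_k,v_k)\|^{\overline{p}},\|(u_k,v_k)\|^{p^-}\}$, forcing $\|(u_k,v_k)\|\to 0$. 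Nothing in this chain depends on the size of $\lambda$, and that independence is precisely the point of Proposition \ref{bounded}: it is what allows Theorem \ref{aplica2} to deliver solutions for \emph{every} $\lambda>0$, in contrast to the restriction $\lambda\geqslant\lambda^{\star}$ in Theorem \ref{aplica}.

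Your route through Theorems \ref{lions} and \ref{lionsinfity} is internally consistent but self-defeating in this context: to exclude the atoms $\nu_i$ and the mass $\nu_\infty$ you rely on the smallness of $\tilde c_\lambda$ from Lemma \ref{lemaC}, which forces $\lambda$ large. You therefore recover only the range already handled by Proposition \ref{lemaD} and lose the ``for all $\lambda>0$'' conclusion that the application to Theorem \ref{aplica2} requires. The insight you are missing is that, under vanishing (the failure of (b)), Lemma \ref{lemalions} combined with the H\"older estimate \eqref{4.4} already drives the critical term to zero, so the entire CCP apparatus---cutoffs, atoms, mass at infinity, energy thresholds---is unnecessary here.
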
 
  \begin{proof}
  Assume that $(b)$ does not occur. Then, for all $R>0$ 
   $$\limsup_{k \to \infty}\int _{B_{R}(y_k)}(|u_k|^{\overline{p}}+|v_k|^{\overline{p}})\,dx=0.$$
  Therefore, by Lemma \ref{lemalions} and \eqref{remark} up to a subsequence 
  \begin{equation}\label{pa0}
   u_k\to 0 \,\,\,\,\, v_k\to 0 \,\,\,\, \mbox{ in } L^{q(\cdot)}(\mathbb{R}^{N})
  \end{equation}
  as $k \to \infty$ for all $\overline{p}\leqslant q(x)\leqslant \overline{p}^{\star}_{s}$ for all $x \in \mathbb{R}^{N}$.
  
  \noindent Consequently, by $(\mathcal{H}')$,  H\"older inequality (see Proposition \ref{hold3}) and \eqref{pa0}, we have 
  \begin{equation}\label{pa1}
\begin{split}
0 & \leqslant \int_{\mathbb{R}^{N}}[\mathrm{H}_{u}(u_k,v_k)u_k+ \mathrm{H}_{v}(u_k,v_k)v_k]\,dx \\&\leqslant \int_{\mathbb{R}^{N}} a|u_{k}|^{\theta(x)}+ b|v_{k}|^{\sigma(x)-1}|u_k|+a |u_{k}|^{\theta(x)-1}|v_{k}|+b |v_k|^{\sigma(x)}\,dx\longrightarrow 0
\end{split}
\end{equation}
since $\overline{p}<\theta^{-}<\theta^{+}<\sigma^{-}\leqslant \sigma^{+}<q^{-} $     as $k\to \infty$.
Now since    $\{(u_k,u_k)\}_{k\in \mathbb{N}}\subset\mathit{W}$ be a $(PS)_{\tilde{c}_{\lambda}}$ of $\tilde{\mathcal{J}}$ at level $\tilde{c}_{\lambda}$, by similar argument  Proposition \ref{lemaD}, the sequence $\{(u_k,u_k)\}_{k\in \mathbb{N}}$ is bounded in $ \mathit{W}$. Then, up to a subsequence, by   H\"older inequality (see Proposition \ref{hold3}), Proposition \ref{mista} and \eqref{pa0}, as $k\to \infty$, we obtain 
   \begin{equation}\label{pa2}
\begin{split}
 \int_{\mathbb{R}^{N}}\frac{|u_k|^{\alpha(x)}|v_k|^{\beta(x)}}{q(x)}\,dx\longrightarrow 0.
\end{split}
\end{equation}
 
\noindent    On the other hand,  by $(a_2)$-$(a_3)$, and  Proposition \ref{norma} we obtain 
 \begin{equation}\label{pa3}
\begin{split}
 \Phi(u_k,v_k)\geqslant  d_6\min \{ \|(u_k, v_k)\|^{\overline{p}}, \|(u_k, v_k)\|^{p^{-}}  \}>0
\end{split}
\end{equation}
 for $d_6= \min \bigg\{ \frac{1}{\overline{p}}C_{\mathcal{A}b_1}, \frac{1}{\overline{p}}\tilde{C}_{\mathcal{A}}\tilde{b}_1,\overline{p}\bigg\}$.
 Therefore, by condition  $(PS)_{\tilde{c}_{\lambda}}$ of $\tilde{\mathcal{J}}$ at level $\tilde{c}_{\lambda}$, \eqref{pa1},   \eqref{pa2} and \eqref{pa3} implies that 
   \begin{equation}\label{pa4}
\begin{split}
 \int_{\mathbb{R}^{N}}\frac{u_k(x)|^{\alpha(x)}|v_k|^{\beta(x)}}{q(x)}\,dx +o(1) = \Phi(u_k,v_k)\geqslant  d_5\min \{ \|(u_k, v_k)\|^{\overline{p}}, \|(u_k, v_k)\|^{p^{-}}  \}>0.
\end{split}
\end{equation}
 Then, as $k\to \infty$, by \eqref{pa4} and \eqref{pa2}, we obtain  $\|(u_k,v_k)\|\to 0$ and then $(a)$ holds.
 
\noindent  Assume $(b)$ is verified and suppose by contradiction that $\{y_k\}_{k \in \mathbb{N}}$ is bounded in $\mathbb{R}^{N}$.  Consequently there exist $\mathfrak{R}>0$ large that $B_{R}(y_k)\subset B_{\mathfrak{R}}$ for all $k$. Now, Lemma \ref{lemalions} implies that 
$(u_k,v_k)\to (0,0)$  in $ L^{r(\cdot)}_{loc}(\mathbb{R}^{N})\times  L^{r(\cdot)}_{loc}(\mathbb{R}^{N})$ for all $ r(x)<p^{\star}_{s}(x)$, $x \in \mathbb{R}^{N}$.
Therefore
\begin{equation*}
\begin{split}
 0=\lim_{k \to \infty} \int_{B_{M}}(|u_k|^{\overline{p}}+|v_k|^{\overline{p}})\,dx \geqslant  \limsup_{k \to \infty} \int_{B_{R}(y_k)}(|u_k|^{\overline{p}}+|v_k|^{\overline{p}})\,dx>0
\end{split}
\end{equation*}
which gives the required contradiction. In conclusion $\{y_k \}_{k \in \mathbb{N}}$ is not bounded in $\mathbb{R}^{N}$.

\end{proof}    
    
 \subsubsection*{Proof of Theorem \ref{aplica2}}
\begin{proof}
First,   thanks to Lemmas \ref{lemaA}, \ref{lemaB}       and  \ref{weak} for any $\lambda>0$ the functional $\tilde{\mathcal{J}}$ has the geometry of the Mountain Pass Theorem, so that admits a Palais-Smale sequence $\{(u_k, v_k)\}_{k \in \mathbb{N}}$ at level $\tilde{c}_{\lambda}$, which up to subsequence, weakly converges to the limit   $(u_{\lambda}, v_{\lambda}).$ The weak limit $(u_{\lambda}, v_{\lambda})$ is a critical point of $\tilde{\mathcal{J}}$ for all $\lambda > 0$ namely weak solution \eqref{ss2}. It remains to show that the constructed solution $(u_{\lambda}, v_{\lambda})$ is nontrivial. Assume by contradiction that $(u_{\lambda}, v_{\lambda})=(0,0)$. Clearly $\{(u_k, v_k)\}_{k \in \mathbb{N}}$ cannot converge strongly to $(0,0)$ in $\mathit{W}$, since otherwise $\tilde{\mathcal{J}}'(u_{\lambda}, v_{\lambda})=0$ and  $0= \tilde{\mathcal{J}}(u_{\lambda}, v_{\lambda})=\tilde{c}_{\lambda}>0$  by Lemmas \ref{lemaA} and \ref{lemaB}.

\noindent Therefore, by Proposition \ref{bounded} there exist $R>0$ and a sequence $\{y_k\}_{k \in \mathbb{N}}\subset \mathbb{R}^{N}$ such that
\begin{equation*}
    \lim_{k\to \infty}\sup_{y \in \mathbb{R}^{N}}\int_{B_{R}(y_k)}(|u_k|^{\overline{p}}+|v_k|^{\overline{p}})\,dx>0.
\end{equation*}
Now define a new sequence   $\{(\tilde{u}_k, \tilde{v}_k)\}_{k \in \mathbb{N}}$ by
\begin{equation*}
\tilde{u}_{k}= u_{k}(\cdot + y_k) \,\,\,\,\,  \tilde{v}_{k}= v_{k}(\cdot + y_k).
\end{equation*}
Then
\begin{equation}\label{fina}
\tilde{\mathcal{J}}(\tilde{u}_{k}, \tilde{v}_{k}) = \tilde{\mathcal{J}}(u_{k}, v_{k}) \longrightarrow \tilde{c}_{\lambda} \,\,\,\,\,\,  \,\,\,\,\,  \tilde{\mathcal{J}}'(\tilde{u}_{k}, \tilde{v}_{k})\longrightarrow 0.
\end{equation}
Indeed, for all $(\mathfrak{u}, \mathfrak{v})\in \mathit{W}$  with $\|(\mathfrak{u}, \mathfrak{v})\|=1$ putting, $\mathfrak{u}_{k}(z)=\mathfrak{u}(z-y_k) \,\,$ and $ \mathfrak{v}_{k}(z)=\mathfrak{v}(z-y_k)$, in $\mathbb{R}^{N}$, we have
\begin{equation*}
\begin{split} 
|\langle \tilde{\mathcal{J}}'(u_{k}, v_{k}), (\mathfrak{u}_{k}, \mathfrak{v}_{k})\rangle| \leqslant \|  \tilde{\mathcal{J}}'(u_{k}, v_{k})\|_{\mathit{W}'}\| (\mathfrak{u}_{k}, \mathfrak{v}_{k})\|=\|  \tilde{\mathcal{J}}'(u_{k}, v_{k})\|_{\mathit{W}'}
\end{split}
\end{equation*}
since  $1=\| (\mathfrak{u}, \mathfrak{v})\|= \| (\mathfrak{u}_{k}, \mathfrak{v}_{k})\|$.

\noindent On the other hand, with a simple change of variable, it is easy to see that
\begin{equation*}
\begin{split} 
\langle\tilde{\mathcal{J}}'(\tilde{u}_{k}, \tilde{v}_{k}), (\mathfrak{u}, \mathfrak{v})\rangle  =\langle \tilde{\mathcal{J}}'(u_{k}, v_{k}), (\mathfrak{u}_{k}, \mathfrak{v}_{k})\rangle .
\end{split}
\end{equation*}
Therefore as $k \to \infty$
\begin{equation*}
\begin{split} 
\| \tilde{\mathcal{J}}'(u_{k}, v_{k})\|_{\mathit{W}'}=\sup_{\substack{ (\mathfrak{u}, \mathfrak{v}) \in \mathit{W}, \\\|(\mathfrak{u}, \mathfrak{v})\|=1}}|\langle \tilde{\mathcal{J}}'(u_{k}, v_{k}), (\mathfrak{u}, \mathfrak{v})  \rangle | \leqslant \|  \tilde{\mathcal{J}}'(u_{k}, v_{k})\|_{\mathit{W}'}= o(1).
\end{split}
\end{equation*}
Consequently, \eqref{fina}  holds and so $\{(\tilde{u}_k, \tilde{v}_{k})\}_{k \in \mathbb{N}}$ is Palais-Smale sequence at level $\tilde{c}_{\lambda}$. Thus, by Lemma \ref{weak},  applied to the new sequence  it follows  that $\{(\tilde{u}_k, \tilde{v}_k)\}_{k \in \mathbb{N}}$ weakly converge to some $(\tilde{u}_{\lambda}, \tilde{v}_{\lambda})$ in $\mathit{W}$ and  $(\tilde{u}_{\lambda}, \tilde{v}_{\lambda})$ is a critical point of $\tilde{\mathcal{J}}$.  Moreover, by \eqref{fim1} we obtain
\begin{equation*}
0< \lim_{k\to \infty}\sup_{y \in \mathbb{R}^{N}}\int_{B_{R}(y_k)}(|u_k|^{\overline{p}}+|v_k|^{\overline{p}})\,dx = \lim_{k\to \infty}\int_{B_{R}}(|\tilde{u}_k|^{\overline{p}}+|\tilde{v}_{k}|^{\overline{p}})\,d\tilde{x}=\int_{B_{R}}(|\tilde{u}_{\lambda}|^{\overline{p}}+|\tilde{v}_{\lambda}|^{\overline{p}})\,d\tilde{x}.
\end{equation*}
Hence   $(\tilde{u}_{\lambda}, \tilde{v}_{\lambda})  \neq (0,0)$ is the desired nontrivial.
Finally, we can prove as in Theorem \ref{aplica} that both components of $(\tilde{u}_{\lambda}, \tilde{v}_{\lambda})$  are nontrivial and that \eqref{assim} holds. This completes the proof.
\end{proof}

\noindent\textbf{Acknowledgments.}  The first author has received a research grant from FAPERJ (Pós-doc Nota 10), through the grant 260003/014956/2021. The second author has received a research grant from the Coordenação de Aperfeiçoamento de Pessoal de Nível
Superior (CAPES)-Finance Code 001. The third author has received research grants from CNPq through the grant  308064/2019-4, and also by FAPERJ  (Cientista do Nosso Estado) through the grant E-26/201.139/2021.

\medskip
Data sharing not applicable to this article as no data sets were generated or analysed during the current study. 
    

\bigskip

\noindent Instituto de Matem\'atica,\\
 Universidade Federal do Rio de Janeiro\\
C.P. 68530, Cidade Universitária 2194--970, Rio de Janeiro, Brazil 
\\ e-mails: nn.lauren@gmail.com and wladimir@im.ufrj.br

\bigskip

\noindent Departamento  de Matem\'atica\\
Universidade de Bras\'ilia\\
70297-400 --Distrito Federal -- Brazil \\
e-mails: elardjh2@gmail.com


\begin{thebibliography}{1}




\bibitem{apple} D. Applebaum, 
{\it Lévy Processes and Stochastic Calculus
(second edition),} Cambridge Studies in Advanced Mathematics, vol. 116, Cambridge University Press, Cambridge (2009)


\bibitem{baalal} A. Baalal, M. Berghout, {\it Density properties for fractional Sobolev spaces with variable exponents,} Ann. Funct. Anal. 10, 308324 (2019)

\bibitem{bonaldo} L. M. M. Bonaldo, E. J. Hurtado, O. H. Miyagaki, {\it A class of elliptic equations involving
a general nonlocal integrodifferential operators with sign-changing weight functions,} Journal of Mathematical Physics,  61, 000000, doi: 10.1063/1.5145154 (2020)


\bibitem{bonaldo2}L. M. M. Bonaldo, E. J. Hurtado, O. H. Miyagaki, {\it Multiplicity results for elliptic problems involving nonlocal integrodifferential operators without Ambrosetti–Rabinowitz condition,} Discrete and Continuous Dynamical Systems,42, 7, 3329–3353 (2022)


 \bibitem{bonder2} J.F. Bonder and A. Silva, {\it Concentration-compactness principle for variable exponent spaces and applications,} Electron. J. Differential Equations 141, 1–18 (2010)

\bibitem{analia}  J.F. Bonder, N. Saintier, A. Silva, {\it The concentration-compactness principle for fractional order Sobolev spaces in unbounded domains and applications to the generalized fractional Brezis–Nirenberg problem}, Nonlinear Differential Equations and Applications,  25(6),   52, 25-52 (2018)







\bibitem{mironescu}	J. Bourgain, H. Brezis and P. Mironescu; {\it Limiting embedding theorems for $W^{s,p}$ when $s\nearrow 1$ and applications}, volume dedicated to the memory of T. H. Wolff, J. Analyse Math. 87, 77-101 (2002)


\bibitem{bucur} C. Bucur, E. Valdinoci, {\it Nonlocal Diffusion and Applications}. Lecture Notes of the Unione Matematica
Italiana, vol. 20. Springer, Unione Matematica Italiana, Cham, Bologna (2016)

\bibitem{caf1}L. Caffarelli, 
{\it Non-local diffusions, drifts and games
H. Holden, K.H. Karlsen (Eds.),} Nonlinear Partial Differential Equations: The Abel Symposium 2010, Abel Symposia, vol. 7, Springer-Verlag, Berlin, Heidelberg, 37-52 (2012)


\bibitem{chabro} J. Chabrowski, {\it Concentration-compactness principle at infinity and semilinear elliptic equations involving critical and subcritical Sobolev exponentes}, Calc. Var. Partial Differential Equations, 3(4), 493–512  (1995) 


\bibitem{cont} R. Cont, P. Tankov,
{\it Financial Modelling with Jump Processes
Chapman \& Hall/CRC Financial Mathematics Series}, Chapman \& Hall/CRC, Boca Raton, Fl (2004)





\bibitem{castillo} D. del Castillo-Negrete, {\it Anomalous transport in the presence of truncated Lévy flights
lafter J., Lim S.C., Metzler R. (Eds.)}, Fractional Dynamics: Recent Advances, World Scientific, Singapore, (2014)

\bibitem{dieming} L. Diening, P. Harjulehto, P. H\" ast\"o,  M. R\.u\v zi\v cka, {\it Lebesgue an Sobolev Spaces with Variable Exponents, Lecture Notes
in Mathematics, Vol. 2017}, Springer, Heidelberg, (2011)

\bibitem{dipierro} S. Dipierro, M. Medina, E. Valdinoci, {\it Fractional elliptic problems with critical growth in the whole of $\mathbb{R}^{N}$}, Pisa: Edizioni della Normale, (2017)
 \bibitem{pucci}A. Fiscella, P. Pucci, {\it $(p, q)$ systems with critical terms in $\mathbb{R}^{N}$}, Special Issue on Nonlinear PDEs and Geometric Function
Theory, in honor of Carlo Sbordone on his 70th birthday, Nonlinear Anal. 177  Part B, 454-479 (2018)


\bibitem{fonseca} I. Fonseca, G. Leoni, {\it Modern Methods in the Calculus of Variations: $L^{p}$ Spaces}, in: Springer Monographs in Mathematics, Springer, New York, p. xiv+599 (2007)

\bibitem{gou} Z. Guo, S. Luo and W. Zou, {\it On critical systems involving fractional Laplacian}, J. Math.
Anal. Appl., 446,  681-706 (2017)


\bibitem{kiho} K. Ho, Y. H. Kim,  {\it The concentration-compactness principles for $W^{s,p(\cdot,\cdot)}(\mathbb{R}^{N})$ and application},  Adv. Nonlinear Anal. 10, no. 1, 816-848 (2021)


\bibitem{kihosim} K. Ho, Y. H. Kim,  I. Sim, {\it Existence results for Schrödinger $p(\cdot)$-Laplace equations involving critical growth in $\mathbb{R}^N$}, Nonlinear Anal. 182, 20-44 (2019)
\bibitem{huaraoto1}  G. Huaroto, W. Neves,  {\it Solvability of the Fractional Hyperbolic Keller-Segel System}, arXiv:2201.03317v2, (2022)





\bibitem{kang}D. Kang, {\it Concentration-compactness principles for the systems of critical elliptic equations}, J. Difference Equ. Appl. 4, 435-444 (2012)

\bibitem{kangyu} D. Kang, J. Yu, {\it Minimizers to Rayleigh quotients of critical elliptic systems involving different Hardy-type terms}, Appl.
Math. Lett. 57, 97-103 (2016) 

\bibitem{kok} O. Kov\v a\v cik,  J. R\v akosnik, {\it On spaces  $L^{p(x)}$ and $W^{m,p(x)}$}, Czechoslovak Math. J. 41(116)(4), 592-618 (1991)

\bibitem{mosconi} S. Mosconi,  M. Squassina, {\it Nonlocal problems at nearly critical growth},  Nonlinear Analysis, 136,   84-101 (2016)




\bibitem{lions3} P.L. Lions, {\it The concentration-compactness principle in the calculus of variations. The locally compact case. Part 1}, Ann. Inst. H. Poincaré Anal. Non Linéaire 1 (2), 109–145 (1984)


\bibitem{lions1} P.L. Lions, {\it The concentration-compactness principle in the calculus of variations. The  limit case. Part 1}, Rev. Mat. Iberoamericana 1.1, 145-201 (1985)

\bibitem{lions2} P.L. Lions, {\it The concentration-compactness principle in the calculus of variations. The limit case. Part 2}, Rev. Mat. Iberoamericana 1.2, 45-121 (1985)

\bibitem{huaraoto2} W., Neves, D. Orlando, {\it On Fractional Benney Type Systems}, arXiv:2206.06343, (2022)

\bibitem{palatucci} G. Palatucci, A. Pisante, {\it Improved Sobolev embeddings, profile decomposition, and concentration-compactness for fractional Sobolev spaces},  Calc. Var. Partial Differential Equations, 50(3-4), 799–829 (2014)



\bibitem{temperini} P. Pucci, L. Temperini, {\it Existence for fractional $(p,q)$ systems with critical and Hardy terms in $\mathbb{R}^{N}$,} Nonlinear Anal. 211,  112477, 33 pp. (2021)



 \bibitem{ruzicka}  M. R\.u\v zi\v cka, {\it Electrorheological Fluids: Modeling and Mathematical Theory}, in: Lecture Notes in Mathematics, vol. 1748, Springer, Berlin, (2000) 
 


 \bibitem{son} C.A. Swanson, L.S. Yu, {\it Critical p-Laplacian problems in $\mathbb{R}^{N}$}, Ann. Mat. Pura Appl. 169, 233–250 (1995)
 
 \bibitem{silva}  E.D. Silva,  M.L. Carvalho,  Albuquerquec, J.C. de, Bahrouni, S, {\it Compact embedding theorems and a Lions’ type Lemma for fractional Orlicz–Sobolev spaces}, Journal of Differential Equations, 300, 487-512 (2021)
 
 

 
 
\bibitem{juan} J.L. Vázquez, {\it Recent progress in the theory of nonlinear diffusion with fractional Laplacian operators,}
Discrete Contin. Dyn. Syst., Ser. A, 7 (4), 857-885 (2014)
 
 \end{thebibliography}
\end{document}